\newlist{thmlist}{enumerate}{1}
\setlist[thmlist]{label=(\roman{thmlisti}), ref=\thetheorem(\roman{thmlisti})}
\declaretheorem[
    name=Theorem,
    Refname={Theorem,Theorems},
    ]{theorem}
\declaretheorem[
    name=Theorem,
    Refname={Theorem, Theorems},
   numbered=no
   ]{theorem*} 
\declaretheorem[
    name=Lemma,
    Refname={Lemma,Lemmas},
   sibling=theorem
   ]{lemma}
\declaretheorem[
    name=Proposition,
    Refname={Proposition,Propositions},
	sibling=theorem
	]{proposition}
\declaretheorem[
    name=Corollary,
    Refname={Corollary,Corollaries},
	sibling=theorem
	]{corollary}
\newtheorem{remark}[theorem]{Remark}
\newtheorem{definition}{Definition}
\Crefname{theorem}{Theorem}{Theorems}
\Crefname{lemma}{Lemma}{Lemmas}
\let\save@mathaccent\mathaccent
\newcommand*\if@single[3]{%
  \setbox0\hbox{${\mathaccent"0362{#1}}^H$}%
  \setbox2\hbox{${\mathaccent"0362{\kern0pt#1}}^H$}%
  \ifdim\ht0=\ht2 #3\else #2\fi
  }
\newcommand*\rel@kern[1]{\kern#1\dimexpr\macc@kerna}
\newcommand*\widebar[1]{\@ifnextchar^{{\wide@bar{#1}{0}}}{\wide@bar{#1}{1}}}
\newcommand*\wide@bar[2]{\if@single{#1}{\wide@bar@{#1}{#2}{1}}{\wide@bar@{#1}{#2}{2}}}
\newcommand*\wide@bar@[3]{%
  \begingroup
  \def\mathaccent##1##2{%
    \let\mathaccent\save@mathaccent
    \if#32 \let\macc@nucleus\first@char \fi
    \setbox\z@\hbox{$\macc@style{\macc@nucleus}_{}$}%
    \setbox\tw@\hbox{$\macc@style{\macc@nucleus}{}_{}$}%
    \dimen@\wd\tw@
    \advance\dimen@-\wd\z@
    \divide\dimen@ 3
    \@tempdima\wd\tw@
    \advance\@tempdima-\scriptspace
    \divide\@tempdima 10
    \advance\dimen@-\@tempdima
    \ifdim\dimen@>\z@ \dimen@0pt\fi
    \rel@kern{0.6}\kern-\dimen@
    \if#31
      \overline{\rel@kern{-0.6}\kern\dimen@\macc@nucleus\rel@kern{0.4}\kern\dimen@}%
      \advance\dimen@0.4\dimexpr\macc@kerna
      \let\final@kern#2%
      \ifdim\dimen@<\z@ \let\final@kern1\fi
      \if\final@kern1 \kern-\dimen@\fi
    \else
      \overline{\rel@kern{-0.6}\kern\dimen@#1}%
    \fi
  }%
  \macc@depth\@ne
  \let\math@bgroup\@empty \let\math@egroup\macc@set@skewchar
  \mathsurround\z@ \frozen@everymath{\mathgroup\macc@group\relax}%
  \macc@set@skewchar\relax
  \let\mathaccentV\macc@nested@a
  \if#31
    \macc@nested@a\relax111{#1}%
  \else
    \def\gobble@till@marker##1\endmarker{}%
    \futurelet\first@char\gobble@till@marker#1\endmarker
    \ifcat\noexpand\first@char A\else
      \def\first@char{}%
    \fi
    \macc@nested@a\relax111{\first@char}%
  \fi
  \endgroup
}
\begin{document}

\begin{frontmatter}
\title{Plugin Estimation of Smooth Optimal Transport Maps}
\runtitle{Plugin Estimation of Smooth Optimal Transport Maps}

\begin{aug}
\author[A]{\fnms{Tudor} \snm{Manole}\ead[label=e1,mark]{tmanole@andrew.cmu.edu}},
\author[A,B]{\fnms{Sivaraman} \snm{Balakrishnan}\ead[label=e2,mark]{siva@stat.cmu.edu}},

\author[C]{\fnms{Jonathan} \snm{Niles-Weed}\ead[label=e3,mark]{jnw@cims.nyu.edu}}
\and
\author[A,B]{\fnms{Larry} \snm{Wasserman}\ead[label=e4,mark]{larry@stat.cmu.edu}}
\address[A]{Department of Statistics and Data Science, Carnegie Mellon University \\ \printead{e1,e2,e4}}
\address[B]{Machine Learning Department, Carnegie Mellon University}
\address[C]{Courant Institute of Mathematical Sciences and Center for Data Science, New York University \\ \printead{e3}}
\end{aug}

%
%

\begin{abstract}

We analyze a number of natural estimators for the optimal transport map
between two distributions and show that they are minimax optimal.
We adopt the plugin approach: our estimators are simply optimal couplings between measures derived 
from our observations, appropriately extended so that they define functions on $\bbR^d$.
When the underlying map is assumed to be Lipschitz, we show that computing the optimal coupling
between the empirical measures, and extending it using linear smoothers,
already gives a minimax optimal estimator.
When the underlying map enjoys higher regularity, we show that the optimal coupling between
appropriate nonparametric density estimates yields faster rates.
Our work also  provides new bounds on the risk of corresponding plugin estimators for the quadratic Wasserstein distance, 
and we show how this problem relates to that of estimating optimal transport maps
using stability arguments  for smooth and strongly convex Brenier potentials. 
As an application of our results, we derive  central limit theorems for   plugin estimators of the squared Wasserstein 
distance, which are centered at their population counterpart when the underlying distributions have sufficiently smooth densities. 
In contrast to known central limit theorems for  empirical  estimators, this result
easily lends itself to statistical inference for the quadratic
Wasserstein distance.
\end{abstract}

\begin{keyword}[class=MSC]
\kwd[Primary ]{62G05}
\kwd{62G20}
\kwd[; secondary ]{62G07,62C20}
\end{keyword}

\begin{keyword}
\kwd{Optimal Transport Map}
\kwd{Wasserstein Distance}
\kwd{Brenier Potential}
\kwd{Minimax Estimation}
\kwd{Density Estimation}
\kwd{Central Limit Theorem}
\kwd{Semiparametric Efficiency}
\end{keyword}

\end{frontmatter}

%
%
\tableofcontents

\section{Introduction}

Optimal transport maps play a central role in the theory of optimal transport~\citep{rachev1998, 
villani2003, santambrogio2015}, 
and have received many recent methodological applications in 
statistics and machine learning~\citep{kolouri2017, panaretos2019a}. 
Given two distributions $P$ and $Q$ with support contained in a set $\Omega \subseteq \bbR^d$,
an optimal transport map $T_0$ from $P$ to $Q$ 
is any solution to the {\it Monge problem}~\citep{monge1781},
\begin{equation}
\label{eq:monge}
\argmin_{T \in \calT(P,Q)} \int_\Omega \norm{x-T(x)}^2 dP(x),
\end{equation}
where $\calT(P,Q)$ is the set of transport maps between $P$ and $Q$, 
that is, the set of Borel-measurable functions $T: \Omega \to \Omega$ 
such that $T_{\#} P := P(T^{-1}(\cdot)) = Q$. Equivalently, 
we write $T_{\#} P = Q$ whenever $X \sim P$ implies $T(X) \sim Q$. 
As we shall see in Section~\ref{sec:background}, 
a sufficient condition for the Monge problem to admit a 
solution  $T_0$ is for $P$ to be absolutely continuous with respect to the Lebesgue measure.

A wide range of statistical applications involve transforming 
random variables to ensure 
they follow a desired distribution. Optimal transport maps form natural
choices of such transformations when no other canonical choice is available. 
  For instance, optimal transport maps form a useful tool for addressing label shift between train  and test distributions
 in classification problems, 
and have more generally been applied to various
domain adaptation and transfer learning problems~\citep{courty2016, redko2019, rakotomamonjy2021, zhu2021}. 
A large body of recent work has also employed optimal transport
maps for defining notions of multivariate ranks and quantiles~\citep{chernozhukov2017,hallin2021,ghosal2022}, and has applied them
to a variety of nonparametric hypothesis testing problems~\citep{shi2020,deb2021a, deb2021b}. 
We also note their recent uses
in distributional regression~\citep{ghodrati2021}, 
generative modeling~\citep{finlay2020,onken2021}, fairness in machine learning~\citep{gordaliza2019,black2020, delara2021},   
and in a wide range of statistical applications to the sciences~\citep{read1999, wang2011, schiebinger2019, komiske2020}.

An important question arising in many of these applications is that of estimating the optimal transport map between unknown 
distributions, based on independent samples. The aim of this paper is
to develop practical estimators of optimal transport maps achieving near-optimal risk. 
Specifically, given i.i.d. samples $X_1, \dots, X_n \sim P$ and $Y_1, \dots, Y_m \sim Q$, 
we derive estimators $\hat T_{nm}$ which achieve the minimax rate of convergence\footnote{Here and throughout, minimax rate-optimality is tacitly understood up to polylogarithmic factors.}, under the loss function
\begin{equation}
\label{eq:L2P}
\big\|\hat T_{nm} - T_0 \big\|_{L^2(P)}^2 = \int_{\Omega} \big\|\hat T_{nm}(x) - T_0(x)\big\|^2 dP(x).
\end{equation} 
The theoretical study of such estimators was recently initiated by~\cite{hutter2021}, who proved that for any estimator
$\hat T_{nm}$ with $n=m$,  
\begin{equation}
\label{eq:HR_lower_bound}
\sup_{(P,Q)} \bbE 
\big\|\hat T_{nm} - T_0\big\|_{L^2(P)}^2  \gtrsim 
   n^{-\frac {2\alpha}{2(\alpha-1)+d}}\vee \frac 1 {n},
\end{equation}
where the supremum is taken over all pairs of distributions $(P,Q)$ 
admitting densities bounded away from zero over a compact set $\Omega$, 
for which $T_0$ lies in an $\alpha$-H\"older ball for some $\alpha \geq 1$, and satisfies 
a key curvature condition~\ref{assm:curvature} which we define below. The lower bound~\eqref{eq:HR_lower_bound}
is reminiscent of, but generally faster than, the classical $n^{-2\alpha/(2\alpha+d)}$ minimax rate of estimating
an $\alpha$-H\"older continuous nonparametric regression function~\citep{tsybakov2008},
and is shown by~\cite{hutter2021} to be achievable up to a polylogarithmic factor. 
Nevertheless, their estimator is computationally intractable in general dimension, 
and their work leaves open the question of developing practical optimal transport map estimators which  achieve 
comparable risk.

In this paper, we establish the minimax optimality of several natural and intuitive
 estimators of optimal transport maps, 
several of which have already been proposed in the statistical optimal transport 
literature, but have resisted sharp statistical analyses thus far. 
We focus on the following two classes of plugin estimators.

\begin{enumerate}
\item[(i)] \textbf{Empirical Estimators.} When no smoothness assumptions are placed on 
$P$ and $Q$, it is natural to study the plugin estimator based on the empirical
measures 
$$P_n = \frac 1 n \sum_{i=1}^n \delta_{X_i}, \quad \text{and}\quad Q_m = \frac 1 m \sum_{j=1}^m \delta_{Y_j}.$$ 
In the special case $n=m$, there is an optimal transport map $T_{nm}$ from $P_n$ to $Q_m$, and more generally there is an optimal coupling of these measures.
While the in-sample estimator $T_{nm}$
is only defined over the support of $P_n$, we readily obtain  estimators
defined over the entire domain by casting the extension problem as one of
nonparametric regression. We show how linear smoothers
and least-squares estimators can be used to interpolate $T_{nm}$, 
leading to estimators $\widehat T_{nm}$ defined over $\Omega$.
Such estimators are new in the literature to the best of our knowledge, 
and achieve the minimax rate
for estimating Lipschitz optimal transport maps~$T_0$.

\item[(ii)] \textbf{Smooth Estimators.} In order to obtain faster rates of convergence when $P$ and $Q$ admit smooth densities $p$ and $q$, we next
analyze the risk of the unique optimal transport
map  between kernel or wavelet density estimators of $p$ and $q$. 
In contrast to our empirical optimal transport map estimators, 
we show that such smooth plugin estimators
are able to take advantage of additional regularity of the densities $p$ and $q$, and achieve minimax-optimal rates when these densities are H\"older smooth.
\end{enumerate}
While our emphasis is on optimal transport maps, 
an equally important target of estimation is the optimal objective value 
in the Monge problem~\eqref{eq:monge}, which gives rise to the 
squared 2-Wasserstein distance
$W_2^2(P,Q)$ (defined formally in Section~\ref{sec:background}). Our optimal transport map estimators
naturally yield estimators for the Wasserstein distance, and we provide upper
bounds on their risk, and derive limit laws, as a byproduct of our study.  \\[-0.1in]

\noindent {\bf Our Contributions.} The primary contributions of this paper are summarized as follows.
\begin{enumerate}
\item[(i)] In Sections~\ref{sec:one_sample} and~\ref{sec:two_sample}, we develop new stability bounds which relate the risk of plugin transport map estimators to the plugin density estimation risk, as measured in the Wasserstein distance. 
These stability bounds are quite general and enable the analysis of flexible, practical transport map estimators. The risk of density estimation under the Wasserstein distance has been extensively studied~\citep{weed2019a,divol2021}, and our
stability bounds enable us to leverage this past work. Additionally, our stability bounds enable the analysis of plugin estimators of the Wasserstein distance, once again relating the risk in this problem 
to the plugin density estimation risk.

\item[(ii)] We build upon our stability bounds to analyze the risk of empirical, kernel-based and wavelet-based transport map estimators in both the one-sample setup (where the source distribution
is known exactly, and the target distribution is sampled) and the two-sample setup (where both the source and target distributions are sampled). The rates we obtain are minimax optimal. 
For example, suppose that $\widehat T_n$ is the optimal transport map from~$P$ to~$\widehat Q_n$, where $\widehat Q_n$ is a
wavelet-estimator over the domain $[0,1]^d$. 
Then, whenever $P$ and $Q$ admit $(\alpha-1)$-H\"older densities and satisfy several additional conditions, we show that,
\begin{align}
\label{eq:informal_transport}
\mathbb{E} \big\|\widehat T_n - T_0\big\|_{L^2(P)}^2  &\lesssim
\begin{cases}
n^{-\frac{2\alpha}{2(\alpha-1)+d}}, & d \geq 3 \\
(\log n)^2/n, & d = 2 \\
1/n, & d = 1.
\end{cases}
\end{align}
As we explain in Section~\ref{sec:background}, the H\"older smoothness of $T_0$
is typically expected to be of one degree greater than
that of $p$ and $q$, and thus our estimator 
achieves the minimax lower bound~\eqref{eq:HR_lower_bound}
when these densities are $(\alpha-1)$-H\"older smooth, for any $\alpha > 1$\footnote{As discussed in Appendix E of~\cite{hutter2021}, 
the minimax lower bound~\eqref{eq:HR_lower_bound}
also holds under such smoothness conditions on the densities
$p$ and $q$, as opposed to smoothness conditions on  $T_0$.}. 
In the two-sample setting, we develop analogous minimax-optimal analyses, for the empirical plugin estimator~(Propositions~\ref{prop:curvature_empirical}--\ref{prop:transport_least_squares})
as well as for kernel-based and wavelet-based plugin estimators (Theorems~\ref{thm:two_sample_density}--\ref{thm:two_sample_kernel}) 
when $P$ and $Q$ admit H\"{o}lder-smooth densities.
In the latter case, as we discuss further in the sequel, we avoid complications that arise in the optimal transport problem due to boundary effects by working over the $d$-dimensional flat torus. 

\item[(iii)] In each of the above settings, we complement our results with upper bounds on the risk of plugin estimators of the Wasserstein distance. For instance, in the smooth 
setting discussed above, we show that,
\begin{align}
\label{eq:informal_wasserstein}
\mathbb{E} \big| W_2^2(P, \widehat Q_n) - W_2^2(P, Q) \big| &\lesssim \left(\frac{1}{n}\right)^{\frac{2\alpha}{2(\alpha-1)+d}} \vee \frac 1 {\sqrt n}.
\end{align}
We also develop analogous results in the one and two-sample settings, for various empirical and smooth plugin estimators. 

\item[(iv)] We build upon these estimation results to 
address {\it inference} for Wasserstein distances in the high-smoothness regime $2(\alpha+1) > d$. 
We show in Section~\ref{sec:clt}, under regularity conditions, that there exists $\sigma^2 > 0$ such that 
\begin{equation}
\label{eq:informal_clt}
\sqrt n \Big( W_2^2(P, \widehat Q_n) - W_2^2(P, Q)\Big) \rightsquigarrow N(0, \sigma^2), \quad \text{as } n\to \infty.
\end{equation}
We also develop analogous results in the two-sample setting. To the best of our knowledge, these are the first
central limit theorems for a plugin estimator of the Wasserstein distance which is centered
at its population counterpart, for absolutely continuous distributions $P$ and $Q$ in arbitrary dimension. We further show that the 
variance $\sigma^2$ of the limiting distribution can be estimated
using our transport map estimators, leading to an asymptotic confidence interval
for $W_2^2(P, Q)$.

\item[(v)] We also develop the semiparametric efficiency theory for the Wasserstein distance functional.  
In Section~\ref{sec:efficiency}, we derive the efficient influence function of the Wasserstein distance, derive asymptotic local minimax lower bounds, and show that 
our plugin Wasserstein distance estimators are asymptotically efficient in the high-smoothness regime.

\end{enumerate}

\noindent {\bf Related Work.}
The two recent works of \cite{hutter2021} and \cite{gunsilius2021} establish $L^2(P)$ convergence rates
for transport map estimators. \cite{gunsilius2021}
derives upper bounds on the risk
of a  plugin estimator for Brenier potentials,
obtained via kernel density estimation of~$p$ and~$q$. 
This analysis results in suboptimal convergence rates for the optimal transport map $T_0$ itself. 
We show in this work that such plugin estimators do in fact achieve the optimal convergence rate
when the sampling domain is the $d$-dimensional torus.

Building upon a construction of~\cite{delbarrio2020}, a consistent estimator of $T_0$
was obtained by \cite{delara2021}  under mild assumptions, 
by regularizing a piecewise constant approximation of the  empirical optimal transport
map $T_n$. We do not know if quantitative convergence rates can be obtained for their estimator under stronger
assumptions. 
Beyond these works, a wide range of heuristic estimators have been proposed in the 
literature~\citep{perrot2016,nath2020,makkuva2020},
but their theoretical properties remain unknown to the best of our knowledge.

Rates of convergence for the problem of estimating Wasserstein distances have arguably received
more attention than that of estimating optimal transport maps. 
Characterizing the convergence rate of the empirical measure under the Wasserstein distance 
is a classical problem~\citep{dudley1969,boissard2014a, fournier2015,bobkov2019,weed2019,lei2020} which immediately leads to upper bounds on the convergence
rate of the empirical plugin estimator of the Wasserstein distance. 
While such upper bounds are generally unimprovable~\citep{liang2019,niles2022}, they have recently been sharpened 
by~\cite{chizat2020} and \cite{manole2021a}
when $W_2(P,Q)$ is bounded away from zero. We complement these results
by deriving a convergence rate that adapts to the magnitude of $W_2(P,Q)$,
in Sections~\ref{sec:one_sample_empirical} and~\ref{sec:two_sample_empirical}. 
Though the empirical plugin estimator of the Wasserstein distance is minimax optimal up to polylogarithmic
factors under no assumptions on $P$ and $Q$, it becomes suboptimal when $P$ and $Q$  have smooth densities.  
\cite{weed2019a} derive the minimax rate of estimating smooth densities under the Wasserstein distance,
and we build upon their results, together with those of~\cite{divol2021},
to characterize  the risk of our density plugin estimators  (cf. Sections~\ref{sec:one_sample_density},~\ref{sec:two_sample_combined}, and~\ref{sec:smooth_domains}).

Central limit theorems for the empirical quadratic cost 
$W_2^2(P_n, Q_m)$ around its expectation 
have been derived by~\cite{delbarrio2019a} 
under mild conditions on the underlying distributions. As we discuss in Section~\ref{sec:clt}, 
however, the centering sequence $\bbE W_2^2(P_n, Q_m)$ in these results cannot
generally be replaced by its population counterpart $W_2^2(P, Q)$, 
which is a barrier to their use  for statistical inference. 
Key exceptions are obtained when the support of $P$ and $Q$ is 
at most  
three-dimensional~\citep{munk1998, delbarrio1999a,freitag2005,delbarrio2005,delbarrio2019c,manole2022b,hundrieser2022}
or countable~\citep{sommerfeld2018,tameling2019}, in which case non-degenerate limiting distributions
for the process $W_2(P_n, Q_m) - W_2(P,Q)$ are known 
up to suitable scaling. In contrast, our work derives 
central limit theorems with desirable centering  in arbitrary dimension $d \geq 1$,
for  a large class of 
 absolutely continuous
distributions $P$ and $Q$ admitting sufficiently smooth densities.

\vspace{0.1in}
\noindent
{\bf Concurrent Work.} 
During the final stages of preparation of the first version of our manuscript, 
we became aware of the independent work of~\cite{deb2021},  
and of the most recently revised version of the work of~\cite{ghosal2022}. 
These papers bound the risk of certain plugin optimal transport map estimators that are closely related to those in our work.   
In particular, assuming for simplicity that $n=m$, they show that an estimator derived from the empirical plugin optimal transport coupling
achieves the $n^{-\left(\frac 1 2 \wedge \frac 2 d\right)}$ convergence rate under the squared $L^2(P_n)$ loss up to polylogarithmic
factors. 
Our work establishes an analogous result using a distinct proof, but 
further shows that empirical estimators achieve this rate in  squared $L^2(P)$ norm, 
once suitably extended using nonparametric smoothers. We also sharpen this result 
to the rate $n^{-\left(1 \wedge \frac 2 d\right)}$ under additional conditions. \cite{deb2021}
also analyze the convergence rate of plugin estimators based on wavelet and kernel density estimation. 
Their work shows that such estimators can achieve, for instance, 
the  faster rate $n^{-\left(\frac 1 2 \vee \frac{ \alpha}{d + 2(\alpha-1)}\right)}$, when 
the underlying densities lie in a $(\alpha-1)$-H\"older ball for some $\alpha > 1$. 
While this upper bound illustrates an improvement  over empirical
estimators in the presence of smoothness, it scales at a quadratically slower rate than the minimax rate~\eqref{eq:HR_lower_bound}. 
In contrast, our work shows that wavelet density plugin estimators do in fact   achieve
the minimax rate $n^{-\left(1 \vee \frac{ 2\alpha}{d + 2(\alpha-1)}\right)}$
(up to a polylogarithmic factor when $d=2$).
The current version of our manuscript extends this result to kernel density estimators, 
using a significantly different proof strategy than~\cite{deb2021}. Finally, we emphasize that our
sharp analysis of   estimators for the Wasserstein distance
allows us to deduce that their bias 
is of lower order
than their variance when $2(\alpha+1) > d$, which is a key component in our derivation of their limiting distribution. 
Indeed, our results in Section~\ref{sec:inference} on statistical inference 
for the 2-Wasserstein distance cannot be deduced from the works of~\cite{deb2021,ghosal2022}.

\vspace{0.1in}
\noindent 
 {\bf Notation.} 
The Euclidean norm on $\bbR^d$ is denoted $\norm\cdot$,  
and the $\ell_p$ norm of a sequence $(a_n)_{n\geq 1} \subseteq \bbR$ is written 
$\norm{(a_n)_{n\geq 1}}_{\ell_p} = (\sum_{n\geq 1} |a_n|^p)^{1/p}$ for all $1 \leq p \leq \infty$. 
Given a   set $\Omega$, which is either a closed
subset of $\bbR^d$ or   the $d$-dimensional
flat torus $\Omega = \bbT^d := \bbR^d/ \bbZ^d$, and given real numbers $\alpha >  0$, $s \in \bbR\setminus \{0\}$, $1 \leq p,q \leq \infty$, 
the H\"older spaces $\calC^\alpha(\Omega)$,  Besov spaces $\calB_{p,q}^s(\Omega)$, 
 homogeneous Sobolev spaces $\dH{s}(\Omega)$,  inhomogeneous Sobolev spaces $H^s(\Omega)$,
and their respective norms $\norm\cdot_{\calC^\alpha(\Omega)}$, $\norm\cdot_{\calB^s_{p,q}(\Omega)}$, 
$\norm\cdot_{\dH{s}(\Omega)}$, $\norm\cdot_{H^s(\Omega)}$,
are defined in Appendix~\ref{app:smoothness_classes}. We drop the suffix $\Omega$ 
when the underlying space can be understood from context. 
We also define, 
for any $M,\gamma > 0$,   
\begin{align}
\label{eq:holder_ball}
\calC^\alpha(\Omega; M) &:= \left\{f \in \calC^\alpha(\Omega): \|f \|_{\calC^\alpha(\Omega)} \leq M\right\}, \\
\label{eq:holder_ball_bound}
\calC^\alpha(\Omega; M,\gamma) &:= \left\{f \in \calC^\alpha(\Omega): \|f \|_{\calC^\alpha(\Omega)} \leq M,  f \geq 1/\gamma \text{ over } \Omega \right\}.
\end{align}
Furthermore, $\calC^\infty(\Omega)$ denotes the set of 
real-valued functions on $\Omega$ which are differentiable up to any order, and 
$\calC^\infty_c(\Omega)$ denotes the set of 
 functions in $\calC^\infty(\Omega)$ whose support is compactly contained
 in $\Omega$.
 Given a measure space $(\Omega, \calF, \nu)$, $L^p(\nu)$    
denotes the Lebesgue space of order $1 \leq p \leq \infty$, 
endowed with the norm $\norm f_{L^p(\nu)} = (\int_{\Omega} |f(x)|^p d\nu(x))^{1/ p}$, 
for any measurable function $f:\Omega\to\bbR$. We also write 
$L_0^p(\nu) = \{f \in L^p(\nu): \int fd\nu = 0\}.$
When $\nu$ is the Lebesgue measure 
$\calL$   on $\Omega \subseteq \bbR^d$, we write $L^p(\Omega)$ (or $L_0^p(\Omega)$) instead of $L^p(\calL)$
(or $L_0^p(\calL)$). We adopt the same convention when $\Omega \subseteq \bbT^d$, 
in which case, by abuse of notation,~$\calL$ denotes the uniform probability measure over $\bbT^d$.
We often abbreviate $\int f d\calL$ by  $\int f$. 
Given $T:\Omega \to \Omega$, we write by abuse of notation
$\|T\|_{L^2(P)} = (\int \|T(x)\|^2 dP(x))^{1/2}$. For any set $\calX$ and 
$f:\calX \to \bbR$, we write $\|f\|_\infty = \sup_{x\in \calX} |f(x)|$.
The Fourier transform of a function $K \in L^1(\bbR^d)$ is denoted
$\calF[K](\xi) = \int_{\bbR^d} f(x) e^{-2\pi i x^\top\xi}dx$ for all $\xi \in \bbR^d$. 
For any   $B \in \bbN$, the permutation group on $[B] = \{1, \dots, B\}$
is denoted $S_B$. For any $a,b \in \bbR$, let $a\vee b = \max\{a,b\}$, $a\wedge b = \min\{a,b\}$, and
$a_+ = a \vee 0$. Furthermore, let $\lfloor a \rfloor$ and $\lceil a \rceil $ denote
the respective floor and ceiling of $a$. 
The diameter of a set $\Omega \subseteq \bbR^d$ is denoted
$\diam(\Omega) = \sup\{\norm{x-y}:x,y \in\Omega\}$, 
and its interior and closure are respectively denoted
$\Omega^\circ$ and $\widebar \Omega$.
For all $x \in \bbR^d$ and $\epsilon > 0$, $B(x,\epsilon) = \{y \in \bbR^d: \norm{x-y} \leq \epsilon\}$. 
For sequences 
$(a_n)_{n=1}^\infty$ and $(b_n)_{n=1}^\infty$, we write $a_n \lesssim b_n$ if there exists
$C > 0$ such that $a_n \leq C b_n$ for all $n \geq 1$, and
we also write $a_n \asymp b_n$ if $b_n \lesssim a_n \lesssim b_n$.
The constant~$C$ is always permitted to depend on $\Omega$, $d$,    and other problem parameters when they are clear from context. 
We sometimes write
$\lesssim_{c_1, c_2, \dots}$ or $\asymp_{c_1, c_2, \dots}$,
to indicate that the 
suppressed constants depend on  problem
parameters $c_1,c_2,\dots$. 
 
\section{Background on Optimal Transport}
\label{sec:background}
\subsection{The Quadratic Optimal Transport Problem over $\bbR^d$}
We provide a brief background on the optimal transport
problem over $\bbR^d$ with respect
to the squared Euclidean cost function, and direct the reader to~\cite{villani2003,santambrogio2015} for further details. 
To simplify our exposition, we assume  throughout the rest of the paper, except where
otherwise specified, that 
all measures have support contained in a set $\Omega \subseteq \bbR^d$ satisfying the following condition.
\begin{enumerate}[leftmargin=1.65cm,listparindent=-\leftmargin,label=\textbf{(S\arabic*)}]   
\item  \label{assm:supp_global} $\Omega$
is a  compact, convex set  with nonempty interior such that $\Omega \subseteq [0,1]^d$. 
\end{enumerate} 
Notice that once $\Omega$ is assumed compact, the final assumption in condition~\ref{assm:supp_global} can always be guaranteed by rescaling.
Let $\calP(\Omega)$ denote the set of Borel probability measures with support contained in 
$\Omega$, and 
$\calP_{\mathrm{ac}}(\Omega)$ the subset of such measures which are absolutely continuous with respect
to the Lebesgue measure on $\bbR^d$.
As we shall recall in Theorem~\ref{thm:brenier} below,   
for any $P \in \calPac(\Omega)$ and $Q \in \calP(\Omega)$ 
the Monge problem~\eqref{eq:monge} admits a  minimizer $T_0$, which is uniquely defined $P$-almost everywhere.
The Monge problem may, however,  be infeasible
when the absolute continuity condition  on $P$ is removed.
This observation  
motivated \cite{kantorovich1942,kantorovich1948} to develop the following convex relaxation of the Monge problem,
\begin{equation}
\label{eq:kantorovich}
\argmin_{\pi \in \Pi(P,Q)} \int_\Omega \norm{x-y}^2 d\pi(x,y),
\end{equation}
known as the Kantorovich problem, 
where $\Pi(P,Q)$ denotes the set of joint distributions on $\Omega^2$ with marginal distributions $P$
and $Q$, known as couplings of $P$ and $Q$. 
It can be shown in our setting
that a minimizer $\pi$ in equation \eqref{eq:kantorovich} always exists (\cite{villani2008}, Theorem~4.1),
and is called an optimal coupling. 
In the special case where $\pi$ is supported
in the graph of a map $ T_0:\Omega \to \Omega$, it
must be the case that $ T_0 \in \calT(P,Q)$ due to the marginal constraints in the definition
of $\Pi(P,Q)$, and it must then follow that $T_0$ is precisely an optimal
transport map from $P$ to $Q$. As we shall elaborate below, this situation turns out to characterize
all optimal couplings when $P \in \calPac(\Omega)$, and for such measures the Monge and Kantorovich problems yield equivalent
solutions. 

While an optimal coupling represents a transference plan for reconfiguring $P$ into $Q$, the 
corresponding optimal value of the objective
function~\eqref{eq:kantorovich} represents the optimal cost of such a reconfiguration, which provides an easily interpretable
measure of divergence between $P$ and $Q$. Specifically, it gives rise to the 2-Wasserstein distance,
\begin{equation}
\label{eq:wasserstein}
W_2(P,Q) = \left(\inf_{ \pi \in \Pi(P,Q)} \int \norm{x-y}^2 d\pi(x,y)\right)^{\frac 1 2}.
\end{equation}
The above problem is an (infinite-dimensional) convex program with linear constraints, and it
admits a dual maximization problem, known as the Kantorovich dual problem, given by
\begin{equation}
\label{eq:kantorovich_dual}
W_2^2(P,Q) = \sup_{(\phi,\psi) \in \calK} \int \phi dP + \int \psi dQ,
\end{equation}
where  $\calK$ is the set of pairs $ (\phi,\psi)\in L^1(\Omega)\times  L^1(\Omega)$
such that $\phi(x) + \psi(y) \leq \norm{x-y}^2$ for all $x, y \in \Omega$.
In the present setting of the quadratic optimal transport problem over
the compact set $\Omega$, it can be shown that strong
duality indeed holds in equation~\eqref{eq:kantorovich_dual}, and 
that the supremum is always achieved by some pair $(\phi_0,\psi_0) \in \calK$. 
Any such pair of functions is  
called a pair of Kantorovich potentials. 
In this case, notice that $(\phi_0,\phi_0^c)$, with $\phi_0^c(y) = \inf_{x \in \Omega} \big\{\norm{x-y}^2 - \phi_0(x)\big\}$,
is itself a pair of Kantorovich potentials, since replacing $\psi_0$ by $\phi_0^c$ 
can only increase the objective value~\eqref{eq:kantorovich_dual}, 
while retaining the constraint $(\phi_0,\phi_0^c) \in \calK$. If we define $\varphi_0 = \frac 1 2( \norm\cdot^2 - \phi_0)$, then
$\phi_0^c = \norm\cdot^2 - 2\varphi_0^*$, where for any  $f:\Omega \to \bbR$,  
$$f^*(y) = \sup_{x \in \Omega}\big\{\langle x, y\rangle - f(x)\big\}, \quad y \in \Omega,$$
denotes the Legendre-Fenchel conjugate of $f$. Under this reparametrization, the
Kantorovich dual problem is equivalent to the so-called semi-dual problem
\begin{equation}
\label{eq:semi_dual}
\inf_{\varphi \in L^1(P)} \int \varphi dP + \int \varphi^* dQ,
\end{equation}
in the sense that $\varphi_0$ solves to the semi-dual
problem if and only if $(\norm\cdot^2 - 2\varphi_0, \norm\cdot^2 -2 \varphi_0^*)$ solves the Kantorovich  problem \eqref{eq:kantorovich_dual}.
The semi-dual problem is closely connected to the Monge problem, as shown by the following 
result
of~\cite{knott1984, brenier1991}. \vspace{-0.2in}
\begin{theorem}[Brenier's Theorem]
\label{thm:brenier}
Let $P \in \calPac(\Omega)$ and $Q \in \calP(\Omega)$. 
\begin{thmlist}
\item  There exists an optimal transport map $T_0$ between $P$ and $Q$
which takes the form $T_0 = \nabla\varphi_0$ for a convex function $\varphi_0: \bbR^d \to \bbR$
which solves the semi-dual problem~\eqref{eq:semi_dual}. Furthermore, $T_0$
  is uniquely determined $P$-almost everywhere.  
\item If we further have $Q \in \calPac(\Omega)$, then $\nabla \varphi_0^*$ is the ($Q$-almost everywhere uniquely determined)
gradient of a convex function such that ${\nabla\varphi_0^*}_\# Q = P$, and solves the Monge problem 
for transporting $Q$ onto $P$. Furthermore, for Lebesgue-almost every $x, y \in \Omega$ 
$$\nabla\varphi_0^* \circ \nabla \varphi_0(x) = x, \quad \nabla\varphi_0\circ\nabla\varphi_0^*(y) = y.$$
\end{thmlist} 
\end{theorem}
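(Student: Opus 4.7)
The strategy is the classical one, building on the semi-dual reformulation already introduced just before the theorem statement. The starting point is the existence of a maximizing pair $(\phi_0,\phi_0^c)$ of Kantorovich potentials, which (as noted in the excerpt) holds in the present setting by standard strong-duality arguments for the quadratic cost on a compact domain. Reparametrize via $\varphi_0 = \tfrac12\|\cdot\|^2 - \phi_0$ (up to the factor of $2$ used in the excerpt), so that $\varphi_0$ is a convex lower-semicontinuous function on $\mathbb{R}^d$, its Legendre conjugate $\varphi_0^*$ is likewise convex, and the pointwise inequality $\phi_0(x)+\phi_0^c(y)\le\|x-y\|^2$ becomes $\varphi_0(x)+\varphi_0^*(y)\ge \langle x,y\rangle$ everywhere, with equality characterizing $y\in\partial\varphi_0(x)$.

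Next I would combine this with the optimality of any coupling $\pi$ in~\eqref{eq:kantorovich}: integrating the dual inequality and matching the primal value forces equality $\pi$-a.e., so $\mathrm{supp}(\pi)$ is contained in the graph of the subdifferential $\partial\varphi_0$. Here the absolute continuity assumption $P\in\calPac(\Omega)$ enters decisively: by Rademacher's theorem (applicable to convex functions, which are locally Lipschitz on the interior of their domain), $\varphi_0$ is differentiable Lebesgue-almost everywhere, hence $P$-almost everywhere. At such points, $\partial\varphi_0(x)=\{\nabla\varphi_0(x)\}$ is a singleton, which forces $\pi$ to be concentrated on the graph of the single-valued map $T_0:=\nabla\varphi_0$. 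By the marginal constraints this gives $(T_0)_\#P=Q$ and $T_0\in\mathcal{T}(P,Q)$, and optimality in the Monge problem follows from the fact that $T_0$ realizes the (attained) Kantorovich infimum. For uniqueness $P$-a.e., observe that any other optimal map $T_0'$ yields an optimal coupling $(\mathrm{id},T_0')_\#P$, which by the same argument must be supported on the graph of $\partial\varphi_0$; thus $T_0'=\nabla\varphi_0=T_0$ $P$-a.e.

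For part~(ii), I would apply part~(i) with $P$ and $Q$ interchanged: the Kantorovich problem is symmetric in its arguments, and the analogous semi-dual solution for transporting $Q$ onto $P$ is furnished by $\varphi_0^*$, since $(\varphi_0,\varphi_0^*)$ achieves equality in the Fenchel–Young inequality on $\mathrm{supp}(\pi)$. Since now $Q\in\calPac(\Omega)$ as well, Rademacher's theorem applied to $\varphi_0^*$ gives differentiability $Q$-a.e., and the resulting map $\nabla\varphi_0^*$ is the (a.e. unique) optimal map from $Q$ to $P$. The inverse relation is then the a.e. version of the classical identity $\nabla\varphi^*(\nabla\varphi(x))=x$, which holds at every $x$ where $\varphi_0$ is differentiable and strictly convex in a neighborhood of the relevant slope; here one uses that differentiability of $\varphi_0$ at $x$ together with differentiability of $\varphi_0^*$ at $\nabla\varphi_0(x)$ yields the identity, and that the bad set on which either fails has Lebesgue measure zero and hence $P$- and $Q$-measure zero.

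The principal obstacle is the transition from ``support contained in the graph of a subdifferential'' to ``support contained in the graph of a single-valued map.'' All the delicate measure-theoretic content is packaged into the statement that a convex function is Lebesgue-a.e. differentiable, together with the absolute continuity of $P$; once this is in hand, the rest of the argument is essentially an unpacking of Legendre duality. A secondary care point is the a.e. validity of $\nabla\varphi_0^*\circ\nabla\varphi_0=\mathrm{id}$, where one must ensure the two negligible sets (where $\varphi_0$ fails to be differentiable at $x$, and where $\varphi_0^*$ fails at $\nabla\varphi_0(x)$) together have $P$-measure zero; this uses that $(\nabla\varphi_0)_\#P=Q\in\calPac(\Omega)$, so preimages of Lebesgue-null sets under $\nabla\varphi_0$ are $P$-null.
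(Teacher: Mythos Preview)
The paper does not prove this statement: Brenier's Theorem is presented in Section~2 as a background result, attributed to \cite{knott1984,brenier1991}, and no proof is given. Your outline is the standard textbook argument (e.g., as in \cite{villani2003}) and is correct in its main strokes; in particular, your handling of the a.e.\ inverse identity via $(\nabla\varphi_0)_\#P=Q\in\calPac(\Omega)$ to control the preimage of the non-differentiability set of $\varphi_0^*$ is the right idea. Since the paper offers no proof to compare against, there is nothing further to contrast.
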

The convexity of $\varphi_0$ 
implies that it will be almost-everywhere twice differentiable. 
Further smoothness properties of Brenier potentials, and therefore of optimal transport maps, have been studied
via the regularity theory of partial differential equations of the Monge-Amp\`ere type, and we refer
to~\cite{dephilippis2014,figalli2017} for surveys. 
In short, denote by $p,q$ the respective Lebesgue densities of $P,Q \in \calPac(\Omega)$, and assume 
$\varphi_0$ is in fact everywhere twice continuously differentiable. Then, the constraint ${\nabla\varphi_0}_\# P = Q$
implies  by the change of variable formula that $\varphi_0$ solves the equation
\begin{equation}
\label{eq:monge_ampere}
\det\big( \nabla^2\varphi_0(x)\big) = \frac{p(x)}{q(\nabla\varphi_0(x))}, \quad x \in \Omega.
\end{equation}
As a direct consequence of equation~\eqref{eq:monge_ampere}, notice that the Hessian $\nabla^2\varphi_0$
admits a uniformly bounded determinant whenever $p$ and $q$ are bounded, and bounded away from zero. 
This observation leads to the following simple result noted by~\cite{gigli2011}.
\begin{lemma}
\label{lem:gigli}
Assume   $\varphi_0 \in \calC^2(\Omega)$  and $\gamma^{-1} \leq p,q \leq \gamma$ for some $\gamma > 0$.
Then, there exists a constant $\lambda > 0$, depending only on $\gamma$ and $\norm{\varphi_0}_{\calC^2(\Omega)}$, 
such that $\varphi_0$ is $\lambda$-strongly convex.
\end{lemma}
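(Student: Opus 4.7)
The plan is to combine the Monge--Amp\`ere equation~\eqref{eq:monge_ampere} with a straightforward linear-algebraic bound on the smallest eigenvalue of a positive semi-definite matrix whose determinant is bounded below and whose largest eigenvalue is bounded above.

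First, I would record two basic facts about the Hessian $H(x) := \nabla^2\varphi_0(x)$ on $\Omega$. Since $\varphi_0$ is convex and twice continuously differentiable, $H(x)$ is positive semi-definite for every $x \in \Omega$; and since $\varphi_0 \in \calC^2(\Omega)$, all entries of $H(x)$ are bounded in absolute value by $M := \norm{\varphi_0}_{\calC^2(\Omega)}$, so the largest eigenvalue satisfies $\lambda_{\max}(H(x)) \leq c_d M$ for some dimensional constant $c_d$. Simultaneously, the Monge--Amp\`ere identity~\eqref{eq:monge_ampere} together with the density bounds $\gamma^{-1} \leq p, q \leq \gamma$ gives
\begin{equation*}
\det(H(x)) \;=\; \frac{p(x)}{q(\nabla\varphi_0(x))} \;\geq\; \gamma^{-2}, \qquad x \in \Omega.
\end{equation*}

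Next, I would invoke the elementary inequality: if $A$ is a $d \times d$ PSD matrix with eigenvalues $0 \leq \mu_1 \leq \mu_2 \leq \dots \leq \mu_d$, then $\det(A) = \prod_{i=1}^d \mu_i \leq \mu_1 \mu_d^{d-1}$. Applying this to $H(x)$ yields
\begin{equation*}
\lambda_{\min}(H(x)) \;\geq\; \frac{\det(H(x))}{\lambda_{\max}(H(x))^{d-1}} \;\geq\; \frac{\gamma^{-2}}{(c_d M)^{d-1}} \;=:\; \lambda,
\end{equation*}
where $\lambda > 0$ depends only on $\gamma$ and $M$ (and on $d$, which is absorbed into the constants as in the notational conventions of the paper). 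Thus $H(x) \succeq \lambda I$ uniformly over $x \in \Omega$, which is the standard second-order characterization of $\lambda$-strong convexity for a $\calC^2$ function on a convex domain.

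There is essentially no genuine obstacle here; the only point requiring a moment of care is that strong convexity is typically stated on convex sets, whereas $\Omega$ is only assumed compact. This is harmless because $\varphi_0$ is defined as a convex function on all of $\bbR^d$ (as in Theorem~\ref{thm:brenier}), so the uniform Hessian lower bound on $\Omega$ either extends to the convex hull of $\Omega$ by the $\calC^2$ assumption, or else the conclusion should be interpreted on the convex hull of $\Omega$; in either interpretation the eigenvalue argument above delivers the claimed strong-convexity constant.
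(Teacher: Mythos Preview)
Your proof is correct and matches the approach the paper indicates: the paper does not spell out a full proof but attributes the result to \cite{gigli2011} and notes that it follows directly from the Monge--Amp\`ere identity~\eqref{eq:monge_ampere}, which bounds $\det(\nabla^2\varphi_0)$ below, combined with the $\calC^2$ bound on the Hessian entries. Your eigenvalue argument $\lambda_{\min} \geq \det(H)/\lambda_{\max}^{d-1}$ is precisely the elementary step that turns these two bounds into strong convexity, and your remark about the convexity of $\Omega$ is harmless here since the paper's working notion of strong convexity is the pointwise Hessian lower bound over $\Omega$ (cf.\ condition~\ref{assm:curvature}).
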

Lemma~\ref{lem:gigli} shows that, whenever equation~\eqref{eq:monge_ampere}
has positive and bounded  right-hand side, smooth Brenier potentials are also strongly convex. We shall require this property in Section~\ref{sec:one_sample_stability} to derive stability bounds for the $L^2(P)$ loss. 
To further obtain sufficient conditions for the H\"older smoothness of $\varphi_0$, notice that 
the Monge-Amp\`ere equation~\eqref{eq:monge_ampere} suggests that $\varphi_0$
admits two degrees of smoothness more than the densities~$p$ and~$q$. 
This intuition indeed turns out to hold true under suitable regularity conditions on $\Omega$, 
as was established in a series of 
publications by~\cite{caffarelli1991,caffarelli1992a,caffarelli1992,caffarelli1996}. 
The following is a summary of these results, as stated by~\citeauthor{villani2008} (\citeyear{villani2008}, Chapter 12). 
\begin{theorem}[Caffarelli's Regularity Theory]
\label{thm:caffarelli}
Assume $\Omega$  satisfies condition~\ref{assm:supp_global}. Assume further that there exists $\gamma > 0$ such that 
$\gamma^{-1} \leq p,q \leq \gamma$ over $\Omega$. Then, the 
Brenier potential $\varphi_0$ is unique up to an additive constant, and satisfies the following. 
\begin{thmlist}
\item \label{thm:caffarelli--interior} (Interior Regularity) Suppose there exists $\alpha > 1$, $\alpha \not\in\bbN$, such that $p,q \in \calC^{\alpha-1}(\Omega^\circ)$.
Then $\varphi_0 \in \calC^{\alpha+1}(\Omega^\circ)$. Moreover,  for  
any open subdomain  $\Omega'$ such that $\widebar{ \Omega'} \subseteq \Omega^\circ$,  there exists a constant $C > 0$ depending 
on $\gamma, \alpha, \Omega, \Omega', \|\varphi_0\|_{L^\infty(\Omega)},\norm{p}_{\calC^{\alpha-1}(\Omega^\circ)}, \norm{q}_{\calC^{\alpha-1}(\Omega^\circ)}$
such that 
$$\norm{\varphi_0}_{\calC^{\alpha+1}(\Omega')} \leq C.$$ 
\item \label{thm:caffarelli--global} (Global Regularity) Assume $\Omega$ admits a $\calC^2$ boundary and is uniformly convex. 
Assume further that there exists $\alpha > 1$, $\alpha \not\in \bbN$, such that $p,q \in \calC^{\alpha-1}(\Omega)$. Then, 
$\varphi_0 \in \calC^{\alpha+1}(\Omega)$.
\end{thmlist}
\end{theorem}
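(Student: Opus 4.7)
This theorem is Caffarelli's regularity theory for the Monge--Amp\`ere equation, a classical PDE result; the authors will almost certainly cite it rather than reprove it. Still, here is the strategy I would follow. The starting point is to translate the push-forward condition $\nabla\varphi_0{}_\# P = Q$ into the Monge--Amp\`ere equation in the sense of Alexandrov: using the change-of-variables formula on Borel sets $E$,
\[
|\partial\varphi_0(E)| \;=\; \int_E \frac{p(x)}{q(\nabla\varphi_0(x))}\,dx,
\]
where $\partial\varphi_0$ is the sub-differential. Since $p,q \in [\gamma^{-1},\gamma]$, the Monge--Amp\`ere measure of $\varphi_0$ has density pinched between two positive constants.

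The next step is Caffarelli's strict convexity theorem, which asserts that any convex Alexandrov solution with Monge--Amp\`ere measure of density bounded above and below by positive constants is strictly convex on $\Omega^\circ$; this rules out flat faces in the graph of $\varphi_0$ and is the gateway to smoothness. Building on strict convexity, one invokes Caffarelli's interior $C^{1,\beta}$ regularity theorem, producing some universal $\beta \in (0,1)$ (depending only on $d,\gamma$) such that $\varphi_0 \in C^{1,\beta}_{\mathrm{loc}}(\Omega^\circ)$.

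Given interior $C^{1,\beta}$ regularity, I would then run a Schauder bootstrap. Once $\nabla\varphi_0$ is H\"older continuous, $q\circ \nabla\varphi_0$ inherits H\"older regularity from $q$, so the Monge--Amp\`ere equation can be linearized about $\varphi_0$ to yield a uniformly elliptic linear PDE with H\"older coefficients. Standard interior Schauder estimates applied on subdomains $\Omega' \Subset \Omega^\circ$ give $\varphi_0 \in C^{2,\beta}_{\mathrm{loc}}$, and iterating one derivative at a time promotes $(\alpha-1)$-H\"older regularity of the data to $(\alpha+1)$-H\"older regularity of $\varphi_0$, with quantitative estimates whose constants depend on $\gamma,\alpha, \Omega,\Omega'$ and the $C^{\alpha-1}$ norms of $p,q$, as well as $\|\varphi_0\|_{L^\infty}$ (which enters through the Alexandrov maximum principle used to control the sections). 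For the global statement (ii), I would additionally invoke Caffarelli's boundary regularity theorem, whose proof crucially uses the uniform convexity and $C^2$ smoothness of $\partial\Omega$ to guarantee strict convexity and $C^{1,\beta}$ regularity of $\varphi_0$ up to the boundary, after which the same Schauder bootstrap, now with boundary Schauder estimates, delivers global $C^{\alpha+1}$ regularity.

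The main obstacle, and the reason this result is genuinely deep, lies in Step~2: establishing strict convexity together with interior $C^{1,\beta}$ regularity. The standard proof proceeds via a delicate geometric analysis of the sections
\[
S_h(x_0) \;=\; \{y : \varphi_0(y) < \varphi_0(x_0) + \nabla\varphi_0(x_0)\cdot(y-x_0) + h\},
\]
normalized by their John ellipsoids, and exploits the affine invariance of the Monge--Amp\`ere operator to rule out degeneracies. Everything after this --- the Schauder bootstrap and the quantitative norm bounds --- is essentially routine elliptic theory once the base $C^{1,\beta}$ regularity is in hand. The boundary counterpart for (ii) is an analogous but technically harder geometric argument, where uniform convexity of $\Omega$ prevents boundary sections from degenerating tangentially.
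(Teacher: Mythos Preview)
Your anticipation is correct: the paper does not prove this theorem but cites it as a summary of Caffarelli's results (1991, 1992a, 1992, 1996), following the exposition in Villani (2008, Chapter~12). Your sketch of the underlying argument---Alexandrov formulation, strict convexity via section geometry, interior $C^{1,\beta}$ regularity, then Schauder bootstrap, with the boundary variant for part~(ii)---is the standard route and is accurate in spirit; the only mild imprecision is that Caffarelli's strict convexity theorem requires not just pinched Monge--Amp\`ere density but also convexity of the image $\nabla\varphi_0(\Omega)$, which here follows automatically since the target support is the convex set $\Omega$.
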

Theorem~\ref{thm:caffarelli--global} implies that, under suitable conditions,
the optimal transport map $T_0$ inherits one degree of smoothness more than the densities $p$ and $q$ over $\Omega$.  
Unlike the interior regularity result of Theorem~\ref{thm:caffarelli--interior}, however, Theorem~\ref{thm:caffarelli--global}
does not imply a uniform bound on $\norm{\varphi_0}_{\calC^{\alpha+1}(\Omega)}$, 
and therefore does not preclude the possibility that the latter quantity diverges when $p,q$ vary
in a $\calC^{\alpha-1}(\Omega)$ ball. Closely related global regularity results 
have also been established by~\cite{urbas1997} under slightly stronger conditions, 
but we do not know if either of these results can be made uniform up to the boundary 
in an analogous way to the interior result of Theorem~\ref{thm:caffarelli--interior}. 
Whenever global uniform regularity results are needed in our development, we  sidestep this
issue by working with the optimal transport problem over the torus, for which boundary considerations do not arise.
 
\subsection{The Quadratic Optimal Transport Problem over the Flat Torus}
\label{sec:background_torus}
Denote by $\bbT^d = \bbR^d /\bbZ^d$ the flat $d$-dimensional torus. Specifically, $\bbT^d$ is the set of equivalence classes $[x]=\{x+k:k\in \bbZ^d\}$, 
for all $x \in [0,1)^d$. 
Abusing notation, we typically write $x$ instead of $[x]$. 
$\bbT^d$ is endowed with the standard metric
$$d_{\bbT^d}(x,y) = \min\{\norm{x-y+k}: k \in \bbZ^d\},
\quad x,y \in \bbT^d.$$
We identify $\calP(\bbT^d)$ with the set of Borel measures $P$ on $\bbR^d$ such that $P([0,1)^d) = 1$ and which are $\bbZ^d$-periodic,
in the sense that $P(B) = P(k+B)$ for all $k \in \bbZ^d$ and all Borel sets $B \subseteq \bbR^d$.
Furthermore, $\calPac(\bbT^d)$ denotes the subset of measures in $\calP(\bbT^d)$ which are absolutely continuous with respect
to the Lebesgue measure on $\bbR^d$. A function $f:\bbT^d \to \bbR$ is understood to 
be a function on $\bbR^d$ which is $\bbZ^d$-periodic, and we write $T:\bbT^d \to \bbT^d$
when $T$ is a map from $\bbR^d$ to $\bbR^d$ such that $[T(x)] = [T(y)]$ whenever $[x]=[y]$. 

The optimal transport problem over $\bbT^d$ with the quadratic cost $d_{\bbT^d}^2$
largely mirrors that of the squared Euclidean cost over $\bbR^d$. 
Define for all $P,Q \in \calPac(\bbT^d)$ the Monge problem 
\begin{equation}
\label{eq:monge_torus}
\argmin_{T\in \calT(P,Q)} \int_{\bbT^d} d_{\bbT^d}^2(x,T(x))dP(x), 
\end{equation} 
where the integral is understood as being taken over $[0,1)^d$. 
The Kantorovich problem and its dual 
give rise to the squared Wasserstein distance over $\calP(\bbT^d)$,
\begin{equation}
\label{eq:wasserstein_torus}
\calW_2^2(P,Q) = \inf_{\pi\in \Pi(P,Q)} \int d_{\bbT^d}^2(x,y)d\pi(x,y) 
               = \sup_{(\phi,\psi) \in \calK_T} \int\phi dP + \int\psi dQ,
\end{equation}
where $\calK_T$ denotes the set of pairs of potentials 
$(\varphi,\psi) \in L^1(P) \times L^1(Q)$ satisfying the dual constraint
$\varphi(x) + \psi(y) \leq d_{\bbT^d}^2(x,y)$ for all $x,y\in \bbT^d$. 
We abuse notation by writing $W_2$
to denote both the 2-Wasserstein distance over $\bbR^d$ and  
$\bbT^d$. Whenever we speak of the optimal transport problem  or Wasserstein distance
between two measures $P,Q \in \calP(\Omega)$, the underlying cost function is tacitly understood
to be $\|\cdot\|^2$ when $\Omega \subseteq \bbR^d$, and $d_{\bbT^d}^2$ when $\Omega = \bbT^d$. 

The following result due to~\cite{cordero-erausquin1999}  is an analogue of Brenier's Theorem,
together with additional properties about the optimal transport problem  over $\bbT^d$.
%
\begin{proposition}
\label{prop:torus_ot}
Let $P \in \calPac(\bbT^d)$ and $Q \in \calP(\bbT^d)$.
Then, there exists a ($P$-a.e. uniquely determined) optimal transport map $T_0 = \nabla \varphi_0$ from $P$ to $Q$
which solves the Monge problem~\eqref{eq:monge_torus}, where
$\varphi_0: \bbR^d \to \bbR$ is a convex function satisfying the following properties.
\begin{thmlist}
\itemsep0.05em 
\item \label{prop:torus_ot--potential_periodicity} 
$\norm\cdot^2/2-\varphi_0$ is $\bbZ^d$-periodic. 
\item \label{prop:torus_ot--map_periodicity}
$T_0(x+k) = T_0(x) + k$ for almost every $x \in \bbR^d$
and $k \in \bbZ^d$.
\item \label{prop:torus_ot--map_locality} 
For $P$-almost all $x \in \bbR^d$, 
 $\norm{T_0(x) - x} = d_{\bbT^d}(x,T_0(x))$.
\end{thmlist}
Assume further that $Q \in \calPac(\bbT^d)$, and denote the respective densities of $P,Q$
by $p,q$. Then, 
\begin{thmlist}[start=5 ]\itemsep0.05em 
\item \label{prop:torus_ot--inverse} $\nabla \varphi_0^*$ is the ($Q$-a.e. uniquely determined)  optimal transport map from $Q$ to $P$.
\item \label{prop:torus_ot--kantorovich}   $(\norm\cdot^2 - 2\varphi_0, \norm\cdot^2 - 2\varphi_0^*)$
is a pair of optimal Kantorovich potentials in equation~\eqref{eq:wasserstein_torus}. 
\item \label{prop:torus_ot--curvature} 
If $\varphi_0 \in \calC^2([0,1]^d)$, 
then it solves the Monge-Amp\`ere equation
$$\det(\nabla^2\varphi_0(x)) q(\nabla\varphi_0(x)) = p(x),\quad x \in \bbR^d.$$
In particular, if $\gamma^{-1} \leq p,q \leq \gamma$ for some $\gamma > 0$, 
then $\varphi_0$ is $\lambda$-strongly convex, for some constant $\lambda > 0$ depending only on $\gamma$ and 
$\norm{\varphi_0}_{\calC^2([0,1]^d)}$. 
\end{thmlist}
\end{proposition}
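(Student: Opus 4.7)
The plan is to lift the torus problem to $\bbR^d$ and invoke Brenier's theorem (Theorem~\ref{thm:brenier}). Identifying $P, Q$ with measures supported on $[0,1)^d \subseteq \bbR^d$, the Kantorovich problem on $\bbT^d$ admits an optimal coupling $\pi^\star$ by the usual weak-compactness argument (compact supports, continuous cost). Using the identity $d_{\bbT^d}^2(x,y) = \min_{k \in \bbZ^d}\norm{x - y - k}^2$, I would measurably select a minimizer $k^\star(x,y) \in \bbZ^d$ for each $(x,y)$ in the support of $\pi^\star$ and push $\pi^\star$ forward under $(x,y) \mapsto (x, y + k^\star(x,y))$. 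This yields a coupling $\tilde\pi$ on $\bbR^{2d}$ whose first marginal is $P$, whose second marginal $\tilde Q$ reduces modulo $\bbZ^d$ to $Q$, and whose quadratic Euclidean cost matches the torus cost of $\pi^\star$.

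Applying Theorem~\ref{thm:brenier} to the Euclidean transport problem from $P \in \calPac(\bbR^d)$ to $\tilde Q$ then produces a convex potential $\varphi_0$, unique up to an additive constant, with $T_0 := \nabla \varphi_0$ the almost-everywhere unique Euclidean optimal map. The chain
\[
W_2^2(P,Q) \leq \int \norm{T_0(x) - x}^2 dP(x) \leq \int \norm{x-y}^2 d\tilde\pi(x,y) = W_2^2(P,Q),
\]
in which the first step uses $d_{\bbT^d} \leq \norm\cdot$ together with the fact that $T_0$ reduced modulo $\bbZ^d$ is a valid torus transport map, and the second step is Euclidean optimality of $T_0$, collapses to equalities. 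This simultaneously proves that $T_0$ projects to an optimal torus transport map and that $\norm{T_0(x) - x} = d_{\bbT^d}(x, T_0(x))$ holds $P$-almost surely, with the diameter bound $\norm{T_0(x) - x} \leq \sqrt d/2$ built into the construction via $k^\star$; this establishes existence and \ref{prop:torus_ot--map_locality}.

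For the periodicity assertions \ref{prop:torus_ot--potential_periodicity} and \ref{prop:torus_ot--map_periodicity}, fix $k \in \bbZ^d$. Since $P$ is $\bbZ^d$-invariant under periodic extension and the construction of $\tilde Q$ is equivariant under integer source shifts, the translated map $x \mapsto T_0(x+k) - k$ is again a convex gradient and pushes $P$ to $\tilde Q$, so uniqueness in Brenier's theorem forces $T_0(x+k) = T_0(x) + k$ almost everywhere, yielding \ref{prop:torus_ot--map_periodicity}. Integrating, $\varphi_0(x+k) - \varphi_0(x) - k^\top x$ is $x$-independent, which is precisely that $\norm\cdot^2/2 - \varphi_0$ is $\bbZ^d$-periodic (after absorbing a harmless additive constant into $\varphi_0$), proving \ref{prop:torus_ot--potential_periodicity}. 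Items \ref{prop:torus_ot--inverse} and \ref{prop:torus_ot--kantorovich} follow by swapping the roles of $P$ and $Q$ and invoking Theorem~\ref{thm:brenier}(ii) together with the semi-dual/Kantorovich-dual correspondence reviewed in Section~\ref{sec:background}. Finally, \ref{prop:torus_ot--curvature} is the change-of-variables identity for ${\nabla\varphi_0}_\# P = Q$ when $\varphi_0 \in \calC^2$, and the strong convexity conclusion reduces to Lemma~\ref{lem:gigli} applied locally on $\bbR^d$, which is valid because $\nabla^2\varphi_0$ is $\bbZ^d$-periodic by \ref{prop:torus_ot--potential_periodicity} and $p,q$ are bounded above and below.

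The main obstacle I anticipate is executing the lifting $k^\star$ compatibly with $\bbZ^d$-translations of the source, since on the Lebesgue-null cut locus where the closest representative is ambiguous one must produce a measurable selection, and one must further verify that $\tilde Q$ only translates (rather than genuinely changes) when $x$ is shifted by an integer vector, as this equivariance is what drives the uniqueness step in \ref{prop:torus_ot--map_periodicity}. The cleanest resolution, which I would adopt, is to bypass an explicit selector: working directly with the $\bbZ^d$-periodic extension of $\pi^\star$ and any convex-gradient solution to the induced $\bbR^d$ transport problem, the required equivariance is inherited from the uniqueness of the Brenier map itself.
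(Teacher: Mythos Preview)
The paper does not give a proof of this proposition; it attributes the result to \cite{cordero-erausquin1999} and states it as background. I can nonetheless assess your proposal on its own terms and against the approach in that reference.

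Your lifting argument is sound for existence and for part~\ref{prop:torus_ot--map_locality}: the chain of inequalities collapses as written, producing a torus-optimal map with the locality property. The gap is in the periodicity claims~\ref{prop:torus_ot--potential_periodicity} and~\ref{prop:torus_ot--map_periodicity}. Brenier's theorem, applied to $P$ regarded as a probability measure supported on $[0,1)^d$, determines $T_0 = \nabla\varphi_0$ only $P$-almost everywhere, i.e.\ on $[0,1)^d$; neither the convex extension of $\varphi_0$ to all of $\bbR^d$ nor the values of $\nabla\varphi_0$ on $[0,1)^d+k$ for $k\neq 0$ are constrained by Theorem~\ref{thm:brenier}. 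Hence the assertion that ``$x \mapsto T_0(x+k)-k$ pushes $P$ to $\tilde Q$'' is not justified, and Brenier uniqueness cannot be invoked to force $T_0(x+k)=T_0(x)+k$. Your proposed remedy---passing to the $\bbZ^d$-periodic extension of $\pi^\star$---produces a transport problem between $\sigma$-finite but infinite measures, which lies outside the scope of Theorem~\ref{thm:brenier} as stated; making that step rigorous would require an essentially different Brenier-type result.

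The approach in \cite{cordero-erausquin1999} sidesteps this by working from the dual. One begins with a $d_{\bbT^d}^2$-concave Kantorovich potential $\phi_0$ on $\bbT^d$, which is $\bbZ^d$-periodic by construction, and defines $\varphi_0 := \norm\cdot^2/2 - \phi_0/2$, so that~\ref{prop:torus_ot--potential_periodicity} holds by fiat. Global convexity of $\varphi_0$ on $\bbR^d$ then follows from the identity
\[
\varphi_0(x) = \sup_{y \in [0,1)^d,\ k \in \bbZ^d}\Big\{\langle x, y+k\rangle - \tfrac{1}{2}\norm{y+k}^2 + \tfrac{1}{2}\psi_0(y)\Big\},
\]
a supremum of affine functions of $x$, obtained by combining $\phi_0 = \psi_0^{c}$ with $d_{\bbT^d}^2(x,y) = \min_{k}\norm{x-y-k}^2$. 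With~\ref{prop:torus_ot--potential_periodicity} in hand, \ref{prop:torus_ot--map_periodicity} is immediate by differentiation, and your arguments for~\ref{prop:torus_ot--map_locality} and \ref{prop:torus_ot--inverse}--\ref{prop:torus_ot--curvature} then go through essentially as written.
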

With Proposition~\ref{prop:torus_ot} in place, regularity properties of Brenier potentials $\varphi_0$ may
be deduced from smoothness conditions on $p,q$. 
The following result was stated by~\cite{cordero-erausquin1999} without explicit mention of the uniformity 
of the H\"older norms appearing therein, but can readily be made uniform using Caffarelli's interior regularity
theory (Theorem~\ref{thm:caffarelli--interior}; \cite{figalli2017}, Chapter 4). We also note
that this result was stated by~\cite{ambrosio2012} in the special case $d=2$. 
\begin{theorem} 
\label{thm:torus_regularity}
Let $P, Q \in \calP(\bbT^d)$ be absolutely continuous with respect to the Lebesgue measure, with respective densities
$p,q$ satisfying $ \gamma^{-1}\leq p,q \leq \gamma$ for some $\gamma > 0$. Assume further that
$p,q \in \calC^{\alpha-1}(\bbT^d)$ for some $\alpha > 1$. Then, there exists
a constant $C  > 0$ depending only on $\alpha,\gamma, \norm{p}_{\calC^{\alpha-1}(\bbT^d)}$
and $\norm q_{\calC^{\alpha-1}(\bbT^d)}$ such that,
$\norm{\varphi_0}_{\calC^{\alpha+1}([0,1]^d)} \leq C.$
\end{theorem}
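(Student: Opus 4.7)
The plan is to reduce to Caffarelli's interior regularity (Theorem~\ref{thm:caffarelli--interior}) by lifting the problem to the universal cover $\bbR^d$, and then to make the resulting constants uniform by exploiting the bounded-displacement property of optimal transport on the torus (Proposition~\ref{prop:torus_ot--map_locality}). This is in essence the argument of~\cite{cordero-erausquin1999}; the new content is simply tracking how the constants depend on the data.

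First I would identify $p$ and $q$ with their $\bbZ^d$-periodic extensions to $\bbR^d$; these remain in $\calC^{\alpha-1}(\bbR^d)$ with the same H\"older norms, and the two-sided bound $\gamma^{-1}\leq p,q\leq \gamma$ persists on all of $\bbR^d$. Set $u(x) := \varphi_0(x) - \tfrac{1}{2}\|x\|^2$, which is $\bbZ^d$-periodic by Proposition~\ref{prop:torus_ot--potential_periodicity} and whose gradient $\nabla u = T_0 - \mathrm{Id}$ is bounded in sup-norm by $\sqrt d/2$ by Proposition~\ref{prop:torus_ot--map_locality}. Normalizing the additive ambiguity in $\varphi_0$ so that $\int_{[0,1]^d} u\,d\calL = 0$ forces $u$ to vanish at some point of $[0,1]^d$ by the intermediate value theorem; combined with the Lipschitz bound on $\nabla u$ and the diameter $\sqrt d$ of the fundamental cell, this yields $\|u\|_{L^\infty(\bbR^d)} \leq d/2$, and hence $\|\varphi_0\|_{L^\infty(B(0,R))} \leq R^2/2 + d/2$ for any $R > 0$.

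Next, I would fix $R := 10\sqrt d$ and apply the interior estimate on $\Omega := B(0, R)$ with the interior subdomain taken to be the closed fundamental cell $\Omega' := [0,1]^d$, which is compactly contained in $\Omega^\circ$. By Proposition~\ref{prop:torus_ot--curvature}, $\varphi_0$ is a convex $\calC^2$ solution of the Monge-Amp\`ere equation with periodic data $p,q$ on all of $\bbR^d$, so the hypotheses of Caffarelli's interior regularity hold on $\Omega$. The conclusion is a bound $\|\varphi_0\|_{\calC^{\alpha+1}(\Omega')} \leq C$, where tracing the dependencies in Theorem~\ref{thm:caffarelli--interior} shows that $C$ depends only on $\alpha$, $\gamma$, $\|p\|_{\calC^{\alpha-1}(\bbT^d)}$, $\|q\|_{\calC^{\alpha-1}(\bbT^d)}$, and $\|\varphi_0\|_{L^\infty(\Omega)}$, the last of which is absorbed into a constant depending only on $d$ by the preceding step. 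Since $\nabla^2 \varphi_0 = I + \nabla^2 u$ and $\nabla^k \varphi_0 = \nabla^k u$ for $k\geq 3$ are $\bbZ^d$-periodic, the relevant H\"older data of $\varphi_0$ at order $\alpha+1$ on $[0,1]^d$ transfers to all of $\bbR^d$ by periodicity, yielding the stated global bound.

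The main technical obstacle is that Theorem~\ref{thm:caffarelli--interior} as formulated treats $\varphi_0$ as the Brenier potential between two measures supported on a common convex domain, whereas here $\varphi_0|_\Omega$ is only a local convex Alexandrov solution of the Monge-Amp\`ere equation whose image $\nabla\varphi_0(\Omega)$ need not lie in $\Omega$. This is resolved by invoking Caffarelli's interior estimates in their local form (see, e.g.,~\cite{figalli2017}, Chapter~4), which demand only that $p \in \calC^{\alpha-1}(\Omega)$ and that $q \in \calC^{\alpha-1}$ on some neighborhood of $\nabla\varphi_0(\Omega)$. Both requirements are automatic from the periodic extensions and the bound $\|T_0 - \mathrm{Id}\|_\infty \leq \sqrt d/2$, which guarantees $\nabla\varphi_0(\Omega) \subseteq B(0, R + \sqrt d/2)$ where $q$ is $\calC^{\alpha-1}$ with norm $\|q\|_{\calC^{\alpha-1}(\bbT^d)}$.
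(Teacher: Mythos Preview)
Your proposal is correct and follows precisely the route the paper indicates: the paper does not give a self-contained proof but states that the result of \cite{cordero-erausquin1999} ``can readily be made uniform using Caffarelli's interior regularity theory (Theorem~\ref{thm:caffarelli--interior}; \cite{figalli2017}, Chapter~4),'' and your argument---periodic lift, uniform $L^\infty$ control via the bounded-displacement property of Proposition~\ref{prop:torus_ot--map_locality}, then local interior estimates on a fundamental domain---is exactly this. Your final paragraph correctly identifies and resolves the one genuine subtlety, namely that Theorem~\ref{thm:caffarelli--interior} as stated is phrased for a global Brenier potential on a convex set rather than for a local Alexandrov solution, which is why the reference to \cite{figalli2017} is needed.
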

 
\section{The One-Sample Problem}
\label{sec:one_sample} 
Throughout this section,  we let $P \in \calPac(\Omega)$ denote a known distribution, and $Q \in \calPac(\Omega)$ denote
an unknown distribution from which an i.i.d. sample $Y_1, \dots, Y_n \sim Q$ is observed. 
Let $p$ and $q$ denote their respective densities,
and let $T_0 = \nabla\varphi_0$ denote the unique optimal transport map from $P$ to $Q$, with respect to 
a convex Brenier potential~$\varphi_0$. 
We also denote by $\phi_0 = \norm\cdot^2-2\varphi_0$ and $\psi_0 = \norm\cdot^2 - 2\varphi_0^*$ 
the Kantorovich potentials induced by $\varphi_0$. 
We assume condition~\ref{assm:supp_global} holds throughout this section, and 
we may therefore assume without loss of generality that $-d \leq \phi_0 \leq 0$ and $0\leq  \psi_0 \leq d$ 
over $\Omega$ (\cite{villani2003}, Remark 1.13).	

Unlike the two-sample case which we discuss in Section~\ref{sec:two_sample}, 
there exist canonical estimators of $T_0$ when the source distribution $P$ is known. 
Indeed, since $P$ is absolutely continuous, Brenier's Theorem
implies that there exists a unique optimal transport map $\hat T$ between $P$ and any estimator $\hat Q$ of $Q$, and we analyze two such examples below. 
We first take $\hat Q$ to be the empirical measure of $Q$ in Section~\ref{sec:one_sample_empirical}, and show that the resulting estimator 
$\hat T$ achieves the minimax risk of estimating Lipschitz optimal transport maps,
under essentially no smoothness conditions on the underlying measures. In Section~\ref{sec:one_sample_density}, we
then take $\hat Q$ to be a  density estimator, leading to an estimator $\hat T$ achieving faster
rates of convergence when $Q$ admits a smooth density.
In both cases, our analysis will hinge upon known upper bounds on the risk of $\hat Q$ under the Wasserstein distance,
by invoking a key stability bound which we turn to first.

\subsection{A General Stability Bound} 
\label{sec:one_sample_stability}
The main technical result of this section 
will be stated under
the following curvature condition. 
\begin{enumerate}[leftmargin=1.65cm,listparindent=-\leftmargin,label=\textbf{A1($\lambda$)}]   
\item  \label{assm:curvature} The Brenier potential $\varphi_0$ is a convex function such that 
$\varphi_0 \in \calC^2(\Omega)$ and 
$(1 /\lambda) I_d \preceq \nabla^2\varphi_0(x) \preceq \lambda I_d$  for all $x \in \Omega$.
\end{enumerate}
It can be seen that whenever condition~\ref{assm:curvature} holds for $\varphi_0$,  the same bounds also 
hold  for $\varphi_0^*$. Therefore, condition~\ref{assm:curvature} implies that $T_0$ is  $\lambda$-bi-Lipschitz
over $\Omega$. 
As noted in Lemma~\ref{lem:gigli}, whenever $P$ and $Q$ both admit densities 
satisfying $ \gamma^{-1} \leq p,q \leq \gamma$ over $\Omega$, for some $\gamma > 0$, 
the second inequality of~\ref{assm:curvature} is sufficient 
to imply the first, up to inflating $\lambda$ by a factor depending on $\gamma$.  
Under this condition, we prove the following stability bounds in Appendix~\ref{app:pf_stability}. 
\vspace{-0.2in}
\begin{theorem}
\label{thm:stability}
Let $P,Q \in \calPac(\Omega)$, and assume condition \ref{assm:curvature} holds for some $\lambda > 0$. 
For any $\hat Q \in \calP(\Omega)$, let 
$\hat T = \nabla\hat\varphi$ be the unique optimal transport map from $P$ to $\hat Q$. Then,  
\begin{equation}\label{eq:stability_bound} 
\frac 1 \lambda \|\hat T - T_0\|_{L^2(P)}^2
  \leq  W_2^2(P, \hat Q) -  W_2^2(P,Q)  - \int \psi_0 d(\hat Q-Q)
  \leq  \lambda W_2^2(\hat Q, Q).
\end{equation}   
\end{theorem} 

\noindent We make several remarks regarding Theorem~\ref{thm:stability}.
 
\begin{itemize}
\item Caffarelli's regularity theory  (cf. Theorem~\ref{thm:caffarelli}) provides sufficient conditions
on the smoothness of $P, Q$ and $\partial\Omega$
for assumption~\ref{assm:curvature} to hold, albeit for a non-universal constant $\lambda > 0$.
We note, however, that our assumption
is considerably weaker. For instance, condition~\ref{assm:curvature} is satisfied whenever $P$ and $Q$ differ by a location transformation, irrespective
of the regularity or positivity of their Lebesgue densities.  
\item We  show in Section~\ref{sec:efficiency} that, under weaker assumptions than those of Theorem~\ref{thm:stability},
the map $\psi_0 - \bbE_Q[\psi_0(Y)]$ is the efficient influence
function of the functional $Q \in \calP(\Omega) \mapsto W_2^2(P, Q)$ with respect to the tangent space
$L_0^2(Q)$. It follows that the linear
functional
\begin{equation}
\label{eq:first_order_von_mises}
L(\hat Q) = \int \psi_0 d(\hat Q-Q)
\end{equation}
is the first-order term in the von Mises expansion of $W_2^2(P, \hat Q)$ around $W_2^2(P, Q)$. 
The upper bound of Theorem~\ref{thm:stability} implies that the remainder of this expansion decays
quadratically in the topology induced by $W_2$, a fact which we shall use  to derive upper bounds 
and limit laws 
for plugin estimators of the Wasserstein distance. This fact combined with the lower bound of Theorem~\ref{thm:stability} 
further implies the following remarkable equivalence,
\begin{equation}
\label{eq:W2_L2P_asymp}
\frac 1 {\lambda} \|\hat T - T_0\|_{L^2(P)} \leq W_2(\hat Q, Q) \leq \|\hat T - T_0\|_{L^2(P)}.
\end{equation}
Notice that
the second inequality   always holds due to the fact that
$(\hat T, T_0)_\# P$ is a coupling of $\hat Q$ and $Q$. Equation~\eqref{eq:W2_L2P_asymp} thus
shows that the transport
cost of this coupling is  within
a universal factor of being optimal,
when the curvature condition~\ref{assm:curvature}
is in force. We use this result to 
obtain upper bounds on the risk of one-sample plugin estimators $\hat T$ 
by appealing to the corresponding risk of  $\hat Q$ under the Wasserstein distance. 

\item When $d=1$, it is easy to see by direct calculation that 
the inequalities~\eqref{eq:W2_L2P_asymp} 
hold with equality, with $\lambda=1$, even without assumption~\ref{assm:curvature}. 
For multivariate measures,
weaker analogues of equation~\eqref{eq:W2_L2P_asymp}, in which the left-hand side admits an exponent greater than
unity, have previously been derived  by~\cite{merigot2019,delalande2021}.   
Those works adopted a weaker assumption than ours, however.
\item Suppose that, in addition to the assumptions of Theorem~\ref{thm:stability},
the measures $Q$ and $\hat Q$ are both absolutely continuous with respect to the Lebesgue measure, with respective densities
$q$ and $\hat q$ which satisfy $\gamma^{-1} \leq q,\hat q \leq \gamma$
over $\Omega$, for some $\gamma > 0$. In this setting, it was shown by~\cite{peyre2018} that the 2-Wasserstein distance is equivalent to the 
negative-order homogeneous Sobolev norm $\|\cdot\|_{\dH{-1}(\Omega)}$, in the sense that, under suitable conditions on $\Omega$, 
\begin{equation}
\label{eq:peyre} 
\gamma^{-1} \|\hat q - q \|_{\dH{-1}(\Omega)}^2 \lesssim W_2^2(\hat Q, Q) \lesssim \gamma \|\hat q - q \|_{\dH{-1}(\Omega)}^2.
\end{equation}
Theorem~\ref{thm:stability} and the above display then imply
$$\frac 1 {\lambda\gamma} \|\hat\varphi - \varphi_0 \|_{\dH{1}(\Omega)}^2 \lesssim W_2^2(P, \hat Q) - W_2^2(P, Q) - \int \psi_0 d(\hat Q-Q) 
\lesssim \lambda\gamma \|\hat q - q\|_{\dH{-1}(\Omega)}^2.$$
It  follows from the upper bound that $W_2^2(P, \cdot)$, when viewed as a functional of $\hat q$, is Fr\'echet
differentiable at $q$ in the $\dot H^{-1}(\Omega)$ topology. It moreover implies that this functional is strongly
convex and smooth with respect to the duality of the spaces $\dot H^{-1}(\Omega)$ and $\dot H^1(\Omega)$. 
\item 
Theorem~\ref{thm:stability} is stated in a form which is sufficient
for our purposes, however it
is not the most general result possible. On the one 
hand, the  assumption of boundedness on $\Omega$ is superfluous: 
Theorem~\ref{thm:stability} continues to hold if $\Omega$ is an unbounded, closed, and convex set, 
such as the entire Euclidean space $\bbR^d$. 
It follows, for instance, that Theorem~\ref{thm:stability} 
is applicable whenever $P$ and $Q$ are  strongly log-concave measures, in which case
assumption~\ref{assm:curvature} holds by Caffarelli's contraction theorem~\citep{caffarelli2000}.
On the other hand,
assumption~\ref{assm:curvature} can be weakened 
in the following way:
the first inequality
of display~\eqref{eq:stability_bound} 
holds  under the mere condition $\nabla^2 \varphi_0 \preceq \lambda I_d$, 
whereas the second holds when $\nabla^2\varphi_0 \succeq \lambda^{-1}I_d$.

\item 
Finally, one may also infer from Theorem~\ref{thm:stability} and the Kantorovich duality that,
\begin{equation}
\label{eq:stability_semi_dual}
 \frac 1 {2\lambda} \|\nabla\hat\varphi - \nabla \varphi_0\|_{L^2(P)}^2 
\leq \int (\varphi_0-\hat\varphi) dP + \int (\varphi_0^*-\hat \varphi^*)d\hat Q
\leq \frac \lambda 2\|\nabla \hat\varphi - \nabla \varphi_0\|_{L^2(P)}^2.
\end{equation}
Equation~\eqref{eq:stability_semi_dual} is a direct analogue of a stability bound
proven by~\citeauthor{hutter2021} (\citeyear{hutter2021}, Proposition~10), 
who show that similar inequalities hold when 
the measure $\hat Q$ appearing in the above display is replaced by $Q$.
Their result assumes, however, that  
$\hat\varphi$ itself satisfies condition~\ref{assm:curvature}.  
In contrast, we do not place any  conditions on the estimator $\hat T$ 
 beyond it being the optimal transport
map from $P$ to $\hat Q$.  
This will permit our study of transport map estimators which are potentially 
nonsmooth but easy to compute, as we show next.
\end{itemize}

\subsection{Upper Bounds for   One-Sample Empirical Estimators}
\label{sec:one_sample_empirical}
Recall that we denote by $Q_n = (1/n)\sum_{i=1}^n \delta_{Y_i}$  the empirical measure. 
Since $P$ is known and absolutely continuous, a natural estimator for $T_0$ is the 
  optimal transport map $T_n$ from $P$ 
to $Q_n$, defined by
\begin{equation}
\label{eq:semi_discrete}
T_n = \argmin_{T \in \calT(P, Q_n)} \int \norm{x-T(x)}^2 dP(x).
\end{equation}
By Brenier's Theorem, the minimizer $T_n$ in the above display exists and is uniquely determined $P$-almost everywhere. 
The optimization problem~\eqref{eq:semi_discrete} is sometimes known as the semi-discrete optimal transport problem, 
for which efficient numerical solvers are well-studied \citep{merigot2011,levy2018}. 
 
In view of the stability bound in Theorem~\ref{thm:stability}, the risk of $T_n$ may be related to that of the empirical
measure $Q_n$ under the Wasserstein distance. For instance, from the work of \cite{fournier2015}
we obtain the following bound, under no assumptions beyond~\ref{assm:supp_global},
\begin{equation}
\label{eq:wasserstein_empirical}
\bbE W_2^2(Q_n,Q) \lesssim \kappa_{n} := \begin{cases}
n^{-1/2}, & d \leq 3 \\
n^{-1/2} \log n, & d =4 \\
n^{-2/d}, & d \geq 5.
\end{cases}
\end{equation}
The following bound on the risk of $T_n$ is now an immediate
consequence of Theorem~\ref{thm:stability}, together with the fact that the 
functional $L$ in equation~\eqref{eq:first_order_von_mises} satisfies 
$\bbE[L(Q_n)] = 0$.
\begin{corollary}
\label{cor:one_sample_emp}
Let $P,Q \in \calPac(\Omega)$ and assume condition~\ref{assm:curvature} holds. Then, 
$$\bbE \big\| T_n - T_0 \big\|_{L^2(P)}^2 \asymp_{\lambda}
 \bbE \big[W_2^2(P, Q_n) - W_2^2(P, Q)\big] \asymp_{\lambda} \bbE W_2^2(Q_n, Q) \lesssim \kappa_n.$$
\end{corollary}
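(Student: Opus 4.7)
The plan is to derive the result as essentially an immediate consequence of the stability bound in Theorem~\ref{thm:stability}, together with the elementary coupling inequality and the Fournier--Guillin bound~\eqref{eq:wasserstein_empirical}. Since $P$ is absolutely continuous, Brenier's Theorem guarantees that the semi-discrete optimal map $T_n$ between $P$ and $Q_n$ is uniquely defined $P$-a.e., so we may invoke Theorem~\ref{thm:stability} with $\hat Q = Q_n$ and $\hat T = T_n$ to obtain the pointwise (sample-path) inequality
\begin{equation*}
\frac{1}{\lambda} \|T_n - T_0\|_{L^2(P)}^2 \;\leq\; W_2^2(P,Q_n) - W_2^2(P,Q) - \int \psi_0 \,d(Q_n - Q) \;\leq\; \lambda\, W_2^2(Q_n, Q).
\end{equation*}

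Next I would take expectations. The key observation is that the linear functional $L(Q_n) = \int \psi_0\, d(Q_n-Q)$ vanishes in expectation: since $Y_1,\dots,Y_n$ are i.i.d.\ from $Q$ and $\psi_0$ is bounded on $\Omega$ by the remark at the beginning of Section~\ref{sec:one_sample} (so $|\psi_0|\leq d$), Fubini/linearity immediately gives $\bbE[L(Q_n)] = 0$. Taking expectations in the two-sided bound therefore yields
\begin{equation*}
\frac{1}{\lambda}\, \bbE\|T_n - T_0\|_{L^2(P)}^2 \;\leq\; \bbE\bigl[W_2^2(P,Q_n) - W_2^2(P,Q)\bigr] \;\leq\; \lambda\, \bbE W_2^2(Q_n, Q).
\end{equation*}
This already gives one direction of each of the two $\asymp_\lambda$ equivalences claimed in the corollary, and it shows the final $\lesssim \kappa_n$ bound upon invoking~\eqref{eq:wasserstein_empirical}.

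For the reverse directions, I would use the trivial coupling inequality noted just after equation~\eqref{eq:W2_L2P_asymp}: since $(T_n, T_0)_\# P$ is an admissible coupling of $Q_n$ and $Q$, we have $W_2^2(Q_n, Q) \leq \|T_n - T_0\|_{L^2(P)}^2$ almost surely, hence in expectation. Chaining this with the upper bound from the stability inequality gives
\begin{equation*}
\bbE W_2^2(Q_n, Q) \;\leq\; \bbE\|T_n - T_0\|_{L^2(P)}^2 \;\leq\; \lambda^2\, \bbE W_2^2(Q_n, Q),
\end{equation*}
and a parallel chaining through $\bbE[W_2^2(P,Q_n) - W_2^2(P,Q)]$ closes the other equivalence. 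All three quantities are therefore pinned between one another up to factors of $\lambda$ and $\lambda^2$, which is exactly the $\asymp_\lambda$ conclusion.

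There is essentially no hard step here: the corollary is a direct bookkeeping exercise once Theorem~\ref{thm:stability} is in hand. The only substantive ingredient beyond the stability bound is the observation that the linear von Mises remainder $\bbE[L(Q_n)]$ vanishes exactly, which is what makes the empirical plugin centerable; and the invocation of the Fournier--Guillin rate~\eqref{eq:wasserstein_empirical} to obtain the explicit $\kappa_n$ dependence. I would close by noting that this strategy, while simple, is the template for the subsequent sections: one replaces $Q_n$ by a smoother plugin $\hat Q$, uses the same stability bound to reduce to bounding $\bbE W_2^2(\hat Q, Q)$ and controlling $\bbE[L(\hat Q)]$, and imports the appropriate Wasserstein density-estimation rates.
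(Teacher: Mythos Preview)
Your proof is correct and follows exactly the approach the paper indicates: the corollary is stated as an immediate consequence of Theorem~\ref{thm:stability} together with $\bbE[L(Q_n)]=0$, and the reverse inequalities come precisely from the coupling bound $W_2(Q_n,Q)\le \|T_n-T_0\|_{L^2(P)}$ noted around equation~\eqref{eq:W2_L2P_asymp}. Your write-up simply makes explicit the bookkeeping that the paper leaves implicit.
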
 
When $d \geq 5$, Corollary~\ref{cor:one_sample_emp} implies that the  empirical estimator $T_n$ achieves the minimax lower
bound~\eqref{eq:HR_lower_bound} 
for estimating Lipschitz transport maps $T_0$.
On the other hand, when $1 \leq d \leq 4$, the rate $\kappa_n$ does not improve beyond $n^{-1/2}$, 
unlike the minimax lower bound~\eqref{eq:HR_lower_bound} of~\cite{hutter2021}, which scales as fast as $1/n$. 
This observation does not imply that the plugin estimator $T_n$ is minimax suboptimal, 
since equation~\eqref{eq:HR_lower_bound} holds~under~stronger assumptions 
than those of Corollary~\ref{cor:one_sample_emp}. In particular, it assumes that these distributions admit densities
which are bounded away from zero, and thus have connected support. 
In contrast, Corollary~\ref{cor:one_sample_emp} applies to measures $P$ and $Q$ with possibly disconnected support, 
for which our upper bound of $\kappa_n$ cannot generally be improved up to a logarithmic factor---similar
considerations are discussed for the convergence rate of the empirical measure by~\cite{bobkov2019} when $d=1$, and 
more generally by~\cite{weed2019a}. 

Nevertheless, when we further assume that $Q$ has a positive density, the result of Corollary~\ref{cor:one_sample_emp}
can be strengthened to match the minimax rate of~\cite{hutter2021} even for $d \leq 4$. For instance, 
it is well-known (cf.~\cite{ajtai1984}, \cite{ledoux2019}) that, when $Q$ is the uniform distribution on
$[0,1]^d$, $Q_n$ achieves the following faster rate,  
\begin{equation}
\label{eq:ledoux}
\bbE W_2^2(Q_n, Q) \lesssim  \begin{cases} 
n^{-1}, & d = 1 \\
n^{-1} \log n, & d = 2 \\
n^{-2/d}, & d \geq 3.
\end{cases}
\end{equation}
Such a result is also known to hold for any measure $Q$ admitting positive density over a compact subset of the real
line~\citep{bobkov2019}, or over the   flat torus~\citep{divol2021}. 
Inspired by the  latter result and by the work of~\cite{weed2019a}, we prove an analogue of equation~\eqref{eq:ledoux}
for arbitrary measures
supported on the unit hypercube, at the expense of an inflated polylogarithmic factor when $d=2$.
\begin{corollary} 
\label{cor:one_sample_emp_hypercube}
Let $P,Q \in \calPac([0,1]^d)$ and assume that condition~\ref{assm:curvature} holds.
Assume further that $\gamma^{-1} \leq q \leq \gamma$ over $[0,1]^d$, for some $\gamma > 0$. Then, 
$$\bbE \big\| T_n - T_0 \big\|_{L^2(P)}^2 \asymp 
 \bbE \big[W_2^2(P, Q_n) - W_2^2(P, Q)\big] \asymp \bbE W_2^2(Q_n, Q) \lesssim  \widebar\kappa_n\
{:=}  \begin{cases} 
n^{-1}, & d = 1 \\
\frac{ (\log n)^2}{n}, & d = 2 \\
n^{-2/d}, & d \geq 3.
\end{cases}$$
\end{corollary}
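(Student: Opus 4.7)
My plan is to reduce the corollary to the single empirical-rate bound $\bbE W_2^2(Q_n, Q) \lesssim \widebar\kappa_n$ via Theorem~\ref{thm:stability}. Applying that theorem with $\hat Q = Q_n$ and taking expectations, the first-order term $\bbE\int \psi_0\,d(Q_n - Q)$ vanishes, because $\psi_0$ is bounded on $\Omega \subseteq [0,1]^d$ (as noted in Section~\ref{sec:one_sample} we may take $0\le \psi_0 \le d$) and $Q_n$ is unbiased. This yields
$$\tfrac{1}{\lambda}\,\bbE \|T_n - T_0\|_{L^2(P)}^2 \,\le\, \bbE\!\left[W_2^2(P, Q_n) - W_2^2(P, Q)\right] \,\le\, \lambda\,\bbE W_2^2(Q_n, Q).$$
In the reverse direction, $(T_n, T_0)_\# P$ is a coupling of $(Q_n, Q)$, so $\bbE W_2^2(Q_n, Q) \le \bbE \|T_n - T_0\|_{L^2(P)}^2$; together these chains give all three $\asymp$-equivalences up to constants depending only on $\lambda$.

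For the rate itself, I would argue separately in each dimension. When $d = 1$, the explicit identity $W_2^2(Q_n, Q) = \int_0^1 (F_n^{-1} - F^{-1})^2$ and the bi-Lipschitzness of $F$ (with constants depending only on $\gamma$, since $q \in [\gamma^{-1}, \gamma]$) yield $W_2^2(Q_n, Q) \le \gamma^2 W_2^2(V_n, U)$, where $U$ is the uniform law on $[0,1]$ and $V_n$ is the empirical measure of $V_i = F(Y_i) \sim U$; the classical $n^{-1}$ rate for uniform samples on the unit interval then gives the bound. When $d \ge 2$, I would follow the multi-scale strategy of~\cite{weed2019a}: fix nested dyadic partitions $\mathcal{D}_j$ of $[0,1]^d$ into $2^{jd}$ cubes of side $2^{-j}$, truncate at resolution $J$ with $2^{Jd} \asymp n$, and construct a coupling of $Q_n$ with $Q$ by correcting the cube-by-cube mass discrepancies at each scale $j \le J$ (cost $O(2^{-2j})$ per unit mass moved across cubes of $\mathcal{D}_j$) and matching the remaining mass within the finest cubes (cost $O(2^{-2J})$). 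Combined with the variance bound $\bbE|Q_n(C) - Q(C)|^2 \le Q(C)/n \le \gamma 2^{-jd}/n$ and Cauchy--Schwarz across $\mathcal{D}_j$, this yields
$$\bbE W_2^2(Q_n, Q) \,\lesssim\, \sum_{j=0}^{J} \frac{2^{j(d-2)}}{n} + 2^{-2J},$$
which evaluates to $n^{-2/d}$ for $d \ge 3$ (top-scale dominated) and to a polylogarithmic multiple of $1/n$ for $d = 2$, where the exponent $d-2$ saturates.

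The main difficulty is making the above $L^2$-tightened dyadic argument rigorous despite the fact that $Q_n$ is not absolutely continuous, which rules out a direct use of Peyre's inequality~\eqref{eq:peyre} on the pair $(Q_n, Q)$. The coarser $L^1$-based dyadic bound in which $\bbE|Q_n(C)-Q(C)|$ enters linearly yields only $n^{-1/2}$ for $d \le 2$, matching~\eqref{eq:wasserstein_empirical}; the sharper rate requires exploiting the $L^2$ variance of the cube counts, which is most naturally carried out through a Haar or wavelet basis adapted to the partitions $\mathcal{D}_j$, or alternatively via Peyre's Sobolev inequality applied after regularizing $Q_n$ by convolution (or wavelet projection) at the scale $2^{-J}$ and carefully tracking the $W_2$-cost of the regularization. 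Both routes rely crucially on the hypothesis $q \ge 1/\gamma$, which ensures that the local behavior of $Q$ at the finest scale is comparable to the Lebesgue measure, and it is also this step that forces the extra $\log n$ for $d = 2$, accounting for the $(\log n)^2/n$ rate in the statement.
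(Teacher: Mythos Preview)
Your proposal is correct and follows essentially the same route as the paper. The equivalences in the first display follow exactly as you say from Theorem~\ref{thm:stability} (this is just Corollary~\ref{cor:one_sample_emp}'s argument repeated verbatim), and for the rate bound the paper does precisely what you identify as the right fix in your final paragraph: it regularizes $Q_n$ by its Haar projection $\hat Q_n$ at resolution $2^{J_n}\asymp n^{1/d}$, proves a short lemma (Lemma~\ref{lem:wasserstein_wavelet_projection}) showing $W_2(Q_n,\hat Q_n)\lesssim 2^{-J_n}$ because $Q_n$ and its Haar projection agree on every dyadic cube down to level $J_n$, and then invokes the Weed--Berthet bound $\bbE W_2^2(\hat Q_n, Q)\lesssim \widebar\kappa_n$ (Lemma~\ref{lem:wavelet_wasserstein} with $N=1$, $s=0$). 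Your separate quantile-transform argument for $d=1$ is a valid alternative, but the paper handles all dimensions uniformly through the Haar route.

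One small inaccuracy: the second logarithm for $d=2$ does not come from the regularization step as you suggest --- the projection cost $W_2^2(Q_n,\hat Q_n)\lesssim 2^{-2J_n}\asymp n^{-1}$ carries no log at all. It comes instead from the paper's use of the Besov $\calB_{2,1}^{-1}$ bound~\eqref{eq:besov_ub_cube} (which introduces an $\ell_1$-over-scales sum) rather than the sharper $\dot H^{-1}=\calB_{2,2}^{-1}$ route~\eqref{eq:besov_ub_torus}. Your informal dyadic sum $\sum_{j\le J}2^{j(d-2)}/n$ would in fact give only one log for $d=2$, and the paper itself notes (after Theorem~\ref{thm:two_sample_kernel}) that $(\log n)^2/n$ is likely an artifact of this choice.
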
 
Under the assumptions of Corollary~\ref{cor:one_sample_emp_hypercube}, we deduce that the plugin estimator $T_n$ is minimax
optimal for all $d \geq 1$, up to a polylogarithmic factor when $d= 2$. 
The scale of this factor is further discussed following the statement
of Theorem~\ref{thm:two_sample_kernel}. 

This result also provides a sharper bound on the bias of $W_2^2(P, Q_n)$ 
than could have been deduced from~\cite{chizat2020}, who show that the risk of this estimator decays at the rate $\kappa_n$ using distinct techniques. 
Aditionally, Corollaries~\ref{cor:one_sample_emp}--\ref{cor:one_sample_emp_hypercube} 
can be extended to recover the risk bounds of~\cite{chizat2020} under stronger conditions, though with an improved rate of convergence when $P$ approaches $Q$ in Wasserstein distance.
\begin{corollary}
\label{cor:two_sided_wasserstein}
Let $P,Q \in \calPac(\Omega)$, and assume condition~\ref{assm:curvature} holds.
Then, 
\begin{equation}
\label{eq:empirical_rate_W2}
\bbE \big| W_2^2(P, Q_n) - W_2^2(P,Q)\big| \lesssim_\lambda   \bbE W_2^2(Q_n, Q) + n^{-\frac 1 2} \lesssim \kappa_n.
\end{equation}
If we further assume that $\Omega = [0,1]^d$ and $\gamma^{-1} \leq q \leq \gamma$ over $\Omega$ 
for some $\gamma > 0$, then
\begin{equation}
\label{eq:empirical_rate_W2_stronger}
\bbE \big| W_2^2(P, Q_n) - W_2^2(P,Q)\big| \lesssim_{\lambda,\gamma} \widebar\kappa_n +  W_2(P,Q) n^{-\frac 1 2} .
\end{equation}
\end{corollary}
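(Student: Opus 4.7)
The plan is to apply Theorem~\ref{thm:stability} at $\hat Q = Q_n$, yielding the central decomposition
\begin{equation*}
W_2^2(P, Q_n) - W_2^2(P, Q) \;=\; L_n + R_n, \qquad L_n := \int \psi_0\, d(Q_n - Q),
\end{equation*}
where $R_n$ is a nonnegative remainder with $R_n \leq \lambda W_2^2(Q_n, Q)$. The triangle inequality reduces the problem to controlling $\bbE|L_n|$ and $\bbE[R_n]$ separately. The remainder is immediately handled by Corollary~\ref{cor:one_sample_emp} (or Corollary~\ref{cor:one_sample_emp_hypercube} in the hypercube setting), giving $\bbE[R_n] \leq \lambda\,\bbE W_2^2(Q_n, Q) \lesssim_\lambda \kappa_n$ (resp. $\lesssim_\lambda \widebar\kappa_n$).

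For the linear term, write $L_n = (1/n)\sum_i (\psi_0(Y_i) - \bbE_Q[\psi_0])$; the i.i.d. structure and Jensen's inequality give
\begin{equation*}
\bbE|L_n| \;\leq\; \sqrt{\bbE L_n^2} \;=\; \sqrt{\mathrm{Var}_Q(\psi_0)/n}.
\end{equation*}
For~\eqref{eq:empirical_rate_W2}, the normalization $0 \leq \psi_0 \leq d$ stipulated at the start of Section~\ref{sec:one_sample} yields $\mathrm{Var}_Q(\psi_0) \lesssim 1$, so $\bbE|L_n| \lesssim n^{-1/2} \lesssim \kappa_n$, and the first claim follows.

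To sharpen this for~\eqref{eq:empirical_rate_W2_stronger}, I would exploit the fact that $\psi_0$ is nearly constant when $P$ is close to $Q$. The key identity is that, since $\nabla\varphi_0^*$ is the optimal transport map from $Q$ to $P$ by Brenier's Theorem, $\nabla\psi_0(y) = 2(y - \nabla\varphi_0^*(y))$ and hence
\begin{equation*}
\|\nabla\psi_0\|_{L^2(Q)}^2 \;=\; 4\,W_2^2(P, Q).
\end{equation*}
A Poincar\'e inequality applied to $Q$, valid with a constant depending only on $\gamma$ because $q$ is bounded above and away from zero on $[0,1]^d$ and can be compared to Lebesgue measure (whose Poincar\'e constant on the unit cube is finite), then yields $\mathrm{Var}_Q(\psi_0) \lesssim_\gamma W_2^2(P,Q)$, and therefore $\bbE|L_n| \lesssim_\gamma W_2(P,Q)/\sqrt n$. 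Combining with the $\widebar\kappa_n$ bound on $\bbE[R_n]$ delivers~\eqref{eq:empirical_rate_W2_stronger}.

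The main technical step is justifying the Poincar\'e inequality for $Q$ with explicit dependence on $\gamma$; this is a well-known consequence of bounded-density comparison with Lebesgue measure, but must be invoked carefully. Everything else---the stability decomposition, the Jensen step, and the bound on $\bbE W_2^2(Q_n, Q)$---is a direct application of the results already developed in the preceding subsections.
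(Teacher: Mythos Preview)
Your proposal is correct and follows essentially the same route as the paper: apply Theorem~\ref{thm:stability} to split into the linear term $L_n$ and the quadratic remainder, bound $\bbE|L_n|$ by $\sqrt{\Var_Q[\psi_0(Y)]/n}$ via Jensen, and control the variance either trivially by boundedness of $\psi_0$ or, in the hypercube case, by the Poincar\'e inequality combined with $\|\nabla\psi_0\|_{L^2(Q)}^2 = 4W_2^2(P,Q)$. The paper packages this last step as a standalone result (Lemma~\ref{lem:kantorovich_L2}), whose proof is exactly the argument you sketch.
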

Equation~\eqref{eq:empirical_rate_W2_stronger} exhibits an upper bound on the risk of $W_2^2(P,Q_n)$ that interpolates
between the fast rate $\widebar \kappa_n$ when $W_2(P,Q) \lesssim n^{-1/2}$, and 
the rate $\kappa_n$ of~\cite{chizat2020}, which is minimax optimal when the distance between $P$ and $Q$ 
is unconstrained~\citep{manole2021a}. We defer the proofs
of Corollaries~\ref{cor:one_sample_emp_hypercube}--\ref{cor:two_sided_wasserstein} 
to Appendix~\ref{app:one_sample_empirical}.

\subsection{Upper Bounds for   One-Sample Wavelet Estimators}
\label{sec:one_sample_density}

While the empirical estimator in the previous section  achieves the minimax rate of estimating
Lipschitz optimal transport maps,  we do not generally expect it to achieve faster rates
of convergence if $T_0$ is assumed to enjoy further regularity. 
We instead show that such improvements can be achieved when $Q$ admits a smooth density $q$, and when
the empirical measure $Q_n$ is replaced
by the distribution $\hat Q_{n}$ of a density estimator $\hat q_{n}$. Specifically, define  
\begin{equation}
\label{eq:semi_discrete_density}
\hat T_n = \argmin_{T \in \calT(P, \hat Q_{n})} \int \norm{x-T(x)}^2 dP(x).
\end{equation}
We focus on the  case where $\hat q_{n}$ is a wavelet density estimator, 
for which sharp risk estimates under the Wasserstein distance have been established by~\cite{weed2019a}. 
In order to appeal to their results, we   assume
that the sampling domain is the unit hypercube $\Omega=[0,1]^d$.
In Section~\ref{sec:smooth_domains}, we also extend some of the results
of this section to the case where $\Omega$ is a generic domain with smooth boundary. 

We briefly introduce notation from the theory of wavelets, 
and refer the reader to Appendix~\ref{app:smoothness_classes}  for a detailed summary and references.  
To define a basis over the unit cube $\Omega$,
we focus on the boundary-corrected $N$-th Daubechies wavelet system, for an integer
$N \geq 2$,  as introduced by 
\cite{cohen1993}. 
In short, given an integer $j_0 \geq \log_2 N$, their construction leads to 
respective families of scaling and wavelet functions 
$$\Phi^{\mathrm{bc}}     = \{\zeta_{j_0k}^{\mathrm{bc}}: 0 \leq k  \leq  2^{j_0}-1\}, \quad
  \Psi_{j}^{\mathrm{bc}} = \{ \xi_{jk\ell}^{\mathrm{bc}}:  0 \leq k  \leq  2^{j_0}-1,\ell \in \{0,1\}^d\setminus \{0\}\},\quad j \geq j_0,$$
such that $\Psi^{\mathrm{bc}} = \Phi^{\mathrm{bc}} \cup \bigcup_{j=j_0}^\infty \Psi_j^{\mathrm{bc}}$
forms an orthonormal basis of $L^2(\Omega)$, 
with the property that $\Phi\pbc$ spans all polynomials 
of degree at most $N-1$ over $\Omega$. 
Given a probability distribution $Q \in \calPac(\Omega)$ admitting density $q \in L^2(\Omega)$,
one then has  
$$q  = \sum_{\xi \in \Psi\pbc} \beta_\xi \xi = \sum_{\zeta \in \Phi\pbc} \beta_\zeta \zeta  + \sum_{j=j_0}^\infty \sum_{\xi \in \Psi_j\pbc}\beta_\xi \xi,\quad \text{where}\quad \beta_\xi  
= \int \xi dQ, \ \xi\in \Psi\pbc,$$
where the series converges at least in $L^2(\Omega)$.
The standard truncated wavelet estimator of $q$~\citep{kerkyacharian1992} with 
a  truncation level $J_n \geq j_0 > 0$ is then  given by  
$$\tilde q_n\sbc = \sum_{\xi \in \Psi\pbc} \hbeta_\xi \xi  =  \sum_{\zeta \in \Phi\pbc} \hbeta_\zeta \zeta  + \sum_{j=j_0}^{J_n} \sum_{\xi \in \Psi_j\pbc}\hbeta_\xi \xi,\quad \text{where}\quad \hbeta_\xi = \int \xi dQ_n, \ \xi\in \Psi\pbc.$$
Notice that $\tilde q_n\sbc$ is permitted to take on negative values, in which case
it does not define a probability density.  
We instead define the final density estimator $\hat q_n\equiv \hat q_n\sbc$   by
\begin{equation}
\label{eq:hatq_one_sample}
\hat q_n\sbc =\frac{ \tilde q_n\sbc I(\tilde q_n\sbc \geq 0)}{\int \tilde q_n\sbc I(\tilde q_n\sbc \geq 0)}, \quad \text{over } \Omega,
\end{equation}
and we denote by $\hat Q_n\sbc$ the distribution induced by $\hat q_n\sbc$. 
We drop all superscripts ``bc'' in the sequel whenever the choice of wavelet system is unambiguous. 
\cite{weed2019a} bounded the Wasserstein risk of a wavelet density estimator obtained from a distinct modification   
of $\tilde q_n$. By appealing to $L^\infty$ concentration inequalities
for wavelet density estimators~\citep{masry1997}, 
we show in Appendix~\ref{app:pf_wavelet_wasserstein}
that their result carries over to the estimator $\hat q_n$.  
Equipped with this result, we arrive at the following bound on the risk of the estimator $\hat T_n \equiv \hat T_n\sbc$ 
defined in equation~\eqref{eq:semi_discrete_density}, and of the corresponding plugin estimator
of the squared Wasserstein distance. Recall that the H\"older balls $\calC^\alpha(\Omega; \cdot)$ and $\calC^\alpha(\Omega; \cdot,\cdot)$
are defined in equations~\eqref{eq:holder_ball}--\eqref{eq:holder_ball_bound}. 

 \begin{theorem}[One-Sample Wavelet Estimators]
\label{thm:one_sample_wavelet}
Let $\alpha > 1$ and $M,\gamma > 0$. Let $P,Q\in \calPac([0,1]^d)$, and assume
 the density $q$  satisfies $q \in \calC^{\alpha-1}([0,1]^d; M,\gamma)$.
Let  $2^{J_n} \asymp n^{1 /({d+2(\alpha-1)})}$.  Then, the following assertions hold.
\begin{thmlist}
\item \label{thm:one_sample_wavelet--map} (Optimal Transport Maps) Assume $\varphi_0$ satisfies condition~\ref{assm:curvature} 
for some $\lambda > 0$.
Then, there exists
a constant $C > 0$ depending on $M,\lambda,\gamma,\alpha$ such that,
\begin{equation*}
\bbE \big\|\hat T_n - T_0\big\|_{L^2(P)}^2
\leq C \trate, \quad \text{where } \trate :=  
\begin{cases}
1/n, & d = 1\\
(\log n)^2 /n, & d = 2\\
n^{-\frac{2\alpha}{2(\alpha-1) + d}}, & d \geq 3.
\end{cases}
\end{equation*}
\item \label{thm:one_sample_wavelet--wasserstein} (Wasserstein Distances) Assume that for some 
$\lambda > 0$, $\varphi_0^* \in \calC^{\alpha+1}([0,1]^d;\lambda)$, and 
$\gamma ^{-1}\leq p \leq \gamma$ 
over $[0,1]^d$. Then, there exists
a constant $C > 0$ depending on $M,\lambda,\gamma,\alpha$ such that, 
\end{thmlist}
\vspace{-0.1in}
\begin{align*}
\big| \bbE W_2^2(P, \hat Q_n) - W_2^2(P,Q) \big| &\leq C \trate,\\
\bbE \big| W_2^2(P, \hat Q_n) - W_2^2(P,Q) \big|^2
&\leq \left[C \trate  +  \sqrt{\frac {\Var_Q[\psi_0(Y)]}{n}}\right]^2.
\end{align*}
\end{theorem}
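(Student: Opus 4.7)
Both claims will follow from the stability bound of Theorem~\ref{thm:stability} combined with a Wasserstein risk analysis of the positivity-truncated wavelet density estimator $\hat q_n$ (adapted from \cite{weed2019a}) that I defer to Appendix~\ref{app:pf_wavelet_wasserstein}. The key quantitative input from that appendix is
\[
\bbE W_2^2(\hat Q_n, Q) \lesssim \trate.
\]
Although condition~\ref{assm:curvature} is not stated explicitly in part~(ii), the assumed smoothness of $\varphi_0^*$ together with the density bounds on $p,q$ imply it (with a constant depending only on $M,\lambda,\gamma,\alpha$) through the Monge--Amp\`ere equation, an eigenvalue comparison, and Legendre duality, so Theorem~\ref{thm:stability} applies throughout. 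For part~(i), the right inequality of Theorem~\ref{thm:stability} with $\hat Q = \hat Q_n$ yields $\|\hat T_n - T_0\|_{L^2(P)}^2 \leq \lambda^2 W_2^2(\hat Q_n, Q)$, and the claim follows upon taking expectations.

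\textbf{Bias for part (ii).} Theorem~\ref{thm:stability} provides the decomposition
\[
W_2^2(P, \hat Q_n) - W_2^2(P, Q) = L(\hat Q_n) + R_n, \qquad 0 \leq R_n \leq \lambda W_2^2(\hat Q_n, Q),
\]
where $L(\hat Q_n) = \int \psi_0\, d(\hat Q_n - Q)$. Since $\bbE R_n \lesssim \trate$ by the input above, it suffices to prove $|\bbE L(\hat Q_n)| \lesssim \trate$. Writing $\hat q_n = \tilde q_n + (\hat q_n - \tilde q_n)$, the linear part satisfies $\bbE \tilde q_n = P_{V_{J_n}} q$, the orthogonal wavelet projection at scale $J_n$. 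The assumption $\varphi_0^* \in \calC^{\alpha+1}([0,1]^d;\lambda)$ yields $\psi_0 = \|\cdot\|^2 - 2\varphi_0^* \in \calC^{\alpha+1}([0,1]^d)$, so projecting twice and applying standard Jackson estimates for the Daubechies system yields
\[
\big|\langle \psi_0,\, P_{V_{J_n}} q - q\rangle\big| = \big|\langle \psi_0 - P_{V_{J_n}}\psi_0,\, q - P_{V_{J_n}} q\rangle\big| \lesssim 2^{-J_n(\alpha+1)}\cdot 2^{-J_n(\alpha-1)} = 2^{-2\alpha J_n} \lesssim \trate.
\]
The contribution of $\hat q_n - \tilde q_n$, which captures the positivity truncation and renormalization in equation~\eqref{eq:hatq_one_sample}, is absorbed using $L^\infty$ concentration of $\tilde q_n$ around $q$~\citep{masry1997}: since $q \geq 1/\gamma > 0$, the event $\{\tilde q_n > 0 \text{ on } [0,1]^d\}$ has overwhelming probability, on which $\hat q_n$ differs from $\tilde q_n$ only by a normalizing factor close to $1$.

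\textbf{Variance for part (ii).} Minkowski's inequality reduces the task to separately bounding $\|L(\hat Q_n)\|_{L^2}$ and $\|R_n\|_{L^2}$. For the latter I will need the fourth-moment refinement $\bbE W_2^4(\hat Q_n, Q) \lesssim \trate^2$, which I establish in the appendix by concentration of $W_2(\hat Q_n, Q)$ about its mean. For the former, the linearity of $\tilde q_n$ in the sample yields the identity $\int \psi_0 \, d\tilde Q_n = n^{-1}\sum_{i=1}^n (P_{V_{J_n}}\psi_0)(Y_i)$, from which
\[
L(\tilde Q_n) = \frac{1}{n}\sum_{i=1}^n \big[\psi_0(Y_i) - \bbE_Q \psi_0(Y)\big] + \frac{1}{n}\sum_{i=1}^n \big[(P_{V_{J_n}}\psi_0 - \psi_0)(Y_i)\big].
\]
The first sum has variance exactly $\Var_Q[\psi_0(Y)]/n$, while the second sum has $L^2$ norm $\lesssim \trate$: its expectation is controlled by the same Jackson/double-orthogonality bound as in the bias analysis, and its fluctuations are of order $\|P_{V_{J_n}}\psi_0-\psi_0\|_\infty/\sqrt n = o(1/\sqrt n)$ since $\alpha+1 > 1$. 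The difference $L(\hat Q_n) - L(\tilde Q_n)$ is of lower order by the $L^\infty$ concentration argument above, giving $\|L(\hat Q_n)\|_{L^2} \leq \sqrt{\Var_Q[\psi_0(Y)]/n} + O(\trate)$ and completing the proof.

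\textbf{Main obstacle.} The delicate step is the appendix analysis of the positivity-truncated estimator $\hat q_n$: beyond the first-moment risk $\bbE W_2^2(\hat Q_n, Q) \lesssim \trate$, one must also establish the fourth-moment bound used to control $\|R_n\|_{L^2}$ and verify that the non-orthogonal distortions from the truncation and renormalization in equation~\eqref{eq:hatq_one_sample} remain of order $\trate$ after integration against the H\"older-smooth potential $\psi_0$. Both hinge on sharp $L^\infty$ concentration of $\tilde q_n$, in conjunction with the boundary-corrected Daubechies system that avoids degradation near $\partial[0,1]^d$.
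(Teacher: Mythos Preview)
Your proposal is correct and follows essentially the same approach as the paper: both proofs combine the stability bound of Theorem~\ref{thm:stability} with the Wasserstein risk bound $\bbE W_2^\rho(\hat Q_n,Q)\lesssim \trate^{\rho/2}$ for the truncated wavelet estimator (Lemma~\ref{lem:wavelet_wasserstein}), and both control the linear term $L(\hat Q_n)$ via the double-orthogonality/Jackson argument exploiting $\psi_0\in\calC^{\alpha+1}$ and $q\in\calC^{\alpha-1}$, together with the identity $\int\psi_0\,\tilde q_n = n^{-1}\sum_i (P_{V_{J_n}}\psi_0)(Y_i)$ and $L^\infty$ concentration to absorb the truncation correction $\hat q_n-\tilde q_n$. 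The paper packages the bias and variance of $L(\hat Q_n)$ into a separate Lemma~\ref{lem:L}, and obtains the fourth-moment bound directly from the general-exponent statement of Lemma~\ref{lem:wavelet_wasserstein} rather than by concentration, but these are cosmetic differences.
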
 
Theorem~\ref{thm:one_sample_wavelet} requires smoothness assumptions on both the density $q$  
 and  the potential $\varphi_0^*$; in particular, the assumption of Theorem~\ref{thm:one_sample_wavelet--wasserstein} 
 requires both $q \in \calC^{\alpha-1}(\Omega)$ and $\varphi_0^* \in \calC^{\alpha+1}(\Omega)$. 
Caffarelli's regularity theory (Theorem~\ref{thm:caffarelli}) suggests that the former condition on $q$
should be sufficient to imply the latter condition on $\varphi_0^*$,
but such results cannot be invoked here due to the lack of smoothness
of the boundary of the unit cube $[0,1]^d$. Even if the above analysis could be adapted
to a domain $\Omega$ with smooth boundary, the lack of uniformity in Caffarelli's global regularity
theory would prevent the bounds in 
Theorem~\ref{thm:one_sample_wavelet} from holding uniformly in $P$ and $Q$, in the absence of a smoothness condition on $\varphi_0^*$.
We refer to Appendix~E of~\cite{hutter2021} for related discussions. 
In Proposition~\ref{prop:one_sample_torus} of Appendix~\ref{app:pf_density_based}, we will show that 
an analogue of~Theorem~\ref{thm:one_sample_wavelet} holds merely under smoothness conditions on $p$ and $q$ 
when $\Omega$ is  the $d$-dimensional torus $\bbT^d$,
which enjoys the global regularity result of Theorem~\ref{thm:torus_regularity}.
Here, we instead impose smoothness conditions on both $\varphi_0^*$ and $q$, 
in which case $\hat T_n$  achieves the minimax rate~\eqref{eq:HR_lower_bound} of estimating an $\alpha$-H\"older 
optimal transport map.
 
Theorem~\ref{thm:one_sample_wavelet--wasserstein} also proves that
the bias of $W_2^2(P, \hat Q_n)$ achieves the same convergence rate, 
as does its risk when $d \geq 2(\alpha+1)$. In the high-smoothness regime $d < 2(\alpha+1)$,
the risk of $W_2^2(P, \hat Q_n)$, in squared loss, does not generally improve beyond the parametric rate $1/n$,
except when $\Var_Q[\psi_0(Y)]$ vanishes. Using Lemma~\ref{lem:kantorovich_L2} in Appendix~\ref{app:kantorovich}, the
latter quantity is bounded above by $W_2^2(P,Q)$ up to a constant, so Theorem~\ref{thm:one_sample_wavelet--wasserstein} also implies
\begin{equation} 
\bbE\big| W_2^2(P, \hat Q_n) - W_2^2(P,Q)\big| \lesssim_{M,\gamma,\lambda,\alpha} \trate + \frac{W_2(P,Q)}{\sqrt n}.
\end{equation} 

We briefly highlight the main components of the proof of Theorem~\ref{thm:one_sample_wavelet}.
Both assertions are proven 
by combining the stability results of Theorem~\ref{thm:stability}
with the bound $\bbE W_2^2(\hat Q_n, Q) \lesssim \trate$, which is stated 
formally in Lemma~\ref{lem:wavelet_wasserstein}, and 
extends a result due to~\cite{weed2019a}.
In particular, Theorem~\ref{thm:one_sample_wavelet--map}   follows immediately from the equivalence~\eqref{eq:W2_L2P_asymp}.
Our proof of Theorem~\ref{thm:one_sample_wavelet--wasserstein}
additionally requires us to analyze the evaluation $L(\hat Q_n)$
of the linear functional~$L$ defined in equation~\eqref{eq:first_order_von_mises}, 
for which we prove the following. 
\begin{lemma}
\label{lem:L}
Assume the same conditions as Theorem~\ref{thm:one_sample_wavelet--wasserstein}. Then, 
$$  \bbE[L(\hat Q_n)] = O\left( 2^{-2J_n\alpha}\right), \quad   
  \Var\big[L(\hat Q_n)\big] =   \frac 1 n \Var_Q[\psi_0(Y)]  + O\left(\frac{2^{-2J_n\alpha}}{n}\right),$$
where the implicit constants depend only on $M,\gamma,\lambda,\alpha$.
\end{lemma}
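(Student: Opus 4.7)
The plan is to work with the raw (not positive-part-normalized) truncated wavelet estimator
\[
\tilde q_n := \sum_{\zeta \in \Phi^{\mathrm{bc}}}\hat\beta_\zeta \zeta + \sum_{j=j_0}^{J_n}\sum_{\xi \in \Psi_j^{\mathrm{bc}}}\hat\beta_\xi \xi,
\]
the numerator in~\eqref{eq:hatq_one_sample}, to show that $\hat q_n$ coincides with $\tilde q_n$ on an event of super-polynomial probability, and then to extract the bias and variance from the formal quantity $L(\tilde Q_n) := \int \psi_0 \tilde q_n - \int \psi_0 q$ by exploiting wavelet orthogonality and the H\"older smoothness of $\psi_0$ and $q$.

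First I observe that $\int \tilde q_n = 1$ almost surely: the expansion $1 = \sum_{\zeta \in \Phi^{\mathrm{bc}}}(\int \zeta)\zeta$ (since the constants lie in the scaling space at level $j_0$) together with the vanishing-mean property $\int \xi = 0$ for every wavelet $\xi$ gives $\int \tilde q_n = \frac{1}{n}\sum_i \sum_\zeta \zeta(Y_i)\int\zeta = 1$. On the event $\mathcal{E} := \{\tilde q_n \geq 0 \text{ on }\Omega\}$, the definition~\eqref{eq:hatq_one_sample} then gives $\hat q_n = \tilde q_n$. Standard $L^\infty$ concentration for wavelet density estimators due to~\cite{masry1997} (already invoked in the proof of Lemma~\ref{lem:wavelet_wasserstein}) combined with $q \geq 1/\gamma$ implies $\Pr(\mathcal{E}^c) = o(n^{-N})$ for every $N > 0$. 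Since $|L(\hat Q_n)| \leq 2\|\psi_0\|_\infty \leq 2d$ always and $|L(\tilde Q_n)|$ is deterministically bounded by a polynomial in $n$ (via the crude estimate $\|\tilde q_n\|_\infty \lesssim 2^{J_n d}$), the $\mathcal{E}^c$ contributions to the bias and variance of $L(\hat Q_n)$ will be super-polynomially small, hence negligible relative to the target rates $2^{-2J_n\alpha}$ and $2^{-2J_n\alpha}/n$ respectively.

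For the bias, let $P_{J_n}$ denote the $L^2$-orthogonal projection onto the span of $\Phi^{\mathrm{bc}} \cup \bigcup_{j_0 \leq j \leq J_n}\Psi_j^{\mathrm{bc}}$, so that $\bbE[\tilde q_n] = P_{J_n}q$, and set $R := \psi_0 - P_{J_n}\psi_0$. Using self-adjointness of $P_{J_n}$ together with the orthogonality $\langle R, P_{J_n}q\rangle = 0$,
\[
\bbE[L(\tilde Q_n)] = \int \psi_0\,(P_{J_n}q - q) = -\int R\,(q - P_{J_n}q).
\]
The wavelet characterization of H\"older spaces gives $\|R\|_{L^2}\lesssim 2^{-J_n(\alpha+1)}$ from $\psi_0 \in \calC^{\alpha+1}$, and $\|q - P_{J_n}q\|_{L^2}\lesssim 2^{-J_n(\alpha-1)}$ from $q \in \calC^{\alpha-1}$, so Cauchy--Schwarz yields $|\bbE[L(\tilde Q_n)]| \lesssim 2^{-2J_n\alpha}$. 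For the variance, the reproducing identity $\sum_{\xi}\xi(y)\int\psi_0\xi = P_{J_n}\psi_0(y)$ (with the sum over the truncated basis) gives $\int\psi_0\tilde q_n = \frac{1}{n}\sum_i P_{J_n}\psi_0(Y_i)$, so that $\Var[L(\tilde Q_n)] = \frac{1}{n}\Var_Q[P_{J_n}\psi_0(Y)]$. Expanding
\[
\Var_Q[P_{J_n}\psi_0(Y)] = \Var_Q[\psi_0(Y)] - 2\,\mathrm{Cov}_Q(\psi_0,R) + \Var_Q[R],
\]
the last term satisfies $\Var_Q[R]\leq \|q\|_\infty\|R\|_{L^2}^2 \lesssim 2^{-2J_n(\alpha+1)}$, while $|\mathrm{Cov}_Q(\psi_0,R)|$ is controlled by $|\int \psi_0 q R| + \|\psi_0\|_\infty|\int Rq|$; using orthogonality, $\int \psi_0 q R = \int (\psi_0 q - P_{J_n}(\psi_0 q))R$, and since $\psi_0 q \in \calC^{\alpha-1}$ as a product of a $\calC^{\alpha+1}$ and a $\calC^{\alpha-1}$ function, Cauchy--Schwarz gives $O(2^{-2J_n\alpha})$, with $|\int Rq|$ handled analogously. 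Passing from $\tilde Q_n$ to $\hat Q_n$ in the variance uses Cauchy--Schwarz on the cross-covariance together with $\bbE(L(\hat Q_n) - L(\tilde Q_n))^2 \leq \mathrm{poly}(n)\,\Pr(\mathcal{E}^c)$.

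The main obstacle is ensuring that the replacement of $\hat q_n$ by $\tilde q_n$ does not disturb the $2^{-2J_n\alpha}$ rate: this hinges on Masry-type $L^\infty$ concentration for the unnormalized wavelet estimator together with $q \geq 1/\gamma$, which makes $\Pr(\mathcal{E}^c)$ fall off faster than any polynomial in $n$. Once this reduction is in place, the rest of the argument is a careful bookkeeping of wavelet orthogonality combined with the decay $2^{-j(\alpha+1)}$ and $2^{-j(\alpha-1)}$ of wavelet $L^2$-projections of $\psi_0$ and $q$ onto the high-frequency part.
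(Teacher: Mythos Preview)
Your proposal is correct and follows essentially the same route as the paper: reduce from $\hat q_n$ to $\tilde q_n$ via $L^\infty$ concentration and the lower bound $q\geq 1/\gamma$, identify $\bbE[\tilde q_n]=P_{J_n}q$ and $\int\psi_0\tilde q_n=\tfrac1n\sum_i P_{J_n}\psi_0(Y_i)$, and exploit the complementary smoothness $\psi_0\in\calC^{\alpha+1}$, $q\in\calC^{\alpha-1}$ through wavelet orthogonality. The only stylistic difference is that the paper bounds $\sum_{j>J_n}\sum_\xi\gamma_\xi\beta_\xi$ via $|\Psi_j|\cdot\|(\gamma_\xi)\|_{\ell_\infty}\cdot\|(\beta_\xi)\|_{\ell_\infty}$ level by level, whereas you apply Cauchy--Schwarz in $L^2$; both yield the same rate, and your super-polynomial estimate on $\Pr(\mathcal E^c)$ is in fact stronger than the paper's stated $O(n^{-2})$ (which already suffices).
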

Lemma~\ref{lem:L} shows that the bias of $L(\hat Q_n)$ 
scales quadratically faster than the traditional bias of $\hat Q_n$
in estimating an $(\alpha-1)$-H\"older density, 
which is known to be of order $2^{-J_n(\alpha-1)}$. 
We obtain the faster rate $2^{-2J_n\alpha}$ due to the assumed $(\alpha+1)$-H\"older smoothness of the  
potential~$\varphi_0^*$. The proofs of Theorem~\ref{thm:one_sample_wavelet} and Lemma~\ref{lem:L} are deferred to
Appendix~\ref{app:one_sample_wavelet}. 

\begin{remark}[Adaptive Estimation]
When constructing the estimator $\hat T_n$, 
we  assumed that the smoothness
parameter $\alpha$ is known, and used it to tune the truncation parameter $J_n$. 
It   is also possible to construct an 
adaptive estimator, however. \citet[Theorem 2]{weed2019a} derived an adaptive density estimator $\hat Q_n^\circ$
which achieves the minimax rate of estimating $Q$ under the Wasserstein distance, up to polylogarithmic
factors. It is then natural to define a plugin estimator of $T_0$ as the unique 
optimal transport map from $P$ to $\hat Q_n^\circ$.
By reasoning similarly as in the proof of Theorem~\ref{thm:one_sample_wavelet--map}, 
this estimator has an $L^2(P)$ risk of order $\trate$, up to  polylogarithmic factors, and does not require knowledge of~$\alpha$. 
\end{remark} 

\section{The Two-Sample Problem}
\label{sec:two_sample}
In this section, we turn to analyzing two-sample estimators when both measures $P ,Q \in \calPac(\Omega)$  are unknown. 
As in the one-sample case, we  study two classes of plugin 
estimators. 
The first consists of 
estimators which interpolate the empirical in-sample optimal transport coupling  
using nonparametric smoothers. Such estimators
will achieve the optimal rate of estimating $T_0$ when it is Lipschitz. 
The second class will consist of plugin estimators based on density estimates of $P$ and $Q$, and will achieve
faster rates of convergence when $P$ and $Q$ have smooth densities. As before, our proofs will rely on stability bounds for the two-sample problem, to which we turn our
attention first. 

\subsection{Two-Sample Stability Bounds}
\label{sec:two_sample_stability}
The stability bounds of Theorem~\ref{thm:stability} admit the following one-sided extension 
when both measures $P$ and $Q$ are unknown. 
\begin{proposition}
\label{thm:two_sample_stability} 
Let $P,Q \in \calPac(\Omega)$, and assume condition \ref{assm:curvature} holds for some $\lambda > 0$. 
Then,  for any measures $\hat P,\hat Q \in \calP(\Omega)$,
\begin{equation}
\label{eq:two_sample_stability}
\begin{multlined}[0.85\linewidth]
0\leq W_2^2(\hat P, \hat Q) - W_2^2(P, Q) - \int \phi_0 d(\hat P-P) - \int \psi_0 d(\hat Q-Q) 
\\  \leq {\lambda} \left[ W_2(\hat P, P) + W_2(\hat Q, Q)\right]^2.
\end{multlined}
\end{equation}
\end{proposition}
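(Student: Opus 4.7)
The plan is to handle the two inequalities separately: the lower bound follows immediately from Kantorovich duality, while the upper bound is obtained by gluing together optimal couplings and then carrying out a Taylor expansion of the Kantorovich potentials.

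For the lower bound, the admissibility $\phi_0(x) + \psi_0(y) \leq \|x-y\|^2$ holds uniformly in $x, y \in \Omega$, so integrating against any $\gamma \in \Pi(\hat P, \hat Q)$ and taking the infimum gives $W_2^2(\hat P, \hat Q) \geq \int \phi_0 \, d\hat P + \int \psi_0 \, d\hat Q$. Combined with the duality identity $W_2^2(P,Q) = \int \phi_0 \, dP + \int \psi_0 \, dQ$, this immediately yields the claimed non-negativity.

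For the upper bound, I would construct a coupling $\gamma \in \Pi(\hat P, \hat Q)$ by gluing. Let $\pi_1 \in \Pi(\hat P, P)$ be optimal for $W_2(\hat P, P)$ and $\pi_2 \in \Pi(Q, \hat Q)$ optimal for $W_2(Q, \hat Q)$. The gluing lemma produces a probability measure $\mu$ on $\Omega^4$, with coordinates denoted $(\hat x, x, y, \hat y)$, whose projections on $(1,2), (2,3), (3,4)$ are $\pi_1$, $(\mathrm{id}, T_0)_\# P$, and $\pi_2$ respectively; in particular $y = T_0(x)$ $\mu$-almost surely, and the projection onto $(1,4)$ is a coupling of $\hat P$ and $\hat Q$. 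Setting $a = \hat x - x$, $b = x - T_0(x)$, $c = \hat y - T_0(x)$, the identity $\hat x - \hat y = a + b - c$ yields $\|\hat x - \hat y\|^2 - \|b\|^2 = \|a - c\|^2 + 2\langle b, a - c\rangle$, so, using $W_2^2(P,Q) = \int \|b\|^2 \, d\mu$,
\[
W_2^2(\hat P, \hat Q) \leq W_2^2(P,Q) + \int \|a - c\|^2 \, d\mu + 2\int \langle b, a - c\rangle \, d\mu.
\]

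The crux is to show that the cross term $2\int \langle b, a-c\rangle \, d\mu$ cancels the first-order part of $\int \phi_0 \, d(\hat P - P) + \int \psi_0 \, d(\hat Q - Q)$. Using $\phi_0 = \|\cdot\|^2 - 2\varphi_0$, $\psi_0 = \|\cdot\|^2 - 2\varphi_0^*$, Brenier's theorem ($\nabla \varphi_0 = T_0$ and $\nabla \varphi_0^*(T_0(x)) = x$), and the Hessian-inversion identity $\nabla^2 \varphi_0^*(T_0(x)) = (\nabla^2 \varphi_0(x))^{-1}$, condition~\ref{assm:curvature} implies the pointwise relations $\nabla \phi_0(x) = 2b$, $\nabla \psi_0(T_0(x)) = -2b$ and the Hessian bound $\max\{\|\nabla^2 \phi_0\|_{\mathrm{op}}, \|\nabla^2 \psi_0\|_{\mathrm{op}}\} \leq 2(\lambda - 1)$. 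A second-order Taylor expansion then gives
\[
\phi_0(\hat x) - \phi_0(x) = 2\langle b, a\rangle + R_1, \qquad \psi_0(\hat y) - \psi_0(T_0(x)) = -2\langle b, c\rangle + R_2,
\]
with $|R_1| \leq (\lambda-1)\|a\|^2$ and $|R_2| \leq (\lambda-1)\|c\|^2$. Rewriting $\int \phi_0 \, d(\hat P - P) + \int \psi_0 \, d(\hat Q - Q)$ as an integral against $\mu$ and substituting, the linear contributions combine to $2\langle b, a - c\rangle$, which cancels the cross term on the right above, leaving
\[
W_2^2(\hat P, \hat Q) - W_2^2(P,Q) - \int \phi_0 \, d(\hat P - P) - \int \psi_0 \, d(\hat Q - Q) \leq \int \|a - c\|^2 \, d\mu + (\lambda - 1) \int (\|a\|^2 + \|c\|^2) \, d\mu.
\]
The algebraic inequality $\|a - c\|^2 + (\lambda - 1)(\|a\|^2 + \|c\|^2) \leq \lambda(\|a\| + \|c\|)^2$ (valid since $\lambda \geq 1$ is forced by~\ref{assm:curvature}), combined with $\int \|a\|^2 \, d\mu = W_2^2(\hat P, P)$, $\int \|c\|^2 \, d\mu = W_2^2(\hat Q, Q)$, and Cauchy--Schwarz giving $\int \|a\| \|c\| \, d\mu \leq W_2(\hat P, P) W_2(\hat Q, Q)$, yields the desired bound $\lambda[W_2(\hat P, P) + W_2(\hat Q, Q)]^2$.

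The main technical obstacle is justifying the Taylor expansion when $\Omega$ is not convex, since the segments $[x, \hat x]$ or $[T_0(x), \hat y]$ may leave $\Omega$. This can be resolved by exploiting the global convexity of $\varphi_0$ on $\bbR^d$ to extend the Hessian bounds to the convex hull $[0,1]^d$ under~\ref{assm:supp_global}, or equivalently by a density argument reducing to the case where the relevant segments lie in a compact subset on which the potentials are $\calC^2$.
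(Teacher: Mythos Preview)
Your proof is correct and rests on the same two pillars as the paper's: the lower bound via Kantorovich duality, and the upper bound via a composite coupling plus the curvature condition~\ref{assm:curvature}. The coupling you build by gluing is exactly the one the paper constructs explicitly as $(\hat\sigma\circ S_0,\mathrm{Id})_\#\hat\pi$, where $\hat\sigma$ is the optimal map $P\to\hat P$ and $\hat\pi\in\Pi(Q,\hat Q)$ is optimal.

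The organization differs, however. You decompose $\|\hat x-\hat y\|^2$ symmetrically around $b=x-T_0(x)$ as $\|b\|^2+\|a-c\|^2+2\langle b,a-c\rangle$, then Taylor-expand $\phi_0$ and $\psi_0$ directly, so the linear parts $2\langle b,a\rangle$ and $-2\langle b,c\rangle$ cancel the cross term in one step. The paper instead decomposes asymmetrically as $\|a\|^2+\|x-\hat y\|^2+2\langle a,x-\hat y\rangle$, which lets it recycle the one-sample Lemma~\ref{lem:stability_technical} to handle $\int\|S_0(y)-z\|^2\,d\hat\pi$, and then isolates the remaining cross term in a separate Lemma~\ref{lem:two_sample_stability_technical}, working throughout with the Brenier potentials $\varphi_0,\varphi_0^*$ rather than $\phi_0,\psi_0$. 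Your route is more self-contained and symmetric in $\hat P$ and $\hat Q$; the paper's buys modularity with the one-sample theory at the cost of a less transparent bookkeeping. The technical caveat you flag about segments possibly leaving $\Omega$ is present in the paper as well and is handled the same way, via the global convexity of $\varphi_0$ on $\bbR^d$ together with~\ref{assm:supp_global}.
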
  

The proof is deferred to Appendix~\ref{app:pf_two_sample_stability}.  
Similarly to Theorem~\ref{thm:stability}, this result shows that the remainder of a first-order 
expansion of $W_2^2(\hat P, \hat Q)$ around $W_2^2(P, Q)$ decays quadratically in the $W_2$ topology. 
Unlike Theorem~\ref{thm:stability}, however, we do not generally expect that the lower bound in Proposition~\ref{thm:two_sample_stability} 
can be replaced by a squared distance between $(P,Q)$ and $(\hat P,\hat Q)$: for instance, the lower bound
of zero is achieved in equation~\eqref{eq:two_sample_stability} when $\hat P = \hat Q \neq Q = P$, even though $\hat P$ may
be arbitrarily far from $P$ in Wasserstein distance. This example shows more generally that
the bivariate functional $W_2^2(\cdot,\cdot)$ is not strictly convex 
over $\calPac(\Omega) \times\calPac(\Omega)$, unlike the univariate functional $W_2^2(P,\cdot)$ 
for a fixed absolutely continuous measure~$P$ (cf. Theorem~\ref{thm:stability} and Proposition 7.19
of~\cite{santambrogio2015}).

These observations do not preclude the possibility of replacing the lower bound
in Proposition~\ref{thm:two_sample_stability} by $\lambda^{-1}\|\hat T - T_0\|_{L^2(P)}^2$, 
for $\hat T$ the optimal transport map between $\hat P$ and $\hat Q$. We were not
able to derive such a result under the stated assumptions, except when these estimators are taken to be
empirical measures. We describe this special case  next, 
and show how it may be used to derive estimators of Lipschitz optimal transport maps $T_0$.

\subsection{Upper Bounds for Two-Sample Empirical Estimators}
\label{sec:two_sample_empirical}
Let 
$X_1, \dots, X_n \sim P$ and $Y_1, \dots, Y_m \sim Q$ denote i.i.d. samples, and 
define the empirical measures $P_n = (1/n)\sum_{i=1}^n \delta_{X_i}$ and $Q_m = (1/m)\sum_{j=1}^m \delta_{Y_j}$. 
Though the Monge problem between $P_n$ and $Q_m$ can be infeasible
when $n \neq m$, the Kantorovich problem is always feasible, and takes the following form
$$\hat\pi  \in 
  \argmin_{\pi \in \calQ_{nm}} \sum_{i=1}^n \sum_{j=1}^m \pi_{ij} \norm{X_i - Y_j}^2,$$
where $\calQ_{nm}$ denotes the set of doubly stochastic matrices $\pi=(\pi_{ij}:1 \leq i \leq n, \ 1 \leq j \leq m)$,
satisfying $\pi_{ij}\geq 0$, $\sum_{i=1}^n \pi_{ij} = 1/m$ and $\sum_{j=1}^m \pi_{ij} = 1/n$.
We shall formulate the main stability bound of this section in terms of the quantity
$$\Delta_{nm} 
 =  \sum_{i=1}^n \sum_{j=1}^m \hat\pi_{ij} \norm{T_0(X_i) - Y_j}^2.$$
Recall that $(\kappa_n)$ and $(\widebar \kappa_n)$ denote the sequences defined
in equation~\eqref{eq:wasserstein_empirical} and Corollary~\ref{cor:one_sample_emp_hypercube} respectively.  We obtain the following result,
which we prove in Appendix~\ref{app:pf_two_sample_stability_empirical}. 
\begin{proposition}
\label{prop:curvature_empirical}
Let $P,Q \in \calPac(\Omega)$, and assume \ref{assm:curvature} holds
for some $\lambda > 0$. Then, 
$$ \bbE [\Delta_{nm}] \asymp_\lambda 
\bbE\Big[W_2^2(P_n, Q_m) - W_2^2(P,Q)\Big] \lesssim \kappa_{n\wedge m}.$$
If, in addition, $\Omega = [0,1]^d$ and there exists $\gamma > 0$ such that $\gamma^{-1} \leq p,q \leq \gamma$ over $\Omega$, 
then,  
$$\bbE [\Delta_{nm}] \asymp_\lambda 
\bbE\Big[W_2^2(P_n, Q_m) - W_2^2(P,Q)\Big] \lesssim_\gamma   \widebar \kappa_{n\wedge m}.$$

\end{proposition}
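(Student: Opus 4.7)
The plan is to combine the two-sample stability bound of Proposition~\ref{thm:two_sample_stability} with a two-sided control of the excess Wasserstein cost $W_2^2(P_n, Q_m) - W_2^2(P,Q)$ by $\bbE[\Delta_{nm}]$, obtained via Fenchel duality between $\varphi_0$ and $\varphi_0^*$.

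The first step uses the pointwise Fenchel--Young identity $\|x-y\|^2 - \phi_0(x) - \psi_0(y) = 2[\varphi_0(x) + \varphi_0^*(y) - \langle x, y\rangle]$, which follows directly from the definitions $\phi_0 = \norm\cdot^2 - 2\varphi_0$ and $\psi_0 = \norm\cdot^2 - 2\varphi_0^*$. Evaluating against the optimal coupling $\hat\pi$ between $P_n$ and $Q_m$ gives
\[
W_2^2(P_n, Q_m) - \int \phi_0 \, dP_n - \int \psi_0 \, dQ_m \; = \; 2 \sum_{i,j} \hat\pi_{ij}\bigl[\varphi_0(X_i) + \varphi_0^*(Y_j) - \langle X_i, Y_j\rangle\bigr].
\]
Under condition~\ref{assm:curvature}, $\varphi_0$ is $\lambda^{-1}$-strongly convex and $\lambda$-smooth, and hence $\varphi_0^*$ is $\lambda^{-1}$-strongly convex and $\lambda$-smooth as well. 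Applying these two bounds to $\varphi_0^*$ at the base point $T_0(X_i) = \nabla\varphi_0(X_i)$, and using $\nabla\varphi_0^*(T_0(X_i)) = X_i$ together with the Fenchel--Young equality $\varphi_0^*(T_0(X_i)) = \langle X_i, T_0(X_i)\rangle - \varphi_0(X_i)$, I obtain the pointwise pinching
\[
\frac{1}{2\lambda} \|Y_j - T_0(X_i)\|^2 \; \leq \; \varphi_0(X_i) + \varphi_0^*(Y_j) - \langle X_i, Y_j\rangle \; \leq \; \frac{\lambda}{2} \|Y_j - T_0(X_i)\|^2.
\]
Summing against $\hat\pi$ recovers $\Delta_{nm}$ on both ends; taking expectations and using $\bbE \int \phi_0 \, dP_n = \int \phi_0 \, dP$, $\bbE \int \psi_0 \, dQ_m = \int \psi_0 \, dQ$, together with the Kantorovich identity $W_2^2(P,Q) = \int \phi_0 \, dP + \int \psi_0 \, dQ$, yields $\bbE[\Delta_{nm}] \asymp_\lambda \bbE[W_2^2(P_n, Q_m) - W_2^2(P, Q)]$.

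The second step reduces the excess cost to known rates for the empirical measure. Applying Proposition~\ref{thm:two_sample_stability} with $\hat P = P_n$ and $\hat Q = Q_m$, the two linear terms $\int \phi_0 \, d(P_n-P)$ and $\int \psi_0 \, d(Q_m-Q)$ have vanishing expectation, so
\[
\bbE\bigl[W_2^2(P_n, Q_m) - W_2^2(P, Q)\bigr] \; \lesssim_\lambda \; \bbE W_2^2(P_n, P) + \bbE W_2^2(Q_m, Q).
\]
Under the baseline assumption~\ref{assm:supp_global}, equation~\eqref{eq:wasserstein_empirical} gives $\bbE W_2^2(P_n, P) \lesssim \kappa_n$ and similarly for $Q_m$, so the sum is $\lesssim \kappa_{n \wedge m}$. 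When in addition $\Omega = [0,1]^d$ with $p, q$ bounded between $\gamma^{-1}$ and $\gamma$, the same sharper empirical-measure bounds invoked in Corollary~\ref{cor:one_sample_emp_hypercube} replace $\kappa$ by $\widebar\kappa$ in both terms.

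The main obstacle is the identification in step one: recognizing that the per-pair duality gap $\varphi_0(X_i) + \varphi_0^*(Y_j) - \langle X_i, Y_j\rangle$ is a Bregman-type divergence for $\varphi_0^*$ at the conjugate point $T_0(X_i)$, and is therefore pinched by $(2\lambda)^{-1}$ and $\lambda/2$ times $\|T_0(X_i) - Y_j\|^2$. This is the mechanism translating the dual-side duality gap into the primal-side quantity $\Delta_{nm}$, and it forces both sides of the curvature condition~\ref{assm:curvature} to enter the implicit constant in $\asymp_\lambda$.
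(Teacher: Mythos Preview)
Your proof is correct and follows essentially the same strategy as the paper's. Both arguments pivot on the curvature bounds~\eqref{eq:curvature} for $\varphi_0^*$ at the base point $T_0(X_i)$, and both conclude via Proposition~\ref{thm:two_sample_stability} combined with the empirical-measure rates. The paper arrives at the pinching by expanding $\|X_i - Y_j\|^2$ quadratically around $T_0(X_i)$ and handling the cross term with the curvature inequality, whereas you reach the same pointwise bounds more directly by subtracting the Kantorovich potentials first and recognizing the residual $\varphi_0(X_i) + \varphi_0^*(Y_j) - \langle X_i, Y_j\rangle$ as the Bregman divergence of $\varphi_0^*$ between $Y_j$ and $T_0(X_i)$; this is a cleaner packaging of the same computation.
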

To gain intuition about Proposition~\ref{prop:curvature_empirical}, it is fruitful to consider the special case $n=m$.
In this setting, there exists an optimal transport map $T_n$ from $P_n$ to 
$Q_n$, and we may take 
$$\hpi_{ij} =  I(T_n(X_i) = Y_j)/n, \quad \text{for all } 1 \leq i,j \leq n.$$ 
We then have $\Delta_{nn}  =    \norm{T_n-T_0}_{L^2(P_n)}^2$, and Proposition~\ref{prop:curvature_empirical}  implies
\begin{equation}
\label{eq:two_sample_stability_empirical}
\bbE \norm{T_n-T_0}_{L^2(P_n)}^2 \asymp \bbE\Big[ W_2^2(P_n, Q_n) - W_2^2(P,Q)\Big].
\end{equation}
Equation~\eqref{eq:two_sample_stability_empirical} is a two-sample analogue of Corollary~\ref{cor:one_sample_emp},
and shows that the $L^2(P_n)$ risk of the in-sample transport map estimator is of same
order as the bias of the two-sample empirical optimal transport cost. 
While the estimators $T_n$ and $\hat\pi$
are only defined over the support of $P_n$, we next show how they may  be extended to the entire domain $\Omega$.
We begin with an estimator inspired by 
the classical method of nearest-neighbor nonparametric regression~\citep{cover1968}.

\noindent {\bf One-Nearest Neighbor Estimator.} Define the Voronoi partition generated by $X_1, \dots, X_n$~as
\begin{align}
\label{eq:voronoi}
V_j = \{x \in \Omega: \norm{x-X_j} \leq \norm{x-X_i} , \  \forall i\neq j\}, \quad j=1, \dots, n.
\end{align} 
Then, we define the one-nearest neighbor estimator of $T_0$  by
\begin{equation}
\label{eq:1NN}
\hat T_{nm}^{\mathrm{1NN}}(x) = \sum_{i=1}^n \sum_{j=1}^m (n\hat\pi_{ij}) I(x \in V_i) Y_j,\quad x \in \Omega.
\end{equation}
In order to state an upper bound on the convergence rate of $\hat T_{nm}^{\mathrm{1NN}}$, we place the following 
mild condition on the support $\Omega$.
Recall that $\calL$ denotes the Lebesgue measure on $\bbR^d$.
\begin{enumerate}[leftmargin=1.65cm,listparindent=-\leftmargin,label=\textbf{(S2)}]    
\item  \label{assm:supp_standard}
$\Omega$ is a standard set, in the sense that
there exist $\epsilon_0,\delta_0 > 0$ such that for all $x \in \Omega$
and $\epsilon\in(0,\epsilon_0)$, we have 
$\calL(B(x,\epsilon)\cap \Omega) \geq \delta_0 \calL(B(x,\epsilon)).$
\end{enumerate}
Condition~\ref{assm:supp_standard} arises frequently in the literature on statistical set 
estimation~\citep{cuevas1997,cuevas2009}, 
and prevents $\Omega$ from admitting cusps. 
Under this condition, we arrive at the following upper bound, which we prove in Appendix~\ref{app:pf_transport_1nn}. 
\begin{proposition}
\label{prop:transport_1nn} 
Let $P \in \calPac(\Omega)$ admit a density $p$ such that $\gamma^{-1} \leq p \leq \gamma$ over $\Omega$, 
for some $\gamma > 0$,
and let $Q \in \calPac(\Omega)$. Assume conditions~\ref{assm:curvature}
and  \ref{assm:supp_global}--\ref{assm:supp_standard} hold. Then,
$$\bbE \big\|\hat T_{nm}^{\mathrm{1NN}} - T_0\big\|_{L^2(P)}^2 \lesssim_{\lambda,\gamma,\epsilon_0,\delta_0} (\log n)^{2 }\kappa_{n\wedge m}.$$
Furthermore, if $\Omega = [0,1]^d$ and we additionally assume
that $\gamma^{-1} \leq q \leq \gamma$ over $\Omega$, then
$$\bbE \big\|\hat T_{nm}^{\mathrm{1NN}} - T_0\big\|_{L^2(P)}^2 \lesssim_{\lambda,\gamma,\epsilon_0,\delta_0} (\log n)^{2 }\widebar\kappa_{n\wedge m}.$$
\end{proposition}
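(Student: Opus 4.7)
The estimator $\hat T_{nm}^{\mathrm{1NN}}$ is piecewise constant on the Voronoi cells $V_1,\ldots,V_n$, taking the value $\bar Y_i := \sum_{j=1}^m (n\hat\pi_{ij}) Y_j$ on $V_i$ (a convex combination of the $Y_j$, since $\sum_j (n\hat\pi_{ij}) = 1$ by the marginal constraint). My plan is to decompose the squared $L^2(P)$ error into an ``averaging error'' term controlled by the in-sample coupling quantity $\Delta_{nm}$ of Proposition~\ref{prop:curvature_empirical}, and a ``Lipschitz error'' term controlled by the maximum Voronoi cell diameter. The stronger bound in the hypercube case will follow from the second half of Proposition~\ref{prop:curvature_empirical}.

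First I would split the error as $\hat T_{nm}^{\mathrm{1NN}}(x) - T_0(x) = (\bar Y_i - T_0(X_i)) + (T_0(X_i) - T_0(x))$ on each $V_i$, use $(a+b)^2 \leq 2a^2 + 2b^2$, apply Jensen's inequality to the first piece and the Lipschitz bound $\|T_0(X_i) - T_0(x)\| \leq \lambda \|X_i - x\|$ (from~\ref{assm:curvature}) to the second, yielding
\begin{equation*}
\big\|\hat T_{nm}^{\mathrm{1NN}} - T_0\big\|_{L^2(P)}^2 \;\leq\; 2\sum_{i=1}^n P(V_i) \sum_{j=1}^m (n\hat\pi_{ij})\|Y_j - T_0(X_i)\|^2 \;+\; 2\lambda^2 \max_{1\leq i\leq n} \diam(V_i)^2.
\end{equation*}

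For the first term I would use empirical-process/VC concentration to bound $\max_i n P(V_i) \lesssim \log n$ on an event $A_n$ with $\mathbb{P}(A_n^c) \leq n^{-C}$ for an arbitrarily large $C$; on $A_n$ the first summand is at most $2 \log n \cdot \Delta_{nm}$, while on $A_n^c$ I would use the crude deterministic bound (coming from $\Omega \subseteq [0,1]^d$) multiplied by $\mathbb{P}(A_n^c)$, which is negligible for $C$ chosen large. Taking expectations and invoking Proposition~\ref{prop:curvature_empirical} then bounds the first term by $(\log n)\,\kappa_{n\wedge m}$ in general, and by $(\log n)\,\widebar\kappa_{n\wedge m}$ under the additional two-sided density assumption on $q$. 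For the second term, condition~\ref{assm:supp_standard} combined with $p \geq 1/\gamma$ allows me to invoke a classical maximum nearest-neighbor estimate (essentially a union bound over a minimal covering of $\Omega$ by balls of radius $(\log n/n)^{1/d}$, using standardness to lower-bound their $P$-mass) to conclude $\max_i \diam(V_i)^2 \lesssim (\log n/n)^{2/d}$ with high probability. A dimension-by-dimension check shows that $(\log n/n)^{2/d}$ is in every case at most a logarithmic factor times $\kappa_{n\wedge m}$ (respectively $\widebar\kappa_{n\wedge m}$), so combining the two pieces yields the claimed rate, the extra $(\log n)^2$ factor absorbing the losses from both concentration arguments.

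The main obstacle is the probabilistic control of the random Voronoi tessellation: producing the uniform bounds $\max_i n P(V_i) \lesssim \log n$ and $\max_i \diam(V_i) \lesssim (\log n/n)^{1/d}$ with tails strong enough to survive taking expectations. Both rely crucially on~\ref{assm:supp_standard} (to prevent volume loss near the boundary) and on $p$ being bounded below (to convert Lebesgue-measure estimates into $P$-measure estimates). A pleasant feature, which keeps the argument modular, is that the coupling $\hat\pi$---which depends on both samples---need not be analyzed jointly with the Voronoi geometry: the Voronoi event depends only on $\{X_i\}$, so the uniform $\log n$ factor factors out cleanly before taking expectation in $\{Y_j\}$ against $\Delta_{nm}$, where Proposition~\ref{prop:curvature_empirical} takes over.
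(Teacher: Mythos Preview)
Your proposal is correct and follows essentially the same route as the paper: the same decomposition on Voronoi cells, Jensen's inequality to reduce the first piece to $nM_n\Delta_{nm}$ with $M_n=\max_i P(V_i)$, the Lipschitz bound for the second piece, and relative-VC concentration (the paper's Lemma~\ref{lem:voronoi}) to control $M_n$ and $\max_i\diam(V_i)$. The only cosmetic difference is that the paper handles $\bbE[nM_n\Delta_{nm}]$ by splitting on $\{M_n\geq c\log n/n\}$ and using the crude bound $\Delta_{nm}\leq\diam(\Omega)^2$ on the bad event (rather than any conditioning argument), and it integrates the tail of $\max_i\diam(V_i)^2$ explicitly to get the expectation bound---both steps you allude to but would need to write out.
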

Proposition~\ref{prop:transport_1nn} proves that the one-nearest neighbor estimator achieves
the minimax rate in equation~\eqref{eq:HR_lower_bound}, up to a polylogarithmic factor. This result is in stark contrast to standard risk bounds for 
$K$-nearest neighbor nonparametric regression, for which
the number $K$ of nearest neighbors is typically required to diverge in order to achieve the minimax estimation
rate of a Lipschitz continuous regression function~\citep{gyorfi2006}. 
Though increasing~$K$ reduces the variance of 
such estimators, 
in our setting, 
Propositions~\ref{prop:curvature_empirical}--\ref{prop:transport_1nn} suggest that the variance of
$\hat T_{nm}^{\mathrm{1NN}}$ is already dominated by its large bias, stemming from that of the in-sample coupling $\hat\pi$.
Therefore, the choice $K=1$ is sufficient to obtain a near-optimal rate. 
While the one-nearest neighbor estimator is simplest to analyze, it is natural
to expect that any  linear smoother with sufficiently small bandwidth may be used to smooth 
the in-sample coupling $\hat\pi_{nm}$ and lead to a similar rate.

\noindent \textbf{Convex Least Squares Estimator.}
Though nearly minimax optimal, the estimator $\hat T_{nm}^{\mathrm{1NN}}$ 
is typically not the gradient of a convex function,
and is therefore not an admissible optimal transport map in its own right. 
We next show how this property can be enforced using an estimator inspired
by nonparametric least squares regression. 
Let $\calJ_\lambda$ denote the class of functions $\varphi:\bbR^d \to \bbR$
which are  convex and have  $\lambda$-Lipschitz  gradients $\nabla\varphi$ over~$\Omega$. 
Define the least squares estimator
$$\hat T_{nm}^{\mathrm{LS}} =\nabla \hat \varphi_{nm}^{\mathrm{LS}}, \quad \text{where } \ 
  \hat \varphi_{nm}^{\mathrm{LS}} \in \argmin_{\varphi\in \calJ_\lambda}  \sum_{i=1}^n \sum_{j=1}^m \hat\pi_{ij}\norm{Y_{j} - \nabla \varphi(X_i)}^2.$$
The computation of the above infinite-dimensional optimization problem
can be reduced to that of solving a finite-dimensional quadratic 
program, by a direct extension of well-known solvers for shape-constrained
nonparametric regression with Lipschitz and convex constraints (cf. \cite{seijo2011}, \cite{mazumder2019}, and references therein). 
We obtain the following upper bound
by a simple extension of Proposition~\ref{prop:transport_1nn}. 
\begin{proposition}
\label{prop:transport_least_squares}
Proposition~\ref{prop:transport_1nn} continues to hold when $\hat T_{nm}^{\mathrm{1NN}}$
is replaced by $\hat T_{nm}^{\mathrm{LS}}$.  
\end{proposition}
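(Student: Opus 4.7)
My plan is to mirror the proof of Proposition~\ref{prop:transport_1nn}, replacing only the initial step that controls the in-sample risk of the estimator, and then re-using the rest of that argument verbatim up to numerical constants. The proof of Proposition~\ref{prop:transport_1nn} proceeds in two pieces: first it shows $\|\hat T_{nm}^{\mathrm{1NN}} - T_0\|_{L^2(P_n)}^2 \leq \Delta_{nm}$ as a direct application of Jensen's inequality to the definition $\hat T_{nm}^{\mathrm{1NN}}(X_i) = \sum_j n\hat\pi_{ij}Y_j$; it then lifts this in-sample bound to $L^2(P)$ via a Voronoi decomposition of $\Omega$, exploiting the Lipschitz regularity of $T_0$ and control of $\max_i P(V_i)$ and of $\sum_i \int_{V_i}\|x-X_i\|^2 dP(x)$ afforded by the density bound $p\leq\gamma$. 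Since $\hat T_{nm}^{\mathrm{LS}}$ is itself $\lambda$-Lipschitz, being the gradient of an element of $\calJ_\lambda$, the second piece of the argument applies to it verbatim, and only the in-sample bound requires new work.

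For the in-sample bound, Assumption~\ref{assm:curvature} places $\varphi_0$ in $\calJ_\lambda$, so $\varphi_0$ is feasible in the definition of $\hat\varphi_{nm}^{\mathrm{LS}}$, yielding the basic inequality
\begin{equation*}
\sum_{i,j}\hat\pi_{ij}\|Y_j - \hat T_{nm}^{\mathrm{LS}}(X_i)\|^2 \;\leq\; \sum_{i,j}\hat\pi_{ij}\|Y_j - T_0(X_i)\|^2 = \Delta_{nm}.
\end{equation*}
Expanding the left-hand side via
\begin{equation*}
\|Y_j - \hat T_{nm}^{\mathrm{LS}}(X_i)\|^2 = \|Y_j - T_0(X_i)\|^2 + \|T_0(X_i) - \hat T_{nm}^{\mathrm{LS}}(X_i)\|^2 + 2\langle Y_j - T_0(X_i),\, T_0(X_i) - \hat T_{nm}^{\mathrm{LS}}(X_i)\rangle,
\end{equation*}
using the marginal identity $\sum_j \hat\pi_{ij} = 1/n$ to recognise the middle sum as $\|\hat T_{nm}^{\mathrm{LS}} - T_0\|_{L^2(P_n)}^2$, and applying Cauchy--Schwarz to the cross term under the probability $\hat\pi$, one obtains $\|\hat T_{nm}^{\mathrm{LS}} - T_0\|_{L^2(P_n)}^2 \leq 4\Delta_{nm}$, matching the nearest-neighbour in-sample bound up to the harmless factor of $4$.

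To lift this to $L^2(P)$, since both $\hat T_{nm}^{\mathrm{LS}}$ and $T_0$ are $\lambda$-Lipschitz, for any cell $V_i$ and any $x \in V_i$ one has
\begin{equation*}
\|\hat T_{nm}^{\mathrm{LS}}(x) - T_0(x)\|^2 \;\leq\; 2\|\hat T_{nm}^{\mathrm{LS}}(X_i) - T_0(X_i)\|^2 + 8\lambda^2\|x - X_i\|^2,
\end{equation*}
which is structurally identical (up to the factor of $4$ multiplying $\lambda^2$) to the pointwise bound used in Proposition~\ref{prop:transport_1nn}. Integrating against $dP$ and invoking the same bounds on $\max_i P(V_i)$ and on $\bbE \sum_i \int_{V_i}\|x-X_i\|^2 dP(x)$ that appear in that proof, and finally combining with $\bbE[\Delta_{nm}] \lesssim \kappa_{n\wedge m}$ (respectively $\widebar\kappa_{n\wedge m}$) from Proposition~\ref{prop:curvature_empirical}, yields the stated bounds.

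The main difficulty is conceptual rather than technical: one must recognise that the Jensen-inequality argument specific to the nearest-neighbour estimator can be swapped for the convex least-squares basic inequality without altering the downstream Voronoi calculation, whose constants change but whose structure does not. Everything beyond this swap is either elementary or a direct reuse of quantities already controlled in Propositions~\ref{prop:transport_1nn} and~\ref{prop:curvature_empirical}.
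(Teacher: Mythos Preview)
Your proposal is correct and follows essentially the same route as the paper: establish an in-sample bound $\|\hat T_{nm}^{\mathrm{LS}} - T_0\|_{L^2(P_n)}^2 \lesssim \Delta_{nm}$ using the feasibility of $\varphi_0\in\calJ_\lambda$, then lift it to $L^2(P)$ via the Voronoi decomposition, exploiting that both $\hat T_{nm}^{\mathrm{LS}}$ and $T_0$ are $\lambda$-Lipschitz. The only cosmetic difference is that the paper obtains the in-sample bound by the direct triangle-type inequality $\|\hat T_{nm}(X_i)-T_0(X_i)\|^2 \lesssim \|\hat T_{nm}(X_i)-Y_j\|^2 + \|Y_j-T_0(X_i)\|^2$ (yielding a constant $2$), whereas you expand the square and Cauchy--Schwarz the cross term (yielding $4$); both are equally valid.
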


\subsection{Upper Bounds for Two-Sample Estimators over $\bbT^d$}
\label{sec:two_sample_combined} 
We next study two-sample estimators under stronger smoothness assumptions on $P$ and $Q$. 
As discussed in Section~\ref{sec:two_sample_stability}, 
we do not know of a two-sample stability bound for the $L^2(P)$ loss which is analogous
to Theorem~\ref{thm:stability}, placing regularity conditions only on the population potential~$\varphi_0$. 
Therefore, unlike   
Theorem~\ref{thm:one_sample_wavelet}, in which smoothness conditions on $q$ and $\varphi_0$ were
sufficient to obtain sharp upper bounds, in the two-sample case
our analysis will also rely on the smoothness of  {\it estimators}~$\hat \varphi_{nm}$ of the potential~$\varphi_0$.
In order to quantify their regularity, we shall require a uniform analogue of Caffarelli's global
regularity theory (Theorem~\ref{thm:caffarelli--global}). Since we are unaware of such results
for generic compact domains $\Omega \subseteq \bbR^d$, we instead assume throughout 
this subsection that $\Omega$ is taken to be the   $d$-dimensional torus~$\bbT^d$, thus allowing
us to appeal to \cref{thm:torus_regularity}. 
We emphasize that our restriction to the torus represents a common approach
in the optimal transport literature, whereby numerical methods~\citep{benamou2000,loeper2005}
 and theoretical results~\citep{bonnotte2013,guittet2003,santambrogio2015}
 are first derived on the torus before being
extended to more generic domains. The torus is an idealized sampling domain, which we believe captures
the main qualitative features of our problem, while removing technical issues that arise 
from boundaries or lack of compactness. As such, it serves as a useful prototype for more general results on compact Euclidean
domains with boundaries. In order to illustrate this point, we will prove in the next subsection that our results over the torus extend
to generic Euclidean domains~$\Omega$, provided that one is willing to assume uniformity in Caffarelli's global regularity
theory on~$\Omega$. 

Though we impose periodicity   for technical purposes, we note that 
optimal transport has recently been used as a methodological tool in 
several applications involving periodic data, such as high energy physics (cf. \cite{komiske2019a,komiske2020}, 
where proton collisions occur in toric
colliders)   
and computational biology (cf. \cite{gonzalez2023}, where protein structures are recorded with pairs
of dihedral angles). More generally, toric data arises in a variety of applications in 
directional statistics (e.g.~\cite{klein2020}, \cite{wiechers2023}, etc.), and our results are naturally 
applicable to such settings.

We also note that periodicity constraints are 
commonly imposed in nonparametric estimation
problems to mitigate boundary issues~\citep{efromovich1999, krishnamurthy2014, han2020}.
In many such cases, an alternative is to assume that the 
underlying probability measures place sufficiently small mass near the boundary.
Such an assumption cannot be used in our context since, as before, we shall require all densities
to be bounded away from zero throughout their support.
Optimal estimation rates under Wasserstein distances
differ dramatically in the absence of a density lower bound condition~\citep{bobkov2019, weed2019a}, and we do not 
address this setting here.

We now turn to our main results. Recall the background on the quadratic optimal transport problem over $\bbT^d$ 
in  Section~\ref{sec:background_torus}.  
Let $P,Q \in \calPac(\bbT^d)$ be absolutely
continuous measures admitting respective $\bbZ^d$-periodic densities $p$ and $q$.
We now denote by $T_0$ the optimal transport map from $P$ to $Q$, with respect to the cost $d_{\bbT^d}^2$. 
As outlined in Proposition~\ref{prop:torus_ot}, $T_0$ is the gradient of a convex
potential $\varphi_0:\bbR^d\to\bbR$, and is uniquely determined $P$-almost everywhere.
We continue to denote by $\phi_0 = \norm\cdot^2 - 2\varphi_0$ and $\psi_0 = \norm\cdot^2 - 2\varphi_0^*$
a corresponding pair of Kantorovich potentials. 
Let $X_1, \dots, X_n \sim P$ and $Y_1, \dots, Y_m \sim Q$
 denote i.i.d. samples, which are  independent of each other, and let $\hat P_n,\hat Q_m$ respectively denote the distributions induced by   density estimators $\hat p_n, \hat q_m$ of $p, q$ over $\bbT^d$, to be defined below. 
Our aim is to bound the risk of the estimator  
\begin{align}
\label{eq:two_sample_density_estimator}
\hat T_{nm} = \nabla\hat\varphi_{nm} = \argmin_{T \in \calT(\hat P_n,\hat Q_m)} \int d_{\bbT^d}^2(T(x), x) d\hat P_n(x).
\end{align}
Note that $\hat P_n$ and $\hat Q_m$ are absolutely continuous, thus there indeed exists a unique
solution to the above minimization problem, by Proposition~\ref{prop:torus_ot}.
We continue to quantify the risk of $\hat T_{nm}$ in terms of the $L^2(P)$ loss
$$\big\|\hat T_{nm} - T_0\big\|_{L^2(P)}^2 = \int_{\bbT^d} \big\|\hat T_{nm}(x) - T_0(x)\big\|^2 dP(x).$$
Notice that the integrand on the right-hand side of the above display is 
$\bbZ^d$-periodic by Proposition~\ref{prop:torus_ot--map_periodicity}
and by the optimality of $\hat T_{nm}$ and $T_0$, thus it indeed defines a map $\bbT^d\to \bbR$. 
As before, we shall also obtain upper bounds on the bias
and risk of $\calW_2^2(\hat P_n, \hat Q_m)$
as a byproduct of our proofs. Indeed, our main results hinge upon 
the stability bounds derived in previous sections, which can easily be shown to hold in the present context.
\begin{proposition}
\label{prop:torus_stability} 
Assume $\varphi_0$ satisfies condition~\ref{assm:curvature}, in the sense
that $\varphi_0$ is a twice differentiable convex function over $\bbR^d$ satisfying
$\lambda^{-1} I_d \preceq \nabla^2 \varphi_0(x) \preceq \lambda I_d$ for all $x \in \bbR^d.$
Then, 
Theorem~\ref{thm:stability} and Proposition~\ref{thm:two_sample_stability}  
hold  with $\Omega = \bbT^d$.
\end{proposition}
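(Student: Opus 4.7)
The strategy is to verify that the Euclidean proofs of Theorem~\ref{thm:stability} and Proposition~\ref{thm:two_sample_stability} given in Appendix~\ref{app:pf_stability} rest on a short list of structural ingredients, each of which has a direct torus analogue established in Section~\ref{sec:background_torus}. These ingredients are: (i) a Brenier-type decomposition of the optimal map as the gradient of a convex potential that is $\lambda^{-1}$-strongly convex and $\lambda$-smooth; (ii) the semi-dual identity $W_2^2(P,Q) = \int \phi_0 dP + \int \psi_0 dQ$ with $(\phi_0,\psi_0)$ remaining admissible for the Kantorovich dual of any other pair of marginals; and (iii) the coupling inequality $\calW_2(Q,\hat Q) \leq \|T_0 - \hat T\|_{L^2(P)}$ obtained from pushing $P$ forward under $(T_0, \hat T)$.

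Ingredients~(i) and~(ii) carry over essentially verbatim: Proposition~\ref{prop:torus_ot} supplies the Brenier decomposition and the convexity/smoothness structure on all of $\bbR^d$, the torus version of condition~\ref{assm:curvature} directly encodes the strong convexity and smoothness, and Proposition~\ref{prop:torus_ot--kantorovich} identifies $(\|\cdot\|^2 - 2\varphi_0, \|\cdot\|^2 - 2\varphi_0^*)$ as an optimal admissible dual pair in the torus Kantorovich duality~\eqref{eq:wasserstein_torus}. Ingredient~(iii) requires only the observation of Proposition~\ref{prop:torus_ot--map_locality}: for $P$-almost every $x$, we may select representatives of $T_0(x)$ and $\hat T(x)$ whose Euclidean displacement from $x$ equals the corresponding $d_{\bbT^d}$ distance. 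The pushforward $(T_0,\hat T)_\# P$ is then a coupling of $(Q,\hat Q)$ whose torus transport cost satisfies
\[
\calW_2^2(Q,\hat Q) \leq \int d_{\bbT^d}^2(T_0(x),\hat T(x)) dP(x) \leq \int \|T_0(x) - \hat T(x)\|^2 dP(x),
\]
recovering the Euclidean bound.

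With these ingredients secured, the Euclidean calculations transfer line-by-line. The lower bound of Theorem~\ref{thm:stability} follows from substituting the semi-dual identity for $W_2^2(P,\hat Q)$ together with Fenchel--Young applied to the conjugate pair $(\varphi_0,\varphi_0^*)$, producing the integral $\int [\varphi_0(x) + \varphi_0^*(\hat T(x)) - x\cdot \hat T(x)] dP(x)$, which is lower bounded by $(2\lambda)^{-1}\|\hat T - T_0\|_{L^2(P)}^2$ via the strong convexity of $\varphi_0^*$; the upper bound combines the corresponding smoothness inequality with ingredient~(iii). The non-negativity in Proposition~\ref{thm:two_sample_stability} is again the suboptimality of $(\phi_0,\psi_0)$, now for the dual of $\calW_2^2(\hat P, \hat Q)$, and the quadratic upper bound proceeds from a symmetric argument using an optimal dual pair for $(\hat P, \hat Q)$ combined with the gluing lemma, which is a purely metric statement valid on any Polish space. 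The only subtle point, rather than a genuine obstacle, is to consistently select the $\bbZ^d$-equivariant representatives of $T_0$, $\hat T$, and the intermediate couplings provided by Proposition~\ref{prop:torus_ot--potential_periodicity}--\ref{prop:torus_ot--map_locality}, so that every Euclidean expression appearing in the original proofs coincides with its $d_{\bbT^d}$-counterpart.
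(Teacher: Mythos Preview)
Your high-level strategy is right, and the paper's proof does exactly what you describe: verify that the Euclidean arguments of Appendix~\ref{app:pf_stability} transfer once the torus analogues from Proposition~\ref{prop:torus_ot} are in place. Your treatment of the lower bounds is also fine: the Fenchel--Young identity
\[
\calW_2^2(P,\hat Q) - \calW_2^2(P,Q) - \int \psi_0\,d(\hat Q-Q)
= 2\int\bigl[\varphi_0(x) + \varphi_0^*(\hat T(x)) - \langle x,\hat T(x)\rangle\bigr]\,dP(x),
\]
combined with $\lambda^{-1}$-strong convexity of $\varphi_0^*$, reproduces the lower bound of Theorem~\ref{thm:stability}, and Kantorovich duality gives the nonnegativity in Proposition~\ref{thm:two_sample_stability}.

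The gap is in your upper-bound arguments. Your ingredient~(iii), the inequality $\calW_2(Q,\hat Q)\le \|T_0-\hat T\|_{L^2(P)}$, points the wrong way: applying smoothness of $\varphi_0^*$ to the Fenchel--Young integral above yields only $\le \lambda\|\hat T-T_0\|_{L^2(P)}^2$, and ingredient~(iii) cannot convert this into $\le \lambda\,\calW_2^2(\hat Q,Q)$. The paper's proof uses a different coupling: with $\hat\pi$ an optimal coupling of $Q$ and $\hat Q$, one pushes forward by $(S_0,\mathrm{Id})$ to couple $P$ and $\hat Q$, then invokes $d_{\bbT^d}\le\|\cdot\|$ to obtain $\calW_2^2(P,\hat Q)\le \int\|S_0(y)-z\|^2\,d\hat\pi(y,z)$, and expands this Euclidean integral exactly as in Lemma~\ref{lem:stability_technical}. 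The torus-specific step you are missing is precisely this use of $d_{\bbT^d}\le\|\cdot\|$ to pass from the torus cost of a suboptimal coupling to a Euclidean integral, after which the locality property~\ref{prop:torus_ot--map_locality} lets one identify $\int\|S_0(y)-y\|^2\,dQ=\calW_2^2(P,Q)$ and $\int\|y-z\|^2\,d\hat\pi=\calW_2^2(\hat Q,Q)$.

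Similarly, for the upper bound of Proposition~\ref{thm:two_sample_stability}, the paper does not use an optimal dual pair for $(\hat P,\hat Q)$ or the gluing lemma. It constructs the explicit coupling $(\hat\sigma\circ S_0,\mathrm{Id})_\#\hat\pi \in \Pi(\hat P,\hat Q)$, where $\hat\sigma$ is the optimal map from $P$ to $\hat P$, and expands the resulting Euclidean integral via Lemma~\ref{lem:two_sample_stability_technical}. Your dual-based sketch would require controlling $\int(\hat\phi-\phi_0)\,d(\hat P-P)$ for an estimated potential $\hat\phi$, which is not among your listed ingredients and does not obviously reduce to a quadratic Wasserstein bound.
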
  

We now turn to the choice of density estimators $(\hat p_n, \hat q_m)$.
The absence of a boundary on the sampling domain $\bbT^d$ facilitates
the analysis of kernel density estimation, which will be our main focus in this section. 
We  also study   periodic wavelet
density estimators, similarly to the one-sample case,
but we defer this analysis to Appendix~\ref{app:pf_density_based} in the interest of brevity.

 Given a kernel $K \in \calC_c^\infty(\bbR^d)$ and  a bandwidth $h_n > 0$, write $K_{h_n} = h_n^{-d} K(\cdot/h_n)$,
 and define the kernel density estimators of $p$ and~$q$ by
$$\tilde p_n^{\mathrm{(ker)}}  = P_n \star K_{h_n} = \int_{\bbR^d} K_{h_n}(\cdot-z)dP_n(z),
\quad
  \tilde q_m^{\mathrm{(ker)}} = Q_m \star K_{h_m} = \int_{\bbR^d} K_{h_m}(\cdot-z)dQ_m(z).$$
Recall that integration over $\bbR^d$ with respect to a measure in $\calP(\bbT^d)$ is understood as integration
with respect to this measure extended to $\bbR^d$ via translation by $\bbZ^d$-periodicity. 
The above estimators may take on negative values,  
thus we again define the final density estimators by
$$\hat p_n^{(\mathrm{ker})} \propto \tilde p_n^{(\mathrm{ker})} I(\tilde p_n^{(\mathrm{ker})} \geq 0),\quad
  \hat q_m^{(\mathrm{ker})} \propto \tilde q_m^{(\mathrm{ker})} I(\tilde q_m^{(\mathrm{ker})} \geq 0),$$
  where the proportionality constants are to be chosen such that $\hat p_n^{(\mathrm{ker})}$
  and $\hat q_m^{(\mathrm{ker})}$ are densities. We also denote
  their induced probability distributions by $\hat P_n^{(\mathrm{ker})}$ and $\hat Q_m^{(\mathrm{ker)}}$. 
Furthermore, $\hat T_{nm}^{(\mathrm{ker})}$ denotes the optimal transport map between
these measures. 

We shall require the following condition on the kernel $K$, for given real numbers $\zeta,\kappa > 0$. 
\begin{enumerate}[leftmargin=1.65cm,listparindent=-\leftmargin,label=\textbf{K1($\zeta,\kappa$)}]   
\item  \label{assm:kernel}
$K \in \calC_c^\infty(\bbR^d) $ is an even kernel, whose Fourier transform $\calF[K]$
satisfies
\begin{equation}
\label{eq:fourier_condition_kernel}
\sup_{x \in \bbR^d\setminus\{0\}} |\calF[ K](x) - 1| \|x\|^{-\zeta} \leq \kappa.
\end{equation}
\end{enumerate} 
A sufficient condition for equation~\eqref{eq:fourier_condition_kernel}
to hold is for $K\in \calC_c^\infty(\bbR^d)$ to be a kernel of order $\beta = \lceil \zeta-1\rceil$. 
Such a statement appears for instance in \cite{tsybakov2008}
when $d=1$, and can easily be generalized to $d > 1$. Multivariate kernels of order $\beta$ which additionally
lie in $\calC_c^\infty(\bbR^d)$ can readily be defined; for example, one may start with a univariate even 
kernel $K_0 \in \calC_c^\infty(\bbR)$ of order $\beta$, constructed for instance using the procedure of~\cite{fan1992}, 
and then set  $K(x) = \prod_{i=1}^d K_0(x_i)$~\citep{gine2016}.

\cite{divol2021} stated that their work may be used to show that $\hat P_n^{(\mathrm{ker})}$ achieves  
a comparable rate of convergence as the boundary-corrected wavelet estimator $\hat P_n^{\mathrm{(bc)}}$, in Wasserstein distance.
 We provide a formal statement and proof of this fact in Lemma~\ref{lem:w2_kernel_rate} of Appendix~\ref{app:pf_kernel_based}, and use it to derive the following result. 
\begin{theorem}[Kernel Estimators]
\label{thm:two_sample_kernel}
Let the distributions $P,Q \in \calPac(\bbT^d)$ admit  
densities $p,q\in \calC^{\alpha-1}(\bbT^d; M,\gamma)$
for some $\alpha > 1$ and $M,\gamma > 0$.  
Assume further that  $K$ is a kernel satisfying condition~\hyperref[assm:kernel]{\textbf{K1($2\alpha,\kappa$)}} for some $\kappa > 0$. 
Let $h_n \asymp n^{-1/({d+2(\alpha-1))}}$. Then, there exists a constant
$C > 0$ depending only on $K,M, \gamma, \alpha$ such that the following statements hold.
\begin{thmlist}
\item \label{thm:two_sample_kernel--maps} (Optimal Transport Maps) We have, 
\begin{align*}
\bbE \big\|\hat T_{nm}^{(\mathrm{ker})} - T_0\big\|_{L^2(P)}^2  &\leq C \nmtratesker,
\quad \text{where } \tratesker := \begin{cases}
n^{-\frac{2\alpha}{2(\alpha-1) + d}}, & d \geq 3 \\  
\log n/n, & d = 2 \\
1/n, & d = 1.
\end{cases}
\end{align*} 
\item \label{thm:two_sample_kernel--wasserstein} (Wasserstein Distances) Assume further that $\alpha\not\in\bbN$. Then, \vspace{-0.2in}
\end{thmlist}
\end{theorem}  
\begin{align*}
\big|\bbE \calW_2^2(\hat P_n^{(\mathrm{ker})} , \hat Q_m^{(\mathrm{ker})} ) - \calW_2^2(P,Q)\big| &\leq C \nmtratesker,\\
\bbE \big|\calW_2^2(\hat P_n^{(\mathrm{ker})} , \hat Q_m^{(\mathrm{ker})} ) - \calW_2^2(P,Q)\big|^2 
 &\leq \left[C\nmtratesker {+}  \sqrt{\frac{\Var_P[\phi_0(X)]}{n}  {+} \frac{\Var_Q[\psi_0(Y)]}{m}}\right]^2.
\end{align*}   
Theorem~\ref{thm:two_sample_kernel} shows that the plugin estimators $\hat T_{nm}$ and $\calW_2^2(\hat P_n,\hat Q_m)$
achieve similar convergence rates as in the one-sample setting of Theorem~\ref{thm:one_sample_wavelet}.
Unlike the latter result, we also note that Theorem~\ref{thm:two_sample_kernel} places
no conditions on the regularity of~$T_0$ or~$\varphi_0$. Indeed, over $\bbT^d$, these can be inferred from 
the assumption $p,q \in \calC^{\alpha-1}(\bbT^d;M,\gamma)$, due to  Theorem~\ref{thm:torus_regularity}.
We  exclude the case $\alpha\in\bbN$ from Theorem~\ref{thm:two_sample_kernel--wasserstein} due in part to our use of this result.
Nevertheless, even when $\alpha\in\bbN$, 
Theorem~\ref{thm:two_sample_kernel--wasserstein} implies that
$$\big|\bbE \calW_2^2(\hat P_n, \hat Q_m) - \calW_2^2(P,Q)\big| \lesssim_\epsilon  R_{K,n\wedge m}^{1-\epsilon}(\alpha),$$
for any $\epsilon > 0$, and similarly for the risk of $\calW_2^2(\hat P_n, \hat Q_m)$.

If one is willing to place assumptions on the regularity of the potentials
$\varphi_0$ and $\varphi_0^*$, then an analogue
of Theorem~\ref{thm:two_sample_kernel--wasserstein} 
can be derived when the periodicity assumption is removed, and
the sampling domain is simply  the unit cube $[0,1]^d$. 
Such a result is stated in Proposition~\ref{thm:two_sample_density--wasserstein_cube} of Appendix~\ref{app:pf_density_based},
and is made possible by the fact
that Proposition~\ref{thm:two_sample_stability}
does not require any regularity of the fitted potentials.

When $d=2$, Theorem~\ref{thm:two_sample_kernel--maps} exhibits an improved
convergence rate relative to  Theorem~\ref{thm:one_sample_wavelet--map},
scaling as $\log n/n$ instead of $(\log n)^2/n$, which we now briefly discuss. 
This rate arises from our upper bound on $\bbE \calW_2^2(\hat P_n\sker, P)$ 
in Lemma~\ref{lem:w2_kernel_rate}, which makes use of the inequality~\eqref{eq:peyre} comparing $\calW_2$ to
a negative-order homogeneous Sobolev norm  \citep{peyre2018}. This last implies
\begin{align}
\label{eq:besov_ub_torus}
\calW_2(\hat P_n\sker, P) \lesssim\|\hat p_n\sker - p\|_{\dot H^{-1}(\bbT^d)} 
\asymp \|\hat p_n\sker - p\|_{\calB_{2,2}^{-1}(\bbT^d)}.
\end{align}
In contrast, when $P \in \calPac([0,1]^d)$, our upper bounds for wavelet estimators 
(and implicitly for empirical estimators in Corollary~\ref{cor:one_sample_emp_hypercube}) employed the following distinct relation, arising from the work of~\cite{weed2019a},
\begin{align}
\label{eq:besov_ub_cube}
 W_2(\hat P_n\sbc, P) \lesssim \|\hat p_n\sbc - p\|_{\calB_{2,1}^{-1}([0,1]^d)},
 \end{align}
and similarly for the estimator $\hat P_n\sper$ described in Appendix~\ref{app:pf_density_based}. It can be seen that the   $\calB_{2,2}^{-1}$ norm
is weaker than the $\calB_{2,1}^{-1}$ norm. While either of these norms provide sufficiently tight
upper bounds in equations~\eqref{eq:besov_ub_torus} and~\eqref{eq:besov_ub_cube} to obtain the minimax rate for density estimation 
in Wasserstein distance when $d \neq 2$, the former allows for a tighter logarithmic factor
to be derived when $d=2$. Inspired by the celebrated Ajtai--Koml\'os--Tusn\'ady matching theorem~\citep{ajtai1984,talagrand1992}, 
it is natural to conjecture that
the rate $\log n/n$ in the definition of $\tratesker$ cannot be further improved when $d=2$, for any of the conclusions of Theorem~\ref{thm:two_sample_kernel}. 

Theorem~\ref{thm:two_sample_kernel} is proved in Appendix~\ref{app:pf_kernel_based}, where the main difficulty is to show that 
the evaluation $L(\hat Q_m^{(\mathrm{ker})})$, of the linear functional $L$ from equation~\eqref{eq:first_order_von_mises},
has bias decaying at the quadratic rate $h_m^{2\alpha}$. As for our analysis of wavelet estimators,
this rate improves upon the naive  upper bound $|\bbE L(\hat Q_m^{(\mathrm{ker})}) |\lesssim h_m^{\alpha-1}$, 
which could have been deduced from the traditional bias of kernel density estimators in estimating
an $(\alpha-1)$-H\"older continuous density~\citep{tsybakov2008}.
Similar considerations arise
in the analysis of  kernel-based estimators for other important functionals, such as the integral of a squared density~\citep{gine2008a}.

\begin{remark}[Dependence Between Samples]
Our assumption of independence between the sample points $X_i$ and $Y_j$  
is only used to derive the sharp constant in the final term of Theorem~\ref{thm:two_sample_kernel--wasserstein},
which implies that, when $2(\alpha+1) > d$, 
$$\bbE\big|W_2^2(\hat P_n,\hat Q_m) - W_2^2(P,Q)\big|^2 \leq (1+o(1))\left( \frac{\Var_P[\phi_0(X)]}{n} + \frac{\Var_Q[\psi_0(Y)]}{m}\right).$$
If one is willing to inflate these leading constants, then all assertions of Theorem~\ref{thm:two_sample_kernel}
continue to hold under arbitrary dependence structures between the two i.i.d. samples. 
\end{remark}

\subsection{Toward Two-Sample Estimation over Smooth Domains}
\label{sec:smooth_domains} 
 
Our aim is now to show that an analogue 
of Theorem~\ref{thm:two_sample_kernel} holds
over generic domains $\Omega \subseteq \bbR^d$, provided that one is willing to assume
that Caffarelli's global regularity theorem (Theorem~\ref{thm:caffarelli--global}) 
holds uniformly over $\Omega$. Specifically, we will use the following conditions throughout this section.
\begin{enumerate}[leftmargin=1.65cm,listparindent=-\leftmargin,label=\textbf{(C1)}]   
\item  \label{assm:smooth_domain} $\Omega$ 
is a known compact, convex subset of $\bbR^d$, such that $\partial\Omega$ is $\calC^\infty$and $\calL(\Omega) = 1$. 
\end{enumerate}
\begin{enumerate}[leftmargin=1.65cm,listparindent=-\leftmargin,label=\textbf{(C2)}]   
\item  \label{assm:caffarelli} There exists $\epsilon_0 > 0$ such that
for any $M,\gamma > 0$ and $\epsilon \in (0,\epsilon_0)$, there exists a constant $C > 0$ depending only on $\Omega,M,\gamma,\epsilon$ such that
for any densities $\hat p,\hat q \in \calC^{\epsilon}(\Omega;M,\gamma)$, 
the unique mean-zero Brenier potential $\hat\varphi$
whose gradient pushes forward $\hat p$ onto $\hat q$ satisfies
$$\|\hat \varphi\|_{\calC^{2}(\Omega)} \leq C.$$
\end{enumerate}
As discussed previously, we are only able to verify condition~\ref{assm:caffarelli} 
when $\Omega$ is replaced by the torus $\bbT^d$. However, in view of Theorem~\ref{thm:caffarelli}, it is natural
to conjecture that condition~\ref{assm:caffarelli} is satisfied for other domains of the type~\ref{assm:smooth_domain}, 
and if such a result is proven in future work, then the bounds appearing in this section
can be applied. For completeness, we will also state a one-sample result over $\Omega$, for which condition~\ref{assm:caffarelli}
is not needed.

It is well-known that kernel density estimators suffer from {leading-order} boundary bias,
and are thus not minimax optimal for estimating strictly positive densities on compact subsets of $\bbR^d$.
In order to develop a minimax optimal estimator for densities supported   on $\Omega$, 
we will impose Neumann boundary conditions on the densities, and 
we will introduce an orthonormal basis of $L^2(\Omega)$ generated by the eigenfunctions of the Neumann Laplacian.
To elaborate, 
define $H_N^2(\Omega)$ to be the set of functions $u \in H^2(\Omega)\cap L_0^2(\Omega)$
satisfying 
$$\frac{\partial u}{\partial \nu} = 0, \quad \text{over } \partial\Omega,$$
where $\nu$ is an outward-pointing normal vector to $\partial\Omega$, and the 
{normal derivative}
is to be understood in the weak sense.
Under condition~\ref{assm:smooth_domain}, it
 is a standard fact that the negative Laplace operator $-\Delta$ is a self-adjoint
bijection of $H_N^2(\Omega)$ onto $ L_0^2(\Omega)$, which 
admits a real and discrete spectrum $0 < \lambda_1 \leq \lambda_2 \leq   \dots$, 
with corresponding eigenfunctions $\{\eta_\ell\}_{\ell=1}^\infty \subseteq H^2_N(\Omega)$
{~\citep{dunlop2020,evans2022}}. 
The latter form an orthonormal basis of $L_0^2(\Omega)$. 

Let the distributions $P,Q \in \calPac(\Omega)$ admit densities $p,q \in L^2(\Omega)$, 
and let $X_1, \dots, X_n \sim P$ and 
$Y_1,\dots, Y_m \sim Q$ be i.i.d. observations. 
Under condition~\ref{assm:smooth_domain}, the densities may be expanded~as
$$p = 1+\sum_{\ell=1}^\infty \alpha_\ell \eta_\ell, \quad q = 1+\sum_{\ell=1}^\infty \beta_\ell \eta_\ell,$$
where $\alpha_\ell = \int \eta_\ell dP$ and $\beta_\ell = \int \eta_\ell dQ$. 
Let $\tau \in \calC^\infty(\bbR_+)$ be a smooth approximation
to the indicator function $I(\cdot  < 1)$. Specifically, assume 
that $\tau$ is a nonincreasing and smooth function such that 
$\tau(x) = 1$ for all $x < 1/2$, and $\tau(x) = 0$ for all $x \geq 1$.  
Given an integer $L_n\geq 1$, set
$$\omega_\ell  = \tau(\lambda_\ell/\lambda_{L_n}), \quad \ell=1, 2, \dots,$$
and define the density estimators  
$$\tilde p_n^{(\mathrm{lap})} = 1 + \sum_{\ell=1}^{L_n} \omega_\ell \halpha_\ell \eta_\ell,\quad 
  \tilde q_m^{(\mathrm{lap})} = 1 + \sum_{\ell=1}^{L_m}    \omega_\ell \hat\beta_\ell\eta_\ell,$$
where $\halpha_\ell = \int\eta_\ell dP_n$,  $\hbeta_\ell = \int \eta_\ell dQ_m$ for $\ell=1,2,\dots.$
Density estimators of this type have also appeared in the works of~\cite{hendriks1990} and~\cite{cleanthous2020}.  
They may be thought of as truncated series estimators for which the truncation
is smoothed by the weight function $\tau$. As such, they are closely related to kernel density
estimators. In fact, if one were to replace~$\Omega$ by the torus~$\bbT^d$ (in which case the Neumann boundary
condition is replaced by the periodic boundary condition), then $\tilde p_n^{(\mathrm{lap})} $
would precisely be the kernel density estimator {whose
kernel is the inverse Fourier
transform of the map $\xi \in \bbR^d \mapsto \tau(\|2\pi\xi\|^2)$,}
 and whose bandwidth is $\lambda_{L_n}^{-1/2}$.  
We also note that if one were to choose the nonsmooth function $\tau = I(\cdot < 1)$, then $\tilde p_n^{(\mathrm{lap})} $
would reduce to a traditional series estimator, but our analysis does not extend to this case: the smoothness of $\tau$ is crucial for our use of $L^r(\Omega)$ multiplier arguments,
as we discuss further in  Remark~\ref{rem:fefferman} of Appendix~\ref{app:smooth_domains}.  
  
As in previous sections, we define the final estimators to be the densities given by
$$\hat p_n^{(\mathrm{lap})} \propto \tilde p_n^{(\mathrm{lap})} I(\tilde p_n^{(\mathrm{lap})}  \geq 0), \quad 
  \hat q_m^{(\mathrm{lap})} \propto \tilde q_m^{(\mathrm{lap})} I(\tilde q_m^{(\mathrm{lap})}  \geq 0),$$
  and we let $\hat P_n^{\mathrm{(lap)}}$ and $\hat Q_m^{\mathrm{(lap)}}$ be the induced distributions. 
  Furthermore, let $\hat T_{nm}^{\mathrm{(lap)}}$ be the optimal transport map from $\hat P_n^{\mathrm{(lap)}}$
  to $\hat Q_m^{\mathrm{(lap)}}$, and $\widebar T_m^{\mathrm{(lap)}}$ the optimal transport
  map from $P$ to $\hat Q_m^{\mathrm{(lap)}}$. 
We omit the superscripts ``lap'' for the remainder of this section.
In order to state convergence rates for these estimators, we will work over the  constrained
H\"older spaces
$$\calC_N^s(\Omega) = \left\{ u \in \calC^s(\Omega): \frac{\partial \Delta^{j}u}{\partial\nu} = 0 \text{ on } \partial\Omega, ~ 
0 \leq j \leq \left\lfloor \frac{s-1}{2}\right\rfloor \right\},$$ 
and the associated balls 
$\calC_N^s(\Omega;M) = \calC^s(\Omega;M) \cap \calC^s_N(\Omega)$ and 
$\calC_N^s(\Omega;M,\gamma) = \calC^s(\Omega;M,\gamma) \cap \calC_N^s(\Omega)$, for $M,\gamma,s > 0$. 
Note that $\calC^s(\Omega)  =  \calC^s_N(\Omega)$ for $s < 1$. 
Our main result is the following. 
\vspace{-0.2in}
\begin{theorem}
\label{thm:smooth_domains}
Let $\Omega$ be a domain satisfying condition~\ref{assm:smooth_domain}. 
Assume the distributions $P,Q \in \calPac(\Omega)$ admit  
densities $p,q\in \calC_N^{\alpha-1}(\Omega; M,\gamma)$
for some $\alpha > 1$, $\alpha\not\in\bbN$,  and $M,\gamma > 0$.
Let $L_n^{1/d} \asymp n^{1/({d+2(\alpha-1))}}$. Then, there exists a constant
$C > 0$ depending only on $\Omega,M, \gamma, \alpha,\tau$ such that the following statements hold.
\begin{thmlist}
\item \label{thm:smooth_domains--one_sample} (One-Sample) 
Assume $\varphi_0 \in \calC^2(\Omega;M)$. Then, 
\begin{align*}
\bbE \big\|\widebar T_{m} - T_0\big\|_{L^2(P)}^2  &\leq C \mtratesker. 
\end{align*}
\item \label{thm:smooth_domains--two_sample} (Two-Sample) Assume  that
condition~\ref{assm:caffarelli} holds. 
Then,
\begin{align*}
\bbE \big\|\hat T_{nm} - T_0\big\|_{L^2(P)}^2  &\leq C \nmtratesker,
\end{align*} 
 
\end{thmlist}
\end{theorem} 
Under the strong condition~\ref{assm:caffarelli}, Theorem~\ref{thm:smooth_domains--two_sample} shows that
our two-sample results on transport 
map estimation over the torus can be extended to 
generic domains\footnote{While we assume for simplicity that $P$ and $Q$
 share the same support $\Omega$, Theorem~\ref{thm:smooth_domains}
 can readily be extended to the case where $P$ and $Q$ are   supported on 
 distinct domains which both satisfy condition~\ref{assm:smooth_domain},
 with natural modifications to the statement of condition~\ref{assm:caffarelli}
 and to the definitions of $\hat p_n$, $\hat q_m$.}
 $\Omega$, 
provided that one is willing to place Neumann boundary conditions   
on the true densities. While other boundary conditions may have been used in our analysis, it is important that they 
be chosen such that $p,q$ are permitted to be smooth and strictly positive over $\Omega$; in particular, 
one cannot impose the Dirichlet  condition $p=q = 0 $ over $\partial\Omega$. Our current
boundary conditions are satisfied by 
a wide range of densities, such as those whose gradient vanishes at the boundary, 
or those which are equal to finite linear combinations of the eigenfunctions $\{\eta_\ell\}_{\ell=1}^\infty$. 
We also emphasize that Theorem~\ref{thm:smooth_domains} imposes no boundary conditions when $\alpha < 2$.

To prove Theorem~\ref{thm:smooth_domains}, our primary contribution is to derive Propositions~\ref{prop:density_risk_smooth_domain} 
and~\ref{prop:smooth_domain_neg_exp} 
of Appendix~\ref{app:smooth_domains}, which state
convergence
rates for $\hat p_n$ under the spectral   Sobolev norms $\calH^{t,r}(\Omega)$ which we define therein.
Here, $t \in \bbR$ and $r > 1$ are smoothness and integrability indices, respectively. Under some conditions on $r$, our
results imply that for large enough $d$, and $-\infty < t <  \alpha-1$,   it holds that
\begin{equation}
\label{eq:illustrate_lap_rate}
\bbE \|\hat p_n - p\|_{\calH^{t,r}(\Omega)} \lesssim n^{-\frac{\alpha-1-t}{2(\alpha-1) + d}},
\end{equation} 
assuming the same conditions as Theorem~\ref{thm:smooth_domains}. This bound has two implications:
\begin{itemize}
\item  On the one hand, taking $t=-1$, $r=2$,  and applying equation~\eqref{eq:peyre},
we deduce that $\hat p_n$ is a minimax optimal density estimator under the 2-Wasserstein distance. To the best of our knowledge, this is the only
known convergence rate for density estimation under the Wasserstein distance over Euclidean domains
with non-rectangular boundary (apart from the special case $\alpha\in (1,2]$, for which density estimation
has been studied without support assumptions by~\cite{weed2019a}).
We also highlight that~\cite{divol2022} has studied the case of boundary-free manifolds. 
\item On the other hand, take $t=\epsilon/2$ for some $\epsilon > 0$, and let $r > 2d/\epsilon$. 
Then, using a Sobolev embedding argument, equation~\eqref{eq:illustrate_lap_rate} leads to
a convergence rate for $\hat p_n$ under the $\calC^\epsilon(\Omega)$ norm.
In particular, this allows us to infer that $\hat p_n$ and $\hat q_m$ satisfy the conditions on the densities 
in assumption~\ref{assm:caffarelli},
with high probability, thus allowing us to infer that $\hat\varphi_{nm}$ is of class $\calC^{2+\epsilon}(\Omega)$
with uniformly bounded H\"older norm.
  \end{itemize}
We close this section by noting that, if one is willing to settle for pointwise asympotics, 
then Theorem~\ref{thm:smooth_domains--one_sample} can be stated without 
any smoothness assumptions on the potential $\varphi_0$. Indeed, the following
is a consequence of Caffarelli's regularity theory (Theorem~\ref{thm:caffarelli}). 
\begin{corollary}
\label{cor:one_sample_smooth_domain}
Let $\Omega$ be a domain satisfying condition~\ref{assm:smooth_domain}. 
Assume the distributions $P,Q \in \calPac(\Omega)$ admit  
densities $p,q\in \calC_N^{\alpha-1}(\Omega)$
for some $\alpha > 1$.
Let $L_m^{1/d} \asymp m^{1/({d+2(\alpha-1))}}$. Then, there exists a constant
$C > 0$ depending  on $\Omega,\alpha,\tau,p,q$ such that 
\begin{align*}
\bbE \big\|\widebar T_{m} - T_0\big\|_{L^2(P)}^2  &\leq C \mtratesker. 
\end{align*}
\end{corollary}

\section{Efficient Statistical Inference for Wasserstein Distances}
\label{sec:inference}

We now complement our results on estimation rates for Wasserstein distances
by deriving limit laws, in Section~\ref{sec:clt}, 
for the   plugin estimators studied in Sections~\ref{sec:one_sample}--\ref{sec:two_sample}. 
We then derive lower bounds in Section~\ref{sec:efficiency}, 
which show that these estimators are asymptotically efficient under suitable conditions.

\subsection{Central Limit Theorems for Smooth Wasserstein Distances}
\label{sec:clt}  
 
Recall that we respectively denote by
$P_n, \hat P_n^{(\mathrm{bc})}, 
\hat P_n^{(\mathrm{ker})}$, 
the 
empirical measure and the distributions induced
by the  
boundary-corrected  and   
kernel   
density estimators 
of $p$ (and similarly for $q$), as  defined in Sections~\ref{sec:one_sample}--\ref{sec:two_sample}.
 Given a smoothness parameter $\alpha > 1$
to be specified, 
let their tuning parameters
be chosen as {$2^{J_n} \asymp h_n^{-1}  \asymp n^{1/ (d+2(\alpha-1))}$}, and assume that the kernel $K$ satisfies 
condition~\hyperref[assm:kernel]{\textbf{K1($2\alpha,\kappa$)}} for some $\kappa > 0$. 
Furthermore, in what follows,   $X_1, \dots, X_n \sim P$ and $Y_1, \dots, Y_m \sim Q$ 
denote i.i.d. samples which are independent of each other, and we write
$$\sigma_\rho^2 = (1-\rho) \Var_P[\phi_0(X)] + \rho \Var_Q[\psi_0(Y)],\quad \text{for any } \rho\in [0,1],$$
where we recall that $\phi_0 = \|\cdot\|^2-2\varphi_0$ and $\psi_0 = \|\cdot\|^2 - 2\varphi_0^*$, 
for any given Brenier potential $\varphi_0$ in the optimal transport problem from $P$ to $Q$. 
For the various estimators $\hat P_n$ and $\hat Q_m$ under consideration, we will derive central limit theorems
of the form 
\begin{align}
\label{eq:clt_one_sample}
\sqrt n \Big( W_2^2(\hat P_n, Q) - W_2^2(P,Q)\Big) &\rightsquigarrow N(0,\sigma_0^2),\quad \text{as } n\to\infty,
\quad\text{and} \\
\label{eq:clt_two_sample}
\sqrt{\frac{nm}{n+m}} \Big(W_2^2(\hat P_n, \hat Q_m) - W_2^2(P,Q)\Big)
&\rightsquigarrow N(0, \sigma_\rho^2),\quad 
\text{as } n,m\to\infty, \ \frac n {n+m} \to \rho,
\end{align}
for some $\rho\in [0,1]$. Our main result is the following.
 \begin{theorem}[Central Limit Theorems]
\label{thm:clt}
Assume that $P,Q \in \calPac(\Omega)$  
admit positive and bounded densities $p,q$   over $\Omega$. Then, the following assertions hold.
\begin{thmlist} 
\item \label{thm:clt--torus} (Density Estimation over the Torus) Let $\Omega = \bbT^d$ and
assume $p,q \in \calC^{\alpha-1}(\Omega)$ for some $\alpha > 1$ satisfying $2(\alpha+1) > d$.
Then, equations~\eqref{eq:clt_one_sample}--\eqref{eq:clt_two_sample}
hold when
$$(\hat P_n,\hat Q_m) = (\hat P_n\sker, \hat Q_m\sker).$$

\item \label{thm:clt--hypercube} (Density Estimation over the Hypercube) Let $\Omega = [0,1]^d$, and assume $p,q \in \calC^{\alpha-1}(\Omega)$
for some $\alpha > 1$ satisfying $2(\alpha+1) > d$. Assume additionally that 
$\varphi_0  \in \calC^{\alpha+1}(\Omega)$. Then, 
equation~\eqref{eq:clt_one_sample} holds when 
$$(\hat P_n,\hat Q_m) = (\hat P_n^{(\mathrm{bc})}, \hat Q_m^{\mathrm{(bc)}}).$$
Furthermore, equation~\eqref{eq:clt_two_sample} holds under the additional condition $\varphi_0^* \in \calC^{\alpha+1}(\Omega)$. 
\vspace{0.08in}
\item \label{thm:clt--empirical} (Empirical Measures) Let $\Omega$ be either $\bbT^d$ or $[0,1]^d$. Assume $d \leq 3$, 
and $\varphi_0 \in \calC^2(\Omega)$. 
Then equations~\eqref{eq:clt_one_sample}--\eqref{eq:clt_two_sample}
hold when 
$$(\hat P_n,\hat Q_m)=(P_n,Q_m).$$
\end{thmlist}
\end{theorem}

To the best of our knowledge, Theorem~\ref{thm:clt} provides the first known central limit theorems 
for nonparametric plugin estimators of the squared Wasserstein distance  in arbitrary dimension  $d \geq 1$ 
which are centered at their population counterpart $W_2^2(P,Q)$. 

We emphasize that the parametric scaling in the above result is made possible by the   
smoothness condition $2(\alpha+1) > d$. 
We do not generally expect that a central limit theorem for $W_2^2( \hat P_n,Q)$ centered
at $W_2^2(P,Q)$ can be obtained
when $d > 2(\alpha+1)$, as  the squared bias of this estimator may then dominate its variance. 
In contrast, even in the absence of  smoothness conditions, \cite{delbarrio2019a}   derived limit laws of the form 
\begin{equation}
\label{eq:delbarrio_clt}
\sqrt n \left( W_2^2(P_n, Q) - \bbE W_2^2(P_n,Q) \right) \rightsquigarrow N(0,\Var[\phi_0(X)]),
\quad n\to \infty,
\end{equation}
and two-sample analogues, for any $d \geq 1$. While such results are important and hold
under milder regularity conditions than those
of Theorem~\ref{thm:clt}, their centering sequence is a barrier to their use for statistical inference
for Wasserstein distances. The low-dimensional case $d \leq 3$ is an exception, 
in which the sequence $\bbE W_2^2(P,Q_n)$ can be replaced by $W_2^2(P,Q)$, as we show in
Theorem~\ref{thm:clt--empirical}. This  fact can  be deduced from  our bias bounds in Corollary~\ref{cor:one_sample_emp_hypercube}; a similar 
observation for $d \leq 3$ was also made in the recent work of~\cite{hundrieser2022},
under weaker assumptions than ours. 
 
Theorem~\ref{thm:clt} is a consequence of the stability bounds in Theorem~\ref{thm:stability}
and Proposition~\ref{thm:two_sample_stability}, which we use to show that $W_2^2(\hat P_n, Q)-W_2^2(P,Q)$ asymptotically
has same distribution as the linear functional $F(\hat P_n) = \int\phi_0 d(\hat P_n-P)$.
We defer the proof to Appendix~\ref{app:pf_clts}. 
Though our arguments differ significantly
from those used by \cite{delbarrio2019a}, 
this  functional $F$ also plays an important role in their work. Indeed, they prove that
$n\Var[W_2^2(P_n,Q) - F(P_n)] = o(1)$ 
under mild conditions. 
In Appendix~\ref{app:pf_clts_alternate}, we provide an alternate proof of Theorem~\ref{thm:clt} which does not make
use of our stability bounds, but which instead combines a generalization of the proof strategy of~\cite{delbarrio2019a}, 
together with our convergence rates for optimal transport maps in Theorems~\ref{thm:one_sample_wavelet}, \ref{thm:two_sample_density}
and~\ref{thm:two_sample_kernel}.

The variance $\sigma_\rho^2$ is positive
if and only if $\phi_0$ and $\psi_0$ are non-constant, thus
the distributional limits in Theorem~\ref{thm:clt} are non-degenerate whenever $P \neq Q$. 
When $P = Q$, it could already have been deduced from Lemma~\ref{lem:wavelet_wasserstein} that, for instance,
the correct scaling for  $W_2^2(\hat P_n\sbc, Q)$ is of larger order than $\sqrt n$. We leave open the question of obtaining
limit laws under this~regime.

The variances appearing in Theorem~\ref{thm:clt} can be consistently estimated 
using   estimators for the Kantorovich potentials $\phi_0$ and $\psi_0$. 
Indeed, using a qualitative stability result for Kantorovich potentials~\citep{santambrogio2015},
we show in Proposition~\ref{prop:variance_estimation} of Appendix~\ref{app:variance_estimation}
that for any of the estimators $(\hat P_n, \hat Q_m)$ described in Theorem~\ref{thm:clt}, 
if $(\hat\phi_{nm},\hat\psi_{nm})$ is a bounded pair of Kantorovich potentials 
in the optimal transport problem from $\hat P_n$ to $\hat Q_m$, then, 
\begin{align*} 
\hat\sigma_{0,nm}^2 &:= \Var_{U\sim P_n}[\hphi_{nm}(U)]
 \overset{p}{\longrightarrow} \sigma_0^2 ,~~
\text{and,}  \quad 
\hat\sigma_{1,nm}^2  := \Var_{V\sim Q_m}[ \hat\psi_{nm}  (V)] \overset{p}{\longrightarrow} \sigma_1^2 .
\end{align*} 
Letting $\hsigma_{nm}^2 = \frac{m\hsigma_{0,nm}^2 +n \hsigma_{1,nm}^2}{n+m}$, 
we  deduce  
from Theorem~\ref{thm:clt--hypercube} that  
for any $\delta \in (0,1)$,  
$$W_2^2(\hat P_n\sbc, \hat Q_m\sbc) \pm \hsigma_{nm} z_{\delta/2}\sqrt{\frac{n+m}{nm}}$$
is an asymptotic, two-sample $(1-\delta)$-confidence interval
for $W_2^2(P,Q)$,
assuming $P \neq Q$. Here $z_{\delta/2}$ denotes the $\delta/2$ quantile of the standard Gaussian distribution.
 To the best of our knowledge, this is the first practical confidence interval for the Wasserstein distance 
between absolutely continuous distributions in arbitrary dimension, albeit
under the strong assumptions that $2(\alpha+1) > d$, and that
the underlying domain $\Omega$ is known and of appropriate type.

\subsection{Efficiency Lower Bounds for Estimating the Wasserstein Distance}
\label{sec:efficiency}
Our aim is now to derive  efficiency lower bounds, showing that
the asymptotic variances in Theorem~\ref{thm:clt} cannot be improved by any other regular estimatorof $W_2^2(P,Q)$.  
In discussing semiparametric efficiency theory, we follow the definitions and notation
of~\cite{vandervaart1998,vandervaart2002}. We begin with a derivation of the efficient influence function
of the functional 
\begin{equation*}
\Phi_Q:\calP(\Omega) \to \bbR, \quad \Phi_Q(P)   = W_2^2(P,Q),
\end{equation*}
where $\Omega$ is either $\bbT^d$ or a subset of $\bbR^d$, and $Q \in \calPac(\Omega)$
is given. \citeauthor{santambrogio2015} (\citeyear{santambrogio2015}, Proposition 7.17) has previously derived the first variation of this functional. 
The
following is a version of their result, stated
in a language
suitable for our development.
\begin{lemma}[Efficient Influence Function]
\label{lem:influence}
Let $\Omega$ be $\bbT^d$ or any connected and compact subset of $\bbR^d$, and let $P,Q \in \calPac(\Omega)$. Assume that the density of at least one of $P$ and $Q$ 
is positive over $\Omega$. Let $(\phi_0,\psi_0)$ denote a pair of Kantorovich potentials
in the optimal transport problem from $P$ to $Q$, uniquely defined up to translation by a constant, and  define the map
\begin{alignat*}{2}
 \tilde \Phi_{(P,Q)}(x) = \phi_0(x) - \int\phi_0 dP,\quad x \in \Omega.
\end{alignat*}
Let $\dot\calP_P \subseteq L^2_0(P)$ be any tangent set containing $\tilde \Phi_{(P,Q)}$. Then, the functional $\Phi_Q$ is  differentiable
relative to $\dot\calP_P$, with efficient influence function given by $\tilde \Phi_{(P,Q)}$.
\end{lemma}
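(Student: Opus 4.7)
The plan is to verify directly the differentiability of $\Phi_Q$ at $P$: given any smooth one-parameter submodel $\{P_t\}$ passing through $P$ at $t=0$ with score $g \in \dot\calP_P$, I would prove
\begin{equation*}
\lim_{t \to 0^+} \frac{W_2^2(P_t, Q) - W_2^2(P, Q)}{t} \;=\; \int \tilde\Phi_{(P,Q)}(x)\, g(x)\, dP(x).
\end{equation*}
This Riesz-representation formula identifies $\tilde\Phi_{(P,Q)}$ as an influence function. Since by construction $\tilde\Phi_{(P,Q)} \in L_0^2(P)$ and by hypothesis $\tilde\Phi_{(P,Q)} \in \dot\calP_P$, it equals its own $L^2(P)$-projection onto $\overline{\dot\calP_P}$, which is by definition the efficient influence function.

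The main tool is an envelope-type bound from Kantorovich duality. Let $(\phi_t, \psi_t)$ be optimal Kantorovich potentials for the problem $(P_t, Q)$, noting that the dual admissibility constraint $\phi \oplus \psi \le \norm{\cdot}^2$ depends only on the cost and not on the source measure. Plugging $(\phi_0, \psi_0)$ into the dual for $(P_t, Q)$ and subtracting dual optimality at $t = 0$ yields the lower bound
\begin{equation*}
W_2^2(P_t, Q) - W_2^2(P, Q) \;\ge\; \int \phi_0\, d(P_t - P),
\end{equation*}
while plugging $(\phi_t, \psi_t)$ into the dual for $(P, Q)$ yields the matching upper bound
\begin{equation*}
W_2^2(P_t, Q) - W_2^2(P, Q) \;\le\; \int \phi_t\, d(P_t - P).
\end{equation*}
The proof then reduces to two claims: $\mathrm{(i)}$ $\int \phi_0\, d(P_t - P) = t \int \phi_0 g\, dP + o(t)$, and $\mathrm{(ii)}$ $\int (\phi_t - \phi_0)\, d(P_t - P) = o(t)$.

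Claim $\mathrm{(i)}$ follows from the quadratic mean differentiability of the submodel applied to the bounded test function $\phi_0$; $\phi_0$ is indeed bounded on compact $\Omega$, because Kantorovich potentials for the quadratic cost are automatically $C\diam(\Omega)$-Lipschitz. Claim $\mathrm{(ii)}$ is controlled via
\begin{equation*}
\Big|\int (\phi_t - \phi_0)\, d(P_t - P)\Big| \;\le\; \|\phi_t - \phi_0\|_\infty \cdot \|P_t - P\|_{\mathrm{TV}},
\end{equation*}
together with $\|P_t - P\|_{\mathrm{TV}} = O(t)$, which again comes from smoothness of the submodel. The remaining task is to show $\|\phi_t - \phi_0\|_\infty \to 0$ under a suitable normalization, and this is the step I expect to be the main obstacle.

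For the stability argument, I would normalize so that $\int \phi_t\, dP = \int \phi_0\, dP$ for all $t$, after which $\{\phi_t\}$ is equi-Lipschitz and uniformly bounded on $\Omega$. By Arzel\`a--Ascoli, any sequence $t_k \to 0$ admits a subsequence along which $\phi_{t_k} \to \phi_*$ and (up to $c$-conjugation) $\psi_{t_k} \to \psi_*$ uniformly. Since $P_{t_k} \to P$ weakly and $W_2^2$ is continuous in its marginals on the compact set $\Omega$, passing to the limit in the dual identity $W_2^2(P_{t_k}, Q) = \int \phi_{t_k}\, dP_{t_k} + \int \psi_{t_k}\, dQ$ shows that $(\phi_*, \psi_*)$ is an optimal pair for $(P, Q)$. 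The hypothesis that $\Omega$ is connected and that at least one of $P$ or $Q$ has positive density ensures that optimal Kantorovich potentials are unique up to an additive constant, so $\phi_* = \phi_0$ under the chosen normalization; as every subsequential limit agrees, the full family converges uniformly, closing the argument.
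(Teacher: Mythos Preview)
Your proposal is correct and follows essentially the same route as the paper: the envelope bounds from Kantorovich duality, the linearization of the lower bound via differentiability of the path, and the control of the upper bound through uniform stability of Kantorovich potentials (the paper invokes Santambrogio's Theorem~1.52, whose proof is precisely the Arzel\`a--Ascoli plus uniqueness argument you sketch). Your organization of the upper bound as $\int(\phi_t-\phi_0)\,d(P_t-P)\le\|\phi_t-\phi_0\|_\infty\cdot\|P_t-P\|_{\mathrm{TV}}=o(1)\cdot O(t)$ is a slight variation on the paper's direct dominated-convergence argument, but the mechanism is identical.
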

Lemma~\ref{lem:influence} is proved in Appendix~\ref{app:pf_efficiency}.
 The assumption that  $P$ or $Q$ have support equal to $\Omega$ 
is only used to ensure that  $\phi_0$ is unique, up to translation by a constant
(cf. Proposition 7.18 of~\cite{santambrogio2015}). 
While this condition is not necessary~\citep{staudt2022}, we retain it for simplicity since
we require it for our upper bounds. 

By combining this result with the Convolution Theorem (\cite{vandervaart1998}, Theorem~25.20), it immediately follows 
that any regular estimator sequence of $\Phi_Q(P)$ has asymptotic variance bounded below  
by $\Var_P[\phi_0(X)]/ n$. The one-sample plugin estimators in Theorem~\ref{thm:clt} are thus optimal 
among regular estimators. A similar remark was also made in the recent independent work of~\cite{goldfeld2022}, 
which studies efficient statistical inference for several variants of the Wasserstein distance.

We next complement this result with an asymptotic minimax lower bound, 
which relaxes the assumption of regularity of such estimator sequences,
at the expense of only comparing their worst-case risk. 
In this case, we also consider the two-sample setting. 
Using a construction of \cite{vandervaart1998}, 
we fix two differentiable paths $(P_{t,h_1})_{t \geq 0}$ and $(Q_{t,h_2})_{t \geq 0}$,
for any $(h_1,h_2) \in \bbR^2$, 
with respective score functions $h_1 \widetilde \Phi_{(P,Q)}$ and 
$h_2 \widetilde \Psi_{(P,Q)}$, where 
$\widetilde \Psi_{(P,Q)}(y) := \psi_0(y) - \int \psi_0 dQ.$
These paths are defined in equations~(\ref{eq:diff_opath_main_P}--\ref{eq:diff_opath_main_Q})
of Appendix~\ref{app:pf_efficiency}, and we use them 
to obtain the following
asymptotic minimax lower bound. 
\begin{theorem}[Asymptotic Minimax Lower Bound over $\bbT^d$]
\label{thm:asymptotic_minimax} 
Given $M,\gamma > 0$ and $\alpha > 1$, let $P, Q \in \calPac(\bbT^d)$
admit densities $p,q \in \calC^{\alpha-1}(\bbT^d;M,\gamma)$.  
Let $(\phi_0,\psi_0)$ denote a pair of Kantorovich potentials between $P$ and $Q$,
unique up to translation by a constant. 
Then, there exist $\widebar M, \widebar \gamma,\widebar u > 0$, depending only on $M, \gamma,\alpha$,
such that $P_{t,h_1}$ and $Q_{t,h_2}$ admit densities   in $\calC^{\alpha-1}(\bbT^d;\widebar M, \widebar \gamma)$,
for all $t > 0$ and $h_1,h_2 \in \bbR$ satisfying $t(|h_1|\vee  |h_2|) \leq  \widebar u$. Furthermore, 
\begin{thmlist} 
\item \label{thm:asymptotic_minimax--one_sample} (One Sample)
For any estimator sequence $(U_n)_{n\geq 1}$, we have
$$\sup_{\substack{\calI \subseteq \bbR \\ |\calI| < \infty}} \liminf_{n\to\infty} \sup_{h \in \calI} n\bbE_{n,h}
\big|U_n - \Phi_Q(P_{n^{-1/2}, h})\big|^2  \geq \Var_P[\phi_0(X)].$$
where   $\bbE_{n,h}$ denotes the expectation taken over the probability measure ${P_{n^{-1/2},h}^{\otimes n}}$.
\item \label{thm:asymptotic_minimax--two_sample} (Two Sample) For any estimator sequence $(U_{nm})_{n,m\geq 1}$, we have
\begin{equation*}
\begin{multlined}[\linewidth]
\sup_{\substack{\calI \subseteq \bbR^2\\ |\calI| < \infty}} \liminf_{n,m\to\infty} \sup_{(h_1,h_2)\in \calI} {\frac{nm}{n+m}}\bbE_{n,m,h_1,h_2} \left|U_{nm} - W_2^2(P_{n^{-1/2},h_1}, Q_{ m^{-1/2},h_2})\right|^2 \\[-0.2in]
\geq (1-\rho) \Var_P[\phi_0(X)] + \rho\Var_Q[\psi_0(Y)],
\end{multlined}
\end{equation*}
where the limit inferior is taken as $n/(n+m) \to \rho \in [0,1]$, and 
$\bbE_{n,m,h_1,h_2}$ denotes the expectation taken over 
the probability measure ${P_{n^{-1/2},h_1}^{\otimes n} \otimes Q_{m^{-1/2},h_2}^{\otimes m}}$.
\end{thmlist}
\end{theorem}
The proof of Theorem~\ref{thm:asymptotic_minimax} appears in Appendix~\ref{app:pf_efficiency}.
For technical purposes,
our statement assumes that 
$P,Q$ admit densities lying in a strict subset $\calC^{\alpha-1}(\bbT^d;M,\gamma)$
of  $\calC^{\alpha-1}(\bbT^d;\widebar M,\widebar \gamma)$, the latter being the class in which
our differentiable paths are shown to lie. 
With this caveat, 
our plugin estimators achieve the asymptotic minimax lower bounds of
Theorem~\ref{thm:asymptotic_minimax}. For example, under the conditions of
Theorem~\ref{thm:two_sample_kernel}, when $2(\alpha+1) > d$ we deduce that
\begin{align*}
\sup_{\substack{\calI \subseteq \bbR^2\\ |\calI| < \infty}} \liminf_{n,m\to\infty} \sup_{(h_1,h_2)\in \calI} {\frac{nm}{n+m}}\bbE_{n,m,h_1,h_2} \left| W_2^2(\hat P_n\sker, \hat Q_m\sker)- W_2^2(P_{n^{-1/2},h_1}, Q_{ m^{-1/2},h_2})\right|^2 \\[-0.1in]
 ~~= (1-\rho) \Var_P[\phi_0(X)] + \rho\Var_Q[\psi_0(Y)].
\end{align*} 

It can be verified that Theorem~\ref{thm:asymptotic_minimax} continues to hold
with $\bbT^d$ replaced by $[0,1]^d$, under
the additional condition that $\varphi_0,\varphi_0^* \in \calC^{\alpha+1}([0,1]^d)$.
We were unable, however, to derive differentiable paths $Q_{t,h} = (\nabla\varphi_{t,h})_\# P_{t,h}$ 
which simultaneously satisfy the H\"older continuity properties
of Theorem~\ref{thm:asymptotic_minimax} while also having  Brenier potentials $\varphi_{t,h},\varphi_{t,h}^*$ 
with uniformly bounded $\calC^{\alpha+1}([0,1]^d)$ norm.

\section{Discussion}
We have shown that several families of plugin estimators for smooth optimal transport maps are minimax
optimal. Our analysis   hinged upon stability arguments which relate this problem to that of estimating
the Wasserstein distance between two distributions, and, in turn, to that of estimating
a distribution under the Wasserstein distance. The latter question is well-studied in the literature, 
and formed a key component in deriving convergence rates for the former two problems. 
As a byproduct of our stability results, we derived central limit
theorems and efficiency lower bounds for estimating the Wasserstein distance between any two 
sufficiently smooth distributions. These results lead to the first practical confidence intervals for the Wasserstein distance in general dimension.

The  estimators in this work are simple to compute and   minimax optimal, but 
we make no claim that their computational efficiency is optimal. 
For example, our plugin estimators of the Wasserstein distance between
$(\alpha-1)$-smooth densities  can be approximated
by sampling $N$ observations from our density estimators, and 
computing the Wasserstein distance between the empirical measures formed by these observations,
which can be done in polynomial time with respect to $N$~\citep{peyre2019}. In order for this approximation
to achieve comparable risk to our theoretical estimators in the high-smoothness regime $\alpha \gtrsim d$,
one must take $N \asymp n^{cd}$ for some $c \geq 1$. Our estimator thus
requires computation time depending exponentially on~$d$.
{\cite{vacher2021,muzellec2021,lin2024} analyzed  alternative estimators 
based on kernel sum-of-squares}, which have more favorable computational properties; though their estimators are not minimax optimal, 
they can be computed in polynomial time if $\alpha \gtrsim d$.
It is an interesting open question to derive polynomial-time estimators in $d$ 
which are also minimax optimal.
{More broadly, there are other computationally efficient estimators for optimal transport maps
based on entropic
regularization~\citep{cuturi2013} and input convex neural networks~\citep{makkuva2020}
whose $L^2$ risks have very recently been studied~\citep{pooladian2021,divol2022a}, 
but are not yet known 
to achieve the minimax rate.}

In our analysis of smooth two-sample  optimal transport map  estimators, we required the fitted Brenier potential to be twice H\"older-smooth, for which we appealed
to Caffarelli's regularity theory. Since we do not know whether Caffarelli's boundary
regularity estimates hold uniformly in the various problem parameters, we resorted to working over $\bbT^d$, where a uniform analogue of Caffarelli's theory 
is available (cf. Theorem~\ref{thm:torus_regularity}). 
We showed in Section~\ref{sec:smooth_domains} that this analysis
can be extended to  generic domains $\Omega$ of $\bbR^d$, conditionally
on a uniform version of Caffarelli's boundary regularity theory.
To the best of our knowledge, it remains an interesting open to question
to verify whether this condition  indeed holds.

Finally, our work leaves open the question of estimating optimal transport maps 
when the ground cost function is not the squared Euclidean norm. While each of the plugin estimators
in this paper can be naturally defined  for generic cost functions, their theoretical analysis
presents a breadth of challenges. For example, although the regularity theory of Caffarelli has been
generalized to cover a large collection of cost functions \citep{ma2005},  
this collection does not include the costs $\|\cdot\|^p$ for $p \neq 2$ and $p > 1$, which are 
arguably most widely-used in statistical applications. For such costs, it remains unclear
what regularity conditions are sensible to place on the population optimal transport map in order
to obtain analogues of our risk bounds, and we hope to explore such questions in future work.

\begin{appendix}

\section{Smoothness Classes and  Density Estimation}
\label{app:smoothness_classes}
In this Appendix, we collect several definitions and properties of H\"older spaces, 
Besov spaces, and Sobolev Spaces, as well as properties of wavelet and kernel density estimators.

\subsection{H\"older Spaces} 
Given a closed   
set $\Omega\subseteq \bbR^d$, 
let $\calC_u(\Omega)$ denote the set of uniformly continuous real-valued functions on $\Omega$. 
For any function $f:\Omega \to \bbR$ which is differentiable 
up to order $k \geq 1$ in the interior of $\Omega$, 
and any multi-index $\gamma \in \bbN^d$, we write $|\gamma| = \sum_{i=1}^d \gamma_i$, and for all $|\gamma|\leq k$, 
$$D^\gamma f = \frac{\partial^{|\gamma|} f}{\partial x_1^{\gamma_1}\dots \partial x_d^{\gamma_d}}.$$
Given $\alpha \geq 0$, the H\"older space $\calC^\alpha(\Omega)$ is defined as the set
of functions $f \in \calC_u(\Omega)$ 
which are differentiable to
order $\lfloor \alpha\rfloor$
in the interior of $\Omega$, 
with derivatives extending continuously up to the boundary of $\Omega$, and such that the H\"older norm  
$$\norm f_{\calC^\alpha(\Omega)} = \sum_{j=0}^{\lfloor\alpha\rfloor} \sup_{|\gamma| = j} \|D^\gamma f\|_{\infty}
 + \sum_{|\gamma|=\lfloor \alpha \rfloor} \sup_{\substack{x,y \in \Omega^\circ \\ x\neq y}} \frac{|D^\gamma f(x) - D^\gamma f(y)|}{\norm{x-y}^{\alpha-\lfloor \alpha\rfloor}}$$
is finite. 
Furthermore, for any $\alpha \geq 0$, $\calC^\alpha(\bbT^d)$ (resp. $\calC_u(\bbT^d)$) is   defined as
the set of $\bbZ^d$-periodic functions $f:\bbR^d \to \bbR$ such that $f \in \calC^\alpha(\bbR^d)$
(resp. $f\in \calC_u(\bbR^d)$). 

Recall that $\calC^\alpha(\Omega;\lambda)$
denotes the closed $\calC^\alpha(\Omega)$ ball of radius $\lambda > 0$. We occasionally use the following simple observation. 
\begin{lemma}
\label{lem:holder_products}
Let $\Omega$ be $\bbT^d$ or a closed subset of $\bbR^d$. Then, for all $\lambda,\alpha > 0$, 
there exists a constant $C_{\lambda,\alpha} > 0$ such that
$$\sup_{f,g \in \calC^\alpha(\Omega;\lambda)} \|fg\|_{\calC^\alpha(\Omega)} \leq C_{\lambda,\alpha}.$$
\end{lemma}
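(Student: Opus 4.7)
The plan is to verify the standard Banach algebra property of Hölder spaces by a direct Leibniz-rule computation, with a simple global/local interpolation to handle the seminorm part on possibly unbounded domains. I will treat both cases ($\Omega = \bbT^d$ and $\Omega$ a closed subset of $\bbR^d$) uniformly, observing that for $\bbZ^d$-periodic functions all suprema coincide with those taken over any fundamental domain, so the argument is essentially identical.

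First, write $k = \lfloor \alpha \rfloor$ and $s = \alpha - k \in [0,1)$. For each multi-index $\gamma$ with $|\gamma| \leq k$, apply the Leibniz rule:
\[
D^\gamma(fg) = \sum_{\gamma_1 \leq \gamma} \binom{\gamma}{\gamma_1} D^{\gamma_1} f \cdot D^{\gamma-\gamma_1} g.
\]
Since $\|D^{\gamma_1}f\|_\infty,\|D^{\gamma-\gamma_1}g\|_\infty \leq \lambda$ whenever $|\gamma_1|,|\gamma-\gamma_1| \leq k$, this immediately gives $\|D^\gamma(fg)\|_\infty \leq 2^{|\gamma|}\lambda^2$, bounding the sup-norm part of the target norm by a constant $C_{1,\alpha,\lambda}$ depending only on $\alpha$ and $\lambda$.

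Next I bound the Hölder seminorm at order $k$. For $|\gamma| = k$, applying Leibniz again and the identity $ab - cd = a(b-d) + (a-c)d$ to each summand yields, for $x\neq y$ in $\Omega^\circ$,
\[
D^\gamma(fg)(x) - D^\gamma(fg)(y) = \sum_{\gamma_1 \leq \gamma} \binom{\gamma}{\gamma_1}\Bigl( D^{\gamma_1}f(x)\bigl[D^{\gamma-\gamma_1}g(x)-D^{\gamma-\gamma_1}g(y)\bigr] + \bigl[D^{\gamma_1}f(x)-D^{\gamma_1}f(y)\bigr]D^{\gamma-\gamma_1}g(y)\Bigr).
\]
For the two extreme summands ($\gamma_1 = 0$ or $\gamma_1 = \gamma$), one factor is a top-order derivative and the Hölder seminorm of $f$ or $g$ supplies a direct bound $\leq \lambda \|x-y\|^s$. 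For the remaining summands, both $|\gamma_1|$ and $|\gamma-\gamma_1|$ are strictly less than $k$, so $D^{\gamma_1}f$ and $D^{\gamma-\gamma_1}g$ have gradients bounded uniformly by $\lambda$, hence are $\lambda$-Lipschitz. The key small lemma is that any $\lambda$-Lipschitz function $h$ with $\|h\|_\infty \leq \lambda$ satisfies
\[
|h(x) - h(y)| \leq 2\lambda \|x-y\|^s \quad \text{for all } x,y,
\]
by considering the cases $\|x-y\|\leq 1$ (use Lipschitz and $\|x-y\|\leq \|x-y\|^s$) and $\|x-y\| > 1$ (use the $L^\infty$ bound and $1 \leq \|x-y\|^s$); this step is what makes the proof work on unbounded~$\Omega$. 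Dividing through by $\|x-y\|^s$ and summing, each term is bounded by a multiple of $\lambda^2$, and the number of multi-indices and the binomial coefficients depend only on $\alpha$.

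Adding the sup-norm and seminorm bounds and summing over $j = 0,\dots,k$ yields $\|fg\|_{\calC^\alpha(\Omega)} \leq C_{\lambda,\alpha}$ for a constant depending only on $\alpha$ and $\lambda$, as required. I expect no genuine obstacle: the only mildly non-routine point is the interpolation trick handling the lower-order-derivative differences in the seminorm when $\Omega$ is not bounded, but as noted this is a one-line argument.
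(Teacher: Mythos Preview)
Your proof is correct and follows exactly the route the paper indicates: the paper itself gives no detailed argument, merely citing Gilbarg--Trudinger for the case $\alpha<1$ and remarking that the general case follows from the general Leibniz rule, which is precisely what you carry out. One small expository point worth tightening: each of your ``extreme'' summands ($\gamma_1=0$ or $\gamma_1=\gamma$) splits into two sub-terms under the $ab-cd$ identity, and only one of them has its difference on a top-order derivative---the other (e.g.\ $[f(x)-f(y)]D^\gamma g(y)$ when $\gamma_1=0$ and $k\ge 1$) still needs your interpolation trick, but since the relevant order is $0<k$ this is already covered by the same argument you give for the ``remaining'' summands.
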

Lemma~\ref{lem:holder_products} is stated for $\alpha < 1$ by \citeauthor{gilbarg2001}~(\citeyear{gilbarg2001}, equation~(4.7)),
and can easily be extended to all $\alpha \geq 1$ using the general Leibniz rule.
\subsection{Wavelets and Besov Spaces} 
\label{app:wavelets}
In Section~\ref{sec:one_sample_density} and Appendix~\ref{app:pf_density_based}, we make use
of the boundary-corrected wavelet system $\Psi^{\mathrm{bc}}$ over the unit cube $[0,1]^d$, 
and of the periodic wavelet system $\Psi^{\mathrm{per}}$ over the flat torus $\bbT^d$.
In this section, we provide further descriptions and properties of these wavelet bases,
before turning to definitions and characterizations of Besov spaces over $[0,1]^d$ and $\bbT^d$. 
For concreteness, we describe these constructions
in terms of the compactly-supported
$N$-th Daubechies scaling and wavelet functions $\zeta_0,\xi_0\in \calC^r(\bbR^d)$,
where $r = 0.18(N-1)$ for an integer $N \geq 2$
(\cite{daubechies1988}; \cite{gine2016}, Theorem~4.2.10). We also extend this definition
to the case $N = 1$ by taking $\zeta_0,\xi_0$ to be the (discontinuous) Haar
functions (\cite{gine2016}, p. 298).
Throughout the sequel and throughout the main manuscript, 
whenever we work with a Besov space $\calB_{p,q}^s$ or 
a H\"older space $\calC^s$ with $s > 0$, 
we tacitly assume that the  parameter $N$ is chosen such that the regularity $r$ is strictly greater than the 
parameters $\lceil s\rceil $ or $\alpha$, in which case it must at least hold that $N \geq 2$. 

Our exposition closely follows that of~\cite{gine2016}, and 
we also refer the reader to~\cite{cohen1993, cohen2003, hardle2012} 
and references therein for further details.

\subsubsection{Boundary-Corrected Wavelets on $[0,1]^d$}
\label{app:bc_wavelets} 
It is well-known that the $N$-th Daubechies wavelet system 
$$\zeta_{0k} = \zeta_0(\cdot-k), \quad \xi_{0jk} = 2^{\frac j 2}\xi_0(2^j(\cdot) - k),\quad j\geq 0, ~k\in \bbZ,$$
forms a basis of $L^2(\bbR)$, with the property
that $\{\zeta_{0k}:k \in \bbZ\}$ spans all
polynomials on $\bbR$ of degree at most $N-1$. While this family may 
easily be periodized 
to obtain a basis for $L^2([0,1])$, as in the following subsection,
doing so may not accurately reflect the regularity of functions in 
$L^2([0,1])$ via the decay of their wavelet coefficients, near the boundaries of the interval.
This consideration motivated \cite{meyer1991} and \cite{cohen1993} to introduce
the so-called boundary-corrected wavelet system on $[0,1]$, which
preserves the standard Daubechies scaling functions lying sufficiently far from the boundaries of the interval, 
and adds edge scaling functions such that their union continues to span all polynomials up to degree $N-1$ on $[0,1]$.  
In short, given a fixed integer 
$j_0 \geq \log_2 N$, the construction of \cite{cohen1993} leads to  smooth
scaling edge basis functions 
\begin{eqnarray*}
&\lzeta_{0j_0k}~~ &\text{   with support contained in }[0,(2N-1)/2^{j_0}],\\
&\rzeta_{0j_0k}~~ &\text{   with support contained in } [1-(2N-1)/2^{j_0},1], 
\end{eqnarray*}
which in turn can be used to define edge wavelet functions $\lxi_{0j_0k}, \rxi_{0j_0k}$,
for $k=0, \dots, N-1$. 
In this case, if one defines,
$$\zeta_{0jk}^a = 2^{\frac{j-j_0}{2}} \zeta_{0j_0k}^a\left(2^{j-j_0}(\cdot)\right), \quad 
  \xi_{0jk}^a = 2^{\frac{j-j_0}{2}} \xi_{0j_0k}^a\left(2^{j-j_0}(\cdot)\right),\quad
  \text{for all } j \geq j_0, \ a \in \{\text{left, right}\},$$
then the family
\begin{align*}
\Phi_0^{\mathrm{bc}} &{=} \{\zeta_{0j_0k}^{\mathrm{bc}}: 0 \leq k \leq 2^{j_0}-1\} 
             {=} \left\{\lzeta_{0j_0k}, \rzeta_{0j_0k}, \zeta_{0m}: 0 \leq k \leq  N-1, N \leq m \leq  2^{j_0} - N-1\right\},\\
\Psi_0^{\mathrm{bc}} &{=} \{ \xi_{0jk}^{\mathrm{bc}}:  0 \leq k \leq 2^j-1,j \geq j_0\} \\
             &{=} \left\{\lxi_{0jk}, \rxi_{0jk}, \xi_{0jm}: 0\leq k \leq  N-1, N \leq m \leq 2^{j_0} - N-1, j\geq j_0\right\},
\end{align*}
form an orthonormal  basis of $L^2([0,1])$, with the property 
that $\Phi^{\mathrm{bc}}$ spans all polynomials on $[0,1]$ of degree at most $N -1$.
We then define a tensor product wavelet basis of $L^2([0,1]^d)$
by setting for all $j \geq j_0$ and all $\ell = (\ell_1,\dots,\ell_d) \in \{0,1\}^d\setminus\{0\}$,
$$\zeta_{j_0k}^{\mathrm{bc}}(x) = \prod_{i=1}^d \zeta_{0j_0k_i}^{\mathrm{bc}}(x_i),\quad \text{and}\quad 
  \xi_{jk\ell}^{\mathrm{bc}}(x) = \prod_{i: \ell_i = 0} \zeta_{0jk_i}^{\mathrm{bc}}(x_i) \prod_{i:\ell_i = 1} \xi_{0jk_i}^{\mathrm{bc}}(x_i),\quad 
x \in [0,1]^d,$$
where in the definition of $\zeta_{j_0k}^{\mathrm{bc}}$, the index 
$k =(k_1, \dots, k_d)$ ranges over $\calK(j_0):=\{1, \dots, 2^{j_0}-1\}^d$, while in the definition of 
$\xi_{jk\ell}^{\mathrm{bc}}$, $k$ ranges over $\calK(j)$. In this case, the wavelet system
$$\Psi^{\mathrm{bc}}     = \Phi^{\mathrm{bc}} \cup \bigcup_{j=j_0}^\infty \Psi_j^{\mathrm{bc}}, \quad 
  \Phi^{\mathrm{bc}}     = \{\zeta_{j_0k}^{\mathrm{bc}}: k\in \calK(j_0)\}, \quad
  \Psi_{j}^{\mathrm{bc}} = \{ \xi_{jk\ell}^{\mathrm{bc}}:  k \in \calK(j)\},\quad j \geq j_0,$$
announced in Section~\ref{sec:one_sample_density} forms a basis of $L^2([0,1]^d)$.
We sometimes make use of the abbreviation $\Psi_{j_0-1} = \Phi$. 
 
\subsubsection{Periodic Wavelets on $\bbT^d$} 
\label{app:per_wavelets}
When working over $\bbT^d$, a simpler construction may be used due to the periodicity 
of the functions involved. Denote the periodization on $\bbT$ 
of dilations of the maps $\zeta_{0}, \xi_{0}$ by
$$\zeta_0^{\mathrm{per}} = \sum_{k \in \bbZ} \zeta_0(\cdot - k)=1, \quad
  \xi_{0j}^{\mathrm{per}} = \sum_{k \in \bbZ} 2^{j/2}   \xi_0(2^j(\cdot-k)),\quad j\geq 0.$$
  In this case, the collection
$$\Psi_0^{\mathrm{per}} = \left\{1, \xi_{0jk}^{\mathrm{per}} = \xi_{0j}^{\mathrm{per}}(\cdot - 2^{-j}k): 0 \leq k \leq 2^j-1, j\geq 0\right\}$$
forms an orthonormal  basis of $L^2(\bbT)$, which may again 
be extended to $L^2(\bbT^d)$ using tensor product wavelets. Specifically, if
$\xi_{jk\ell}^{\mathrm{per}} = \prod_{i=1}^d(\xi_{jk}^{\mathrm{per}})^{\ell_i}$ for all $\ell = (\ell_1, \dots, \ell_d) \in \{0,1\}^d\setminus\{0\}$, 
then 
$$\Psi^{\mathrm{per}} = \{1\} \cup \bigcup_{j=0}^\infty \Psi_j^{\mathrm{per}}, \quad  \text{with}\quad   
\Psi_j^{\mathrm{per}} = \big\{\xi_{jk\ell}: k \in \calK(j), \ell \in \{0,1\}^d\setminus\{0\}\big\},\ j \geq 0,$$
forms an orthonormal basis of $L^2(\bbT^d)$
(\cite{daubechies1992}, Section 9.3; \cite{gine2016}, Section 4.3).

\subsubsection{Properties of Boundary-Corrected and Periodic Wavelet Systems}
\label{sec:wavelet_common_properties}
In both of the preceding constructions, 
one obtains a family $\Phi$ of scaling functions and a sequence of families $(\Psi_j)_{j\geq j_0}$ of wavelet functions, such that
\begin{align*}
\Phi &= \Psi_{j_0-1} = \{\zeta_{k}: k \in \calK(j_0)\} = \begin{cases}
\Phi^{\mathrm{bc}}, & {\text{for }}\Psi = \Psi^{\mathrm{bc}} \\
\{1\},           & {\text{for }} \Psi = \Psi^{\mathrm{per}},
\end{cases} \\ 
\Psi_j &= \{\xi_{jk\ell}: k \in \calK(j), \ell \in \{0,1\}^d\setminus\{0\}\} = \begin{cases}
\Psi_{j}^{\mathrm{bc}}, & {\text{for }} \Psi = \Psi^{\mathrm{bc}}  \\
\Psi_{j}^{\mathrm{per}}, & {\text{for }} \Psi = \Psi^{\mathrm{per}},
\end{cases}\qquad j \geq j_0, \\
j_0 &= \begin{cases}
\lceil \log_2 N\rceil, &  {\text{for }}\Psi = \Psi^{\mathrm{bc}} \\
0,                     & {\text{for }} \Psi = \Psi^{\mathrm{per}},
\end{cases} \\
\calK(j) &= \{0, \dots, 2^{j}-1\}^d,\quad j \geq j_0.
\end{align*} 
In both cases, the wavelet system is defined over a domain $\Omega$, which
is to be understood as either $[0,1]^d$ in the boundary-corrected
case, or as $\bbT^d$ (which itself may be identified with $(0,1]^d$) in the periodic case. 
In either of these settings, the wavelet system
\begin{equation}
\label{eq:general_basis}
\Psi = \Phi \cup \bigcup_{j=j_0}^\infty \Psi_j
\end{equation}
forms a basis of $L^2(\Omega)$. 
The following simple result collects several properties and definitions which are common to both of the above bases.
\begin{lemma}
\label{lem:wavelet}
Let $N \geq 1$. There exist constants $C_1, C_2 \geq 1$ depending only on $N,d$ and on  the choice
of basis $\Psi \in \{\Psi^{\mathrm{bc}}, \Psi^{\mathrm{per}}\}$ such that the following properties hold.
\begin{thmlist} 
\item \label{lem:wavelet--basis_size}
{The cardinalities of $\Phi$ and $\Psi_j$ satisfy }
$|\Phi|  \leq C_1$, $|\Psi_j| \leq C_2 2^{dj}$ for all $j \geq j_0$.
\item \label{lem:wavelet--locality} For all $j \geq j_0$ and $\xi \in \Psi_j$, 
there exists a rectangle $I_\xi \subseteq \Omega$ such that $\diam(I_\xi) \leq C_1 2^{-j}$, 
$\supp(\xi_j) \subseteq I_\xi$, and $\left\|\sum_{\xi\in\Psi_j} I(\cdot  \in I_\xi)\right\|_{L^\infty} \leq C_2$.
\item \label{lem:wavelet--regularity} {Every element $\xi \in \Psi$
is contained in $\calC^r(\Omega)$.}
\item \label{lem:wavelet--polynomials} Polynomials of degree at most $N-1$ over
$\Omega$ lie in $\mathrm{Span}(\Phi)$. 
\item \label{lem:wavelet--derivatives} If $N \geq 2$, we have, 
$$\sup_{0 \leq |\gamma|\leq \lfloor r \rfloor} \sup_{\zeta \in \Phi} \norm{D^\gamma \zeta}_{L^\infty} \leq C_1,\quad 
  \sup_{0 \leq |\gamma|\leq \lfloor r \rfloor}
  \sup_{j \geq j_0} \sup_{\xi \in \Psi_j}2^{-j\left(\frac d 2 + |\gamma|\right)} \norm{D^\gamma \xi}_{L^\infty} \leq C_2.$$
\end{thmlist} 
\end{lemma}
{Notice that the only $\bbZ^d$-periodic polynomials on $\bbR^d$
 are constants, thus Lemma~\ref{lem:wavelet--polynomials}
 is nearly vacuous for the basis $\Psi^{\mathrm{per}}$.}
  
\subsubsection{Besov Spaces}
\label{app:besov_spaces}
We next define the Besov spaces $\calB_{p,q}^s(\Omega)$, for $s > 0$, $p,q \geq 1$. Once again, 
$\Omega$ is understood to be one of $[0,1]^d$ or $\bbT^d$, and $\Psi$ is understood to be the corresponding
wavelet basis as in equation~\eqref{eq:general_basis}. 
%
%
%
Let $f \in L^p(\Omega)$ admit the wavelet expansion 
$$f = \sum_{\zeta \in \Phi} \beta_\zeta \zeta + \sum_{j=j_0}^\infty \sum_{\xi \in \Psi_j} \beta_\xi \xi, \quad\text{over } \Omega,$$ 
with convergence in $L^p(\Omega)$,
where $\beta_\xi = \int \xi f$ for all $\xi \in \Psi$. 
Then, the Besov norm of $f$ 
may be defined by 
\begin{equation}
\label{eq:besov_norm}
\norm f_{\calB_{p,q}^s(\Omega)} = 
\norm{(\beta_\zeta)_{\zeta \in \Phi}}_{\ell_p} + \left\| \left( 2^{j(s + \frac d 2 - \frac d p)} \norm{(\beta_\xi)_{\xi\in\Psi_j}}_{\ell_p}\right)
_{j\geq j_0}\right\|_{\ell_q},
\end{equation}
and we define
$$\calB_{p,q}^s(\Omega)
 = \begin{cases}
 \left\{ f \in L^p(\Omega): \norm f_{\calB_{p,q}^s(\Omega)} < \infty \right\}, & 1 \leq p < \infty \\
 \left\{ f \in \calC_u(\Omega): \norm f_{\calB_{p,q}^{s}(\Omega)} < \infty \right\}, & p = \infty.
 \end{cases}$$ 
We extend the above definition to $s < 0$ by the duality $\calB_{p',q'}^{s}(\Omega) = \big(\calB_{p,q}^{-s}(\Omega)\big)^*$,
where $\frac 1 {p'} + \frac 1 p = \frac 1 {q'} + \frac 1 q = 1$. 
It can be shown that the resulting norm on the space $\calB_{p',q'}^s(\Omega)$ 
is equivalent to the sequence norm $\|\cdot\|_{\calB_{p',q'}^s(\Omega)}$ in equation~\eqref{eq:besov_norm}~(cf. 
\cite{cohen2003}, Theorem 3.8.1), thus we extend its definition to $s < 0$. 

We shall often make use of Besov spaces
in order to characterize H\"older continuous functions
in terms of the decay of their wavelet coefficients, via the following
classical result.
\begin{lemma}
\label{lem:besov_holder}
For all $0 < s < r$, and $d \geq 1$, we have
\begin{equation}
\label{eq:besov_holder_subseteq}
\calC^s([0,1]^d) \subseteq \calB_{\infty,\infty}^s([0,1]^d), \quad
  \calC^s(\bbT^d)  \subseteq \calB_{\infty,\infty}^s(\bbT^d),
\end{equation}
and there exist $C_1,C_2 > 0$
such that 
$$\norm\cdot_{\calB_{\infty,\infty}^s([0,1]^d)}
\leq C_1 \norm\cdot_{\calC^s([0,1]^d)}, \quad
 \norm\cdot_{\calB_{\infty,\infty}^s(\bbT^d)}
\leq C_2\norm\cdot_{\calC^s(\bbT^d)}.$$
If $s \not\in\bbN$, then equation~\eqref{eq:besov_holder_subseteq}
holds with equalities, and with equivalent norms.  
\end{lemma}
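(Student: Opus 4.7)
The plan is to prove Lemma~\ref{lem:besov_holder} by directly estimating wavelet coefficients via the standard combination of vanishing moments and Taylor's theorem, then converting these estimates into the Besov norm using the sequence-space definition~\eqref{eq:besov_norm}. I would handle the boundary-corrected case on $[0,1]^d$ and the periodic case on $\bbT^d$ in parallel, since the only wavelet facts used are those collected in Lemma~\ref{lem:wavelet}, which apply to both bases.

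For the forward inclusion, fix $f \in \calC^s(\Omega)$ with $0 < s < r$, and consider a wavelet $\xi \in \Psi_j$ for $j \geq j_0$. By Lemma~\ref{lem:wavelet--locality}, $\supp\xi \subseteq I_\xi$ where $\diam(I_\xi) \lesssim 2^{-j}$, and by Lemma~\ref{lem:wavelet--polynomials} together with orthogonality of $\Psi_j$ to $\mathrm{Span}(\Phi)$, $\xi$ annihilates every polynomial of degree at most $N-1 \geq \lfloor s \rfloor$. Taylor-expanding $f$ to order $\lfloor s \rfloor$ around a point $x_\xi \in I_\xi$, writing $f = P_\xi + R_\xi$ where $P_\xi$ is the Taylor polynomial and $|R_\xi(x)| \leq \|f\|_{\calC^s} \|x-x_\xi\|^s \lesssim \|f\|_{\calC^s} 2^{-js}$ uniformly on $I_\xi$, gives
\begin{equation*}
|\beta_\xi| = \left|\int_\Omega R_\xi \xi\right| \leq \|R_\xi\|_{L^\infty(I_\xi)} \|\xi\|_{L^1(\Omega)} \lesssim \|f\|_{\calC^s} \cdot 2^{-js} \cdot 2^{-jd/2},
\end{equation*}
where $\|\xi\|_{L^1} \lesssim 2^{-jd/2}$ follows from $\|\xi\|_{L^\infty} \lesssim 2^{jd/2}$ (Lemma~\ref{lem:wavelet--derivatives}) and $|I_\xi| \lesssim 2^{-jd}$. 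The scaling coefficients satisfy $|\beta_\zeta| \leq \|f\|_{L^\infty} \|\zeta\|_{L^1} \lesssim \|f\|_{\calC^s}$. Substituting into~\eqref{eq:besov_norm} with $p = q = \infty$ yields $\|f\|_{\calB_{\infty,\infty}^s(\Omega)} \lesssim \|f\|_{\calC^s(\Omega)}$ as required.

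For the reverse inclusion when $s \notin \bbN$, I would show that any $f \in \calB_{\infty,\infty}^s(\Omega)$ has a wavelet series that converges in $\calC^{\lfloor s \rfloor}(\Omega)$ to a function whose top-order derivatives are $(s - \lfloor s\rfloor)$-Hölder. Write $f = \sum_{j \geq j_0 - 1} f_j$ where $f_{j_0 - 1} = \sum_{\zeta \in \Phi}\beta_\zeta \zeta$ and $f_j = \sum_{\xi \in \Psi_j}\beta_\xi \xi$. The Besov assumption gives $|\beta_\xi| \leq \|f\|_{\calB_{\infty,\infty}^s} 2^{-j(s+d/2)}$, combined with the derivative estimate $\|D^\gamma \xi\|_{L^\infty} \lesssim 2^{j(d/2 + |\gamma|)}$ from Lemma~\ref{lem:wavelet--derivatives} and the bounded-overlap property in Lemma~\ref{lem:wavelet--locality}. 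This yields $\|D^\gamma f_j\|_{L^\infty} \lesssim \|f\|_{\calB_{\infty,\infty}^s} 2^{-j(s-|\gamma|)}$ for any multi-index $\gamma$ with $|\gamma| \leq \lfloor r \rfloor$. For $|\gamma| \leq \lfloor s \rfloor$ this is summable, giving uniform convergence of derivatives up to order $\lfloor s \rfloor$. To establish Hölder regularity of the top derivatives, I would apply the standard dichotomy: for $x,y \in \Omega$ with $\|x-y\| \leq 1$, choose $J$ with $2^{-J} \asymp \|x-y\|$, bound $|D^\gamma f_j(x) - D^\gamma f_j(y)|$ with $|\gamma| = \lfloor s \rfloor$ by $\|x-y\| \cdot \|\nabla D^\gamma f_j\|_{L^\infty} \lesssim \|x-y\| \cdot 2^{j(\lfloor s\rfloor + 1 - s)}$ for $j \leq J$, and by $2 \|D^\gamma f_j\|_{L^\infty} \lesssim 2^{-j(s - \lfloor s\rfloor)}$ for $j > J$. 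Both geometric sums give $\|x-y\|^{s-\lfloor s\rfloor}$, since $s - \lfloor s\rfloor \in (0,1)$; this is exactly where non-integrality of $s$ is essential.

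The main obstacle I expect is the bookkeeping in the two geometric sums above: the dichotomy argument is standard, but one must ensure the factor $2^{j(\lfloor s\rfloor + 1 - s)}$ in the ``smooth'' regime has strictly positive exponent and the factor $2^{-j(s-\lfloor s\rfloor)}$ in the ``tail'' regime has strictly negative exponent, which fails precisely when $s \in \bbN$ and accounts for the qualification in the lemma. Once both inclusions are established as bounded linear inclusions via the estimates above, the norm equivalence statement follows from the open mapping theorem, or more concretely by tracking the implicit constants through both directions of the argument.
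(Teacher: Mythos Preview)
Your proposal is correct and follows essentially the same approach as the paper: Taylor expansion combined with the vanishing-moment property of the wavelets (Lemma~\ref{lem:wavelet--polynomials}) for the inclusion $\calC^s \subseteq \calB_{\infty,\infty}^s$, and level-wise derivative bounds via Lemma~\ref{lem:wavelet--derivatives} together with the standard dichotomy argument (splitting at a scale $J$ with $2^{-J} \asymp \|x-y\|$) for the reverse inclusion. The paper presents the two directions in the opposite order and phrases the Lipschitz estimate for the low-frequency piece slightly differently (bounding differences of $D^\gamma \xi$ directly rather than passing through $\nabla D^\gamma f_j$), but the underlying computations and the role of the hypothesis $s \notin \bbN$ are identical.
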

An analogue of Lemma~\ref{lem:besov_holder} is well-known to hold for the Daubechies
wavelet system over $\bbR^d$, in which case it can readily be proven using
an equivalent characterization of Besov spaces in terms
of moduli of smoothness~(\cite{gine2016}, Section 4.3.1).
Such characterizations are also available for the periodized
and boundary-corrected wavelet systems
(\cite{gine2016},  Theorem 4.3.26 and 
discussions in Sections~4.3.5--4.3.6), 
and at least in the periodized case can be shown to lead to Lemma~\ref{lem:besov_holder}
(\cite{gine2016}, 
equation (4.167)). 
For the boundary-corrected case, Lemma~\ref{lem:besov_holder}
is known to hold in the special case $d=1$ (\cite{cohen1993}, Theorem 4; 
\cite{gine2016}, equation (4.152)), 
but we do not know of a reference stating this precise
result when $d > 1$, in part due to the potential ambiguity of defining
the H\"older space $\calC^s([0,1]^d)$
over the closed set $[0,1]^d$. We thus
provide a self-contained proof of Lemma~\ref{lem:besov_holder}
in the boundary-corrected case for completeness, using standard arguments.

\noindent {\bf Proof of Lemma~\ref{lem:besov_holder} (Boundary-Corrected Case).}
Let $\Omega=[0,1]^d$. Suppose first that $f \in \calB^s_{\infty,\infty}(\Omega)$
for some $s\not\in\bbN$, with wavelet expansion
$$f = \sum_{\zeta \in \Phi^{\mathrm{bc}}} \beta_\zeta \zeta + \sum_{j=j_0}^\infty \sum_{\xi \in \Psi_j^{\mathrm{bc}}} \beta_\xi \xi.$$
We wish to show that $\|f \|_{\calC^s(\Omega)} \lesssim \|f \|_{\calB^s_{\infty,\infty}(\Omega)}$. 
By Lemma~\ref{lem:wavelet}, $\xi \in \calC^r(\Omega)$ for all $\xi \in \Psi^{\mathrm{bc}}$, where recall that $\lceil s \rceil < r$, thus we may define the map
$$f_\gamma
 =   \sum_{\zeta \in \Phi^{\mathrm{bc}}} \beta_\zeta D^\gamma \zeta + 
           \sum_{j=j_0}^\infty \sum_{\xi \in \Psi_j^{\mathrm{bc}}}  \beta_\xi  D^\gamma \xi,\quad
           \text{for all } 0 \leq |\gamma| \leq \lfloor s\rfloor.$$
Notice that $\|D^\gamma \zeta\|_{L^\infty} \lesssim 1$
for all $\zeta \in \Phi^{\mathrm{bc}}$, and for all $j\geq j_0, k\in \calK(j)$, $\ell \in \{0,1\}^d\setminus\{0\}$,
$$D^\gamma \xi_{jk\ell}^{\mathrm{bc}} = 2^{(j-j_0)\left(\frac{d}{2} + |\gamma|\right)} D^\gamma \xi_{j_0k\ell}^{\mathrm{bc}}(2^{j-j_0}(\cdot))$$
Then, it follows from  Lemma~\ref{lem:wavelet} that for all $x \in  \Omega^\circ$, 
\begin{align}
\nonumber 
|f_\gamma(x)|  
 &\leq     \sum_{\zeta \in \Phi^{\mathrm{bc}}} \left|\beta_\zeta D^\gamma \zeta(x)\right|+ 
           \sum_{j=j_0}^\infty \sum_{\xi \in \Psi_j^{\mathrm{bc}}} \left|\beta_\xi  D^\gamma \xi(x)\right| \\
\nonumber 
 &\lesssim \norm{(\beta_\zeta)_{\zeta\in\Phi^{\mathrm{bc}}}}_{\ell_\infty}|\Phi^{\mathrm{bc}}|  + 
			 \sum_{j=j_0}^\infty  \|(\beta_\xi)_{\xi \in\Psi_j^{\mathrm{bc}}}\|_{\ell_\infty}2^{(j-j_0)\left(\frac{d}{2} + |\gamma|\right)}   \sum_{\xi\in\Psi_j^{\mathrm{bc}}}        
 			I(|\xi(x)| > 0)\\
\nonumber
 &\lesssim \norm{(\beta_\zeta)_{\zeta\in\Phi^{\mathrm{bc}}}}_{\ell_\infty}  + 
            \sum_{j=j_0}^\infty 2^{j\left(\frac{d}{2} + |\gamma|\right)} \|(\beta_\xi)_{\xi \in\Psi_j^{\mathrm{bc}}}\|_{\ell_\infty} \\  
 &\lesssim \norm{(\beta_\zeta)_{\zeta\in\Phi^{\mathrm{bc}}}}_{\ell_\infty}  + 
            \norm{\left( 2^{j\left(\frac{d}{2} + s\right)} \|(\beta_\xi)_{\xi \in\Psi_j^{\mathrm{bc}}}\|_{\ell_\infty}\right)_{j\geq j_0}}_{\ell_\infty} \sum_{j=j_0}^\infty 2^{
            \frac{(|\gamma|-s)j}{2}} 
 \lesssim \norm f_{\calB_{\infty,\infty}^s(\Omega)},
 \label{eq:besov_holder_step_pf} 	
\end{align}
where on the final line, we used the fact that $s$ is not an integer, thus $|\gamma| < s$.
An analogous calculation reveals that
the series defining $f_\gamma$ converges uniformly
for any $0\leq |\gamma| \leq \lfloor s\rfloor$, 
thus it must follow that $f$ is differentiable up to order
$\lfloor s \rfloor$ with derivatives given by $D^\gamma f = f_\gamma$, 
which by equation~\eqref{eq:besov_holder_step_pf} must satisfy $|D^\gamma f(x)| \leq C \norm f_{\calB_{\infty,\infty}^s(\Omega)}$
for all $x \in \Omega^\circ$, 
for a constant $C > 0$ depending only on $d$ and $r$ 
We next prove that $D^\gamma f$ is uniformly $(s-\lfloor s\rfloor)$-H\"older
continuous over $ \Omega^\circ$, for all $|\gamma|=\lfloor s\rfloor$. For all $x,y \in \Omega^\circ$, we have,
\begin{align*}
|D^\gamma f(x) - D^\gamma f(y)|
 &\leq \sum_{\zeta \in \Phi^{\mathrm{bc}}} |\beta_\zeta| |D^\gamma \zeta (x) - D^\gamma\zeta(y)| + 
       \sum_{j=j_0}^\infty \sum_{\xi \in \Psi_j^{\mathrm{bc}}} |\beta_\xi| |D^\gamma \xi(x) - D^\gamma \xi(y)|.
\end{align*}
Since $\zeta \in \calC^r(\Omega)$, for all $\zeta \in \Phi^{\mathrm{bc}}$,  
$$\sum_{\zeta \in \Phi^{\mathrm{bc}}} |\beta_\zeta| |D^\gamma \zeta (x) - D^\gamma\zeta(y)|
\lesssim \norm f_{\calB_{\infty,\infty}^s(\Omega)} |\Phi^{\mathrm{bc}}| \norm{x-y} \lesssim \norm f_{\calB_{\infty,\infty}^s(\Omega)} \norm{x-y}.$$
Furthermore, using the definition of the boundary-corrected wavelet basis
and its locality property in Lemma~\ref{lem:wavelet--locality}, we have
\begin{align*}
\sum_{j=j_0}^\infty &\sum_{\xi \in \Psi_j^{\mathrm{bc}}} |\beta_\xi| |D^\gamma \xi(x) - D^\gamma \xi(y)| \\
  &= \sum_{j=j_0}^\infty \sum_{k=0}^{2^j-1} \sum_{l\in \{0,1]^d\setminus\{0\}}
   |\beta_{\xi_{jk\ell}}|2^{(j-j_0)\left(\frac{d}{2} + |\gamma|\right)} | D^\gamma \xi_{j_0k\ell}(2^{j-j_0}(x)) -
  D^\gamma \xi_{j_0k\ell}(2^{j-j_0}(y))|\\
 &\lesssim \sum_{j=j_0}^\infty  \|(\beta_\xi)_{\xi\in\Psi_j^{\mathrm{bc}}}\|_{\ell_\infty}
  2^{(j-j_0)\left(\frac{d}{2} + |\gamma|\right)} \big(\|2^{j-j_0}x - 2^{j-j_0}y\|  \wedge 1\big)
  \sum_{\xi \in \Psi_j^{\mathrm{bc}}} I(|\xi(x)|\vee | \xi(y)| > 0)\\ 
 &\lesssim \sum_{j=j_0}^\infty   \|(\beta_\xi)_{\xi\in\Psi_j^{\mathrm{bc}}}\|_{\ell_\infty}2^{(j-j_0)\left(\frac{d}{2} + |\gamma|\right)} \big(2^{j-j_0}\|x - y\|  \wedge 1\big) \\    
 &\lesssim \sum_{j=0}^\infty   \|(\beta_\xi)_{\xi\in\Psi_{j+j_0}^{\mathrm{bc}}}\|_{\ell_\infty}2^{j\left(\frac{d}{2} + |\gamma|\right)} \big(2^{j}\|x - y\|  \wedge 1\big) \\   
  &\lesssim  \sum_{j=0}^{J(x,y)} 
 		\|(\beta_\xi)_{\xi\in\Psi_{j+j_0}^{\mathrm{bc}}}\|_{\ell_\infty} 2^{j\left(\frac{d}{2} +  |\gamma| +1\right)}\| x -  y\|+
 		\sum_{j=J(x,y)}^\infty 
 		 \|(\beta_\xi)_{\xi\in\Psi_{j+j_0}^{\mathrm{bc}}}\|_{\ell_\infty}2^{j\left(\frac{d}{2} +  |\gamma|  \right)}, 
\end{align*}
where $J(x,y)$ is the smallest integer $j \geq 0$ such that $2^{j}| x - y| \geq 1$; in particular, 
\begin{equation}
\label{eq:Jxy}
2^{-J(x,y)} \leq \|x-y\| \leq 2^{-J(x,y)+1}.
\end{equation}
Now, since $2^{j(\frac d 2 + s)} \|(\beta_\xi)_{\xi\in\Psi_{j+j_0}^{\mathrm{bc}}}\|_{\ell_\infty} 
\leq \norm f_{\calB_{\infty,\infty}^s(\Omega)} < \infty$, and since $|\gamma| < s \not\in \bbN$, we obtain
\begin{align*}
 {\norm f^{-1}_{\calB_{\infty,\infty}^s(\Omega)}} |D^\gamma  f(x) - D^\gamma f(y)| 
 &\lesssim  \| x -  y\|\sum_{j=0}^{J(x,y)}   
 		2^{j\left( |\gamma| - s +1 \right)}+
 		\sum_{j=J(x,y)}^\infty
 		 2^{j\left( |\gamma| - s\right)} \\
 &\lesssim  \| x -  y\|  
 		2^{J(x,y)\left( |\gamma| - s +1 \right)}+
  		 2^{J(x,y)\left( |\gamma| - s\right)} 
 \lesssim     		 \norm{x-y}^{s-|\gamma|},
\end{align*} 
where the final inequality is due to equation~\eqref{eq:Jxy}.
It readily follows that $\norm f_{\calC^s(\Omega)} \lesssim \norm f_{\calB_{\infty,\infty}^s(\Omega)}$. Furthermore, 
since $D^\gamma f$ is uniformly H\"older continuous
over $(0,1)^d$, it is in particular uniformly continuous
and hence extends to a continuous function over $[0,1]^d$, 
thus $f \in \calC^s([0,1]^d)$. 
We next show that $\calC^s([0,1]^d) \subseteq \calB_{\infty,\infty}^s([0,1]^d)$ for all $s >0$, with the requisite H\"older norms. 
Assume $\norm f_{\calC^s(\Omega)} < \infty$, and let
$\beta_\xi = \int f \xi$ for all $\xi\in\Psi^{\mathrm{bc}}$. By definition of  the 
Besov norm, it will suffice to prove that
$$\big\|(\beta_\zeta)_{\zeta \in \Phi^{\mathrm{bc}}}\| \lesssim \norm f_{\calC^s([0,1]^d)}
, \quad 
  \big\|(\beta_\xi)_{\xi\in\Psi_j^{\mathrm{bc}}}\| \lesssim \norm f_{\calC^s([0,1]^d)}
  2^{-j\left(\frac d 2 +s\right)}, 
  \quad j \geq j_0.$$
The first bound is immediate, since $f$ is bounded above by 
$\norm f_{\calC^s([0,1]^d)}$ over $[0,1]^d$. 
To prove the second bound, let 
$x_0 \in (0,1)^d$, and let $\underline{s}$
denote the largest integer strictly less than $s$. 
By a Taylor expansion to order $\underline s$, there exists $c_s > 0$ 
such that 
\begin{equation}
\label{eq:holder_taylor}
\left|f(x) - \sum_{0\leq  |\gamma| \leq  \underline s} D^\gamma f(x_0) (x-x_0)^\gamma\right| \leq c_s \norm f_{\calC^s(\Omega)}\norm{x-x_0}^s,
\quad x \in \Omega,
\end{equation}
where $(x-x_0)^\gamma = \prod_{i=1}^d (x_i-x_{0i})^{\gamma_i}$.
In particular, for any given $\xi \in \Psi_j^{\mathrm{bc}}$, $j \geq j_0$, 
choose $x_0 \in I_\xi\cap (0,1)^d$, where $\diam(I_\xi)\lesssim 2^{-j}$ and $I_\xi$
is a set containing the support of $\xi$, as defined in Lemma~\ref{lem:wavelet--locality}.
We then have,
$$\begin{multlined}[0.9\textwidth]
\left|\int \xi f\right|
 \lesssim \left| \int \xi(x) \sum_{0 \leq |\gamma| \leq \underline s} D^\gamma f(x_0) (x-x_0)^\gamma dx\right|
     \\  + \|f\|_{\calC^s(\Omega)}\int |\xi(x)| \norm{x-x_0}^s dx
      =  \|f\|_{\calC^s(\Omega)}\int |\xi(x)| \norm{x-x_0}^s dx,
\end{multlined}$$
where the final equality uses the fact that polynomials of degree at most $\lfloor r\rfloor$ lie in $\mathrm{Span}(\Phi^{\mathrm{bc}})$
by Lemma~\ref{lem:wavelet--polynomials}, 
and are therefore orthogonal to $\xi$. We thus have, 
\begin{align*}
|\beta_\xi|
 &\lesssim  \|f\|_{\calC^s(\Omega)} \int_\Omega |\xi(x)| \norm{x-x_0}^s dx  \\
 &=  \|f\|_{\calC^s(\Omega)}\int_{I_\xi} |\xi(x)| \norm{x-x_0}^s dx  \\
 &\lesssim  \|f\|_{\calC^s(\Omega)}2^{dj/2} \diam(I_\xi)^{s}\calL(I_\xi)   
 \lesssim  \|f\|_{\calC^s(\Omega)}2^{-j\left(\frac d 2 + s\right)}.
\end{align*} 
The claim readily follows.
\qed

\subsection{Sobolev Spaces}
\label{app:sobolev} 
For our analysis of the kernel plugin estimators appearing in Section~\ref{sec:two_sample_combined}, 
we briefly recall a Fourier analytic description of the Sobolev spaces $H^s(\bbT^d) = \calB_{2,2}^s(\bbT^d)$ over the torus, 
and refer the reader to~\cite{roe1999,grafakos2009,bahouri2011} for further details. 
Given a function $\phi \in L^2(\bbT^d)$, 
denote its sequence of Fourier coefficients by
$$\calF[\phi](\xi) =  \int \phi(x)\exp(-2\pi ix^\top \xi) dx, \quad \xi \in \bbZ^d.$$
If instead $\phi\in L^1(\bbR^d)$, we continue to denote by $\calF[\phi]$ the Fourier transform of~$\phi$, 
now defined for all $\xi \in \bbR^d$. 
The  inhomogeneous Sobolev norm of order $s \in \bbR$ is defined by  
\begin{align}
\label{eq:sobolev_norm_torus}
\norm \phi_{H^s(\bbT^d)} = \big\|  \langle\cdot\rangle^s \calF[\phi](\cdot) \big\|_{\ell^2(\bbZ^d)},  
\end{align}
 where $\langle \xi \rangle = (1+\|\xi\|^2)^{1/2}$, and the inhomogeneous
Sobolev space $H^s(\bbT^d)$ is then defined as the completion of $\calC^\infty(\bbT^d)$ in the above norm. 
In the special case where $s \in \bbN$, one may equivalently write
$$H^s(\Omega) \equiv W^{s,2}(\bbT^d) = \left\{ f \in L^2(\bbT^d): D^{\gamma} f \in L^2(\bbT^d), 0 \leq |\gamma| \leq s\right\},$$
where differentiation is understood in the distributional sense, and the norm $\|\cdot\|_{H^s(\bbT^d)}$ is then equivalent to 
the norm
$$\|\phi\|_{W^{s,2}(\bbT^d)} =  \sum_{0 \leq |\gamma| \leq s} \|D^\gamma \phi\|_{L^2(\bbT^d)}.$$
We also 
denote the homogeneous Sobolev seminorm of a map $\phi\in L^2(\bbT^d)$ by
$$\norm \phi_{\dH{s}(\bbT^d)} = \big\|  \|\cdot\|^s \calF[\phi] (\cdot)\big\|_{\ell^2(\bbZ^d)}.$$
for any $s \in \bbR$, with the convention $0/0=0$. $\norm \cdot_{\dH{s}(\bbT^d)}$ is in fact a norm
on $L_0^2(\bbT^d)$, and we  define the homogeneous Sobolev space $\dot H^{s}(\bbT^d)$ 
as the completion of $L^2_0(\bbT^d) \cap \calC^\infty(\bbT^d)$ under this~norm. 
As before, one may equivalently write  for $s \in \bbN$, 
$$\dot H^s(\bbT^d) = \left\{ f \in L^2_0(\bbT^d): D^\gamma f \in L^2(\bbT^d), |\gamma| = s\right\}.$$

The following result summarizes some elementary identities (cf. 
Theorem 1.122 of~\cite{triebel2006} and Section 4.3.6 of~\cite{gine2016}).

\begin{lemma}
\label{lem:sobolev_facts} 
Let $s > 0$. Then, there exists a constant $C > 0$ depending only on $d$ and $s$ such that
$\|\cdot\|_{H^s(\bbT^d)} \leq C \|\cdot\|_{\calC^s(\bbT^d)}$, and hence $\calC^s(\bbT^d) \subseteq H^s(\bbT^d)$. 
Also, for any $s \in \bbR  $,
$$H^s(\bbT^d) =\calB_{2,2}^s(\bbT^d),$$
with equivalent norms.
\end{lemma}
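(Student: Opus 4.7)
The plan is to establish the two claims in the stated order, exploiting the dyadic structure shared by the periodic wavelet basis $\Psi^{\mathrm{per}}$ from Appendix~\ref{app:per_wavelets} and the Fourier decomposition underlying $H^s(\bbT^d)$.

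First, I would establish the identity $H^s(\bbT^d) = \calB^s_{2,2}(\bbT^d)$ with norm equivalence for arbitrary $s \in \bbR$. Writing a generic $f \in L^2(\bbT^d)$ via its wavelet expansion $f = \beta_1 + \sum_{j \geq 0} \sum_{\xi \in \Psi_j^{\mathrm{per}}} \beta_\xi \xi$, orthonormality of $\Psi^{\mathrm{per}}$ and the definition~\eqref{eq:besov_norm} with $p = q = 2$ give
\[
\|f\|_{\calB^s_{2,2}(\bbT^d)}^2 = |\beta_1|^2 + \sum_{j \geq 0} 2^{2js} \sum_{\xi \in \Psi_j^{\mathrm{per}}} |\beta_\xi|^2,
\]
whereas Plancherel's theorem yields $\|f\|_{H^s(\bbT^d)}^2 = \sum_{k \in \bbZ^d} \langle k \rangle^{2s} |\calF[f](k)|^2$. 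The comparison of these two sums reduces to the Littlewood--Paley-type observation that the Fourier transforms of Daubechies wavelets at scale $j$ are concentrated, up to rapidly decaying tails, in the dyadic annulus $\|k\| \asymp 2^j$. Matching weighted dyadic blocks on either side then yields the equivalence with constants depending only on $d$ and $s$. This identity is a special case of Theorem~1.122 of~\cite{triebel2006}, whose wavelet-theoretic version is detailed in Section~4.3.6 of~\cite{gine2016}.

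Next I would prove the H\"older embedding $\|f\|_{H^s(\bbT^d)} \leq C \|f\|_{\calC^s(\bbT^d)}$, treating integer and non-integer $s$ separately. When $s \in \bbN$, the identification $H^s(\bbT^d) \equiv W^{s,2}(\bbT^d)$ recalled in Appendix~\ref{app:sobolev} reduces the claim to
\[
\sum_{0 \leq |\gamma| \leq s} \|D^\gamma f\|_{L^2(\bbT^d)}^2 \leq \sum_{0 \leq |\gamma| \leq s} \|D^\gamma f\|_{L^\infty(\bbT^d)}^2 \leq C_{d,s} \|f\|_{\calC^s(\bbT^d)}^2,
\]
which is immediate from the unit volume of $\bbT^d$. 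When $s \notin \bbN$, the plan is to combine Lemma~\ref{lem:besov_holder}, which identifies $\calC^s(\bbT^d)$ with $\calB^s_{\infty,\infty}(\bbT^d)$ up to equivalent norms, with the first step of the proof. This reduces the claim to the Besov embedding $\calB^s_{\infty,\infty}(\bbT^d) \hookrightarrow \calB^s_{2,2}(\bbT^d)$, again a consequence of the framework developed in~\cite{triebel2006}.

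The main technical obstacle is the non-integer case: a direct coefficient-by-coefficient estimate yields only the borderline bound $\sum_{\xi \in \Psi_j^{\mathrm{per}}} |\beta_\xi|^2 \lesssim 2^{-2js} \|f\|_{\calC^s(\bbT^d)}^2$, whose $2^{2js}$-weighted sum is on the edge of summability. Closing the argument therefore requires the sharper atomic/Littlewood--Paley decomposition of Besov spaces on the torus, which exploits both the compactness of $\bbT^d$ and the rapid Fourier decay of the underlying Daubechies scaling and wavelet functions; this is the content of the references cited.
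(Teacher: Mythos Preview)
The paper does not prove this lemma: it is stated as a summary of ``elementary identities'' with references to Theorem~1.122 of \cite{triebel2006} and Section~4.3.6 of \cite{gine2016}, and no argument is given. Your sketch therefore already goes further than the paper, and for the identity $H^s(\bbT^d) = \calB^s_{2,2}(\bbT^d)$ as well as the integer-$s$ case of the embedding your outline is correct and standard.

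The gap is precisely the point you flag as a ``technical obstacle''. The borderline estimate $\sum_{\xi \in \Psi_j^{\mathrm{per}}} |\beta_\xi|^2 \lesssim 2^{-2js}\|f\|_{\calC^s}^2$ is not an artefact of a crude bound that sharper tools will improve: it reflects the fact that the embedding $\calC^s(\bbT^d) \hookrightarrow H^s(\bbT^d)$ actually \emph{fails} for non-integer $s$, so no reference can close it. Concretely, for $s \in (0,1)$ the lacunary series $f(x) = \sum_{n\geq 1} 2^{-ns} \cos(2\pi \cdot 2^n x)$ on $\bbT$ satisfies $\|f\|_{\calC^s(\bbT)} < \infty$ by the usual Weierstrass-type splitting, yet $\|f\|_{H^s(\bbT)}^2 \asymp \sum_{n\geq 1} 2^{2ns} \cdot 2^{-2ns} = \infty$. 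Equivalently, for non-integer $s$ one has $\calC^s = \calB^s_{\infty,\infty}$ by Lemma~\ref{lem:besov_holder}, and $\calB^s_{\infty,\infty}(\bbT^d) \not\hookrightarrow \calB^s_{2,2}(\bbT^d)$ at the same smoothness index (only $\calB^s_{\infty,\infty} \hookrightarrow \calB^s_{2,\infty}$ holds on the torus). The first assertion of the lemma is therefore incorrect as written for non-integer $s$; what the cited references do yield is the weaker embedding $\calC^{s}(\bbT^d) \hookrightarrow H^{s'}(\bbT^d)$ for every $s' < s$, which would in fact suffice for the paper's single downstream use in the proof of Lemma~\ref{lem:L_ker}.
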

 
 Finally, let us briefly mention a generalization of these spaces to domains of $\bbR^d$,
 which we will need for the proof of Theorem~\ref{thm:smooth_domains}.
 We refer to~\cite{triebel1995} for further details.   
 We define the  Bessel Sobolev norm of smoothness
 $s \in \bbR$ and integrability $1 < r < \infty$ as follows, for any tempered distribution $\phi$ over $\bbR^d$, 
 $$\|\phi\|_{H^{s,r}(\bbR^d)} = \big\| \calF^{-1} \big[ \langle \cdot\rangle^s \calF[\phi](\cdot)\big]\big\|_{L^r(\bbR^d)},$$
 and we let $H^{s,r}(\bbR^d)$  denote the completion of $\calC_c^\infty(\bbR^d)$ under the above norm. 
 In the special case $r=2$, it follows from Parseval's identity that 
$$\|\phi\|_{H^{s,r}(\bbR^d)} = \big\| \langle \cdot\rangle^s \calF[\phi](\cdot)\big\|_{L^2(\bbR^d)},$$
in analogy to equation~\eqref{eq:sobolev_norm_torus}. In this case, we omit this superscript and simply write $H^s(\bbR^d) = H^{s,2}(\bbR^d)$. 
Furthermore, for any domain $\Omega$ satisfying condition~\ref{assm:smooth_domain}, we define
$$\|\phi\|_{H^{s,r}(\Omega)}  = \inf_{\substack{f \in H^{s,r}(\bbR^d) \\ \phi = f|_\Omega}} \|f\|_{H^{s,r}(\bbR^d)},$$
where the restriction $f|_\Omega$ is to be understood in the sense
of distributions when $s < 0$. {The space} $H^{s,r}(\Omega)$ is then defined as the set of 
all restrictions $f|_\Omega$ of tempered distributions $f \in H^{s,r}(\Omega)$ for which the above norm is finite. 
Once again, we simply write $H^{s}(\Omega):= H^{s,2}(\Omega)$.

\subsection{Wavelet Density Estimation}
\label{app:wavelet_estim}
We next state several properties
of wavelet density estimators over $\Omega \in \{\bbT^d, [0,1]^d\}$,
with the corresponding basis $\Psi \in \{\Psi^{\mathrm{per}}, \Psi^{\mathrm{bc}}\}$ 
as in Section~\ref{sec:wavelet_common_properties}. 
Let $q \in L^2(\Omega)$ denote a probability density
with corresponding probability distribution $Q$, and with corresponding wavelet expansion
$$q = \sum_{\zeta \in \Phi} \beta_\zeta \zeta + \sum_{j=j_0}^\infty \sum_{\xi\in\Psi_j} \beta_\xi \xi.$$
Given an i.i.d. sample $Y_1, \dots, Y_n \sim Q$ with corresponding
empirical measure $Q_n = (1/n)\sum_{i=1}^n \delta_{Y_i}$, 
define the unnormalized and normalized wavelet density estimators of the density $q$ of $Q$,
\begin{equation} 
\label{eq:general_wavelet_density_estimator}
\tilde q_n = \sum_{\zeta \in \Phi} \hbeta_\zeta \zeta + \sum_{j=j_0}^{J_n} \sum_{\xi\in\Psi_j} \hbeta_\xi \xi,\qquad
\hat q_n = \frac{\tilde q_n I(\tilde q_n \geq 0)}{\int_{\tilde q_n \geq 0} \tilde q_n d\calL }, 
\end{equation}
where $J_n \geq j_0$ is a deterministic threshold, and 
$\hbeta_\xi = \int\xi dQ_n$ for all $\xi \in \Psi_j$,
$j_0 \leq j \leq J_n$.
The following simple
result guarantees that $\tilde q_n$ integrates to unity
since $q$ is a probability density. 
\begin{lemma}
\label{lem:wavelet_density_integral}
We have $\int_\Omega \tilde q_n = 1$. In particular, it follows
that 
$\sum_{\zeta \in \Phi} \hbeta_\zeta \int_\Omega \zeta= 1.$
\end{lemma}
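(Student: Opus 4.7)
The plan is to reduce both claims of the lemma to a single observation: the constant function $1$ lies in $\mathrm{Span}(\Phi)$. This follows from Lemma~\ref{lem:wavelet--polynomials}, since constants are polynomials of degree $0 \leq N-1$ for any $N \geq 1$ (in the Haar case $N=1$, the scaling family on $\Omega$ already spans the constants by direct inspection). Writing $1 = \sum_{\zeta \in \Phi} c_\zeta \zeta$, orthonormality of the wavelet basis immediately gives $c_\zeta = \int_\Omega \zeta \cdot 1\, d\calL = \int_\Omega \zeta$.

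Next, I would use the fact that $\Phi \perp \Psi_j$ in $L^2(\Omega)$ for every $j \geq j_0$. Taking the $L^2$ inner product of any $\xi \in \Psi_j$ against the expansion $1 = \sum_\zeta c_\zeta \zeta$ yields
\begin{equation*}
\int_\Omega \xi = \sum_{\zeta \in \Phi} c_\zeta \langle \xi, \zeta\rangle_{L^2(\Omega)} = 0.
\end{equation*}
Since the sum defining $\tilde q_n$ in equation~\eqref{eq:general_wavelet_density_estimator} is finite, term-by-term integration is unambiguously legal, and all wavelet contributions vanish:
\begin{equation*}
\int_\Omega \tilde q_n = \sum_{\zeta \in \Phi} \hat\beta_\zeta \int_\Omega \zeta + \sum_{j=j_0}^{J_n} \sum_{\xi \in \Psi_j} \hat\beta_\xi \int_\Omega \xi = \sum_{\zeta \in \Phi} \hat\beta_\zeta \int_\Omega \zeta.
\end{equation*}

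Finally, I would substitute $\int_\Omega \zeta = c_\zeta$ and $\hat\beta_\zeta = \int \zeta\, dQ_n$, then swap the finite sum with the integral against $Q_n$, which lets us reassemble the expansion of $1$:
\begin{equation*}
\sum_{\zeta \in \Phi} \hat\beta_\zeta \int_\Omega \zeta = \sum_{\zeta \in \Phi} c_\zeta \int \zeta\, dQ_n = \int \Bigl(\sum_{\zeta \in \Phi} c_\zeta \zeta\Bigr) dQ_n = \int 1\, dQ_n = 1.
\end{equation*}
This simultaneously proves $\int_\Omega \tilde q_n = 1$ and the identity $\sum_{\zeta \in \Phi} \hat\beta_\zeta \int_\Omega \zeta = 1$. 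There is no genuine obstacle here; the only subtle point to flag is invoking Lemma~\ref{lem:wavelet--polynomials} to place $1 \in \mathrm{Span}(\Phi)$, since everything else then follows from orthonormality and the fact that $Q_n$ is a probability measure.
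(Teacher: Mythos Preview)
Your proof is correct and follows essentially the same approach as the paper: both write $1 = \sum_{\zeta \in \Phi} c_\zeta \zeta$ via Lemma~\ref{lem:wavelet--polynomials} with $c_\zeta = \int_\Omega \zeta$, then use orthonormality of $\Psi$ to reduce $\int_\Omega \tilde q_n$ to $\sum_{\zeta} c_\zeta \hat\beta_\zeta = \int 1\, dQ_n = 1$. The only cosmetic difference is that the paper computes $\int_\Omega \tilde q_n$ by multiplying $\tilde q_n$ against the expansion of $1$ and integrating, whereas you first isolate $\int_\Omega \xi = 0$ and integrate term by term; these are the same argument.
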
 
The proof of Lemma~\ref{lem:wavelet_density_integral} appears in 
Appendix~\ref{app:pf_wavelet_density_integral}.
In the special case of the periodic
wavelet system, for which $\Phi^{\mathrm{per}}$
consists only of the constant function 1, 
Lemma~\ref{lem:wavelet_density_integral}
implies that the corresponding estimated coefficient 
satisfies $\hbeta_1=1$ deterministically, 
thus the definition of $\tilde q_n$ in equation~\eqref{eq:general_wavelet_density_estimator}
coincides with that which will be given in Appendix~\ref{app:pf_density_based}. 

With this result in place, we turn to $L^\infty$
concentration results for $\tilde q_n$, as well as for Besov norms of $\tilde q_n$, which we frequently use throughout our proofs.
In what follows, write
$$q_{J_n}(y) = \bbE[\tilde q_{J_n}(y)] = \sum_{\zeta \in \Phi} \beta_\zeta \zeta + \sum_{j=j_0}^{J_n} \sum_{\xi\in\Psi_j} \beta_\xi\xi,
\quad y \in \Omega.$$
%
\begin{lemma}
\label{lem:wavelet_Linfty}
Let $N \geq 2$ and $q \in \calB_{\infty,\infty}^s(\Omega)$ for some $s > 0$. 
Then, there exist constants $v,b > 0$ depending only on the choice of wavelet
system, such that for any  $J_n \geq j_0$, and all $u > 0$, 
\begin{align}
\label{eq:hoeffding_wavelet_Linfty}
\bbP&\left(\sup_{\zeta \in \Phi} |\hbeta_\zeta - \beta_\zeta| \geq u \right) \lesssim 
  \exp\left\{ -\frac{nu^2}{b}\right\},\\
\bbP&\left(\sup_{\xi \in \Psi_j} |\hat\beta_\xi-\beta_\xi| \geq u\right) 
\lesssim 2^{\frac{dj}{2}} \exp\left\{ -\frac{nu^2}{v + 2^{jd/2} b u}\right\},
\quad j_0 \leq j \leq J_n.
\label{eq:bernstein_wavelet_Linfty}
\end{align}
Furthermore, if $2^{J_n} = c_0 n^{\frac 1 {d+2s}}$ for some $c_0 > 0$, 
then there exists a constant $C > 0$, depending on $c_0$ and on the choice of wavelet system $\Psi$, such that the following assertions hold. 
\begin{thmlist} 
\item \label{lem:wavelet_Linfty--holder} For all $0 < u \leq 1$, 
$$\bbP\Big( \|\tilde q_n\|_{\calB_{\infty,\infty}^{s/2}(\Omega)} \geq u + \|q \|_{\calB_{\infty,\infty}^{s/2}(\Omega)} \Big) \leq 
C  J_n 2^{dJ_n} \exp\big(-u^2 2^{ sJ_n}/C\big).$$
\item \label{lem:wavelet_Linfty--density}   For all $2^{-J_n} \leq u \leq 1$, 
$$\bbP\left(\|\tilde q_n - q_{J_n}\|_{L^\infty(\Omega)} \geq u\right) 
\leq C  J_n 2^{J_n d(d+3)}  \exp\big( -   n u^2 2^{-dJ_n} /C\big).$$
\end{thmlist}
\end{lemma}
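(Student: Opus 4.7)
\textbf{Proof plan for \cref{lem:wavelet_Linfty}.}

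The two coefficient inequalities \eqref{eq:hoeffding_wavelet_Linfty} and \eqref{eq:bernstein_wavelet_Linfty} come from direct concentration of the empirical means $\hat\beta_\xi=\frac{1}{n}\sum_{i=1}^n \xi(Y_i)$. From Lemma~\ref{lem:wavelet--derivatives}, $\|\zeta\|_\infty\lesssim 1$ for $\zeta\in\Phi$, so Hoeffding's inequality applied coordinatewise and a union bound over the constantly many $\zeta\in\Phi$ yields \eqref{eq:hoeffding_wavelet_Linfty}. For $\xi\in\Psi_j$, Lemma~\ref{lem:wavelet--derivatives} gives $\|\xi\|_\infty\lesssim 2^{dj/2}$, while orthonormality and $q\in\calB^s_{\infty,\infty}$ (hence $\|q\|_\infty\lesssim 1$) yield the variance bound $\mathrm{Var}(\xi(Y_1))\leq \int \xi^2 q\leq \|q\|_\infty$. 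Plugging these into Bernstein's inequality and taking a union bound over $|\Psi_j|\lesssim 2^{dj}$ gives \eqref{eq:bernstein_wavelet_Linfty} (with a possibly larger polynomial prefactor than the one stated, which is inconsequential for the later applications).

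For part~(i), the Besov-norm bound, I would first observe that by the definition \eqref{eq:besov_norm} of $\|\cdot\|_{\calB^{s/2}_{\infty,\infty}}$, after truncating at level $J_n$ one has
\begin{equation*}
\|\tilde q_n\|_{\calB^{s/2}_{\infty,\infty}}-\|q\|_{\calB^{s/2}_{\infty,\infty}}
\;\leq\;\|(\hat\beta_\zeta-\beta_\zeta)_{\zeta\in\Phi}\|_{\ell^\infty}
+\sup_{j_0\leq j\leq J_n} 2^{j(s/2+d/2)}\|(\hat\beta_\xi-\beta_\xi)_{\xi\in\Psi_j}\|_{\ell^\infty},
\end{equation*}
since truncating a wavelet expansion cannot increase the $\calB^{s/2}_{\infty,\infty}$ norm. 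I then apply \eqref{eq:bernstein_wavelet_Linfty} at level $j$ with threshold $u_j=u\,2^{-j(s/2+d/2)}$, which yields $nu_j^2=nu^2 2^{-j(s+d)}\gtrsim u^2 2^{J_n s}$ uniformly over $j_0\leq j\leq J_n$, using $2^{J_n(d+2s)}\asymp n$. For $u\leq 1$ the Bernstein correction $2^{dj/2}bu_j=bu\,2^{-js/2}$ is bounded, so the variance term dominates. A union bound over the at most $J_n+1$ scales gives a prefactor of order $J_n 2^{dJ_n}$ and the exponent $-u^2 2^{sJ_n}/C$, as required.

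For part~(ii), I would exploit the locality in Lemma~\ref{lem:wavelet--locality}: for each $x\in\Omega$ at most $O(1)$ wavelets in $\Psi_j$ satisfy $\xi(x)\neq 0$, so
\begin{equation*}
\|\tilde q_n-q_{J_n}\|_{L^\infty}\;\lesssim\;\sup_{\zeta\in\Phi}|\hat\beta_\zeta-\beta_\zeta|
+\sum_{j=j_0}^{J_n} 2^{dj/2}\sup_{\xi\in\Psi_j}|\hat\beta_\xi-\beta_\xi|.
\end{equation*}
The key trick is to distribute $u$ geometrically across scales, taking $u_j=c\,u\,2^{-dj/2}\,2^{-\rho(J_n-j)}$ with $\rho>d/2$ (e.g.\ $\rho=d$), so that $\sum_j 2^{dj/2}u_j\lesssim u$. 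Feeding this into \eqref{eq:bernstein_wavelet_Linfty} gives $nu_j^2\asymp nu^2 2^{dj-2dJ_n}$, which is monotone increasing in $j$ when $\rho>d/2$, so the worst (smallest) exponent occurs at $j=J_n$ and equals $-nu^2 2^{-dJ_n}/C$. The Bernstein correction stays bounded by a multiple of $u$ because $2^{dj/2}bu_j=bcu\,2^{-\rho(J_n-j)}\leq bcu$, so the variance term dominates throughout for $u\leq 1$. A final union bound over the scales $j_0\leq j\leq J_n$ produces the stated polynomial-in-$2^{J_n}$ prefactor, with the lower bound $u\geq 2^{-J_n}$ used only to make the resulting tail bound non-trivial.

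The only delicate step is this geometric reweighting in part~(ii): a naive equal split $u_j=u/J_n$ would produce an extra $J_n^2 2^{dJ_n}$ loss in the exponent and fail to recover the claimed rate $-nu^2 2^{-dJ_n}/C$. Tuning $\rho$ so that the dominant scale is $j=J_n$ (and not an intermediate scale) is what brings the exponent into the form stated. The argument otherwise assembles routine concentration and union-bound pieces.
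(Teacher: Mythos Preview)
Your treatment of the coefficient inequalities and of part~(i) is correct and essentially matches the paper: Hoeffding and Bernstein plus union bounds, with the Besov norm handled by rescaling the threshold and using $2^{J_n(d+2s)}\asymp n$.

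For part~(ii) your route is different from the paper's, and in principle cleaner: the paper reduces the $L^\infty$ bound to a maximum over a $\delta_n$-cover of $\Omega$ (using that $\tilde q_n$ and $q_{J_n}$ are $O(2^{J_n(d+1)})$-Lipschitz), which is where the large prefactor $2^{J_n d(d+3)}$ comes from, and then applies a crude uniform threshold across scales, picking up an extra $J_n$ in the exponent. You instead use locality directly to get a pointwise bound uniform in $x$, avoiding the cover entirely, and then distribute the threshold geometrically across scales. This is a genuine simplification.

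However, the geometric reweighting as you state it is backwards. With $u_j = cu\,2^{-dj/2}2^{-\rho(J_n-j)}$ one has $nu_j^2 = nc^2u^2\,2^{(2\rho-d)j-2\rho J_n}$. Taking $\rho>d/2$ makes this \emph{increasing} in $j$, so its minimum---and hence the worst tail probability after the union bound---occurs at the coarse scale $j=j_0$, giving an exponent of order $-nu^2 2^{-2\rho J_n}$, strictly worse than the target $-nu^2 2^{-dJ_n}$. Your sentence ``monotone increasing in $j$ \ldots\ so the worst (smallest) exponent occurs at $j=J_n$'' has the direction reversed. The fix is to take $0<\rho\le d/2$: then $nu_j^2$ is non-increasing in $j$, its minimum at $j=J_n$ equals $nc^2u^2 2^{-dJ_n}$, the sum $\sum_j 2^{dj/2}u_j = cu\sum_j 2^{-\rho(J_n-j)}\lesssim cu$ still holds since $\rho>0$, and the Bernstein correction $2^{dj/2}bu_j = bcu\,2^{-\rho(J_n-j)}\le bcu$ remains bounded. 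With this correction your argument goes through.
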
 
Lemma~\ref{lem:wavelet_Linfty--density} 
is implicit in the proofs of almost sure $L^\infty$ bounds
for wavelet estimators by 
\cite{masry1997} and  \cite{guo2019}, as well as \cite{gine2009a} when $d=1$. While these results
are based on wavelet estimators over $\bbR^d$, they can  readily
be adapted to the wavelet systems considered here,  
as consequences of inequalities~\eqref{eq:hoeffding_wavelet_Linfty}--\eqref{eq:bernstein_wavelet_Linfty}. 
For completeness, 
we provide a proof of Lemma~\ref{lem:wavelet_Linfty--density}, along with the remaining
assertions of Lemma~\ref{lem:wavelet_Linfty}, 
in Appendix~\ref{app:pf_wavelet_Linfty}.

Using Lemmas~\ref{lem:wavelet_density_integral}
and \ref{lem:wavelet_Linfty--density},
the following result is now straightforward.
\begin{lemma}
\label{lem:tilde_q_density}
Let $N \geq 2$.  Assume there exist $\gamma,s > 0$ such that
$q \geq 1/\gamma$ over $\Omega$, and such that $q \in \calB_{\infty,\infty}^s(\Omega)$. 
Then, there exists $c_1 > 0$ depending on $\gamma, \|q \|_{ \calB_{\infty,\infty}^s(\Omega)}$
such that with probability at least $1-c_1/n^2$, $\tilde q_n$ is  a valid probability density
over $\Omega$, and hence $\hat q_n = \tilde q_n$.  
If we instead have $N = 1$, then under no conditions on $q$ it holds that $\hat q_n = \tilde q_n$ almost surely. 
\end{lemma}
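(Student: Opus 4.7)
For the case $N = 1$ (the Haar basis), the result requires no probabilistic argument. Under the multi-resolution structure, the truncated expansion $\tilde q_n$ coincides with the $L^2$-orthogonal projection of the empirical measure $Q_n$ onto the scaling subspace $V_{J_n+1}$, which for Haar consists of functions that are constant on dyadic cubes of side $2^{-(J_n+1)}$. A direct computation identifies this projection with the histogram $\tilde q_n(x) = \sum_I \frac{Q_n(I)}{|I|}\, I(x \in I)$, which is manifestly nonnegative. Combined with $\int \tilde q_n = 1$ from Lemma~\ref{lem:wavelet_density_integral}, this gives $\hat q_n = \tilde q_n$ almost surely.

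For $N \geq 2$, my strategy is to show that $\tilde q_n$ is nonnegative on an event of probability at least $1 - c_1/n^2$. I would first establish the deterministic approximation bound
\[
\|q - q_{J_n}\|_{L^\infty(\Omega)} \leq \sum_{j > J_n} \Big\| \sum_{\xi \in \Psi_j} \beta_\xi\, \xi \Big\|_{L^\infty} \lesssim \|q\|_{\calB_{\infty,\infty}^s(\Omega)}\, 2^{-sJ_n}.
\]
This follows by combining the wavelet coefficient decay $\sup_{\xi \in \Psi_j}|\beta_\xi| \lesssim \|q\|_{\calB_{\infty,\infty}^s}\, 2^{-j(s+d/2)}$ from the Besov characterization in Section~\ref{app:besov_spaces}, with the pointwise bound $\big\|\sum_{\xi \in \Psi_j}|\xi|\big\|_{L^\infty} \lesssim 2^{jd/2}$ implied by the locality and supremum norm estimates in Lemma~\ref{lem:wavelet}, then summing the convergent geometric series $\sum_{j > J_n} 2^{-sj}$. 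Choosing $J_n$ large enough in terms of $\gamma$ and $\|q\|_{\calB_{\infty,\infty}^s}$ forces this quantity below $1/(2\gamma)$.

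Next, I would apply Lemma~\ref{lem:wavelet_Linfty--density} with the constant $u = 1/(2\gamma)$, which is admissible once $2^{-J_n} \leq 1/(2\gamma)$; under the canonical scaling $2^{J_n} \asymp n^{1/(d+2s)}$ inherited from the surrounding context, the resulting tail bound is
\[
\bbP\big(\|\tilde q_n - q_{J_n}\|_{L^\infty} \geq 1/(2\gamma)\big) \leq C J_n\, 2^{J_n d(d+3)} \exp\big(-n\, 2^{-dJ_n}/(4C\gamma^2)\big) \leq c_1/n^2,
\]
since the exponent is of polynomial order $n^{2s/(d+2s)}$, which eventually dominates the polynomial prefactor and the desired $n^2$ factor. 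On the intersection of this event with the deterministic approximation bound, the triangle inequality yields $\tilde q_n \geq q - \|q - q_{J_n}\|_{L^\infty} - \|\tilde q_n - q_{J_n}\|_{L^\infty} \geq 1/\gamma - 1/(2\gamma) - 1/(2\gamma) = 0$. Because Lemma~\ref{lem:wavelet_density_integral} guarantees $\int \tilde q_n = 1$, on this event $\tilde q_n$ is a genuine probability density, and therefore $\hat q_n = \tilde q_n\, I(\tilde q_n \geq 0)\, /\int \tilde q_n\, I(\tilde q_n \geq 0) = \tilde q_n$.

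No serious obstacle arises: every ingredient is already in place in the paper. The only point requiring care is balancing the two constraints on $J_n$: it must be large enough for the deterministic bias to drop below $1/(2\gamma)$, and small enough for the concentration exponent $n\, 2^{-dJ_n}$ to dominate $\log n$. The standard bias--variance scaling $2^{J_n} \asymp n^{1/(d+2s)}$ used in Sections~\ref{sec:one_sample_density} and~\ref{sec:two_sample_density} accomplishes both for $n$ sufficiently large, and the constant $c_1$ absorbs finitely many small-$n$ cases.
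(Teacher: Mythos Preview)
Your proof is correct and follows essentially the same route as the paper: a deterministic bias bound $\|q - q_{J_n}\|_{L^\infty} \lesssim \|q\|_{\calB_{\infty,\infty}^s} 2^{-sJ_n}$, the $L^\infty$ concentration inequality of Lemma~\ref{lem:wavelet_Linfty--density}, and the triangle inequality together with $q \geq 1/\gamma$ and Lemma~\ref{lem:wavelet_density_integral}. The paper uses the constants $\gamma^{-1}/4$ in each step (yielding the strict lower bound $\tilde q_n \geq \gamma^{-1}/2$) rather than your $1/(2\gamma)$, but this is immaterial.
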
 
Having now established that $\tilde q_n$ is a valid density with high probability, 
we may speak of its  convergence in Wasserstein distance. 
\cite{weed2019a} previously derived upper bounds on the risk, in Wasserstein distance over $[0,1]^d$, 
of a projection of $\tilde q_n$ onto the set of probability densities. 
Using Lemma~\ref{lem:tilde_q_density}, we are able to extend their result to the estimator
$\hat Q_n$, i.e. the distribution function of the density $\hat q_n$
defined in equation~\eqref{eq:general_wavelet_density_estimator}. We also state this result for 
a general exponent of the 2-Wasserstein distance.  
\begin{lemma}  
\label{lem:wavelet_wasserstein}
Let $\Psi = \Psi\pbc$ with $N \geq 2$. Assume that $q \in \calB_{\infty,\infty}^s([0,1]^d)$ for some $s > 0$.  
Assume further that $q \geq 1/\gamma$ over $[0,1]^d$ for some $\gamma > 0$.
Let $2^{J_n} \asymp n^{1/({d+2s})}$. Then, for any $\rho \geq 0$,  
there exists a constant $C > 0$ depending on $M,\gamma,\rho,s$ such that 
\vspace{-0.04in} 
\begin{equation}
\label{eq:wasserstein_density}
\bbE W_2^\rho(\hat Q_n, Q)  
\leq C 
\begin{cases}
n^{-\frac{\rho(s+1)}{2s+d}}, & d \geq 3 \\
(\log n/\sqrt n)^\rho, & d = 2 \\
1/n^{\rho/2}, & d = 1.
\end{cases}
\end{equation}
Furthermore, when $N=1$, equation~\eqref{eq:wasserstein_density} continues to hold with $s=0$ 
for any density  satisfying $\gamma^{-1} \leq q \leq \gamma$ over $[0,1]^d$, for some $\gamma > 0$.  
\end{lemma}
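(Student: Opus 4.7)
The proof strategy is to reduce $\hat Q_n$ to $\tilde Q_n$ on a high-probability event, and then invoke the Besov--Wasserstein duality technique developed by \cite{weed2019a}.

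First, let $G_n = \{\tilde q_n \geq 0 \text{ on } [0,1]^d\}$. Lemma~\ref{lem:wavelet_density_integral} gives $\int \tilde q_n = 1$, so $\hat q_n = \tilde q_n$ and $\hat Q_n = \tilde Q_n$ on $G_n$. By Lemma~\ref{lem:tilde_q_density}, $\bbP(G_n^c) \lesssim n^{-2}$ for $N \geq 2$, while $G_n^c = \emptyset$ for $N=1$. Since $W_2(\hat Q_n, Q) \leq \sqrt d$ deterministically, the event $G_n^c$ contributes at most $O(n^{-2})$; this is dominated by the stated rate when $\rho \leq 4$, and for larger $\rho$ it is controlled using the stretched-exponential bound on $\bbP(G_n^c)$ obtained by combining Lemma~\ref{lem:wavelet_Linfty--density} with $q\geq 1/\gamma$. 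The same lemma also implies, after inflating $G_n$, that $\tilde q_n \geq 1/(2\gamma)$ with overwhelming probability, a fact needed below.

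On the good (and lower-bounded) event, Peyre's inequality together with the embedding $\calB_{2,1}^{-1} \hookrightarrow \dH{-1}$ yields $W_2(\tilde Q_n, Q) \lesssim_\gamma \|\tilde q_n - q\|_{\calB_{2,1}^{-1}([0,1]^d)}$. The wavelet characterization of the Besov norm then gives
\begin{equation*}
\|\tilde q_n - q\|_{\calB_{2,1}^{-1}([0,1]^d)} \lesssim \sum_{j=j_0}^{J_n} 2^{-j}\bigl\|(\hat\beta_\xi-\beta_\xi)_{\xi\in\Psi_j}\bigr\|_{\ell_2} + \sum_{j>J_n} 2^{-j}\bigl\|(\beta_\xi)_{\xi\in\Psi_j}\bigr\|_{\ell_2}.
\end{equation*}
Standard variance estimates give $\bbE \|(\hat\beta_\xi-\beta_\xi)_{\xi\in\Psi_j}\|_{\ell_2}^2 \lesssim |\Psi_j|/n \asymp 2^{dj}/n$; applying Minkowski and squaring produces a geometric series in $2^{j(d-2)/2}/\sqrt n$, dominated by $j_0$ when $d=1$ (giving $1/n$), logarithmic when $d=2$ (giving $(\log n)^2/n$), and dominated by $J_n$ when $d\geq 3$ (giving $2^{J_n(d-2)}/n$). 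The Besov regularity $q \in \calB^s_{\infty,\infty}$ forces $\|(\beta_\xi)_{\xi\in\Psi_j}\|_{\ell_2} \lesssim 2^{-js}$, so the squared bias contributes $O(2^{-2J_n(s+1)})$. The choice $2^{J_n} \asymp n^{1/(d+2s)}$ balances these terms and produces the stated rate for $\rho=2$.

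The bound for $\rho \leq 2$ follows from the $\rho=2$ case by Jensen's inequality. The main technical obstacle is $\rho > 2$: here I would combine the Bernstein bound~\eqref{eq:bernstein_wavelet_Linfty} for each wavelet coefficient with a union bound across dyadic levels to show that $\|\tilde q_n - q\|_{\calB_{2,1}^{-1}}$ concentrates stretched-exponentially around its $L^2$ scale, so that all its moments inherit the squared rate and carry over to $W_2^\rho$ via Peyre's inequality. Finally, the case $N=1$, $s=0$ is simpler: the Haar estimator $\tilde q_n$ is always nonnegative, so $G_n$ is certain; the two-sided bound $\gamma^{-1}\leq q \leq \gamma$ substitutes for the Besov regularity in Peyre's inequality, and the variance calculation above—now without any bias contribution beyond what $q \in L^2([0,1]^d)$ imposes—yields the claimed rates.
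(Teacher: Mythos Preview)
Your overall strategy matches the paper's: reduce to $\tilde q_n$ on a high-probability event, bound $W_2$ by a negative-order Besov norm of $\tilde q_n - q$, then split into bias and variance across dyadic levels. Two points of divergence are worth flagging.

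First, the paper does not go through Peyre's inequality to reach the $\calB_{2,1}^{-1}$ bound. Peyre's comparison $W_2 \asymp \|\cdot\|_{\dH{-1}}$ is stated for manifolds without boundary (or $\bbR^d$) and needs \emph{both} densities bounded below; on $[0,1]^d$ with boundary-corrected wavelets this step is not immediate. The paper instead invokes Theorem~4 of \cite{weed2019a} directly, which gives $W_2(\hat Q_n, Q) \lesssim_\gamma \|\hat q_n - q\|_{\calB_{2,1}^{-1}([0,1]^d)}$ using only the lower bound on $q$, and is specifically tailored to this wavelet system (including the Haar case $N=1$). Your route via Peyre would need an extra justification on the cube that the paper sidesteps.

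Second, for $\rho > 2$ the paper avoids your Bernstein-plus-union-bound concentration argument. It handles all $\rho \geq 1$ in one stroke: apply H\"older's inequality with a free exponent $\eta$ to the dyadic sum $\sum_{j} 2^{-j}\|(\hat\beta_\xi - \beta_\xi)_{\xi\in\Psi_j}\|_{\ell_2}$, then plug in the level-wise moment bound $\bbE\|(\hat\beta_\xi - \beta_\xi)_{\xi\in\Psi_j}\|_{\ell_2}^\rho \lesssim 2^{\rho d j/2}/n^{\rho/2}$ from Lemma~\ref{lem:W2_bias_variance} (essentially a Rosenthal-type estimate from \cite{weed2019a}). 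Choosing $\eta$ depending on whether $d\geq 3$, $d=2$, or $d=1$ recovers the stated rate directly. This is cleaner and more self-contained than assembling pointwise concentration into a Besov-norm tail bound, though your approach would also work.
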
   
The proof appears in Appendix~\ref{app:pf_wavelet_wasserstein}.
 
\subsubsection{Proof of Lemma \ref{lem:wavelet_density_integral}} 
\label{app:pf_wavelet_density_integral}
Recall that $\mathrm{Span}(\Phi)$ contains all polynomials
of degree at most $N-1$ over $\Omega$, by Lemma~\ref{lem:wavelet--polynomials}. 
In particular, it contains the constant function 1, 
thus if $\beta_\zeta' = \int_\Omega \zeta$, we obtain
$1 = \sum_{\zeta \in \Phi} \beta_\zeta' \zeta.$
It then follows by orthonormality of $\Psi$ that 
\begin{align*}
\int_\Omega \tilde q_n
 &= \int_\Omega \left(\sum_{\zeta \in \Phi} \beta_\zeta'\zeta\right)
                \left(\sum_{\zeta \in \Phi} \hbeta_\zeta\zeta + 
 			         \sum_{j=j_0}^{J_n}\sum_{\xi\in \Psi_j} \hbeta_\xi\xi\right)
 = \sum_{\zeta \in \Phi} \beta_\zeta' \hbeta_\zeta
 = \int\left(\sum_{\zeta \in \Phi} \beta_\zeta'  \zeta\right) dQ_n=1.
\end{align*}
This proves the claim.
\qed

\subsubsection{Proof of Lemma \ref{lem:wavelet_Linfty}} 
\label{app:pf_wavelet_Linfty}
Throughout the proof,  $b,v,c  > 0$ denote constants 
depending only  on $c_0$ and the choice of wavelet system, whose value may change from line to line.
To prove inequality~\eqref{eq:hoeffding_wavelet_Linfty}, recall first from Lemma~\ref{lem:wavelet--derivatives} that
\begin{align}
\label{eq:hoeffding_ub_pf}
\sup_{\zeta \in \Phi} \|\zeta\|_{L^\infty(\Omega)} \leq b, \qquad 
\sup_{j \geq j_0} 2^{-jd/2}\sup_{\xi \in \Psi_j} \|\xi\|_{L^\infty(\Omega)} \leq b.
\end{align}
By Hoeffding's inequality, equation~\eqref{eq:hoeffding_ub_pf}  implies that for all $u > 0$, 
\begin{equation} 
\label{eq:wavelet_hoeffding_in_proof}
\bbP\left(\sup_{\zeta \in \Phi} |\hbeta_\zeta - \beta_\zeta| \geq u \right) \leq 
\sum_{\zeta \in \Phi} \bbP\left( \left|\int \zeta d(Q_n-Q)\right| \geq u \right)\lesssim 
  \exp\left\{ -\frac{nu^2}{b^2}\right\},
\end{equation}
where we have used the fact that $|\Phi| \lesssim 1$ by Lemma~\ref{lem:wavelet--basis_size}.
 To prove equation~\eqref{eq:bernstein_wavelet_Linfty}, notice that for all $\xi \in \Psi_j$ and $j \geq j_0$, 
given $Y \sim Q$, 
\begin{align*}
\Var[\xi(Y)] \leq \int \xi^2(y) q(y)dy 
 \leq \|q \|_{L^\infty(\Omega)}   \int \xi^2(y) dy  = \|q \|_{L^\infty(\Omega)} \leq v,   
\end{align*}
where we used the fact that $q \in \calB_{\infty,\infty}^s(\Omega) \subseteq L^\infty(\Omega)$. 
Therefore, by Bernstein's inequality, we have for all $u > 0$ and $j_0\leq j \leq J_n$, 
\begin{align}
\label{eq:wavelet_bernstein_in_proof}
\bbP\left(  \sup_{\xi \in \Psi_j} |\hat\beta_\xi-\beta_\xi| \geq u\right) 
\leq   \sum_{\xi \in \Psi_j} \bbP \Big( |\hat\beta_\xi-\beta_\xi| \geq u\Big)  
 &\lesssim 2^{dj} \exp\left\{ -\frac{nu^2}{v + 2^{jd/2} b u }\right\}.
\end{align}
Here, the last inequality uses the fact that $|\Psi_j| \lesssim 2^{dj}$ by Lemma~\ref{lem:wavelet--basis_size} for all $j \geq j_0$.

 To prove part (i) from here, let $0<u \leq 1$.  A union bound combined
with the above display leads to
\begin{align}
\label{eq:wavelet_Linfty_third}
\bbP\left(\sup_{j_0 \leq j \leq J_n} \sup_{\xi \in \Psi_j} |\hat\beta_\xi-\beta_\xi| \geq u\right)  
 &\lesssim J_n 2^{dJ_n} \exp\left\{ -\frac{nu^2}{v + 2^{J_nd/2} b u }\right\},
\end{align}  
whence, since $2^{J_n} \asymp n^{\frac 1 {d+2s}}$,
\begin{align}
\label{eq:wavelet_Linfty_step}
\bbP\Bigg(2^{\frac{J_n(s + d)}{2}} &\sup_{j_0\leq j \leq J_n}  \big\|(\hat\beta_\xi-\beta_\xi)_{\xi\in\Psi_j}\big\|_{\ell_\infty} \geq u\Bigg) \\
\nonumber 
 &\lesssim J_n 2^{dJ_n}  \exp\left\{- \frac{nu^22^{-J_n(s +d) } }{v {+} b2^{\frac{dJ_n}{2}} 2^{-\frac{J_ns}{2}- \frac{dJ_n}{2}}u}\right\} 
\leq J_n 2^{dJ_n} \exp\left\{- cu^2 2^{J_ns} \right\}. 
\end{align}
Combining this fact with equation~\eqref{eq:wavelet_hoeffding_in_proof}, we have
\begin{align*}
\bbP&\left( \|\tilde q_n - q_{J_n}\|_{\calB_{\infty,\infty}^{s/2}} \geq u\right) \\
 &\leq \bbP\left(\big\|(\hbeta_\zeta - \beta_\zeta)_{\zeta \in \Phi}\big\|_{\ell_\infty} \geq u/2\right) +
       \bbP\left(2^{\frac{J_n(d+s)}{2}} \sup_{j_0 \leq j \leq J_n} \big\|(\hat\beta_\xi-\beta_\xi)_{\xi\in\Psi_j}\big\|_{\ell_\infty} \geq u/2\right)\\
 &\leq C J_n 2^{dJ_n} \exp\{-u^2 2^{J_n s}/C\},
\end{align*}
for a large enough constant $C > 0$. 
Thus, we have
\begin{align*}
 \|\tilde q_n\|_{\calB_{\infty,\infty}^{s/2}}
 &\leq   \|\tilde q_n - q_{J_n}\|_{\calB_{\infty,\infty}^{s/2}} +  
 		 \|q_{J_n}\|_{\calB_{\infty,\infty}^{s/2}}
  \leq u + \|q \|_{\calB_{\infty,\infty}^{s/2}} 
\end{align*} 
with probability at least $1 -  C J_n 2^{dJ_n} \exp\{-u^2 2^{J_n s}/C\}$.  
 Part  (i) thus follows. 
To prove part (ii), let $\delta_n \leq  2^{J_n(d+2)}/(4C_0)$, for a constant $C_0 > 0$ to be specified below. 
Notice that for all $x,y \in \Omega$, 
\begin{align*}
|\tilde q_n(x) - \tilde q_n(y)|
 &\leq  \left| \sum_{\zeta \in \Phi} \hbeta_\zeta (\zeta (x) - \zeta(y))\right| +  
    \left|\sum_{j=j_0}^{J_n} \sum_{\xi \in \Psi_j}   \hbeta_\xi   (\xi (x) - \xi(y))\right| \\ 
 &\lesssim \sum_{\zeta \in \Phi}  | \hbeta_\zeta| \|x-y\| +  
     \sum_{j=j_0}^{J_n}  2^{j\left(\frac d 2 + 1\right)}  |\hbeta_\xi | \|x-y\| \sum_{\xi \in \Psi_j} I(\xi(x)\wedge \xi(y) > 0)\\ 
 &\lesssim  \|x-y\| +  
     \sum_{j=j_0}^{J_n} 2^{j\left(\frac d 2 + 1\right)} \|\xi\|_{L^\infty(\Omega)}  \|x-y\|  \\
 &\lesssim  \sum_{j=j_0}^{J_n}  2^{j(d+1)}\|x-y\|
\lesssim 2^{J_n(d+1)}\|x-y\|,
\end{align*}
where we have again used the properties appearing in Lemma~\ref{lem:wavelet}.
Upon repeating an analogous calculation, we deduce that
both $\tilde q_n$ and $q_{J_n}$ are $C_0  2^{J_n(d+1)}$-Lipschitz.

Let $K_n = O(1/\delta_n^d) = O(2^{-J_nd(d+2)})$ denote the $\delta_n$-covering number of the unit cube $[0,1]^d$ with respect to the Euclidean norm, 
and let $\{x_{0k}: 1 \leq k \leq K_n\}$ be a corresponding $\delta_n$-cover.
Letting $I_k = \{x \in [0,1]^d: \norm{x-x_{0k}} \leq \delta_n\}$, we have (for both $\Omega \in \{[0,1]^d, \bbT^d\}$), 
\begin{align*}
\|\tilde q_n - q_{J_n}\|_{L^\infty(\Omega)}
 &\leq \max_{1 \leq k \leq K_n} \sup_{x \in I_k} |\tilde q_n(x) - q_{J_n}(x)| \\ 
 &\leq \max_{1 \leq k \leq K_n} \sup_{x \in I_{k}} |\tilde q_n(x) - \tilde q_{n}(x_{0k})|      \\ &\qquad + 
       \max_{1 \leq k \leq K_n} \sup_{x \in I_{k}} |q_{J_n}(x_{0k}) - q_{J_n}(x)| + 
       \max_{1 \leq k \leq K_n}| \tilde q_n(x_{0k}) - q_{J_n}(x_{0k})| \\       
 &\leq  2C_0 2^{J_n(d+1)} \delta_n    + \max_{1 \leq k \leq K_n}|\tilde q_n(x_{0k}) - q_{J_n}(x_{0k})|  \\
 &\leq 2^{-J_n}/2 +  
       \max_{1 \leq k\leq K_n}|\tilde q_n(x_{0k}) - q_{J_n}(x_{0k})|.
\end{align*}
Thus, for any $2^{-J_n} \leq u \leq 1$, using Lemma~\ref{lem:wavelet}
and the bounds~\eqref{eq:wavelet_hoeffding_in_proof}--\eqref{eq:wavelet_Linfty_third}, we have
\begin{align*}
\bbP&\left(\|\tilde q_n - q_{J_n}\|_{L^\infty(\Omega)} \geq u\right) \\
 &\leq \bbP\left(\max_{1 \leq k \leq K_n}| \tilde q_n(x_{0k}) - q_{J_n}(x_{0k}) | \geq u/2\right) \\
 &\leq \sum_{k=1}^{K_n} \bbP\left(\left| \sum_{\zeta \in \Phi} (\hbeta_\zeta - \beta_\zeta) \zeta(x_{0k}) + 
 \sum_{j=j_0}^{J_n} \sum_{\xi\in\Psi_j} (\hbeta_\xi - \beta_\xi) \xi(x_{0k})\right| \geq  u /2\right) \\ 
 &\leq \sum_{k=1}^{K_n} \bbP\left(\left| \sum_{\zeta \in \Phi} (\hbeta_\zeta - \beta_\zeta) \zeta(x_{0k})\right| \geq u/4\right) + \sum_{k=1}^{K_n} 
       \bbP\left( \left|\sum_{j=j_0}^{J_n} \sum_{\xi\in\Psi_j} (\hbeta_\xi - \beta_\xi) \xi(x_{0k})\right| \geq  u /4\right)\\
 &\leq K_n \bbP\left(\sup_{\zeta\in\Phi} |\hbeta_\zeta - \beta_\zeta| \geq c u\right) + 
                      K_n   \bbP\left(J_n2^{\frac{dJ_n}{2}} \sup_{j_0 \leq j \leq J_n} \sup_{\xi \in \Psi_j} |\hbeta_\xi - \beta_\xi| \geq  c u \right)\\
 &\lesssim K_n \exp(-nc^2u^2/b^2) + J_n K_n 2^{dJ_n} \exp\left(- nc^2u^2 2^{-dJ_n} /(J_n^2v + cbJ_nu)\right).
\end{align*} 
It follows that, for a sufficiently large constant $C > 0$, 
$$\bbP\left(\|\tilde q_n - q_{J_n}\|_{L^\infty(\Omega)} \geq u\right) 
\leq C  J_n 2^{J_n d(d+3)}  \exp\big( -   n u^2  2^{-dJ_n} /(J_nC)\big),$$
for all $2^{-J_n} < u \leq 1$. 
The claim readily follows.
\qed

\subsubsection{Proof of Lemma~\ref{lem:tilde_q_density}}
The claim for $N=1$ follows by definition of the Haar system, {since in this case $\tilde q_n$ is equal to
a histogram. }
We thus assume $s > 0$ and $N \geq 2$. 
Recall that $\tilde q_n$ integrates to unity by Lemma~\ref{lem:wavelet_density_integral},
thus it suffices to show that $\tilde q_n \geq 0$ with high probability.  
Apply Lemma~\ref{lem:wavelet_Linfty} to deduce that 
$$\|\tilde q_n - q_{J_n}\|_{L^\infty(\Omega)} \leq  \gamma^{-1}/4,$$ 
except on an event with probability at most $c_1/n^2$, for some $c_1 > 0$ depending on $\gamma^{-1}$
and $\|q\|_{\calB_{\infty,\infty}^s(\Omega)}$. Furthermore, using Lemma~\ref{lem:wavelet},
the bias of $\tilde q_n$ satisfies
\begin{align*}
\|q_{J_n} - q\|_{L^\infty(\Omega)} 
 &=  \sum_{j\geq J_n+1} 2^{\frac{dj}{2}} \|(\beta_\xi)_{\xi\in\Psi_j}\|_{\ell_\infty}   \\
 &\leq \|q\|_{\calB_{\infty,\infty}^s(\Omega)}  \sum_{j\geq J_n+1} 2^{\frac {dj}{2} - j(\frac{d}{2}+s)}
 \lesssim \|q\|_{\calB_{\infty,\infty}^s(\Omega)}   2^{-J_ns} \leq \gamma^{-1}/4,
\end{align*}
for all $n$ larger than a universal constant depending only on $\|q\|_{\calB_{\infty,\infty}^s(\Omega)}$. 
Therefore, after possibly increasing $c_1 > 0$, we have with probability at least $1-c_1/n^2$ that
for all $n \geq 1$, 
$$\|\tilde q_n - q\|_{L^\infty(\Omega)} \leq \gamma^{-1}/2.$$
Since $q \geq \gamma^{-1}$, we deduce that $\tilde q_n \geq \gamma^{-1}/2\geq 0$, over the same high probability event. \qed 

\subsubsection{Proof of Lemma~\ref{lem:wavelet_wasserstein}}
\label{app:pf_wavelet_wasserstein}
By Jensen's inequality, it suffices to assume that $\rho \geq 1$. It is straightforward to verify from Lemma~\ref{lem:wavelet}
that the wavelet system $\Psi^{\mathrm{bc}}$ satisfies Assumptions E.1--E.6  
of~\cite{weed2019a}, except Assumption E.2 in the special case $N=1$.  
We also have  $\gamma^{-1} \leq  q \leq \gamma$ 
over $[0,1]^d$. These conditions are sufficient to invoke their Theorem 4 for any $N \geq 1$, leading to
$$W_2 ( \hat Q_n, Q) \lesssim_\gamma \norm{\hat q_n - q}_{\calB_{2,1}^{-1}([0,1]^d)}.$$
Furthermore, it follows from \cref{lem:tilde_q_density} that
the  event $A_n=\{\hat q_n = \tilde q_n\}$  satisfies $\bbP(A_n^\cp) \lesssim n^{-2}$.
Let $q_{J_n} = \bbE[\tilde q_n]$, so that
\begin{align*}
\bbE W_2^\rho(\hat Q_n, Q) 
 &= \bbE\Big[W_2^\rho(\hat  Q_n, Q) I_{A_n}\Big] + \bbE\Big[ W_2^\rho(\hat Q_n, Q) I_{A_n^\cp}\Big] 
 \lesssim \bbE \|\tilde q_n - q \|_{\calB_{2,1}^{-1}([0,1]^d)}^\rho + n^{-2}.
\end{align*}
Now, we make use of the following result which
can be deduced from {  the proofs} of Theorem~1 and Proposition~4 of~\cite{weed2019a}.
\begin{lemma}[\cite{weed2019a}]
\label{lem:W2_bias_variance} 
Let $q$ be a density satisfying $\gamma^{-1} \leq q \leq \gamma$ over $[0,1]^d$. Assume further that 
$q \in \calB_{\infty,\infty}^s([0,1]^d)$ for some $s \geq 0$. Then, 
\begin{align*}
 \|q_{J_n} - q\|_{\calB_{2,1}^{-1}([0,1]^d)}^\rho &\lesssim 2^{-\rho J_n(s+1)}, \\
\bbE \big\|(\hbeta_\zeta-\beta_\zeta)_{\zeta\in\Phi\pbc}\big\|_{\ell_2}^\rho &\lesssim 1/n^{\rho/2},\quad 
\bbE \big\|(\hbeta_\xi-\beta_\xi)_{\xi\in\Psi_j\pbc}\big\|_{\ell_2}^\rho \lesssim \Big(2^{dj}/n^{1/2}\Big)^\rho,\quad j\geq j_0.
\end{align*}
\end{lemma}
Let $\rho'\geq 1$ satisfy  $\frac 1 \rho + \frac 1 {\rho'} = 1$.
Lemma~\ref{lem:W2_bias_variance} implies,
\begin{align*}
\bbE &\|\tilde q_n - q\|_{\calB_{2,1}^{-1}([0,1]^d)}^\rho \\
 &\lesssim  
   \bbE \|\tilde q_n - q_{J_n}\|_{\calB_{2,1}^{-1}([0,1]^d)}^\rho  + 
        \|q_{J_n} - q\|_{\calB_{2,1}^{-1}([0,1]^d)}^\rho \\
 &\lesssim \bbE \big\|(\hbeta_\zeta-\beta_\zeta)_{\zeta\in\Phi\pbc}\big\|_{\ell_2}^\rho  + 
 		  \bbE \left(\sum_{j=j_0}^{J_n} 2^{-j} \big\|(\hbeta_\xi-\beta_\xi)_{\xi\in\Psi_j\pbc}\big\|_{\ell_2}\right)^\rho + 
   2^{-\rho J_n(s+1)}  \\
 &\lesssim n^{-\frac \rho {2}} +  \left(\sum_{j=j_0}^{J_n} 2^{\rho(\eta-1)j} \bbE \big\|(\hbeta_\xi-\beta_\xi)_{\xi\in\Psi_j\pbc}\big\|^\rho_{\ell_2}\right)\left(\sum_{j=j_0}^{J_n} 2^{-\rho'\eta j} \right)^{\frac \rho {\rho'}} + 
   2^{-\rho J_n(s+1)}\\
 &\lesssim n^{-\frac \rho {2}} +  n^{-\frac \rho {2}} \left(\sum_{j=j_0}^{J_n} 2^{\rho(\eta+\frac d 2-1)j}  \right)\left(\sum_{j=j_0}^{J_n} 2^{-\rho'\eta j} \right)^{\frac \rho {\rho'}} + 
   2^{-\rho J_n(s+1)},
\end{align*}
for any $\eta  \in \bbR$. Now, when $d \geq 3$, choose 
$1 - \frac d 2 < \eta < 0$. In this case, the above display is of order
$$n^{-\frac \rho {2}} 2^{[\rho (\eta+\frac d 2 - 1) - \rho\eta]J_n} + 2^{-\rho J_n(s+1)}
 =  2^{\rho(\frac d  2 - 1)J_n}   + 
   2^{-\rho J_n(s+1)}
    \lesssim n^{-\frac{\rho(s+1)}{2s+d}},$$
    which proves the claim for $d \geq 3$. 
When $d \leq 2$, choose $\eta = 0$. Then, the penultimate display is dominated by its
second term, which is of order $n^{-\rho/2}$ when $d=1$ and of order $(\log n/\sqrt n)^\rho$ when $d=2$.
The claim follows. 
\qed

\subsection{Kernel Density Estimation}
We close this appendix with several properties of kernel density estimators.
We adopt the same notation as in Section~\ref{sec:two_sample_combined}. Specifically, 
$K : \bbR^d\to\bbR$ denotes an even kernel, we write $K_{h_n} = h_n^{-d} K(\cdot/h_n)$
for some bandwidth $h_n > 0$,
 and we consider the kernel density estimator 
$$\tilde q_n = Q_n \star K_{h_n} = \int_{\bbR^d} K_{h_n}(\cdot-z)dQ_n(z),$$
where $Q_n\in \calP(\bbT^d)$ denotes the empirical measure based on an i.i.d. sample $Y_1, \dots, Y_n \sim Q\in \calPac(\bbT^d)$. 
In the above display, recall that integration over $\bbR^d$ with respect to the measure $Q_n$ is understood as integration with respect
to its extension to $\bbR^d$ by $\bbZ^d$-periodicity, namely the measure
$$\frac 1 n \sum_{k\in \bbZ^d} \sum_{i=1}^n \delta_{Y_i + k}.$$
Equivalently, we may write
$$\tilde q_n = \int_{\bbT^d} K_{h_n}\sper(\cdot-z)dQ_n(z),\quad\text{where } K_{h_n}^{\text{(per})} = \sum_{k\in \bbZ^d} K_{h_n}(\cdot + k).$$
With the same conventions, we define
$$q_{h_n}(y) = \bbE[\tilde q_n(y)] = Q \star K_{h_n}(y),\quad y \in \bbT^d.$$ 
We begin by proving an $L^\infty(\bbT^d)$ concentration inequality for the estimator $\tilde q_n$ about its mean. 
Though such concentration inequalities  have previously
been established by~\cite{gine2002} under very general conditions on $K$, 
the following simple result will suffice for our purposes. 
\begin{lemma}
\label{lem:kernel_infty}
Assume $q \leq \gamma$ over $\bbT^d$ for some $\gamma > 0$, and that $K \in \calC^1(\bbR^d)$.  
 Then, there exists a constant $C > 0$ depending only on $\gamma, \norm K_{\calC^1(\bbR^d)}$ 
such that for all $h_n \leq u \leq 1$,  
\begin{align*}
\bbP\left(\|\tilde q_n - q_{h_n}\|_{L^\infty(\bbT^d)}\geq u\right) 
 \leq C h_n^{-d(d+2)} \exp\big(-nu^2h_n^d/C\big).
\end{align*} 
\end{lemma}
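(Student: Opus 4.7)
\textbf{Proof plan for Lemma~\ref{lem:kernel_infty}.} The strategy is the standard chaining-via-covering approach: combine a pointwise Bernstein inequality with a Lipschitz bound on the residual $\tilde q_n - q_{h_n}$, then take a union bound over a net of $\bbT^d$ chosen fine enough for the Lipschitz slack to be negligible.

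First, I will establish pointwise concentration. Fix $y \in \bbT^d$ and write $\tilde q_n(y) - q_{h_n}(y) = \frac{1}{n}\sum_{i=1}^n Z_i(y)$, where $Z_i(y) = K_{h_n}\sper(y - Y_i) - \bbE[K_{h_n}\sper(y-Y_i)]$. Since $K$ has bounded $\calC^1$ norm (and, as used throughout the paper, compact support), only $O(1)$ translates in the periodization $K_{h_n}\sper$ are nonzero at any given point once $h_n$ is small, so $\|K_{h_n}\sper\|_{L^\infty(\bbT^d)} \lesssim h_n^{-d}\|K\|_\infty$. The variance bound follows from $q \leq \gamma$ via
\[
\Var[K_{h_n}\sper(y - Y)] \leq \gamma \int_{\bbT^d} \big(K_{h_n}\sper(y-z)\big)^2 dz \lesssim \gamma h_n^{-d} \|K\|_{L^2(\bbR^d)}^2.
\]
Bernstein's inequality then gives, for all $t>0$,
\[
\bbP\big(|\tilde q_n(y)-q_{h_n}(y)|\geq t\big) \leq 2\exp\!\left(-\frac{c\, n t^2}{h_n^{-d}(1 + t)}\right),
\]
which simplifies to $\leq 2\exp(-c' n t^2 h_n^d)$ for $t \leq 1$.

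Next, I control the oscillation of $\tilde q_n - q_{h_n}$. A direct calculation gives $\|\nabla K_{h_n}\sper\|_{L^\infty(\bbT^d)} \lesssim h_n^{-(d+1)}\|\nabla K\|_\infty$, so both $\tilde q_n$ and $q_{h_n}$ are $L$-Lipschitz on $\bbT^d$ with $L \lesssim h_n^{-(d+1)}$, and hence so is their difference. Let $\calN_\delta$ be a minimal $\delta$-net of $\bbT^d$ in the intrinsic metric, with $|\calN_\delta| \lesssim \delta^{-d}$. Then
\[
\|\tilde q_n - q_{h_n}\|_{L^\infty(\bbT^d)} \leq \max_{y\in\calN_\delta} |\tilde q_n(y)-q_{h_n}(y)| + 2L\delta.
\]
Choose $\delta \asymp u h_n^{d+1}$, which makes the slack $2L\delta$ smaller than $u/2$ and yields $|\calN_\delta| \lesssim u^{-d} h_n^{-d(d+1)}$. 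Using the assumption $u \geq h_n$, this bound is dominated by $h_n^{-d(d+2)}$.

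Finally, a union bound over $\calN_\delta$ combined with the pointwise Bernstein estimate at level $u/2$ gives
\[
\bbP\big(\|\tilde q_n-q_{h_n}\|_{L^\infty(\bbT^d)}\geq u\big) \leq 2|\calN_\delta| \exp(-c' n (u/2)^2 h_n^d) \lesssim h_n^{-d(d+2)} \exp(-n u^2 h_n^d / C),
\]
as claimed. The argument is entirely standard; the only subtlety is dimensional bookkeeping to get exactly the prefactor $h_n^{-d(d+2)}$, which is where the constraint $u \geq h_n$ enters to allow $u^{-d}$ in the net size to be absorbed into $h_n^{-d}$. No obstacle of substance arises beyond confirming that the periodization of a compactly supported $\calC^1$ kernel inherits the sup and gradient bounds of $K$ itself up to $O(1)$ factors.
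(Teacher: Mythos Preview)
Your proposal is correct and follows essentially the same approach as the paper: a pointwise Bernstein bound, a Lipschitz estimate of order $h_n^{-(d+1)}$ on $\tilde q_n - q_{h_n}$, and a union bound over a $\delta$-net with $\delta \asymp h_n^{d+2}$ (the paper takes $\delta$ independent of $u$ so that the Lipschitz slack is at most $h_n/2 \leq u/2$, whereas you take $\delta \asymp u h_n^{d+1}$ and then use $u \geq h_n$ to control the net size, but the two choices lead to the same prefactor $h_n^{-d(d+2)}$).
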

The proof appears in Appendix~\ref{app:pf_lem_kernel_infty}.
 When the true density $q$ is H\"older continuous with any positive exponent, and bounded below by a positive constant, it is easy to infer from this result that 
$\tilde q_n$ defines a valid density except on an event with exponentially small probability. 
We shall additionally require the following result, which ensures that the fitted density $\tilde q_n$ 
enjoys a nonzero amount of H\"older regularity. 
\begin{lemma}
\label{lem:kernel_smoothness}
Assume $nh_n^d \asymp n^a$ for some $a \in (0,1)$. Assume further that $q \in \calC^s(\bbT^d)$ for some $s > 0$, 
and that $K \in \calC^1(\bbR^d)\cap L^1(\bbR^d)$. 
Then, there exist constants $C,c_1 > 0, \beta \in (0, s\wedge 1)$ depending only on $\norm q_{\calC^s(\bbT^d)}, \norm K_{\calC^1(\bbR^d)},\norm K_{L^1(\bbR^d)}, a, s,d$ such that
for all $n \geq 1$,  with probability at least 
$1-c_1/n^2$, 
$$\norm {\tilde q_n}_{\calC^\beta(\bbT^d)} \leq C.$$
\end{lemma}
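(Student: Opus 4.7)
The strategy is to decompose $\tilde q_n = q_{h_n} + (\tilde q_n - q_{h_n})$, where $q_{h_n} = \bbE[\tilde q_n] = q \star K_{h_n}$, control each piece separately, and combine them via a H\"older interpolation inequality. For the deterministic part, standard convolution estimates give, when $s \leq 1$, the pointwise bound $|q_{h_n}(x) - q_{h_n}(y)| \leq \norm{K}_{L^1(\bbR^d)} \norm{q}_{\calC^s} \norm{x-y}^s$; for $s > 1$, one differentiates under the integral to obtain analogous bounds on derivatives. Either way, $\norm{q_{h_n}}_{\calC^{s \wedge 1}(\bbT^d)} \leq C$ uniformly in $n$, so $\norm{q_{h_n}}_{\calC^\beta(\bbT^d)} \leq C'$ for any $\beta \in (0, s \wedge 1]$.

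For the fluctuation, I would apply Lemma~\ref{lem:kernel_infty} with $u_n = c \max\{h_n,\, \sqrt{\log n / (nh_n^d)}\}$ for a sufficiently large constant $c > 0$. Using the assumption $nh_n^d \asymp n^a$, one verifies that with probability at least $1 - c_1/n^2$, $\norm{\tilde q_n - q_{h_n}}_{L^\infty(\bbT^d)} \leq u_n$. Deterministically, $\tilde q_n$ is $\calC^1$ (since $K \in \calC^1$), and its gradient is pointwise bounded by $h_n^{-d-1} \norm{\nabla K}_{L^\infty}$, because $\nabla K_{h_n} = h_n^{-d-1} (\nabla K)(\cdot/h_n)$; the same crude bound applies to $q_{h_n}$. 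Then, applying the elementary interpolation inequality $[f]_{\calC^\beta(\bbT^d)} \leq (2\norm{f}_{L^\infty})^{1-\beta}\, \norm{\nabla f}_{L^\infty}^\beta$ (valid for $\beta \in (0,1)$ and $f \in \calC^1(\bbT^d)$, by balancing the trivial bounds $|f(x)-f(y)| \leq 2\|f\|_{L^\infty}$ and $\|\nabla f\|_{L^\infty}\|x-y\|$ over $r = \|x-y\|$) to $f = \tilde q_n - q_{h_n}$ yields
\[
  [\tilde q_n - q_{h_n}]_{\calC^\beta(\bbT^d)} \,\leq\, C\, u_n^{1-\beta}\, h_n^{-\beta(d+1)}
\]
on the same event.

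Substituting $h_n \asymp n^{(a-1)/d}$, a direct calculation shows that the right-hand side is bounded as $n \to \infty$ provided $\beta$ is strictly less than $\min\{s,\, 1,\, 1/(d+2),\, ad/[ad + 2(1-a)(d+1)]\}$; the last two thresholds arise from the two regimes determined by whether $h_n$ or $\sqrt{\log n / n^a}$ dominates $u_n$ (which happens according to the sign of $a(d+2) - 2$). Picking any such $\beta > 0$ and combining with the deterministic part gives $\norm{\tilde q_n}_{\calC^\beta(\bbT^d)} \leq C$ with probability at least $1 - c_1/n^2$. The main nuisance is the case analysis around $u_n$; however, since at the boundary $a = 2/(d+2)$ both thresholds agree at $1/(d+2)$ and vary continuously in $a$, a single valid choice of $\beta \in (0, s \wedge 1)$ exists for every $a \in (0, 1)$, and all implicit constants depend only on the parameters listed in the lemma statement.
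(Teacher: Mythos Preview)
Your proof is correct and follows the same approach as the paper: control $q_{h_n}$ in $\calC^\beta$ directly via the convolution estimate, and control the fluctuation by combining the $L^\infty$ bound from Lemma~\ref{lem:kernel_infty} with the crude gradient bound $\|\nabla \tilde q_n\|_{L^\infty} \lesssim h_n^{-(d+1)}$; the paper carries out the resulting case split on $\|x-y\|^\beta \gtrless \gamma_n$ by hand, whereas you package it as the equivalent interpolation inequality $[f]_{\calC^\beta} \leq (2\|f\|_{L^\infty})^{1-\beta}\|\nabla f\|_{L^\infty}^\beta$, and the threshold $\beta \leq ad/[ad+2(1-a)(d+1)]$ you obtain is identical. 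Your use of $u_n = c\max\{h_n,\sqrt{\log n/(nh_n^d)}\}$ is in fact slightly more careful than the paper about the constraint $u \geq h_n$ required to invoke Lemma~\ref{lem:kernel_infty}.
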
 
The proof appears in Appendix~\ref{app:pf_lem_kernel_smoothness}.

\subsubsection{Proof of Lemma~\ref{lem:kernel_infty}}
\label{app:pf_lem_kernel_infty}
Let $\delta_n \leq   h_n^{d+2}/(4\norm{K}_{\calC^1(\bbR^d)})$. 
By a direct calculation, it can be seen that
$$\|\tilde q_n\|_{\calC^1(\bbT^d)} \vee \|q_{h_n}\|_{\calC^1(\bbT^d)} \leq \norm{K}_{\calC^1(\bbR^d)} h_n^{-(d+1)}.$$
Let $J_n = O(1/\delta_n^d) = O(h_n^{-d(d+2)})$ denote the $\delta_n$-covering number of the unit cube $[0,1]^d$ with respect to the Euclidean norm, 
and let $\{x_{0j}: 1 \leq j \leq J_n\}$ be a corresponding $\delta_n$-cover.
Letting $I_j = \{x \in [0,1]^d: \norm{x-x_{0j}} \leq \delta_n\}$, we have,
\begin{align*}
\|\tilde q_n - q_{h_n}\|_{L^\infty(\bbT^d)}
 &\leq \max_{1 \leq j \leq J_n} \sup_{x \in I_j} |\tilde q_n(x) - q_{h_n}(x)| \\ 
 &\leq \max_{1 \leq j \leq J_n} \sup_{x \in I_{j}} |\tilde q_n(x) - \tilde q_{n}(x_{0j})|      \\ &\qquad + 
       \max_{1 \leq j \leq J_n} \sup_{x \in I_{j}} |q_{h_n}(x_{0j}) - q_{h_n}(x)| + 
       \max_{1 \leq j \leq J_n}| \tilde q_n(x_{0j}) - q_{h_n}(x_{0j})| \\       
 &\leq  2\norm{K}_{\calC^1(\bbR^d)}h_n^{-(d+1)}\delta_n    + \max_{1 \leq j \leq J_n}|\tilde q_n(x_{0j}) - q_{h_n}(x_{0j})|  \\
 &\leq h_n/2 +  
       \max_{1 \leq j \leq J_n}|\tilde q_n(x_{0j}) - q_{h_n}(x_{0j})|.
\end{align*}
Thus, for any $h_n \leq u \leq 1$, 
\begin{align*}
\bbP&\left(\|\tilde q_n - q_{h_n}\|_{L^\infty(\bbT^d)} \geq u\right) \\
 &\leq \bbP\left(\max_{1 \leq j \leq J_n}| \tilde q_n(x_{0j}) - q_{h_n}(x_{0j}) | \geq u/2\right) \\
 &\leq \sum_{j=1}^{J_n} \bbP\left(|\tilde q_n(x_{0j}) - q_{h_n}(x_{0j})| \geq u/2\right) \\ 
 &\leq \sum_{j=1}^{J_n} \bbP\left(\left|\frac 1 n \sum_{i=1}^n \left[K_{h_n}^{(\mathrm{per})}\left(\norm{x_{0j}-X_i}\right)-\bbE\left\{ K_{h_n}^{(\mathrm{per})}\left(\norm{x_{0j}-X_i}\right)\right\}\right]\right| \geq  u /2\right) \\ 
 &\leq 2J_n \exp\left( - \frac{n u^2}{8\left(\gamma\|K^{(\text{per)}}\|_{L^\infty(\bbT^d)} h_n^{-d} + u h_n^{-d} \| K^{(\text{per})}\|_{L^\infty(\bbR^d)}/3\right)} \right),
\end{align*}
where we invoked Bernstein's inequality   by noting that
\begin{align*}
\left\|K_{h_n}^{(\text{per})} (\norm{x-\cdot})\right\|_{L^\infty(\bbT^d)} &\leq h_n^{-d} \|K^{(\text{per})}\|_{L^\infty(\bbT^d)}, \\
\text{and,} ~~\Var\left[ K_{h_n}^{(\text{per})} (\norm{x-X_i})\right]   
  &\leq  h_n^{-2d}       \int \left[K^{(\text{per)}}\left(\frac{\norm{x-y}}{h_n}\right) \right]^2q(y)dy  \\
  &\leq \gamma \|K^{\text{(per)}}\|_{L^\infty(\bbT^d)} h_n^{-d} \int K_{h_n}^{\text{(per)}}\left( \norm{x-y} \right)  dy  \\  
  &= \gamma \|K^{\text{(per)}}\|_{L^\infty(\bbT^d)} h_n^{-d}. 
\end{align*}
It follows that, for a sufficiently large constant $C > 0$ depending  on $\gamma$ and $\|K\|_{\calC^1(\bbR^d)}$,  we have
$$
\bbP\left(\|\tilde q_n - q_{h_n}\|_{L^\infty(\bbT^d)} \geq u\right) 
\leq C h_n^{-d(d+2)} \exp\big( -   n u^2 h_n^d /C\big).$$
The claim readily follows.\qed

\subsubsection{Proof of Lemma~\ref{lem:kernel_smoothness}}
\label{app:pf_lem_kernel_smoothness}
By Lemma~\ref{lem:kernel_infty}, there is a constant $c_1 > 0$ and an event $A_n$ satisfying $\bbP(A_n) \geq 1-1/n^2$ such that 
$$\norm{\tilde q_n - q_{h_n}}_{L^\infty(\bbT^d)} \leq \gamma_n = c_1\sqrt{\frac{ \log n}{nh_n^d}}.$$
All subsequent statements are made over the event $A_n$. 
Now, given $\beta \in (0,s\wedge 1)$ to be specified below, and $x,y \in \bbT^d$, we have
\begin{align*}
|\tilde q_n(x) - \tilde q_n(y)| 
 &\leq 2\|\tilde q_n - q_{h_n}\|_{L^\infty(\bbT^d)} + |q_{h_n}(x) - q_{h_n}(y)| \\ 
 &\leq 2 \gamma_n + \int_{\bbR^d}  \left| K (z)\big[q(x-h_nz) - q(y-h_nz)\big] \right|   dz\\
 &\leq 2 \gamma_n +  \|q\|_{\calC^\beta(\bbT^d)}\|K\|_{L^1(\bbR^d)} \norm{x-y}^\beta\\ 
 &\leq C_1 (\gamma_n + \norm{x-y}^\beta),
\end{align*}
for a large enough constant $C_1 > 0$. If $\norm{x-y}^\beta \geq \gamma_n$, then $\tilde q_n$ already satisfies the condition of $\beta$-H\"older continuity, thus it suffices
to assume $\norm{x-y}^\beta < \gamma_n$. 
Recall that 
$$\|\tilde q_n\|_{\calC^1(\bbT^d)} \leq \norm{K}_{\calC^1(\bbR^d)} h_n^{-(d+1)}.$$
We deduce that for all $x,y$ such that
$\norm{x-y}^\beta < \gamma_n$, 
\begin{align*}
|\tilde q_n(x) - \tilde q_n(y)| 
  \lesssim \frac{\norm{x-y}}{h_n^{d+1}} 
  \leq \frac{\gamma_n^{\frac {1}{\beta}-1}}{h_n^{d+1}} \norm{x-y}^\beta 
  \lesssim \frac{(n^a /\log n)^{\frac 1 2\left(1-\frac {1}{\beta}\right)}}{n^{(a-1)(d+1)}} \norm{x-y}^\beta 
  \lesssim  \norm{x-y}^\beta,
\end{align*}
for any small enough choice of $\beta$. The claim follows. 
\qed

\section{On the Variance of Kantorovich Potentials}
\label{app:kantorovich}
We state a straightforward technical result  which will be used throughout our proofs.
\begin{lemma}
\label{lem:kantorovich_L2}
Let $\Omega$ be equal to $[0,1]^d$ or $\bbT^d$. Given $P,Q \in \calPac(\Omega)$, 
let $(\phi_0,\psi_0)$ be a pair of Kantorovich potentials in the optimal transport 
transport problem from $P$ to $Q$. 
Assume further that the   density $q$ of $Q$ satisfies $\gamma^{-1} \leq q  \leq \gamma$
over $\Omega$, for some $\gamma > 0$. Define
$\widebar\psi_0 = \psi_0 - \int_\Omega \psi_0.$ 
Then, there exists a constant $C > 0$ depending only on $d$ such that
$$\|\widebar \psi_0\|_{L^2(Q)} \leq C \gamma W_2(P,Q).$$ 
In particular,
$$\Var_Q[\psi_0(Y)] \leq (C\gamma)^2 W_2^2(P,Q).$$ 
\end{lemma}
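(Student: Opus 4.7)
The plan is to exploit the explicit representation $\psi_0 = \|\cdot\|^2 - 2\varphi_0^*$ (available from Brenier's theorem on $[0,1]^d$, or from Proposition~\ref{prop:torus_ot--kantorovich} on $\bbT^d$), together with the Poincar\'e-Wirtinger inequality on the unit-volume domain $\Omega$. The key observation is that $\nabla \psi_0$ is essentially the displacement of the \emph{inverse} optimal transport map, so its $L^2$-norm is naturally controlled by $W_2(P,Q)$.

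First, I would write
$$\nabla\psi_0(y) = 2\bigl(y - \nabla\varphi_0^*(y)\bigr) \quad \text{a.e.},$$
noting that $\varphi_0^*$ is convex (hence a.e.\ differentiable in the interior of $\Omega$) and that $\nabla\varphi_0^*$ is the optimal transport map from $Q$ to $P$ by Brenier's theorem (Theorem~\ref{thm:brenier}) or Proposition~\ref{prop:torus_ot--inverse}. Since $q \geq 1/\gamma$ on $\Omega$ and $\calL(\Omega)=1$,
$$\|\nabla\psi_0\|_{L^2(\Omega)}^2 = 4\int_\Omega \|y - \nabla\varphi_0^*(y)\|^2\, d\calL(y) \leq 4\gamma \int_\Omega \|y - \nabla\varphi_0^*(y)\|^2\, dQ(y) = 4\gamma\, W_2^2(P,Q).$$
In the torus case, the last equality additionally requires Proposition~\ref{prop:torus_ot--map_locality} (applied to the inverse map $\nabla\varphi_0^*$) to identify the Euclidean norm $\|y-\nabla\varphi_0^*(y)\|$ with the torus distance $d_{\bbT^d}(y,\nabla\varphi_0^*(y))$ on a set of full $Q$-measure; on $[0,1]^d$ no such step is needed.

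Second, since $\psi_0$ is Lipschitz (as a $c$-transform of the bounded function $\phi_0$ on the compact set $\Omega$) it belongs to $H^1(\Omega)$, and since $\Omega\in\{[0,1]^d,\bbT^d\}$ is a bounded convex Lipschitz domain (resp.\ a compact connected smooth manifold) of unit Lebesgue volume, Poincar\'e-Wirtinger supplies a constant $C_d>0$ depending only on $d$ with
$$\|\bar\psi_0\|_{L^2(\Omega)} \leq C_d\, \|\nabla\psi_0\|_{L^2(\Omega)}.$$
Using the upper bound $q\leq \gamma$ to pass from the Lebesgue norm to the $Q$-weighted norm,
$$\|\bar\psi_0\|_{L^2(Q)}^2 \leq \gamma\,\|\bar\psi_0\|_{L^2(\Omega)}^2 \leq 4C_d^2\,\gamma^2\, W_2^2(P,Q),$$
which is the claimed bound with $C = 2C_d$. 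The variance assertion then follows immediately from the fact that $\Var_Q[\psi_0(Y)] = \min_{c\in\bbR} \int (\psi_0-c)^2\, dQ \leq \|\bar\psi_0\|_{L^2(Q)}^2$.

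I do not anticipate any serious obstacle; the argument is essentially a one-line identity for $\nabla\psi_0$ followed by Poincar\'e. The only small subtleties are ensuring $\psi_0\in H^1(\Omega)$ (handled by its Lipschitz property) and treating the two domains uniformly, with Proposition~\ref{prop:torus_ot--map_locality} supplying the bridge in the torus setting.
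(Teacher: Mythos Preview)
Your proposal is correct and follows essentially the same approach as the paper: both arguments compute $\nabla\psi_0 = 2(\mathrm{Id}-\nabla\varphi_0^*)$, use the density bounds $\gamma^{-1}\leq q\leq \gamma$ to pass between $L^2(\Omega)$ and $L^2(Q)$ norms, and apply the Poincar\'e--Wirtinger inequality on $\Omega$ to bound $\|\widebar\psi_0\|_{L^2(\Omega)}$ by $\|\nabla\psi_0\|_{L^2(\Omega)}$. The only differences are the order in which the density bounds are invoked and your explicit mention of Proposition~\ref{prop:torus_ot--map_locality} for the torus case, which the paper leaves implicit.
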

The proof will follow from Poincar\'e inequalities over $[0,1]^d$ and $\bbT^d$, which we
recall here as they will be needed again in the sequel. 
The following is a special case of the Poincar\'e inequality 
for convex domains (see for instance \cite{leoni2017}, Theorem 12.30).
\begin{lemma} 
\label{lem:poincare}
Let $0<a<b<\infty$ and $\Omega=[a,b]^d$. Then, there exists a constant $C > 0$ depending only on $d$
such that for all $f \in H^1(\Omega)$ satisfying $\int_\Omega f = 0$, 
$$\norm f_{L^2(\Omega)} \leq C (b-a)\norm{\nabla f}_{L^2(\Omega)}.$$
\end{lemma}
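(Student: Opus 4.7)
The plan is to prove the Poincar\'e inequality via a direct integral representation that exploits the convexity of the cube. First, by the scaling $\tilde f(y) = f(a+(b-a)y)$ for $y\in[0,1]^d$, which sends $\nabla f$ to $(b-a)^{-1} \nabla \tilde f$ and preserves mean zero, I reduce to showing that there exists $C=C(d)$ such that $\|f\|_{L^2(\Omega_0)} \leq C\|\nabla f\|_{L^2(\Omega_0)}$ on $\Omega_0 = [0,1]^d$ for every $f \in H^1(\Omega_0)$ with $\int_{\Omega_0} f = 0$. A standard density argument further reduces the problem to $f \in \calC^1(\widebar{\Omega_0})$, since $\calC^\infty(\widebar{\Omega_0})$ is dense in $H^1(\Omega_0)$.

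Next, since $|\Omega_0|=1$ and $\int_{\Omega_0} f = 0$, I write the pointwise identity
\[
f(x) \;=\; \int_{\Omega_0} \bigl(f(x)-f(y)\bigr)\, dy \;=\; \int_{\Omega_0}\!\int_0^1 (x-y)\cdot \nabla f\bigl(y + t(x-y)\bigr)\, dt\, dy,
\]
obtained from the fundamental theorem of calculus applied along the segment $[y,x]$, which lies in $\Omega_0$ by convexity. Applying Cauchy--Schwarz twice (first in $(y,t)$, then bounding $\|x-y\|\leq \sqrt d$) and squaring gives
\[
|f(x)|^2 \;\leq\; d\int_{\Omega_0}\!\int_0^1 \bigl\|\nabla f\bigl(y+t(x-y)\bigr)\bigr\|^2\, dt\, dy,
\]
and after integrating in $x$,
\[
\|f\|_{L^2(\Omega_0)}^2 \;\leq\; d\int_{\Omega_0}\!\int_{\Omega_0}\!\int_0^1 \bigl\|\nabla f\bigl((1-t)y+tx\bigr)\bigr\|^2\, dt\, dy\, dx.
\]

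To close the estimate, I would split the $t$-integral at $1/2$. On $t\in[1/2,1]$, I fix $y$ and substitute $z=(1-t)y+tx$ in the $x$-integral; by convexity the point $z$ stays in $\Omega_0$, the Jacobian is $t^{-d}$, and $\int_{\Omega_0}\|\nabla f(z)\|^2 t^{-d}\,dz \leq t^{-d}\|\nabla f\|_{L^2(\Omega_0)}^2$. On $t\in[0,1/2]$, I fix $x$ and substitute $z=(1-t)y+tx$ in the $y$-integral, with Jacobian $(1-t)^{-d}$. Both one-dimensional $t$-integrals $\int_{1/2}^1 t^{-d}\,dt$ and $\int_0^{1/2}(1-t)^{-d}\,dt$ are finite constants depending only on $d$, yielding $\|f\|_{L^2(\Omega_0)}^2 \leq C(d)\|\nabla f\|_{L^2(\Omega_0)}^2$. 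Undoing the scaling then multiplies the right-hand side by $(b-a)^2$, which is exactly the claimed bound.

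The main obstacle is the change-of-variables step: one needs to verify carefully that for each fixed endpoint and each $t$, the map sending the free endpoint to $z=(1-t)y+tx$ has image contained in $\Omega_0$ (this is where convexity of the cube is used essentially, and it would fail on a non-convex domain), and that the integrability of $t^{-d}$ and $(1-t)^{-d}$ near the split point $1/2$ produces a genuinely dimensional constant. Everything else is standard scaling, density, and Cauchy--Schwarz.
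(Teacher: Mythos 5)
Your proof is correct, but it differs from the paper in that the paper does not prove this lemma at all: it is quoted as a special case of the Poincar\'e inequality for convex domains, with a citation to Leoni (2017, Theorem 12.30). What you have written is essentially the classical self-contained argument behind that textbook result, specialized to the cube: the scaling reduction to $[0,1]^d$, the representation $f(x)=\int_{\Omega_0}\int_0^1 (x-y)\cdot\nabla f(y+t(x-y))\,dt\,dy$ using mean zero and convexity, Cauchy--Schwarz, and the change of variables $z=(1-t)y+tx$ with the $t$-range split at $1/2$ so that the Jacobian factors $t^{-d}$ and $(1-t)^{-d}$ stay bounded by $2^d$. All steps check out, including the scaling bookkeeping that produces the factor $(b-a)$ and the density reduction to $\calC^1(\widebar{\Omega_0})$ (subtracting the mean of the smooth approximants). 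The trade-off is the expected one: citing the general convex-domain inequality buys brevity and covers arbitrary convex $\Omega$ with constant proportional to the diameter, while your argument is elementary, self-contained, and yields an explicit constant of order $d\,2^{d}$ for the cube. One small wording correction: the potential non-integrability is at the endpoints $t=0$ (for $t^{-d}$, when the free endpoint is $x$) and $t=1$ (for $(1-t)^{-d}$, when the free endpoint is $y$), not near the split point $1/2$; the split at $1/2$ is exactly what keeps each substitution away from its singular endpoint, and near $1/2$ both integrands are harmless.
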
 
We also state the following classical periodic Poincar\'e inequality 
(see for instance~\cite{steinerberger2016} for a simple proof).
\begin{lemma}
\label{lem:poincare-periodic}
Let $f \in H^1(\bbT^d)$ satisfy $\int_{\bbT^d}f= 0$. Then, 
$\norm{f}_{L^2(\bbT^d)} \leq \norm{\nabla f}_{L^2(\bbT^d)}.$
\end{lemma}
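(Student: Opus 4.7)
The plan is to give the classical Fourier-series proof of the periodic Poincaré inequality, which is essentially a termwise comparison of Parseval identities for $f$ and $\nabla f$.

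First, I would reduce to the smooth case. Since $\calC^\infty(\bbT^d)$ is dense in $H^1(\bbT^d)$, and both sides of the claimed inequality are continuous in the $H^1(\bbT^d)$ norm, and the zero-mean condition $\int_{\bbT^d} f = 0$ is preserved by mean-subtraction, it suffices to prove the inequality for $f \in \calC^\infty(\bbT^d)$ satisfying $\int_{\bbT^d} f = 0$.

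Next, I would expand $f$ in Fourier series. For $\xi \in \bbZ^d$, let $\hat f(\xi) = \int_{\bbT^d} f(x) e^{-2\pi i x^\top \xi} dx$ denote the Fourier coefficients of $f$, so that $f(x) = \sum_{\xi \in \bbZ^d} \hat f(\xi) e^{2\pi i x^\top \xi}$ with convergence in $L^2(\bbT^d)$. The zero-mean hypothesis gives $\hat f(0) = 0$. Termwise differentiation yields $\widehat{\partial_j f}(\xi) = 2\pi i \xi_j \hat f(\xi)$ for each $j = 1, \dots, d$, and Parseval's identity therefore gives
\begin{equation*}
\norm{f}_{L^2(\bbT^d)}^2 = \sum_{\xi \in \bbZ^d \setminus\{0\}} |\hat f(\xi)|^2, \qquad
\norm{\nabla f}_{L^2(\bbT^d)}^2 = (2\pi)^2 \sum_{\xi \in \bbZ^d\setminus\{0\}} \norm{\xi}^2 |\hat f(\xi)|^2.
\end{equation*}

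Finally, I would compare coefficient by coefficient. For every nonzero $\xi \in \bbZ^d$ we have $\norm{\xi}^2 \geq 1$, and $(2\pi)^2 > 1$, so $(2\pi)^2 \norm{\xi}^2 |\hat f(\xi)|^2 \geq |\hat f(\xi)|^2$ for each such $\xi$. Summing over $\xi \neq 0$ yields $\norm{f}_{L^2(\bbT^d)}^2 \leq \norm{\nabla f}_{L^2(\bbT^d)}^2$, which is the desired inequality.

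There is essentially no obstacle here: the only points requiring mild care are (i) keeping the Fourier convention consistent with the one used elsewhere in Appendix~\ref{app:sobolev} (so that the factor $(2\pi)$ sits with $\xi$, not with the exponential), and (ii) justifying the density argument so that the smooth-function calculation transfers to general $f \in H^1(\bbT^d)$.
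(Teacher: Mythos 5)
Your proof is correct. The paper does not actually include a proof of this lemma --- it simply cites a reference for "a simple proof" --- so there is no in-paper argument to compare against; your Fourier/Parseval argument is the standard one and is exactly the kind of proof the citation points to. Two small remarks: (i) with the paper's normalization $\bbT^d = \bbR^d/\bbZ^d$ your computation in fact yields the stronger bound $\norm{f}_{L^2(\bbT^d)} \leq (2\pi)^{-1}\norm{\nabla f}_{L^2(\bbT^d)}$, of which the stated inequality is a weakening, so the coefficientwise comparison is more than enough; (ii) the reduction to $\calC^\infty(\bbT^d)$ is fine but not strictly needed, since Parseval and the identity $\widehat{\partial_j f}(\xi) = 2\pi i\,\xi_j \hat f(\xi)$ for the weak derivative already hold for every $f \in H^1(\bbT^d)$, so the argument can be run directly at that level of generality.
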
   

{\bf Proof of Lemma~\ref{lem:kantorovich_L2}.} 
Since $\psi_0 \in H^1(\Omega)$ by definition, we may apply the Poincar\'e inequality over $\Omega$
(namely, Lemma \ref{lem:poincare} when $\Omega=[0,1]^d$, or Lemma~\ref{lem:poincare-periodic} when $\Omega=\bbT^d$).
 This fact, together with the assumption $\gamma^{-1} \leq q \leq \gamma$, implies
\begin{align*}
 \|\widebar \psi_0\|_{L^2(Q)}^2 \leq \gamma \|\widebar \psi_0\|_{L^2([0,1]^d)}^2
\leq C^2\gamma \|\nabla \psi_0\|_{L^2([0,1]^d)}^2
&\leq (C\gamma)^2 \|\nabla \psi_0\|_{L^2(Q)}^2  = (C\gamma)^2W_2^2(P,Q),
\end{align*}
which then also implies
$$\Var_Q[\psi_0(Y)] = \Var_Q[\widebar \psi_0(Y)] \leq  \|\widebar \psi_0\|_{L^2(Q)}^2 \leq (C\gamma)^2 W_2^2(P,Q),$$
as claimed.\qed

\section{Proofs of One-Sample Stability Bounds}
\label{app:pf_stability} 
\subsection{Proof of  Theorem \ref{thm:stability}} 
Recall that $\varphi_0$ denotes a Brenier potential from $P$ to $Q$, while
$\phi_0 = \norm\cdot^2-2\varphi_0$ and $\psi_0 = \norm\cdot^2 - 2\varphi_0^*$ denote
the corresponding Kantorovich potentials. 
Since we have assumed that both $P$ and $Q$ are absolutely continuous distributions, Brenier's Theorem implies that 
$S_0 = \nabla\varphi_0^*$ is the optimal transport map
from $Q$ to $P$. 
Since $\varphi_0$ is closed, the assumption
$$\frac 1 \lambda  I_d \preceq \nabla^2\varphi_0 \preceq \lambda I_d,$$
from condition~\ref{assm:curvature} also implies (\cite{hiriart-urruty2004}, Theorem 4.2.2),
$$\frac 1 \lambda I_d \preceq \nabla^2 \varphi_0^* \preceq   \lambda I_d.$$
Combining this bound with a second-order Taylor expansion of $\varphi_0^*$ leads to the following inequalities
\begin{equation}
\label{eq:curvature}
\frac 1 {2\lambda} \norm{x-y}^2 \leq \varphi_0^*(y) - \varphi_0^*(x) - \big\langle S_0(x), y-x\big\rangle 
   \leq \frac \lambda {2} \norm{x-y}^2,\quad x,y \in \Omega.
   \end{equation}
With these facts in place, we turn to proving the theorem, namely that
\begin{equation}
\label{eq:stability_pf_sufficient}
   \frac 1 \lambda \|\hat T - T_0\|_{L^2(P)}^2 \leq  W_2^2(P, \hat Q) -  W_2^2(P,Q)  - \int \psi_0 d(\hat Q-Q) \leq \lambda W_2^2(\hat Q,Q).
\end{equation} 
We begin with the first inequality.
Since $\hat T$ is the optimal transport map from $P$ to $\hat Q$, 
we have,
\begin{align*}
W_2^2(P, \hat Q) 
 &=  \int  \|\hat T(x) - x \|^2  dP(x) \\
 &= \int \|T_0(x) - x\|^2 dP(x)
  \\ &+ \int 2\big\langle  T_0(x) - x, \hat T(x) - T_0(x)\big\rangle dP(x) 
+ \int \| \hat T(x) - T_0(x)\|^2 dP(x) \\
 &= W_2^2(P,Q)+ \int 2\big\langle  T_0(x) - x, \hat T(x) - T_0(x)\big\rangle dP(x) 
+ \|\hat T - T_0\|_{L^2(P)}^2.
\end{align*}
To bound the cross term, notice that equation~\eqref{eq:curvature} implies
\begin{align*}
2\int \big\langle & T_0(x) - x, \hat T(x) - T_0(x)\big\rangle dP(x)   \\
 &= 2\int \big\langle T_0(x) - S_0(T_0(x)), \hat T(x) - T_0(x)\big\rangle dP(x) \\
 & \geq 2\int\bigg[ \big\langle T_0(x), \hat T(x) - T_0(x)\big\rangle \\ &\qquad\qquad + 
  			\varphi_0^*(T_0(x)) - \varphi_0^*(\hat T(x)) + \frac 1 {2\lambda} \|\hat T(x) - T_0(x)\|^2 \bigg]dP(x) \\
 &= \int\bigg[\|\hat T(x)\|^2 - \|T_0(x)\|^2 - \|\hat T(x) - T_0(x)\|^2  \\ &\qquad\qquad + 
  			2\varphi_0^*(T_0(x)) - 2\varphi_0^*(\hat T(x)) + \frac 1  \lambda   \|\hat T(x) - T_0(x)\|^2 \bigg]dP(x) \\
 &= \left(\frac 1 \lambda-1\right) \|\hat T - T_0\|_{L^2(P)}^2   + 
 \int \psi_0 d(\hat Q - Q).
\end{align*}
We deduce
$$W_2^2(P, \hat Q)  \geq W_2^2(P,Q) + \frac 1 \lambda \| \hat T - T_0\|_{L^2(P)}^2 + \int \psi_0 d(\hat Q - Q),$$ 
To prove the second inequality
in equation~\eqref{eq:stability_pf_sufficient}, let $\hpi$ denote an optimal coupling between $Q$ and $\hat Q$. 
Then, the measure $\hpi_{S_0} = (S_0, Id)_\# \hpi$ is a (possibly suboptimal) coupling between $P$ and $\hat Q$, thus
\begin{align}
\label{eq:one_sample_stability_key_step}
W_2^2(P, \hat Q)
 \leq \int \norm{x-z}^2 d\hpi_{S_0}(x,z) 
 = \int \norm{S_0(y)-z}^2 d\hpi(y,z).
 \end{align}
The claim is now a consequence of the following technical Lemma, which will be used again in the sequel. 
\begin{lemma}
\label{lem:stability_technical}
We have, 
$$W_2^2(P, \hat Q)
 \leq \int \norm{S_0(y)-z}^2 d\hpi(y,z) \leq W_2^2(P,Q)+ \int \psi_0 d(\hat Q - Q)  + \lambda W_2^2(\hat Q, Q) .$$
\end{lemma}

\subsection{Proof of Lemma~\ref{lem:stability_technical}} 
We have, 
\begin{align*}
\int& \norm{S_0(y)-z}^2 d\hpi(y,z)  \\
  &= \int \norm{S_0(y)-y}^2  dQ(y) {+} \int \norm{y - z}^2 d\hpi(y,z) {+} 
    2\int \big\langle S_0(y) - y, y -z\big\rangle  d\hpi(y,z) \\
  &= W_2^2(P,Q) + W_2^2(\hat Q, Q) + 2 \int \big\langle S_0(y) - y, y -z\big\rangle  d\hpi(y,z).
\end{align*}
Now, notice that by~\eqref{eq:curvature}, 
\begin{align*}
2\int \big\langle S_0(y), y-z \big\rangle d\hpi(y,z)
 &\leq 2\int \left[\varphi_0^*(y) - \varphi_0^*(z) + \frac \lambda {2} \norm{y-z}^2 \right] d\hpi(y,z) \\
 &= 2\int \varphi_0^* d(Q- \hat Q) +\lambda W_2^2(\hat Q, Q),
\end{align*} 
and,
\begin{align*}
2\int &\big\langle -y, y-z \big\rangle d\hpi(y,z) \\ 
&= \int \left[ \norm z^2 - \norm{z-y}^2 - \norm y^2 \right] d\hpi(y,z) 
= \int \norm \cdot^2  d(\hat Q-Q) - W_2^2(\hat Q, Q).
\end{align*}
Therefore,
\begin{align*}
W_2^2&(P, \hat Q) - W_2^2(P,Q) \\
  &\leq   
  \int \left(\norm \cdot^2-2\varphi_0^*\right)d(\hat Q-Q) +  \lambda W_2^2(\hat Q, Q)
  = \int \psi_0 d(\hat Q-Q) +  \lambda W_2^2(\hat Q, Q),
\end{align*} 
and the claim follows.\qed

\section{Proofs of Upper Bounds for One-Sample Empirical Estimators}
\label{app:one_sample_empirical}
In this Appendix, we prove Corollaries~\ref{cor:one_sample_emp_hypercube} and~\ref{cor:two_sided_wasserstein}. 

\subsection{Proof of Corollary~\ref{cor:one_sample_emp_hypercube}}
We shall make use of the notation introduced in Section~\ref{sec:one_sample_density} 
and Appendix~\ref{app:besov_spaces}, regarding wavelet density estimation over $[0,1]^d$. 
In particular, let $\Psi = \Psi^{\text{bc}}$ with $N=1$, so that $\Psi$ is the Haar wavelet basis on $[0,1]^d$. 
 
\begin{lemma}
\label{lem:wasserstein_wavelet_projection}
Let $J \geq 1$ be an integer. For any $\mu \in \calP([0,1]^d)$, let $\mu_J \in \calPac([0,1]^d)$
denote the measure admitting density
$$q_J = 1 + \sum_{j=0}^J \sum_{\xi\in\Psi_j} \xi \int \xi d\mu,$$
with respect to the Lebesgue measure on $[0,1]^d$. Then, $W_2(\mu,\mu_J) \leq \sqrt d 2^{-J}.$
\end{lemma}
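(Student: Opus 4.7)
The plan is to identify $q_J$ with the piecewise-constant averaging of $\mu$ over dyadic cubes of side $2^{-(J+1)}$ in $[0,1]^d$, and then exhibit an explicit coupling that moves the mass of $\mu$ within each such cube to the uniform distribution on that same cube. This reduces the Wasserstein distance to a cube-diameter bound.

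First I would verify that, for the Haar system ($N=1$, $j_0=0$), the linear span of $\Phi \cup \Psi_0 \cup \cdots \cup \Psi_J$ in $L^2([0,1]^d)$ coincides with $V_J := \mathrm{span}\{\mathbf{1}_Q : Q \in \mathcal{D}_{J+1}\}$, where $\mathcal{D}_{J+1}$ denotes the family of closed dyadic subcubes of $[0,1]^d$ of side length $2^{-(J+1)}$. This is the standard statement that adding one Haar wavelet level refines the dyadic approximation resolution by a factor of $2$ along each axis. Combined with orthonormality of $\Psi$ and the usual formula for the $L^2$-projection onto $V_J$, the coefficients $\int \xi\, d\mu$ reassemble exactly into
\begin{equation*}
q_J = \sum_{Q \in \mathcal{D}_{J+1}} \frac{\mu(Q)}{|Q|}\, \mathbf{1}_Q,
\end{equation*}
so that on every $Q \in \mathcal{D}_{J+1}$, $\mu_J|_Q$ is the uniform distribution on $Q$ with total mass $\mu(Q)$.

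Next I would construct an explicit coupling $\pi \in \Pi(\mu,\mu_J)$ by matching mass cube-by-cube. Since $\mu|_Q$ and $\mu_J|_Q$ have equal total mass $\mu(Q)$, the product measure trick
\begin{equation*}
\pi \;=\; \sum_{\substack{Q \in \mathcal{D}_{J+1} \\ \mu(Q) > 0}} \frac{1}{\mu(Q)}\, \mu|_Q \otimes \mu_J|_Q
\end{equation*}
defines a probability measure on $[0,1]^d \times [0,1]^d$ whose marginals, by a direct computation, are $\mu$ and $\mu_J$. Because $\pi$ is supported on $\bigcup_Q Q \times Q$ and each dyadic cube $Q \in \mathcal{D}_{J+1}$ has Euclidean diameter $\sqrt{d}\,2^{-(J+1)}$,
\begin{equation*}
W_2^2(\mu,\mu_J) \;\le\; \int \|x-y\|^2\, d\pi(x,y) \;\le\; d\cdot 2^{-2(J+1)} \sum_{Q \in \mathcal{D}_{J+1}} \mu(Q) \;=\; d\cdot 2^{-2(J+1)} \;\le\; d\cdot 2^{-2J},
\end{equation*}
and taking square roots yields the stated bound.

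The only substantive point is the identification $\mathrm{span}(\Phi \cup \Psi_0 \cup \cdots \cup \Psi_J) = V_J$, which is the classical multiresolution analysis for the Haar basis (and for which the tensor-product extension to $d$ dimensions is immediate from Lemma~\ref{lem:wavelet--polynomials}). Once this is in hand, the rest is a one-line product-coupling estimate, so I do not anticipate any genuine obstacle.
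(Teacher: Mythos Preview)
Your proof is correct and takes a genuinely different route from the paper's. The paper invokes a multiscale dyadic bound for $W_2$ (Proposition~1 of \cite{weed2019}), which controls $W_2^2(\mu,\mu_J)$ by $d\,2^{-2J}$ plus a telescoping sum of mass discrepancies $|\mu(S)-\mu_J(S)|$ over dyadic cubes $S\in\calQ_j$ at levels $j\le J$; it then shows, via orthogonality of the Haar system, that each such discrepancy vanishes because $I_S$ lies in $\mathrm{Span}(\Phi\cup\Psi_0\cup\cdots\cup\Psi_{j-1})$. You instead push the same structural observation one step further, identifying $q_J$ outright as the block-average density of $\mu$ over the finest-level cubes $\mathcal D_{J+1}$, and then couple $\mu$ to $\mu_J$ cube-by-cube with the product measure. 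Your argument is more self-contained---it avoids the external citation altogether---and in fact delivers the sharper constant $\sqrt d\,2^{-(J+1)}$ before you discard it to match the stated bound. The paper's route has the virtue of reusing machinery already appearing in the manuscript, but both proofs ultimately rest on the same fact that the Haar projection preserves dyadic cube masses.
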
 

\begin{proof} 
The Lemma is a   consequence of dyadic partitioning arguments which
have previously been used by~\cite{boissard2014a, fournier2015, weed2019, lei2020}. In particular, for all $j \geq 0$, let
$\calQ_j$ denote the natural partition (up to intersections on Lebesgue null sets) of $[0,1]^d$ into $2^{dj}$ cubes of length $2^{-j}$. Then, Proposition 1
of~\cite{weed2019} implies
$$W_2^2(\mu,\mu_J) \leq d \left[2^{-2J} + \sum_{j=1}^J 2^{-2(j-1)}\sum_{S \in \calQ_j} |\mu(S) - \mu_J(S)|\right].$$
To prove the claim, it thus suffices to show that $\mu(S) = \mu_J(S)$ for all $S \in \calQ_j$ and $j=1, \dots, J$. 

Let $j \geq 0$, $S \in \calQ_j$, and recall that $I_S$ is the indicator function of $S$. 
Denote its expansion in the Haar basis by
$$I_S = \calL(S) + \sum_{\ell=0}^\infty \sum_{\xi \in \Psi_\ell} \gamma_\xi \xi, \quad \text{where } \gamma_\psi = \int I_S \psi, \psi \in \Psi.$$
Notice that
for any $\ell \geq j$ and $\xi \in \Psi_\ell$, we have  
$$\supp(\xi) \subseteq I_S, \quad \text{or} \quad \supp(\xi) \cap I_S = \emptyset.$$
Furthermore, since $\zeta = I_{[0,1]^d}$, and the Haar basis is orthonormal, 
we must have $\int_{[0,1]^d} \xi = 0$ for any $\xi \in \Psi_j$, $j \geq 0$. It must follow that
$$\gamma_\xi = \int I_S \xi = 0, \quad \text{for all } \xi \in \Psi_\ell, \ell \geq j,$$
that is, $I_S \in \text{Span}\left(\Phi \cup \bigcup_{\ell=0}^{j-1} \Psi_\ell\right)$.
We therefore have, for any $S \in \calQ_j$ and $j \leq J$, 
\begin{align*}
\mu_J(S)
 &= \int I_S(y) q_J(y) dy \\
 &= \calL(S) + \sum_{j=0}^J \sum_{\xi\in\Psi_j} \left(\int \xi d\mu\right) \left(\int I_S(y) \xi(y) dy\right) \\
 &= \calL(S) + \sum_{j=0}^J \sum_{\xi\in\Psi_j} \left(\int \xi d\mu\right) \gamma_\xi \\
 &= \int  \left(\calL(S) + \sum_{j=0}^J \sum_{\xi\in\Psi_j}\xi  \gamma_\xi  \right)d\mu = \int I_S d\mu = \mu(S).
\end{align*}
The claim follows.
\end{proof}
To prove the Corollary from here, let $2^{J_n} \asymp n^{1/d}$, and let $\hat Q_n$ be the distribution with density
$$\hat q_n(y) = 1 + \sum_{j=0}^{J_n} \sum_{\xi\in \Psi_j} \left(\int \xi dQ_n\right)\xi(y) , \quad y \in [0,1]^d.$$
Apply Lemma~\ref{lem:wasserstein_wavelet_projection} to the measure $\mu=Q_n$ to obtain 
$$W_2^2(Q_n, Q) \lesssim W_2^2(Q_n, \hat Q_{n}) + W_2^2(\hat Q_n, Q) \lesssim 2^{-2J_n} + W_2^2(\hat Q_n, Q)
\lesssim n^{-2/d} + W_2^2(\hat Q_n, Q).$$ 
Furthermore, recall that $\gamma^{-1} \leq q\leq \gamma$, 
thus we may apply Lemma~\ref{lem:wavelet_wasserstein}
to deduce 
$$\bbE W_2^2(\hat Q_n, Q)\lesssim 
\begin{cases}
n^{-2/d}, & d \geq 3 \\ 
(\log n)^2/n, & d = 2 \\
1/n,      & d = 1.
\end{cases}
$$
The claim follows.\qed 

\subsection{Proof of Corollary~\ref{cor:two_sided_wasserstein}}
By Theorem~\ref{thm:stability}, 
$$\bbE \big| W_2^2(P, Q_n) - W_2^2(P, Q)\big| 
 \leq  \bbE W_2^2(Q_n, Q)+\bbE\left| \int \psi_0 d(Q_n-Q)\right| .$$
By Jensen's inequality, the final term satisfies 
$$\bbE\left| \int \psi_0 d(Q_n-Q)\right| 
 \leq   n^{-\frac 1 2} \sqrt{\Var_Q[\psi_0(Y)]}.$$
Since $\psi_0$ is uniformly bounded by a constant depending only on $d$, the right-hand side of the above display is of order $n^{-1/2}$. 
Furthermore, by equation~\eqref{eq:wasserstein_empirical}, we have $\bbE W_2^2(Q_n, Q) \lesssim \kappa_n$, 
thus the first part of the claim follows.

Under the assumptions of the second part of the claim, we may instead use Corollary~\ref{cor:one_sample_emp_hypercube}
to obtain the stronger bound $\bbE W_2^2(Q_n, Q) \lesssim \widebar\kappa_n$, as
well as 
Lemma~\ref{lem:kantorovich_L2} to derive $\Var_Q[\psi_0(Y)] \lesssim W_2^2(P,Q)$.
The claim then follows. 
\qed

\section{Proofs of Upper Bounds for One-Sample Wavelet Estimators}
\label{app:one_sample_wavelet}
\subsection{Proof of Theorem~\ref{thm:one_sample_wavelet}}
Under the assumptions of part~(i), we may apply Theorem~\ref{thm:stability} and Lemma~\ref{lem:wavelet_wasserstein}
to obtain, 
$$\bbE \big\|\hat T_n - T_0\big\|_{L^2(P)}^2 \lesssim_\lambda   \bbE W_2^2(\hat Q_n, Q) \lesssim_{M,\gamma,\alpha} \trate,$$
which immediately leads to the first claim.  
To prove the second claim, recall that we have assumed $\alpha > 1$,
whence the assumption on $\varphi_0^*$ implies in particular that $\norm{\varphi_0^*}_{\calC^2(\Omega)} \leq \lambda$. 
Since the densities $p,q$ are bounded from below and above over $[0,1]^d$ by positive constants, 
it follows by Lemma~\ref{lem:gigli} 
that $\varphi_0$ satisfies condition~\ref{assm:curvature}, 
after possibly modifying the value of $\lambda$ in terms of $\gamma$.
 We may therefore invoke Theorem~\ref{thm:stability}  
to obtain,  
$$ L(\hat Q_n) \leq W_2^2(P, \hat Q_n) - W_2^2(P,Q) \leq \lambda W_2^2(\hat Q_n, Q) + L(\hat Q_n).$$
Let $C > 0$ be a constant depending only on $M,\lambda,\gamma,\alpha$, whose value may change
from line to line. By Lemma~\ref{lem:wavelet_wasserstein}, we have
$$\bbE W_2^2(\hat Q_n, Q) \leq  C\trate,\quad\text{and}\quad \bbE W_2^4(\hat Q_n, Q) \leq  C \sqtrate.$$
Furthermore, by Lemma~\ref{lem:L}, we have
\begin{align*}
\big| \bbE L(\hat Q_n)\big| &\leq C \trate \\
\Var\big[ L(\hat Q_n)\big] &\leq \frac 1 n \left(\Var_Q[\psi_0(Y)] + 2^{-2J_n\alpha}\right) \leq \frac{\Var_Q[\psi_0(Y)]}{n} + C\sqtrate \\
\bbE \big|  L(\hat Q_n)\big|^2 &=  \big| \bbE L(\hat Q_n)\big|^2 + \Var\big[ L(\hat Q_n)\big]
\leq \frac{\Var_Q[\psi_0(Y)]}{n} + C\sqtrate.
\end{align*}
Combining the preceding three displays, we deduce that  
\begin{align}
\label{eq:pf_one_sample_bias_setup}
\big|\bbE W_2^2(P, \hat Q_n) -  W_2^2(P,Q) \big| \leq  \lambda \bbE W_2^2(\hat Q_n, Q) {+} \big|\bbE L(\hat Q_n)\big|
{\leq} C  \trate,
\end{align}
and, 
\begin{align*}
\nonumber 
\bbE \big| &W_2^2(P, \hat Q_n) -  W_2^2(P,Q)\big|^2 \\ 
 &\leq \bbE\left[ \left(\lambda W_2^2(\hat Q_n, Q)+ \big|L(\hat Q_n)\big| \right)^2\right] \\ 
 &\leq \lambda^2 \bbE W_2^4(\hat Q_n, Q) + 2\lambda \bbE\left[ W_2^2(\hat Q_n, Q) \big| L(\hat Q_n)\big|\right]+ \bbE \big|L(\hat Q_n)\big|^2 \\
 &\leq \lambda^2 \bbE W_2^4(\hat Q_n, Q) + 2\lambda \sqrt{\left(\bbE W_2^4(\hat Q_n, Q)\right) \bbE \big|L(\hat Q_n)\big|^2}+ \bbE \big|L(\hat Q_n)\big|^2 \\
 &\leq C\lambda^2 \sqtrate + 2\lambda \sqrt{ C\sqtrate\left(C\sqtrate + \frac{\Var_Q[\psi_0(Y)]}{n}\right)} + \frac{\Var_Q[\psi_0(Y)]}{n} \\
 &\leq C^2  \sqtrate  + 2C\trate \sqrt{ \frac{\Var_Q[\psi_0(Y)]}{n}}+  \frac{\Var_Q[\psi_0(Y)]}{n} \\
 &\leq \left(  C  \trate  + \sqrt{\frac{\Var_Q[\psi_0(Y)]}{n}}\right)^2.
\end{align*}
The claim follows \qed 

It thus remains to prove Lemma~\ref{lem:L}. 

\subsection{Proof of Lemma~\ref{lem:L}}  
In order to bound the bias of $\int \psi_0 \hat q_n$, recall
from Lemma~\ref{lem:tilde_q_density}  
that the event $A_n=\{\hat q_n = \tilde q_n\}$ satisfies $\bbP(A_n^\cp) \lesssim n^{-2}$.
Since $\psi_0$ is bounded by a constant depending only on $d$, we have, 
\begin{align*}
\bbE \left| \int \psi_0 (\hat q_n - \tilde q_n)\right|
  &\leq \bbE \left(\left| \int \psi_0 (\hat q_n - \tilde q_n) \right|I_{A_n^\cp} \right) \lesssim \bbP(A_n^\cp) \lesssim 1/n^2.
\end{align*}
We deduce that 
\begin{align*}
\big|\bbE \big[L(\hat Q_n)\big]\big|  \lesssim \left| \int \psi_0 (\tilde q_n - q)\right|   + \frac 1 {n^2},
\end{align*} 
thus we are left with bounding the bias of $\int\psi_0 \tilde q_n$. 
  Recall that $\hat\beta_\xi$ is an unbiased estimator of $\beta_\xi$
for all $\xi \in\Psi$,
so that
$$q_{J_n} := \bbE[\tilde  q_n] = \sum_{\zeta \in \Phi} \beta_\zeta \zeta + \sum_{j=j_0}^{J_n} \sum_{\xi\in\Psi_j} \beta_\xi\xi.$$ 
Write the expansion of $\psi_0$ in the basis $\Psi$
as
$$\psi_0 = \sum_{\zeta \in \Phi} \gamma_\zeta\zeta  + \sum_{j =j_0}^\infty \sum_{\xi \in \Psi_j} \gamma_\xi \xi, \quad \text{where } \gamma_\xi = \int \psi_0 \xi \ 
\text{for all } \xi \in \Psi,$$
where the series converges uniformly due to the H\"older regularity of $\psi_0$, 
so that, 
\begin{align*}
\int \psi_0  (q-q_{J_n}) 
= \int \left(\sum_{\zeta \in \Phi} \gamma_\zeta\zeta  + \sum_{j =j_0}^\infty \sum_{\xi \in \Psi_j} \gamma_\xi \xi\right) 
         \left(\sum_{j=J_n+1}^\infty \sum_{\xi\in\Psi_j} \beta_\xi\xi\right)  
= \sum_{j=J_n+1}^\infty \sum_{\xi \in \Psi_j}\gamma_\xi \beta_\xi,
  \end{align*}
by orthonormality of the basis $\Psi$.
By \cref{lem:wavelet--basis_size} in Appendix~\ref{app:wavelets}, we have $|\Psi_j| \lesssim 2^{dj}$, therefore
\begin{equation}
\label{eq:step1_one_sample_density}
\left|\int \psi_0 (q-q_{J_n})\right| \leq 
\sum_{j=J_n+1}^\infty \sum_{\xi \in \Psi_j}|\gamma_\xi \beta_\xi|
  \lesssim \sum_{j=J_n+1}^\infty 2^{dj} \|(\gamma_\xi)_{\xi\in\Psi_j}\|_{\infty} \|(\beta_\xi)_{\xi\in\Psi_j}\|_\infty.
  \end{equation}
On the other hand, 
we have 
$\norm\cdot_{\calB_{\infty,\infty}^s(\Omega)} \lesssim \norm\cdot_{\calC^s(\Omega)}$
for all $s > 0$ by Lemma~\ref{lem:besov_holder}. Therefore, by assumption on $q$ and $\varphi_0^*$, we obtain
\begin{equation}
\label{eq:step3_one_sample_density}
\begin{aligned}
\|(\beta_\xi)_{\xi\in\Psi_j}\|_{\ell_\infty} &\leq \norm q_{\calB_{\infty,\infty}^{\alpha-1}(\Omega)}2^{-j[ (\alpha-1) + \frac d 2]}
\lesssim 2^{-j[ (\alpha-1) + \frac d 2]}, \\
\|(\gamma_{\xi})_{\xi\in\Psi_j}\|_{\ell_\infty} &\leq \norm {\psi_0}_{\calB_{\infty,\infty}^{\alpha+1}(\Omega)} 2^{-j[ (\alpha+1) + \frac d 2]}
\lesssim 2^{-j[ (\alpha+1) + \frac d 2]},
\end{aligned}
\end{equation}
for all $j \geq j_0$. 
Combine equations~\eqref{eq:step1_one_sample_density}--\eqref{eq:step3_one_sample_density}
to deduce
\begin{align*}
\big| \bbE L(\hat Q_n)\big| \lesssim \sum_{j=J_n+1}^\infty 2^{dj} 2^{-j[ (\alpha+1) + \frac d 2]}2^{-j[ (\alpha-1) + \frac d 2]} 
 \lesssim \sum_{j=J_n+1}^\infty  2^{-2j \alpha}  
  \lesssim 2^{-2J_n\alpha} \asymp n^{-\frac{2\alpha}{2(\alpha-1)+d}}.
\end{align*}
We next bound the variance $\Var_Q[L(\hat Q_n)]$. Denote~by
$$\psi_{J_n} = \sum_{\zeta\in \Phi} \gamma_\zeta \zeta +  \sum_{j=j_0}^{J_n} \sum_{\xi\in\Psi_j} \xi  \gamma_\xi$$ 
the projection of $\psi_0$ onto $\mathrm{Span}\left(\Phi \cup \bigcup_{j=j_0}^{J_n} \Psi_j\right)$. 
By again applying Lemma~\ref{lem:tilde_q_density}, 
it is a straightforward observation that
\begin{align*}
\left|\Var\left[\int \psi_0 \hat q_n\right] - \Var\left[ \int \psi_0 \tilde q_n\right]\right|
  \lesssim n^{-2},
\end{align*}
thus it suffices to show that $\Var\left[ \int \psi_0 \tilde q_n\right] = \Var_Q[\psi_0(Y)] /n + O(2^{-2J_n\alpha}/n)$. Notice that
\begin{align}
\nonumber 
\int \psi_0\tilde  q_n
 &= \sum_{\zeta \in \Phi} \hbeta_\zeta \int\psi_0\zeta + \sum_{j=j_0}^{J_n} \sum_{\xi\in\Psi_j} \hbeta_\xi \int \psi_0\xi  \\
\nonumber 
 &= \sum_{\zeta \in \Phi} \hbeta_\zeta \gamma_\zeta + \sum_{j=j_0}^{J_n} \sum_{\xi\in\Psi_j} \hbeta_\xi \gamma_\xi \\
 &=\frac 1 n \sum_{i=1}^n \left[\sum_{\zeta \in \Phi} \zeta(Y_i)\gamma_\zeta + \sum_{j=j_0}^{J_n} \sum_{\xi\in\Psi_j} \xi(Y_i) \gamma_\xi\right] = \int \psi_{J_n} dQ_n,
 \label{eq:pf_variance_kantorovich_bound_step}
\end{align}  
whence, 
\begin{align*}
\Var\left[\int \psi_0\tilde q_n\right]
 &= \frac 1 n \Var_Q[\psi_{J_n}(Y)] \\ 
 &=  \frac 1 n \Var_Q[\psi_0(Y)] + \frac 1 n (\Var_Q[\psi_{J_n}(Y)]-\Var_Q[\psi_0(Y)]).
\end{align*} 
It thus remains to bound the final term. Notice that
\begin{align*}
\Big| \Var_Q&[\psi_{J_n}(Y)] - \Var_Q[\psi_0(Y)]\Big| \\
 &{\lesssim} \Big| \bbE_Q [\psi_{J_n}^2(Y) - \psi_0^2(Y)]\Big| +  \Big| \bbE_Q [\psi_{J_n}(Y) - \psi_0(Y)]\Big| = (I) + (II).
\end{align*}
We begin by bounding $(I)$. Letting $g_n = (\psi_{J_n} + \psi_0)q$, we have, 
\begin{align*}
(I) = \left| \int (\psi_{J_n} - \psi_0)(\psi_{J_n} + \psi_0) q \right| = \left| \int (\psi_{J_n} - \psi_0)g_n  \right|.
\end{align*}
It is clear that $\|\psi_{J_n}\|_{\calB^{\alpha+1}_{\infty,\infty}([0,1]^d)} \leq \|\psi_0\|_{\calB^{\alpha+1}_{\infty,\infty}([0,1]^d)}
\lesssim \lambda $,
thus for any fixed $\epsilon > 0$ sufficiently small, the map $\psi_{J_n}+\psi_0$ lies in $\calC^{\alpha+1-\epsilon}([0,1]^d)$ with uniformly bounded norm,
by Lemma~\ref{lem:besov_holder}. Note that one may take $\epsilon=0$ if $\alpha$ is not an integer. 
On the other hand, we also have $\|q\|_{\calC^{\alpha-1}([0,1]^d)} \leq M$.
Deduce that
$$\sup_{n\geq 1}\|g_n\|_{\calB_{\infty,\infty}^{\alpha-1}([0,1]^d)} \lesssim
\sup_{n \geq 1} \|g_n\|_{\calC^{\alpha-1}([0,1]^d)}\lesssim 1,$$
where the first inequality again uses Lemma~\ref{lem:besov_holder},
and the second inequality follows from  Lemma~\ref{lem:holder_products}.
Now, let $\alpha_{n,\xi} = \int \xi g_n$ for all $\xi \in \Psi$. 
By following the same argument as in the first
part of this proof, and using again the fact that
 $\|\psi_0\|_{\calB_{\infty,\infty}^{\alpha+1}([0,1]^d)} \lesssim \lambda$, we may deduce that 
\begin{align*}
(I)
 &\leq \sum_{j=J_n+1}^\infty  \sum_{\xi \in \Psi_j} |\gamma_\xi\alpha_{n,\xi}| \\
 &\leq  \|\psi_0\|_{\calB_{\infty,\infty}^{\alpha+1}([0,1]^d)}\| g_n\|_{\calB^{\alpha-1}_{\infty,\infty}([0,1]^d)}
  \sum_{j=J_n+1}^{\infty} 2^{dj}  2^{-j[ (\alpha+1) + \frac d 2]}2^{-j[ (\alpha-1) + \frac d 2]}  \lesssim 2^{-2J_n\alpha}.
\end{align*}
Likewise,  we have
\begin{align*} 
(II) = \left|\int (\psi_{J_n} - \psi_0) q\right| \lesssim 2^{-2J_n\alpha},
\end{align*}
and the claim follows from here.\qed

\section{Proofs of Two-Sample Stability Bounds}
\subsection{Proof of Proposition~\ref{thm:two_sample_stability}} 
\label{app:pf_two_sample_stability}
Due to the absolute continuity of $P$ and $Q$, the optimal transport map 
from $Q$ to $P$ is given by $S_0 = \nabla\varphi_0^*$. 
Furthermore, by absolute continuity of $P$, there exists 
an optimal transport map $\hsigma$ from  $P$ to $\hat P$. We clearly have,  
$$(\hsigma \circ S_0)_\# Q = \hat P.$$
Also let $\hpi \in \Pi(Q, \hat Q)$ be the optimal coupling between $Q$ and $\hat Q$, so that 
$$(\hsigma \circ S_0, Id)_\# \hpi \in \Pi(\hat P, \hat Q).$$
We deduce,
\begin{align}
\nonumber 
W_2^2(\hat P, \hat Q)
 &\leq \int \norm{\hsigma\circ S_0(y)-z}^2 d\hpi(y,z) \\
\nonumber 
 &= \int \Big[ \norm{\hsigma\circ S_0(y)-S_0(y)}^2 + \norm{S_0(y)-z}^2 \Big] d\hpi(y,z) \\& +
  2\int \langle \hsigma\circ S_0(y)-S_0(y) , S_0(y)-z\rangle d\hpi(y,z).
  \label{eq:pf_two_sample_stability_cross}
\end{align}
Notice that
\begin{equation}
  \label{eq:pf_two_sample_stability_noncross1}
\int \norm{\hsigma\circ S_0(y)-S_0(y)}^2  d\hpi(y,z)
 = \int \|\hat\sigma(x) -x\|^2dP(x) = W_2^2(\hat P, P).
 \end{equation}
Furthermore, we have
\begin{equation}
  \label{eq:pf_two_sample_stability_noncross2}
  \int \|S_0(y)-z\|^2 d\hpi(y,z) \leq W_2^2(P,Q) + \int \psi_0 d(\hat Q - Q)+  \lambda   W_2^2(\hat Q, Q),
\end{equation}
by Lemma~\ref{lem:stability_technical}. Additionally, 
the cross term in equation~\eqref{eq:pf_two_sample_stability_cross} is bounded as follows.
\begin{lemma}
\label{lem:two_sample_stability_technical}
We have,  
$$
\begin{multlined}[0.9\textwidth]
2\int \langle \hsigma\circ S_0(y)-S_0(y) , S_0(y)-z\rangle d\hpi(y,z) \\
\leq \int \phi_0 d(\hat P - P) + 2 W_2(\hat P,P) W_2(\hat Q,Q) + (\lambda-1) W_2^2(\hat P, P).
\end{multlined}$$
\end{lemma}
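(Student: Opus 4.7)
The plan is to rewrite the cross term in terms of an integral against $P$ and to split it using the intermediate point $T_0(x)$, where $T_0 = \nabla \varphi_0$ is the optimal transport map from $P$ to $Q$. Note that by absolute continuity, $T_0 \circ S_0 = \mathrm{Id}$ $Q$-a.e., so upon the change of variables $x = S_0(y)$ (so that $x \sim P$ and $y = T_0(x)$), the measure $\tilde\pi := (S_0, \mathrm{Id})_\# \hat\pi$ is a coupling of $P$ and $\hat Q$, and the cross term becomes
\begin{equation*}
\mathrm{Cross} := 2\int \langle \hat\sigma(x) - x, \ x-z\rangle \, d\tilde\pi(x,z)
  = 2\int \langle \hat\sigma(x) - x,\ x - T_0(x)\rangle\, dP(x) + 2\int \langle \hat\sigma(x) - x,\ T_0(x) - z\rangle\, d\tilde\pi(x,z).
\end{equation*}
Call these two pieces $\mathrm{I}$ and $\mathrm{II}$; they will be handled separately.

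For $\mathrm{I}$, I would exploit the semiconcavity of $\phi_0 = \|\cdot\|^2 - 2\varphi_0$. Indeed, condition~\ref{assm:curvature} gives $\nabla^2\varphi_0 \preceq \lambda I_d$, hence $\nabla^2 \phi_0 \succeq -2(\lambda-1) I_d$ on $\Omega$, and $\nabla\phi_0(x) = 2(x - T_0(x))$. The Taylor bound from below therefore yields, for every $x$,
\begin{equation*}
2\langle x - T_0(x),\ \hat\sigma(x) - x\rangle \;=\; \langle \nabla\phi_0(x),\ \hat\sigma(x) - x\rangle \;\leq\; \phi_0(\hat\sigma(x)) - \phi_0(x) + (\lambda-1)\|\hat\sigma(x) - x\|^2.
\end{equation*}
Integrating against $P$ and using $\hat\sigma_\# P = \hat P$ together with $\int\|\hat\sigma-\mathrm{Id}\|^2 dP = W_2^2(\hat P, P)$ produces
\begin{equation*}
\mathrm{I} \;\leq\; \int \phi_0\, d(\hat P - P) + (\lambda-1)\, W_2^2(\hat P, P),
\end{equation*}
which accounts for the first and last terms on the right-hand side of the claim.

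For $\mathrm{II}$, the Cauchy--Schwarz inequality gives
\begin{equation*}
\mathrm{II} \;\leq\; 2\sqrt{\int \|\hat\sigma(x)-x\|^2\, d\tilde\pi(x,z)}\; \sqrt{\int \|T_0(x) - z\|^2\, d\tilde\pi(x,z)}.
\end{equation*}
The first factor equals $W_2(\hat P, P)$ since the $x$-marginal of $\tilde\pi$ is $P$ and $\hat\sigma$ is the optimal map. The second factor is the key observation: by construction of $\tilde\pi = (S_0, \mathrm{Id})_\# \hat\pi$, the pair $(T_0(x), z)$ is distributed as $(y, z) \sim \hat\pi$, so $\int \|T_0(x) - z\|^2 d\tilde\pi = \int \|y-z\|^2 d\hat\pi = W_2^2(\hat Q, Q)$. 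This yields $\mathrm{II} \leq 2 W_2(\hat P, P) W_2(\hat Q, Q)$, and adding the two bounds completes the proof.

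The main obstacle is conceptual rather than computational: identifying the right decomposition $x - z = (x - T_0(x)) + (T_0(x) - z)$ that converts the cross term into a form where the $x$-only piece sees the semiconcavity of $\phi_0$ while the mixed piece inherits the optimal coupling structure and factors through $W_2(\hat Q,Q)$ via Cauchy--Schwarz. Once that split is made, both estimates are one-line consequences of~\ref{assm:curvature} and the fact that $\hat\pi$ is an optimal coupling of $Q$ and $\hat Q$.
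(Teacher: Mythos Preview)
Your proof is correct and follows essentially the same strategy as the paper: one piece is controlled by Cauchy--Schwarz to produce the $2W_2(\hat P,P)W_2(\hat Q,Q)$ term, and the other piece is controlled via the curvature condition~\ref{assm:curvature} to produce $\int\phi_0\,d(\hat P-P)+(\lambda-1)W_2^2(\hat P,P)$.

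The only organizational difference is that the paper uses a three-way split $S_0(y)-z=(y-z)+(-y)+S_0(y)$, handling the second piece via the smoothness bound on $\varphi_0$ and the third via the algebraic identity $2\langle a,b\rangle=\|a+b\|^2-\|a\|^2-\|b\|^2$, then recombining to recover $\phi_0=\|\cdot\|^2-2\varphi_0$. Your two-way split $x-z=(x-T_0(x))+(T_0(x)-z)$ bypasses this detour by recognizing directly that $\phi_0$ is $(\lambda-1)$-semiconcave (since $\nabla^2\phi_0=2I_d-2\nabla^2\varphi_0\succeq -2(\lambda-1)I_d$) and applying the corresponding second-order lower bound in one step. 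This is a slightly cleaner packaging of the same argument.
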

We prove Lemma~\ref{lem:two_sample_stability_technical} in Appendix~\ref{app:two_sample_stability_technical}
below. 
By equations~(\ref{eq:pf_two_sample_stability_cross}--\ref{eq:pf_two_sample_stability_noncross2})
and Lemma~\ref{lem:two_sample_stability_technical}, we obtain
\begin{align*}
W_2^2(\hat P, \hat Q) 
 &\leq W_2^2(P, Q) + \lambda W_2^2(\hat P, P) + \lambda  W_2^2(\hat Q, Q)  \\ &\qquad\qquad\qquad \qquad + \int \psi_0 d(\hat Q-Q) + \int \phi_0 d(\hat P-P)
 + 2W_2(\hat P, P)W_2(\hat Q,Q) \\
 &\leq W_2^2(P, Q) +  \lambda  \left[W_2(\hat P, P) + W_2(\hat Q, Q)\right]^2 + \int \psi_0 d(\hat Q-Q) + \int \phi_0 d(\hat P-P).
\end{align*}
This proves the upper bound of the claim. To prove the lower bound, notice that, by the Kantorovich duality, 
\begin{align*} 
W_2^2(\hat P, \hat Q) 
 &\geq \int \phi_0 d\hat P + \int \psi_0 d\hat Q \\
 &= \int\phi_0 dP + \int\psi_0 dQ + \int \phi_0 d(\hat P - P) + \int \psi_0 d(\hat Q - Q)  \\
 &= W_2^2(P, Q) + \int \phi_0 d(\hat P - P) + \int \psi_0 d(\hat Q - Q).
\end{align*}
The claim follows.
\qed

\subsection{Proof of Lemma~\ref{lem:two_sample_stability_technical}}
\label{app:two_sample_stability_technical}
Write
\begin{equation}
\label{eq:decomp_tech_two_sample_stab}
2\int \langle \hsigma\circ S_0(y)-S_0(y) , S_0(y)-z\rangle d\hpi(y,z) = (I) + (II) + (III),
\end{equation}
where
\begin{align*}
(I) &= 2\int \langle \hsigma\circ S_0(y)-S_0(y) , y -z\rangle d\hpi(y,z)\\
(II)  &= 2\int \langle \hsigma\circ S_0(y)-S_0(y) , -y\rangle d\hpi(y,z)\\
(III)&= 2\int \langle \hsigma\circ S_0(y)-S_0(y) , S_0(y)\rangle d\hpi(y,z). 
\end{align*}
Regarding $(I)$, the Cauchy-Schwarz inequality implies 
\begin{align}
\nonumber 
(I) &\leq  2
\left(\int \norm{\hsigma\circ S_0(y)-S_0(y)}^2 d\hpi(y,z)\right)^{\frac 1 2}
       \left(\int \norm{y-z}^2 d\hpi(y,z)\right)^{\frac 1 2} \\
   &=2
   \nonumber 
\left(\int \norm{\hsigma(x)-x}^2 dP(x)\right)^{\frac 1 2}
       \left(\int \norm{y-z}^2 d\hpi(y,z)\right)^{\frac 1 2}\\
 &= 2W_2(\hat P, P) W_2(\hat Q,Q).       
       \label{eq:pf_two_sample_stab_cross1}
\end{align}
Regarding term $(II)$, recall that $\varphi_0$ satisfies assumption~\ref{assm:curvature}, thus we have 
$$\frac 1 {2\lambda} \norm{x-y}^2 \leq \varphi_0(y) - \varphi_0(x) - \big\langle T_0(x), y-x\big\rangle 
\leq \frac \lambda 2 \|x-y\|^2, \quad x,y \in \Omega,$$
We deduce that, \begin{align}
\nonumber 
(II) &=2 \int \langle \hsigma\circ S_0(y)-S_0(y) , -T_0\circ S_0(y)\rangle d\hpi(y,z) \\
\nonumber 
 &\leq 
 2\int \left[ \varphi_0(S_0(y)) - \varphi_0(\hsigma\circ S_0(y)) + \frac \lambda 2 \norm{S_0(y) - \hsigma\circ S_0(y)}^2\right] d\hpi(y,z) \\
 &=  \int  2\varphi_0 d(P - \hat P)   +  \lambda   W_2^2(\hat P, P).
\label{eq:pf_two_sample_stab_cross2}
\end{align}
Finally, term $(III)$ satisfies
\begin{align}
\nonumber 
(III)
 &= \int \left[ \norm{ \hsigma\circ S_0(y)}^2 - \norm{\hsigma\circ S_0(y) - S_0(y)}^2 - \norm{S_0(y)}^2 \right] d\hpi(y,z)\\
 &= \int \norm\cdot^2 d(\hat P - P) - W_2^2(\hat P, P).
\label{eq:pf_two_sample_stab_cross3}
\end{align}
Combine equations~\eqref{eq:pf_two_sample_stab_cross1}--\eqref{eq:pf_two_sample_stab_cross3} 
with equation~\eqref{eq:decomp_tech_two_sample_stab} to deduce the claim.\qed  
 
\subsection{Proof of Proposition \ref{prop:curvature_empirical}} 
\label{app:pf_two_sample_stability_empirical}
Once again, denote by $S_0 = \nabla\varphi_0^*$ the optimal transport map from $Q$ to $P$. 
Recall from the proof of Theorem~\ref{thm:stability} (equation~\eqref{eq:curvature}) that, due to assumption~\ref{assm:curvature},
$$\frac 1 { 2\lambda} \norm{x-y}^2 \leq  \varphi_0^*(y) - \varphi_0^*(x) - \big\langle S_0(x), y-x\big\rangle \leq \frac \lambda  {2} \norm{x-y}^2,$$
for all $x,y \in \Omega$. 
Now, we have,
\begin{align*}
W_2^2(P_n, Q_m) 
 &=  \sum_{i=1}^n \sum_{j=1}^m \hat\pi_{ij} \|X_i - Y_j\|^2 \\
 &=  \sum_{i=1}^n \sum_{j=1}^m \hat\pi_{ij} \bigg[\|T_0(X_i) - X_i \|^2 \\ &\hspace{0.9in}+ 2\langle T_0(X_i) - X_i, Y_j - T_0(X_i)\rangle + 
 									 \norm{Y_j - T_0(X_i)}^2\bigg].
\end{align*}
Notice that 
\begin{align*}
\bbE\left[\sum_{i=1}^n \sum_{j=1}^m \hat\pi_{ij} \|T_0(X_i) - X_i \|^2\right]
 &= \bbE\left[\sum_{i=1}^n \left(\sum_{j=1}^m \hat\pi_{ij}\right) \|T_0(X_i) - X_i \|^2\right] \\
 &= \bbE\left[\frac 1 n \sum_{i=1}^n \|T_0(X_i) - X_i \|^2  \right] 
 = W_2^2(P,Q),
\end{align*}
where we have used the marginal constraint on the coupling $\hat\pi$ in the first equality
of the above display. Recalling that $\Delta_{nm} = \sum_{i=1}^n\sum_{j=1}^m \hpi_{ij} \|X_i-Y_j\|^2$,
 thus we obtain,	
\begin{align*}
\bbE \Big[ W_2^2(P_n,& Q_m)  - W_2^2(P,Q)\Big]  \\
  &= \bbE [\Delta_{nm}] + 2\bbE \left[  \sum_{i=1}^n  \sum_{j=1}^m \hat\pi_{ij} \langle T_0(X_i) - X_i, Y_j - T_0(X_i)\rangle 
 									  \right]\\
  &= \bbE [\Delta_{nm}] + 2\bbE \left[  \sum_{i=1}^n  \sum_{j=1}^m \hat\pi_{ij}  \langle T_0(X_i) - S_0(T_0(X_i)) , Y_j - T_0(X_i)\rangle 
 									  \right].
\end{align*}
Now,
\begin{equation}
\label{eq:empirical_stability_step}
2\langle - S_0(T_0(X_i)) , Y_j - T_0(X_i)\rangle \geq 2\varphi_0^*(T_0(X_i)) - 2\varphi_0^*(Y_j) +  \frac 1 \lambda   \norm{T_0(X_i) - Y_j}^2,
\end{equation}
whence, we obtain,
$$\begin{multlined}[0.75\textwidth]				  
\bbE \Big[ W_2^2(P_n, Q_m)  - W_2^2(P,Q)\Big]  
  \geq  \bbE [\Delta_{nm}] + \bbE \Bigg[ \sum_{i=1}^n \sum_{j=1}^m \hat\pi_{ij}\bigg(   2\varphi_0^*(T_0(X_i)) - 2\varphi_0^*(Y_j) \\ + 
\frac 1 \lambda \norm{T_0(X_i) - Y_j}^2 + 2\langle T_0(X_i), Y_j - T_0(X_i)\rangle \bigg) \Bigg]
\end{multlined}$$
Now, notice that
$$2\langle T_0(X_i), Y_j - T_0(X_i)\rangle = -\norm{T_0(X_i) - Y_j}^2 + \norm{Y_j}^2 - \norm{T_0(X_i)}^2.$$
Thus, continuing from before, we have
\begin{align*} 							
\bbE \Big[ W_2^2&(P_n, Q_m)  - W_2^2(P,Q)\Big] 	  	\\	  								  
  &\geq\frac 1 \lambda \bbE [\Delta_{nm}] + \bbE \left[ \sum_{i=1}^n  \sum_{j=1}^m \hat\pi_{ij}\Big(2\varphi_0^*(T_0(X_i)) - 2\varphi_0^*(Y_j) + 
  \|Y_j\|^2- \norm{T_0(X_i)}^2 \Big) \right] \\ 
  &=\frac 1 \lambda \bbE [\Delta_{nm}] + \bbE \left[ \frac 1 m \sum_{j=1}^m \Big(\norm{Y_j}^2 - 2\varphi_0^*(Y_j)\Big)\right] 
  \\ &\hspace{0.95in}  - \bbE\left[\frac 1 n \sum_{i=1}^n \Big(\norm{T_0(X_i)}^2 - 2\varphi_0^*(T_0(X_i))  \Big) \right] 
  =\frac 1 \lambda\bbE [\Delta_{nm}].
\end{align*}
This proves one of the inequalities of the claim. To obtain the other, return to equation~\eqref{eq:empirical_stability_step}
and notice that one also has
\begin{equation*}
2\langle - S_0(T_0(X_i)) , Y_j - T_0(X_i)\rangle \leq 2\varphi_0^*(T_0(X_i)) - 2\varphi_0^*(Y_j) +   \lambda   \norm{T_0(X_i) - Y_j}^2.
\end{equation*}
The proof then proceeds analogously. This proves that
$$\bbE[\Delta_{nm}] \asymp_\lambda \bbE \Big[ W_2^2(P_n, Q_m) - W_2^2(P,Q) \Big].$$
To conclude, apply Proposition~\ref{thm:two_sample_stability} to deduce
\begin{align*}
\bbE \Big[ W_2^2&(P_n, Q_m) - W_2^2(P,Q)\Big] \\
&\leq \bbE \int \phi_0 d(P_n - P) + \bbE \int \psi_0 d(Q_m - Q) + 
2\lambda\Big[\bbE W_2^2(P_n, P) + \bbE W_2^2(Q_m,Q) \Big]\\
&=2\lambda\Big[\bbE W_2^2(P_n, P) + \bbE W_2^2(Q_m,Q) \Big].
\end{align*}
The above display is of the order $\kappa_{n\wedge m}$ due to equation~\eqref{eq:wasserstein_empirical}. 
When we additionally assume that $\Omega = [0,1]^d$ and $\gamma^{-1} \leq p,q \leq \gamma$, 
we may instead bound it from above by $\widebar \kappa_{n\wedge m}$,
due to Corollary~\ref{cor:one_sample_emp_hypercube}. The claim follows.\qed

\subsection{Proof of Proposition~\ref{prop:torus_stability}} 
The claim follows along the same lines as the proofs of 
Theorem~\ref{thm:stability} and Proposition~\ref{thm:two_sample_stability}, 
thus we only provide a brief proof of the analogue of Theorem~\ref{thm:stability}
over the torus. 
It will suffice to prove
\begin{equation} 
\label{eq:stability_pf_torus_sufficient}
   \frac 1 \lambda \|\hat T - T_0\|_{L^2(P)}^2 \leq  \calW_2^2(P, \hat Q) - \calW_2^2(P,Q)  - \int \psi_0 d(\hat Q-Q) \leq \lambda \calW_2^2(\hat Q,Q).
\end{equation}
Recall that $\hat T$ is the optimal transport map from $P$ to $\hat Q$. 
By Proposition~\ref{prop:torus_ot--map_locality}, we therefore have {$P$-almost surely}
$$d_{\bbT^d}(\hat T(x),x) = \|\hat T(x) - x\|,\quad d_{\bbT^d}(T_0(x),x) = \norm{T_0(x) - x},
\quad x \in \bbT^d.$$
It follows that
\begin{align*}
\calW_2^2(P, \hat Q)  - \calW_2^2(P, Q)
 = \int\|\hat T(x) - x\|^2 dP(x)  - \int \|T_0(x) - x\|^2 dP(x).
 \end{align*}
From here, it follows identically as in the proof  of Theorem~\ref{thm:stability} that
\begin{align*}
\calW_2^2(P, \hat Q)  - \calW_2^2(P, Q)
  \geq \frac 1 \lambda \|\hat T - T_0\|_{L^2(P)}^2 + \int \psi_0 d(\hat Q - Q).
\end{align*}
To prove the second inequality in equation~\eqref{eq:stability_pf_torus_sufficient},  
let $\hpi$ denote an optimal coupling between $Q$ and $\hat Q$ with respect to the cost $d_{\bbT^d}^2$. 
Notice similarly as before that Proposition~\ref{prop:torus_ot--map_locality} implies
$$\calW_2^2(P, Q) = \int \|S_0(y) - y\|^2 dQ(y), \quad \calW_2^2(Q, \hat Q) = \int \|y-z\|^2 d\hpi(y,z),$$ 
thus, since $(S_0, Id)_\# \hpi \in \Pi(P,\hat Q)$, and using the fact that
$d_{\bbT^d} \leq \|\cdot\|$, we have
\begin{align*}
\calW_2^2&(P, \hat Q) \\
 &\leq \int d_{\bbT^d}^2(S_0(y),z) d\hpi(y,z) \\
 &\leq \int \|S_0(y)-z\|^2 d\hpi(y,z) \\
 &= \int \norm{S_0(y)-y}^2  dQ(y) + \int \norm{y - z}^2 d\hpi(y,z) + 
    2\int \big\langle S_0(y) - y, y -z\big\rangle  d\hpi(y,z) \\
  &= \calW_2^2(P,Q) + \calW_2^2(\hat Q, Q) + 2 \int \big\langle S_0(y) - y, y -z\big\rangle  d\hpi(y,z).
\end{align*}
By the same argument as in Theorem~\ref{thm:stability}, the cross term is bounded
above by $(\lambda - 1)\calW_2^2(\hat Q, Q)+\int\psi_0d(\hat Q-Q)$, thus the claim follows. 
\qed

\section{Proofs of Upper Bounds for Two-Sample Empirical Estimators}
In this Appendix, we prove Propositions~\ref{prop:transport_1nn} and~\ref{prop:transport_least_squares}. 
We begin with the following result.
\begin{lemma}
\label{lem:voronoi}
Let $\Omega$ satisfy conditions~\ref{assm:supp_global}--\ref{assm:supp_standard}.
Let $P \in \calPac(\Omega)$ admit a density $p$ such that $\gamma^{-1} \leq p \leq \gamma$
for some $\gamma > 0$. Let $V_1, \dots, V_n$ denote the Voronoi partition in equation~\eqref{eq:voronoi},
based on an i.i.d. sample $X_1, \dots, X_n \sim P$. Then,
there exist constants $C_1, C_2 > 0$ depending only on $d,\gamma,\epsilon_0,\delta_0$ such that
{the following assertions hold.}
\begin{thmlist} 
\item \label{lem:voronoi--mass} For all $\delta \in (0,1)$, we have, 
$$\bbP\left(\max_{1 \leq i \leq n} P(V_i) \geq \frac {C_1} n\Big[d\log n +  \log\left(  1 /\delta\right)\Big]\right) \leq \delta.$$
\item \label{lem:voronoi--diam} We have,
$$\bbE\left[ \max_{1 \leq i \leq n} \diam(V_i)^2\right] \leq C_2 \left(\frac {\log n}{n}\right)^{\frac 2 d}.$$
\end{thmlist}
\end{lemma} 
\textbf{Proof of Lemma~\ref{lem:voronoi}.}
 We shall make use of the relative Vapnik-Chervonenkis
 inequality~\citep{vapnik2013, bousquet2003}, in the following form stated by \cite{chaudhuri2010}. 
\begin{lemma}
\label{lem:relative_vc}
Let $\calB$ denote the set of balls in $\bbR^d$. Then, 
there exists a universal constant $C > 0$ such that 
for every $\delta \in (0,1)$, we have with probability at least $1-\delta$ that for all $B \in \calB$,
$$P(B) \geq \frac{C}{n}\left[d\log n +  \log\left(\frac 1 \delta\right)\right] ~ \Longrightarrow ~ P_n(B) > 0.$$
\end{lemma}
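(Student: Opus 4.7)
The plan is to reduce the statement to the standard one-sided relative Vapnik--Chervonenkis inequality applied to the class $\calB$ of Euclidean balls in $\bbR^d$. Two ingredients are needed: an upper bound on the VC dimension of $\calB$, and a sharp form of the relative VC inequality that permits taking a contrapositive at the zero-empirical-measure event.

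First I would recall (or cite) the classical fact that the VC dimension of $\calB$ is at most $d+1$. This follows, for instance, by lifting a ball $\{x \in \bbR^d : \|x - c\|^2 \leq r^2\}$ to a half-space in $\bbR^{d+1}$ via the map $x \mapsto (x, \|x\|^2)$, and invoking the well-known fact that the VC dimension of the class of half-spaces in $\bbR^{d+1}$ equals $d+1$. This reduction makes the dependence on $d$ in the final bound transparent.

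Next I would invoke the one-sided relative VC inequality (see, e.g., Vapnik--Chervonenkis or Bousquet's refinement), which, for a class $\calF$ of VC dimension $V$, guarantees that with probability at least $1-\delta$, simultaneously for all $A \in \calF$,
\begin{equation*}
P(A) \leq P_n(A) + c_1\sqrt{\tfrac{P(A)\bigl(V \log n + \log(1/\delta)\bigr)}{n}} + c_2\,\tfrac{V \log n + \log(1/\delta)}{n},
\end{equation*}
for some absolute constants $c_1,c_2 > 0$. With $V = d+1$, the right-hand side becomes a quadratic inequality in $\sqrt{P(A)}$ whenever $P_n(A) = 0$. Solving this quadratic yields an absolute constant $C > 0$ such that $P_n(A) = 0$ forces $P(A) \leq \tfrac{C}{n}\bigl[d\log n + \log(1/\delta)\bigr]$. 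The desired implication then follows by contrapositive: if $P(B) \geq \tfrac{C}{n}[d\log n + \log(1/\delta)]$, then $P_n(B) > 0$, uniformly over $B \in \calB$, on the high-probability event.

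There is no essential obstacle here beyond choosing a clean reference statement of the relative VC inequality; the proof is a short application of a well-established tool. The only minor subtlety is to verify that the inequality is stated in its uniform (one-sided) form --- i.e., the probability bound holds simultaneously over all $A \in \calF$, not for a fixed $A$ --- which is standard but worth citing explicitly to match the phrasing ``for all $B \in \calB$'' in the lemma.
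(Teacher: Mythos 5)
Your proof is correct, but note that the paper does not actually prove this lemma: it is imported verbatim from \cite{chaudhuri2010}, and the derivation you give---the one-sided relative VC deviation bound combined with Sauer's lemma, specialized to the event $P_n(A)=0$ and solved as a quadratic in $\sqrt{P(A)}$, then taking the contrapositive---is essentially the standard argument underlying that citation. One small inaccuracy: lifting a ball via $x \mapsto (x,\|x\|^2)$ turns it into an \emph{affine} half-space in $\bbR^{d+1}$, and affine half-spaces in $\bbR^{m}$ have VC dimension $m+1$, so this argument bounds the VC dimension of $\calB$ by $d+2$, not $d+1$; the exact value $d+1$ for balls in $\bbR^d$ requires a slightly finer argument. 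Since either bound is $O(d)$ and gets absorbed into the universal constant $C$, the conclusion of the lemma is unaffected.
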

We now turn to the proof. Recall that $\Omega$ is a standard set
by  condition~\ref{assm:supp_standard}, and recall the constants
$\epsilon_0,\delta_0 > 0$ therein. For any $1 \leq i \leq n$ and 
{$x \in V_i\setminus\{X_i\}$,
let $\rho_i(x) = (\epsilon_0/2d)\norm{x-X_i}$.
Since $\diam(\Omega) \leq\sqrt  d$ by condition~\ref{assm:supp_global}, 
we have $\rho_i(x) \leq \epsilon_0$.  
We also have $\rho_i(x) < \norm{x-X_i}$, thus the   balls $B(x,\rho_i(x))$ 
of radius $\rho_i(x)$ centered at $x$ contain no sample points. Therefore, 
by  Lemma~\ref{lem:relative_vc}, we have that
for any $\delta \in (0,1)$, with probability at least $1-\delta$,
\begin{equation}
\label{eq:voronoi_pf_step}
\max_{1 \leq i \leq n} \sup_{x \in V_i} P\big(B(x_i,\rho_i(x))\big) \leq  \frac{C}{n}\left[d\log n +  \log\left(\frac 1 \delta\right)\right].
\end{equation}
Now, since $\gamma^{-1}\leq p \leq \gamma$, 
the assumption of standardness on $\Omega$ leads to the bound
$$P(B(x,\rho_i(x))) \geq   \gamma^{-1} \calL(B(x,\rho_i(x))\cap\Omega)
\geq \delta_0\gamma^{-1} \calL(B(x, \rho_i(x))) \asymp \rho_i^d(x),$$
thus equation~\eqref{eq:voronoi_pf_step} reduces to
$$\max_{1 \leq i \leq n} \sup_{x \in V_i} \rho_i^d(x) \leq \frac{C}{n}\left[d\log n +  \log\left(\frac 1 \delta\right)\right].$$
Deduce from here that with probability at least $1-\delta$, for any $1 \leq i \leq n$ and $x,y \in V_i$, 
$$\|x-y\| \leq \|x-X_i\|+ \|y-Y_i\| \lesssim  \rho_i(x) + \rho_i(y) \lesssim \left[\frac{d\log n +  \log\left(  1 /\delta\right)}{n}\right]^{\frac 1 d}.$$
It follows that for some  $C_1 > 0$ not depending on $\delta$, 
we have with probability at least $1-\delta$,
$$\max_{1 \leq i \leq n}\diam(V_i) \leq C_1 \left[\frac{d\log n +  \log\left(  1 /\delta\right)}{n}\right]^{\frac 1 d}.$$
}
To prove claim (i), notice that
since the density of $P$ is bounded from above, we also have
with probability at least $1-\delta$,
$$\max_{1 \leq i \leq n} P(V_i) 
\leq \gamma \max_{1 \leq i \leq n} \calL(V_i)
\lesssim \max_{1 \leq i \leq n} \diam(V_i)^d \lesssim  \frac 1 n\Big[d\log n +  \log\left(  1 /\delta\right)\Big].$$
To prove claim (ii), let $t_n = ( 2C_1^d(d+2)\log n/n)^{2/d}$. 
Set $\delta = n^d \exp\left(-\frac{u^d n}{C_1^d}\right)$ for any $u > 0$ to obtain
\begin{align*}
\bbE\left[\max_{1 \leq i \leq n}\diam(V_i)^2\right]
 &= \int_0^\infty \bbP\left(\max_{1 \leq i \leq n}\diam(V_i)^2 \geq u\right)du  \\
 &\leq t_n  + n^d \int_{t_n}^\infty   \exp\left(-\frac{u^{\frac d 2} n}{C_1^d}\right)du  \\
 &= t_n  + \frac {4n^d} d\int_{t_n^{d/4}}^\infty   \exp\left(-\frac{v^{ 2} n}{C_1^d}\right)v^{\frac{4}{d}-1} dv  \\
 &\lesssim t_n  + n^d \int_{t_n^{d/4}}^\infty   \exp\left(-\frac{v^{ 2} n}{2C_1^d}\right) dv  \\
 &\lesssim t_n  + \frac{n^d}{\sqrt n} \exp\left(-\frac{t_n^{d/ 2} n}{2C_1^d}\right)  \lesssim \left(\frac{\log n}{n}\right)^{\frac 2 d}.
\end{align*} 
The claim follows.
\qed

 \subsection{Proof of Proposition~\ref{prop:transport_1nn}}
\label{app:pf_transport_1nn}
Abbreviate $\hat T_{nm}^{\mathrm{1NN}}$ by $\hat T_{nm}$. We have,  
\begin{align*}
\big\| \hat T_{nm} - T_0\big\|_{L^2(P)}^2  
 &= \sum_{i=1}^n \int_{V_i} \|\hat T_{nm}(x) - T_0(X_i) + T_0(X_i) - T_0(x)\big\|^2 dP(x) \\
 &\lesssim \sum_{i=1}^n \int_{V_i} \Big[ \|\hat T_{nm}(x) - T_0(X_i)\|^2 + \|T_0(X_i) - T_0(x)\|^2   \Big] dP(x).
\end{align*}
To bound the first term, notice that,
\begin{align*}
 \sum_{i=1}^n \int_{V_i}  \|\hat T_{nm}(x) - T_0(X_i)\|^2 dP(x) 
  &=  \sum_{i=1}^n \int_{V_i}   \Bigg\|\sum_{j=1}^m (n \hpi_{ij})Y_j - T_0(X_i)\Bigg\|^2 dP(x) \\
  &=  \sum_{i=1}^n P(V_i)  \Bigg\|\sum_{j=1}^m (n\hpi_{ij}) Y_j - T_0(X_i)\Bigg\|^2   \\
  &\leq  \sum_{i=1}^n P(V_i) \sum_{j=1}^m (n\hpi_{ij})  \left\|Y_j - T_0(X_i)\right\|^2,
\end{align*}
by convexity of $\norm\cdot^2$. Therefore, setting $M_n = \max_{1 \leq i \leq n} P(V_i)$, we obtain
\begin{align*}
\big\| \hat T_{nm} - T_0\big\|_{L^2(P)}^2   \leq 
n \Delta_{nm} \left(\max_{1 \leq i \leq n} P(V_i)\right) + 
\sum_{i=1}^n \int_{V_i}  \|T_0(X_i) - T_0(x)\|^2    dP(x).
 \end{align*}
Since $T_0$ is $\lambda$-Lipschitz by condition~\ref{assm:curvature}, 
the claim is now a consequence of the following simple Lemma, which we isolate for future reference.
\begin{lemma}
\label{lem:two_sample_empirical_technical}
Under the conditions of the first claim of Proposition~\ref{prop:transport_1nn}, we have 
for any $\lambda$-Lipschitz map $F:\Omega\to\Omega$,
\begin{align*}
\bbE\left[\sum_{i=1}^n \int_{V_i} \|F(X_i) - F(x)\|^2 dP(x)\right] &\lesssim_{\lambda,\gamma}(\log n/n)^{2/d},\\
\bbE\left[n\Delta_{nm} \left(\max_{1 \leq i \leq n} P(V_i)\right) \right]   &\lesssim_{\lambda,\gamma,\epsilon_0,\delta_0} (\log n) \kappa_{n\wedge m}.
\end{align*}
If we additionally assume that $\Omega=[0,1]^d$ and $\gamma^{-1} \leq q \leq \gamma$ over $\Omega$, then
\begin{align*}
\bbE\left[n\Delta_{nm} \left(\max_{1 \leq i \leq n} P(V_i)\right) \right]   &\lesssim_{\lambda,\gamma,\epsilon_0,\delta_0} (\log n) \widebar \kappa_{n\wedge m}.
\end{align*}
\end{lemma}
 \subsubsection{Proof of Lemma~\ref{lem:two_sample_empirical_technical}}
The first quantity is easily bounded as follows,
\begin{align*}
\bbE\left[\sum_{i=1}^n \int_{V_i} \|F(X_i) - F(x)\|^2 dP(x)\right]
 &\leq \lambda^2 \bbE\left[\sum_{i=1}^n \int_{V_i} \|X_i - x\|^2 dP(x)\right] \\
 &\leq \lambda^2  \bbE   \left[\sum_{i=1}^n P(V_i) \diam(V_i)^2 \right]  \\
 &\leq \lambda^2 \bbE \left[\max_{1 \leq i \leq n} \diam(V_i)^2\right] 
 \lesssim \left(\frac{\log n}{n}\right)^{\frac 2 d},
\end{align*}
where the final inequality is due to Lemma~\ref{lem:voronoi--diam}.
To bound the second quantity, let~$M_n = \max_{1 \leq i \leq n} P(V_i)$. 
By Lemma~\ref{lem:voronoi--mass}
with $\delta = 1/n^2$, there is a large enough
constant $c > 0$ such that if $m_n = c \log n/n$, then
$\bbP(M_n \geq m_n ) \leq 1/n^2.$  
We have,
\begin{align*}
\bbE\left[nM_n \Delta_{nm}\right] 
 &=  \bbE\left[nM_n I(M_n \geq m_n) \Delta_{nm} \right] + 
 	    \bbE\left[nM_n I(M_n < m_n) \Delta_{nm} \right].
\end{align*}
Notice that $\Delta_{nm}$ is bounded above by $\diam(\Omega)^2$, and $0 \leq M_n \leq 1$, thus,
by Proposition~\ref{prop:curvature_empirical},  
\begin{align*}
\bbE\left[nM_n \Delta_{nm}\right] 
 \lesssim  n\bbP(M_n \geq m_n) +
 	    m_n n\bbE\left[ \Delta_{nm}\right] 
 \lesssim  \frac 1 n  + (\log n)\bbE\left[ \Delta_{nm}\right] 
 \lesssim (\log n)  \kappa_{n\wedge m},
\end{align*}
as desired. 
The final claim follows analogously.
\qed

\subsection{Proof of Proposition \ref{prop:transport_least_squares}}
Abbreviate $\hat T_{nm}^{\mathrm{LS}}$ by $\hat T_{nm}$. Notice first that we have
\begin{align}
\label{eq:step_least_squares} 
\nonumber
\big\|\hat T_{nm} - T_0\big\|_{L^2(P_n)}^2
 &=\frac 1 n \sum_{i=1}^n \big\|\hat T_{nm}(X_i) - T_0(X_i)\big\|^2 \\
\nonumber &= \sum_{i=1}^n\sum_{j=1}^m\hat\pi_{ij}\big\|\hat T_{nm}(X_i) - T_0(X_i)\big\|^2 \\
\nonumber &\lesssim 
\sum_{i=1}^n\sum_{j=1}^m\hat\pi_{ij}\big\|\hat T_{nm}(X_i) - Y_j\big\|^2
+ 
\sum_{i=1}^n\sum_{j=1}^m\hat\pi_{ij}\big\|Y_j - T_0(X_i)\big\|^2 \\
 &\leq 2\sum_{i=1}^n\sum_{j=1}^m\hat\pi_{ij}\big\|Y_j - T_0(X_i)\big\|^2 =2\Delta_{nm},
\end{align}
where the final inequality follows by definition of $\hat T_{nm}$, since $\varphi_0 \in \calJ_\lambda$
under assumption~\ref{assm:curvature}. Therefore,
$$\begin{multlined}[\textwidth]
\big\|\hat T_{nm} - T_0\big\|_{L^2(P)}^2 
= \sum_{i=1}^n \int_{V_i} \|\hat T_{nm} - T_0\big\|^2 dP  
\lesssim  \sum_{i=1}^n \int_{V_i} \Big[
 									\|\hat T_{nm}(x) - \hat T_{nm}(X_i)\|^2  \\ +
 								   \| \hat T_{nm}(X_i) - T_0(X_i)\|^2 + 
 								   \|T_0(X_i)-T_0(x)\|^2\Big] dP(x).
 								   \end{multlined}
$$
By definition of $\calJ_\lambda$ and by assumption~\ref{assm:curvature}, $\hat T_{nm}$ and $T_0$
are both $\lambda$-Lipschitz, thus by Lemma~\ref{lem:two_sample_empirical_technical},  
\begin{align*}
\bbE \big\|\hat T_{nm} - T_0\big\|_{L^2(P)}^2  
 &\lesssim \left(\frac{\log n}{n}\right)^{\frac 2 d} + \bbE\left[ \sum_{i=1}^n \int_{V_i} \| \hat T_{nm}(X_i) - T_0(X_i)\|^2  dP(x)\right]\\
 &\leq \left(\frac{\log n}{n}\right)^{\frac 2 d} + 
          \bbE\left[n \left(\max_{1 \leq i \leq n} P(V_i)\right)\big\|\hat T_{nm} - T_0\big\|^2_{L^2(P_n)}\right]\\
 &\lesssim \left(\frac{\log n}{n}\right)^{\frac 2 d} + 
          \bbE\left[n \left(\max_{1 \leq i \leq n} P(V_i)\right)\Delta_{nm}\right],
\end{align*}
where we used equation~\eqref{eq:step_least_squares}. Lemma~\ref{lem:two_sample_empirical_technical} may
now be applied to bound the right-hand term in the above display, leading to the claim.\qed

\section{Upper Bounds for Two-Sample Wavelet Estimators}
\label{app:pf_density_based}
{
In this section, we state and prove a result deferred from Section~\ref{sec:two_sample_combined},
regarding two-sample plugin estimators based on wavelet density estimation over the torus. 

Unlike the boundary-corrected wavelet system used in~Section~\ref{sec:one_sample_density},
it will be convenient to introduce a simpler basis  
which guarantees that the density estimators are periodic.
Recall that we described in Appendix~\ref{app:per_wavelets}  how the standard Daubechies wavelet system
may be periodized to obtain a set of $\bbZ^d$-periodic functions 
$$\Psi^{\mathrm{per}} = \{1\} \cup \bigcup_{j=0}^\infty \Psi_j^{\mathrm{per}}, \quad \text{where}\quad  
  \Psi_j^{\mathrm{per}} = \big\{\xi_{jk\ell}^{\mathrm{per}}:  0\leq k \leq  2^{j-1}, \ell \in \{0,1\}^d\setminus\{0\}\big\},\ j \geq 0,$$
which forms an orthonormal basis of $L^2(\bbT^d)$~\citep{daubechies1992,gine2016}.
Whenever the  densities $p,q$ lie in $L^2(\bbT^d)$, they admit wavelet expansions of the form
$$p  = 1 + \sum_{j=0}^\infty \sum_{\xi \in \Psi_j\pper} \alpha_\xi \xi,\quad 
  q  = 1 + \sum_{j=0}^\infty \sum_{\xi \in \Psi_j\pper} \beta_\xi \xi,$$
where $\alpha_\xi = \int \xi dP$ and $\beta_\xi = \int \xi dQ$. 
We then define the wavelet density estimators
 $$\tilde p_n\sper =  1+ \sum_{j =0}^{J_n} \sum_{\xi \in \Psi_j\pper} \hat{\alpha}_\xi \xi,\quad 
   \tilde  q_m\sper =  1+ \sum_{j =0}^{J_m} \sum_{\xi \in \Psi_j\pper} \hat{\beta}_\xi \xi,
$$
where $ \hat \alpha_\xi = \int\xi dP_n$ and $\hat\beta_\xi = \int\xi dQ_m$. 
By orthonormality of $\Psi\pper$, it is straightforward to see that $\tilde p_n\sper, \tilde q_m\sper$
integrate to unity, but may nevertheless be negative.  
We therefore define the final density estimators by 
\begin{equation}
\label{eq:final_estimator_periodic}
\hat p_n\sper \propto \tilde p_n\sper I(\tilde p_n\sper \geq 0),\quad 
\hat q_m\sper \propto \tilde q_m\sper I(\tilde q_m\sper \geq 0),
\end{equation}
where the proportionality constants are to be chosen such that $\hat p_n\sper$ and $\hat q_m\sper$ are probability
densities, which respectively induce
probability distributions $\hat P_n\sper, \hat Q_m\sper \in \calPac(\bbT^d)$. 
Once again, we drop all superscripts ``per'' whenever the choice of wavelet basis is unambiguous.   
We state the following bound for the two-sample
estimator~$\hat T_{nm}\equiv \hat T_{nm}\sper$ in equation~\eqref{eq:two_sample_density_estimator},
together with the associated plugin estimator of the squared Wasserstein distance.
Recall the sequence $\trate$ defined in Theorem~\ref{thm:one_sample_wavelet}.  
 \begin{theorem}[Two-Sample Wavelet Estimators]
\label{thm:two_sample_density} 
Let $P,Q \in \calPac(\bbT^d)$ admit  
densities $p,q\in \calC^{\alpha-1}(\bbT^d; M,\gamma)$
for some $\alpha > 1$ and $M,\gamma > 0$. Assume $2^{J_n} \asymp n^{\frac 1 {d + 2(\alpha-1)}}$. 
 Then, there exists a constant $C > 0$ depending only
 on $ M, \gamma, \alpha$ such that the following statements hold.
\begin{thmlist}
\item \label{thm:two_sample_density--map} (Optimal Transport Maps) We have, 
$$\bbE \big\|\hat T_{nm} - T_0\big\|_{L^2(P)}^2  \leq
C  \nmtrate.$$
\item \label{thm:two_sample_density--wasserstein} (Wasserstein Distances)  When $\alpha\not\in \bbN$, we have
\begin{equation*}  
\begin{aligned}
\big|\bbE \calW_2^2(\hat P_n, \hat Q_m) - \calW_2^2(P,Q)\big| &\leq 
C \nmtrate,
\\
\bbE \big|\calW_2^2(\hat P_n, \hat Q_m) - \calW_2^2(P,Q)\big|^2 &\leq
\left[ C\nmtrate {+}  \sqrt{\frac{\Var_P[\phi_0(X)]}{n}+\frac{\Var_Q[\psi_0(Y)]}{m}}\right]^2.
\end{aligned}
\end{equation*} 
\end{thmlist} 
\end{theorem}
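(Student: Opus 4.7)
The plan is to derive Part~(ii) via direct application of the two-sample stability bound (Proposition~\ref{thm:two_sample_stability}, transferred to the torus by Proposition~\ref{prop:torus_stability}), and to derive Part~(i) by combining a triangle inequality through an intermediate one-sample OT map with a one-sample stability bound (Theorem~\ref{thm:stability}) applied to the \emph{inverse} maps over $\hat Q_m$.

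\textbf{Part (ii).} Applying Proposition~\ref{thm:two_sample_stability} with $\hat P = \hat P_n$ and $\hat Q = \hat Q_m$ yields
\begin{equation*}
\calW_2^2(\hat P_n, \hat Q_m) - \calW_2^2(P, Q) = \int\phi_0\,d(\hat P_n - P) + \int\psi_0\,d(\hat Q_m - Q) + R_{nm},
\end{equation*}
where $0 \leq R_{nm} \leq \lambda\bigl(W_2(\hat P_n, P) + W_2(\hat Q_m, Q)\bigr)^2$. Since $p,q\in\calC^{\alpha-1}(\bbT^d;M,\gamma)$, Theorem~\ref{thm:torus_regularity} yields a uniform $\calC^{\alpha+1}$ bound on both $\varphi_0$ and $\varphi_0^*$. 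The torus analogue of Lemma~\ref{lem:L} then bounds the biases of the two linear terms by $2^{-2\alpha J_n}$ and $2^{-2\alpha J_m}$, both of order $\nmtrate$, while the periodic-wavelet analogue of Lemma~\ref{lem:wavelet_wasserstein} gives $\bbE[W_2^2(\hat P_n, P) + W_2^2(\hat Q_m, Q)] \lesssim \nmtrate$; these combine to yield the bias assertion. The second-moment bound follows by squaring and using independence of the two samples to separate the variance of the linear terms into the leading $\mathrm{Var}_P[\phi_0(X)]/n + \mathrm{Var}_Q[\psi_0(Y)]/m$ plus the lower-order terms from Lemma~\ref{lem:L}; the almost sure bound $R_{nm}\leq \lambda\,\mathrm{diam}(\bbT^d)^2$ paired with its first-moment bound controls $\bbE R_{nm}^2$.

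\textbf{Part (i).} Let $\bar T = \nabla\bar\varphi$ denote the one-sample OT map from $P$ to $\hat Q_m$, with inverse $\bar S = \nabla\bar\varphi^*$. By the triangle inequality,
\begin{equation*}
\|\hat T_{nm} - T_0\|_{L^2(P)}^2 \leq 2\|\bar T - T_0\|_{L^2(P)}^2 + 2\|\hat T_{nm} - \bar T\|_{L^2(P)}^2.
\end{equation*}
The first term has expectation $\lesssim \nmtrate$ (in $m$) by Proposition~\ref{prop:one_sample_torus}, the torus version of Theorem~\ref{thm:one_sample_wavelet--map}. For the second term, the change of variables through $\bar T_\# P = \hat Q_m$ combined with the $\hat Q_m$-a.e.\ identity $y = \hat T_{nm}\bigl(\hat T_{nm}^{-1}(y)\bigr)$ yields
\begin{equation*}
\|\hat T_{nm} - \bar T\|_{L^2(P)}^2 = \bigl\|\hat T_{nm}\circ\bar S - \hat T_{nm}\circ\hat T_{nm}^{-1}\bigr\|_{L^2(\hat Q_m)}^2 \leq \mathrm{Lip}(\hat T_{nm})^2 \,\bigl\|\bar S - \hat T_{nm}^{-1}\bigr\|_{L^2(\hat Q_m)}^2.
\end{equation*}
Applying Theorem~\ref{thm:stability} with fixed source $\hat Q_m$ and target measures $P,\hat P_n$, whose respective OT maps are $\bar S$ and $\hat T_{nm}^{-1}$, yields $\|\hat T_{nm}^{-1} - \bar S\|_{L^2(\hat Q_m)}^2 \lesssim W_2^2(\hat P_n, P)$, and the periodic-wavelet Wasserstein rate gives $\bbE W_2^2(\hat P_n, P) \lesssim \nmtrate$ (in $n$). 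Combining the two contributions produces the stated rate.

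\textbf{Main obstacle.} The delicate point in Part~(i) is uniformity of the constants above, namely (a) the Lipschitz constant of $\hat T_{nm}$ and (b) the strong-convexity constant of $\bar\varphi^*$ required to apply Theorem~\ref{thm:stability}. Both are controlled by $\calC^{\alpha+1}$-norm bounds on Brenier potentials whose source/target densities involve the random estimators $\hat p_n,\hat q_m$. I would handle this by restricting to the high-probability event that $\hat p_n,\hat q_m$ lie in a fixed $\calC^{\alpha-1}(\bbT^d; M',\gamma')$ ball, with $M',\gamma'$ chosen so that Theorem~\ref{thm:torus_regularity} furnishes uniform $\calC^{\alpha+1}$ bounds on both $\bar\varphi^*$ and $\hat\varphi_{nm}^*$; the complementary event has negligible probability by $L^\infty$ concentration of truncated wavelet density estimators (analogous to the ingredients in Lemma~\ref{lem:wavelet_wasserstein}), and its contribution to $\|\hat T_{nm} - T_0\|_{L^2(P)}^2$ is trivially bounded by $\mathrm{diam}(\bbT^d)^2 = d/4$, yielding a subdominant term.
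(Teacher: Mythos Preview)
Your strategy matches the paper's: Part~(ii) via the two-sample stability bound and Proposition~\ref{prop:one_sample_torus}; Part~(i) via the triangle inequality through $\bar T$, inversion, and a one-sample stability bound with source $\hat Q_m$ (the paper's Steps~2--3). There are, however, two execution gaps. In Part~(ii), bounding $\bbE R_{nm}^2$ by pairing the almost-sure bound with the first moment only yields $\bbE R_{nm}^2 \lesssim \nmtrate$, not $(\nmtrate)^2$; for $d\geq 3$ this is too weak for the claimed second-moment inequality (e.g.\ $d=3,\alpha=2$ would give $\bbE|\cdot|^2 \lesssim n^{-4/5}$ in place of the stated $n^{-1}$). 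The paper instead invokes the fourth-moment Wasserstein rate $\bbE \calW_2^4(\hat P_n,P) \lesssim (\nmtrate)^2$ from Proposition~\ref{prop:one_sample_torus_wasserstein_under} (taking $\rho=4$ in the underlying Lemma~\ref{lem:wavelet_wasserstein}), and then handles the cross term $(III)$ by Cauchy--Schwarz.

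In Part~(i), the estimators $\hat p_n,\hat q_m$ do \emph{not} remain in a fixed $\calC^{\alpha-1}$ ball with high probability: at the top level $j=J_n$ the coefficient fluctuations and the $\calC^{\alpha-1}$-norm requirement are both $\asymp n^{-1/2}$, so the $\ell_\infty$ over $\sim 2^{dJ_n}$ coefficients introduces a divergent $\sqrt{\log n}$ factor. The paper's Step~1 instead only establishes $\hat p_n,\hat q_m \in \calC^\epsilon(\bbT^d;M_1)$ for a small $\epsilon\in(0,1\wedge\frac{\alpha-1}{2})$, via Lemma~\ref{lem:wavelet_Linfty--holder} (which delivers $\calB_{\infty,\infty}^{(\alpha-1)/2}$ control); this weaker regularity already suffices for Theorem~\ref{thm:torus_regularity} to give uniform $\calC^{2+\epsilon}$ bounds on $\hat\varphi_{nm}$ and $\bar\varphi_m$, and hence the uniform Lipschitz and strong-convexity constants you need.
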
 
The proof appears in Appendix~\ref{app:pf_two_sample_density}.
Theorem~\ref{thm:two_sample_density} shows that the plugin estimators $\hat T_{nm}$ and $\calW_2^2(\hat P_n,\hat Q_m)$
achieve analogous convergence rates as   in the one-sample setting.   
Similarly as in Section~\ref{sec:one_sample_density}, we 
may deduce from Theorem~\ref{thm:two_sample_density--wasserstein} and Lemma~\ref{lem:kantorovich_L2}   that
$$\bbE \big|\calW_2^2(\hat P_n, \hat Q_m) - \calW_2^2(P,Q)\big| 
 \lesssim_{M,\gamma, \alpha} \nmtrate + (n\wedge m)^{-1/2} \calW_2(P,Q).$$ 
 Thus, in the high-smoothness regime $2(\alpha+1) > d$, 
the risk of $\calW_2^2(\hat P_n, \hat Q_m)$ decays at a rate which adapts to the magnitude of the Wasserstein distance between $P$ and $Q$. 

If one is willing to place assumptions on the regularity of the potentials
$\varphi_0$ and $\varphi_0^*$, Theorem~\ref{thm:two_sample_density--wasserstein} 
may be extended
to the case where the sampling domain is taken to be the unit cube $[0,1]^d$, as we show next. 
Such a result is  made possible by the fact
that Proposition~\ref{thm:two_sample_stability}
does not require any regularity of the fitted potentials. 
On the other hand, we do not know how to obtain an analogue of 
Theorem~\ref{thm:two_sample_density--map} over domains in $\bbR^d$.
Let $P,Q \in \calPac([0,1]^d)$,
and denote by $\hat P_n\sbc$ and $\hat Q_m\sbc$ the boundary corrected
wavelet estimators   defined in Section~\ref{sec:one_sample_density}. 
 \begin{proposition}
 \label{thm:two_sample_density--wasserstein_cube} 
Let $P,Q \in \calPac([0,1]^d)$ admit  
densities $p,q\in \calC^{\alpha-1}([0,1]^d; M,\gamma)$
for some $\alpha > 1$ and $M,\gamma > 0$. Assume further that 
for some $\lambda > 0$, 
\begin{equation}
\label{eq:assumption_varphi0_conj}
\varphi_0,\varphi_0^* \in  \calC^{\alpha+1}([0,1]^d;\lambda).
\end{equation}
Let $2^{J_n} \asymp n^{\frac 1 {d + 2(\alpha-1)}}$. 
 Then, there exists a constant $C > 0$ depending only
 on $ M, \lambda, \gamma, \alpha$ such that,
\begin{equation*}
\begin{aligned}
\big|\bbE W_2^2(\hat P_n\sbc, \hat Q_m\sbc) - W_2^2(P,Q)\big| &\leq 
C \nmtrate,
\\
\bbE \big| W_2^2(\hat P_n\sbc, \hat Q_m\sbc) - W_2^2(P,Q)\big|^2 &\leq
  \left[C\nmtrate {+} \sqrt{ \frac{\Var_P[\phi_0(X)]}{n}  {+}  \frac{\Var_Q[\psi_0(Y)]}{m}}\right]^2.
\end{aligned}
\end{equation*} 
\end{proposition}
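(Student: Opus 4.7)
The plan is to follow the same template as the proof of Theorem~\ref{thm:two_sample_density--wasserstein}, but invoking the two-sample stability bound of Proposition~\ref{thm:two_sample_stability} directly, rather than passing through a one-sample stability bound for $L^2(P)$ losses. This is the key simplification: unlike $\hat T_{nm}$, the estimator $W_2^2(\hat P_n\sbc, \hat Q_m\sbc)$ does not require any regularity estimate on the fitted Brenier potentials, so Caffarelli's theory on $[0,1]^d$ is not needed.

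First, observe that the assumption $\varphi_0 \in \calC^{\alpha+1}([0,1]^d; \lambda)$ together with $\alpha > 1$ gives $\|\varphi_0\|_{\calC^2([0,1]^d)} \leq \lambda$, and since $p,q$ are bounded and bounded away from zero, Lemma~\ref{lem:gigli} promotes this to the full curvature condition~\ref{assm:curvature} for some $\lambda' > 0$ depending only on $\lambda,\gamma$. Applying Proposition~\ref{thm:two_sample_stability} then yields, writing $F(\hat P_n) = \int\phi_0\,d(\hat P_n - P)$ and $L(\hat Q_m) = \int\psi_0\,d(\hat Q_m - Q)$,
\begin{equation*}
\bigl| W_2^2(\hat P_n\sbc,\hat Q_m\sbc) - W_2^2(P,Q)\bigr|
\leq \bigl|F(\hat P_n\sbc)\bigr| + \bigl|L(\hat Q_m\sbc)\bigr|
+ 2\lambda'\bigl[W_2^2(\hat P_n\sbc,P) + W_2^2(\hat Q_m\sbc,Q)\bigr].
\end{equation*}

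Next I would bound each term on the right-hand side. For the Wasserstein remainders, Lemma~\ref{lem:wavelet_wasserstein} with $s = \alpha-1$ yields $\bbE W_2^{2\rho}(\hat P_n\sbc, P) \lesssim \trate^\rho$ for $\rho \in \{1,2\}$, and similarly with $n$ replaced by $m$. For the linear functionals $F$ and $L$, I would reproduce the argument of Lemma~\ref{lem:L} verbatim in the boundary-corrected setting: since $\phi_0 = \|\cdot\|^2 - 2\varphi_0 \in \calC^{\alpha+1}([0,1]^d)$ and $\psi_0 = \|\cdot\|^2 - 2\varphi_0^* \in \calC^{\alpha+1}([0,1]^d)$ with norms controlled by $\lambda$, Lemma~\ref{lem:besov_holder} gives a uniform bound on their coefficients $\gamma_\xi$ in the boundary-corrected basis $\Psi\pbc$ of order $2^{-j[(\alpha+1)+d/2]}$, while the density coefficients $\beta_\xi$ decay at rate $2^{-j[(\alpha-1)+d/2]}$. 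Combining via orthonormality and summing over $j > J_n$ gives the bias $|\bbE F(\hat P_n\sbc)| \lesssim 2^{-2J_n\alpha} \asymp \trate$, and analogously for $L$. The variance computation mirrors equation~\eqref{eq:pf_variance_kantorovich_bound_step}, exploiting again $\psi_0 \in \calC^{\alpha+1}([0,1]^d)$. One auxiliary step is to justify passing from $\hat q_m\sbc$ to $\tilde q_m\sbc$: this is handled by Lemma~\ref{lem:tilde_q_density}, which ensures the renormalization event has probability at most $c/n^2$, a negligible contribution.

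Finally, putting everything together, the first bound of the proposition follows by taking expectations, using independence of the two samples for the cross term. The second bound follows exactly as in the proof of Theorem~\ref{thm:two_sample_density--wasserstein}: expanding
\begin{equation*}
\bbE\bigl|W_2^2(\hat P_n\sbc,\hat Q_m\sbc) - W_2^2(P,Q)\bigr|^2 \leq (I) + (II) + (III),
\end{equation*}
with $(I)$ grouping the squared $F$ and $L$ terms (using independence to factorize), $(II)$ the squared Wasserstein remainders (controlled by $\bbE W_2^4 \lesssim \sqtrate$), and $(III)$ the cross term (handled by Cauchy-Schwarz). The variance contributions from $F$ and $L$ produce the $\Var_P[\phi_0(X)]/n$ and $\Var_Q[\psi_0(Y)]/m$ terms, while all other contributions are absorbed into the squared rate factor.

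The main obstacle I anticipate is purely bookkeeping: verifying that the proof of Lemma~\ref{lem:L} transfers without change from $\Psi\pper$ to $\Psi\pbc$. This requires that the common wavelet properties collected in Lemma~\ref{lem:wavelet} (locality, basis size, polynomial reproduction, derivative bounds) and the Besov--H\"older embedding (Lemma~\ref{lem:besov_holder}) hold for $\Psi\pbc$, all of which are confirmed in Appendix~\ref{app:wavelets}. No new analytic ingredient is needed; the argument is strictly an adaptation of the torus proof, with the smoothness of $\varphi_0,\varphi_0^*$ imposed as a hypothesis precisely because Caffarelli's global regularity is not available uniformly up to the cube boundary.
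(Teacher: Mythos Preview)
Your proposal is correct and matches the paper's intended approach exactly: the paper itself states that the proof ``follows along similar lines as that of Theorem~\ref{thm:two_sample_density--wasserstein}, and is therefore omitted,'' and you have correctly reconstructed that argument, identifying in particular that Proposition~\ref{thm:two_sample_stability} places no regularity demands on the fitted potentials, so Caffarelli-type estimates on $[0,1]^d$ are unnecessary. One minor correction to ease your bookkeeping: Lemma~\ref{lem:L} is already stated and proved in the boundary-corrected setting $\Psi\pbc$ over $[0,1]^d$ (it sits under Theorem~\ref{thm:one_sample_wavelet}), so there is nothing to transfer from $\Psi\pper$---you can invoke it directly for $L(\hat Q_m\sbc)$, and its obvious analogue for $F(\hat P_n\sbc)$ follows by the symmetric hypothesis $\varphi_0\in\calC^{\alpha+1}([0,1]^d;\lambda)$.
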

The proof follows along similar lines as that of Theorem~\ref{thm:two_sample_density--wasserstein}, which 
will be given below,
and is therefore omitted. 

Condition~\eqref{eq:assumption_varphi0_conj} places a smoothness assumption   
on $\varphi_0^*$ in addition to~$\varphi_0$. 
If our analysis could be carried out over a domain $\Omega \subseteq \bbR^d$
with smooth boundary, then, under appropriate boundary conditions
on the potentials and under the assumptions made on $p,q$, 
standard Schauder theory~\citep{gilbarg2001} could be applied to the Monge-Amp\`ere equation
to obtain that $\varphi_0^* \in \calC^{\alpha+1}(\Omega)$ as soon 
as $\varphi_0 \in \calC^2(\Omega)$, 
with uniform H\"older norms (see Proposition 9.1 of~\cite{caffarelli1995}). 
We do not know whether analogues of such results can be applied over the hypercube $[0,1]^d$, 
thus we have placed assumptions both on $\varphi_0$ and its convex conjugate.
%
%


We now turn to the proof of Theorem~\ref{thm:two_sample_density}.  
} We first note that the one-sample results from Section~\ref{sec:one_sample_density}
may readily be extended to the optimal transport problem over $\bbT^d$.
\begin{proposition}
\label{prop:one_sample_torus}
Assume $P,Q \in \calPac(\bbT^d)$ admit 
densities $p,q \in \calC^{\alpha-1}(\bbT^d;M,\gamma)$ 
for some $\alpha > 1$, $\alpha\not\in\bbN$, and $M,\gamma > 0$. Let $\hat q_m=\hat q_m\sper$ be the periodic wavelet estimator
defined in equation~\eqref{eq:final_estimator_periodic}, and let $\hat Q_m$ be the induced probability distribution. 
Let
$$\widebar T_m  = \argmin_{T \in \calT(P,\hat Q_m)} \int d_{\bbT^d}^2(x,T(x))dP(x).$$
Furthermore, let $2^{J_m} \asymp m^{\frac 1 {2(\alpha-1) + d}}$. Then, there exists a constant $C > 0$ depending only on $M,\gamma,\alpha$ such that the following
statements hold. 
\begin{thmlist} 
\item\label{prop:one_sample_torus_wasserstein_under}
We have $\bbE \calW_2^2(\hat Q_m, Q) \leq C  \mtrate$ and $\bbE \calW_2^4(\hat Q_m, Q) \leq C \sqmtrate$. 
\item \label{prop:one_sample_torus_functional} We have, 
\begin{align*} 
&\left|\bbE \int \psi_0 d(\hat Q_m - Q)\right| \leq C 2^{-2J_m\alpha} \\
&\left| \Var\left[\int\psi_0 d(\hat Q_m - Q)\right]  - \frac{\Var_Q[\psi_0(Y)]}{m}\right| 
\leq  \frac {C2^{-2J_m\alpha}}{m}.
\end{align*}
\item  \label{prop:one_sample_torus_wasserstein} 
We have, 
\begin{align*}  
\bbE \|\widebar T_m - T_0\|_{L^2(P)}^2 &\leq C \mtrate, \\
\big| \bbE  \calW_2^2(P, \hat Q_m) - \calW_2^2(P,Q)\big|&\leq C\mtrate,\\
\bbE \big| \calW_2^2(P, \hat Q_m) - \calW_2^2(P,Q)\big|^2 &\leq
    \left[ C \mtrate + \sqrt{\frac{\Var_Q[\psi_0(Y)]}{m}}\right]^2. 
\end{align*}
\end{thmlist}
\end{proposition}
Notice that the only properties of the boundary-correct wavelet basis used in the proofs of Lemma~\ref{lem:wavelet_wasserstein}
and Theorem~\ref{thm:one_sample_wavelet} are those contained in Lemmas~\ref{lem:wavelet} and Lemma~\ref{lem:wavelet_Linfty}
of Appendix~\ref{app:wavelets},
which are also stated to hold for the periodic wavelet basis. The proof of Proposition~\ref{prop:one_sample_torus}
is therefore a direct extension of these results. 
Notice that, unlike
Theorem~\ref{thm:one_sample_wavelet}, we no longer require any conditions on the smoothnes of~$\varphi_0$ itself,
due to the torus regularity result of Theorem~\ref{thm:torus_regularity}. 
Indeed, 
under the assumptions of Proposition~\ref{prop:one_sample_torus}, the latter implies that
there exists a constant $C' > 0$ depending only on $\alpha, \gamma, M$ such that $\norm{\varphi_0}_{\calC^{\alpha+1}(\bbT^d)} \leq C'$, assuming $\alpha\not\in\bbN$.

\subsection{Proof of Theorem~\ref{thm:two_sample_density}}
\label{app:pf_two_sample_density} 
Throughout the proof, we use the abbreviations 
$$F(\hat P_n) = \int \phi_0 d(\hat P_n-P), \quad L(\hat Q_m) = \int \psi_0 d(\hat Q_m-Q).$$
We begin by proving part (ii). Under the assumptions of this case,  
Theorem~\ref{thm:torus_regularity}
implies that  $\norm{\varphi_0}_{\calC^{\alpha+1}(\bbT^d)} \leq M_0$ 
for a universal constant $M_0 > 0$ depending only
on $\alpha,\gamma$ and $M$. 
In particular, it also follows from Proposition~\ref{prop:torus_ot--curvature}
that $\varphi_0$ is strongly convex, and thus satisfies
condition~\ref{assm:curvature} for some $\lambda > 0$ depending only on $M_0$ and $\gamma$. We may therefore invoke the two-sample
stability bound over $\bbT^d$ in Proposition~\ref{prop:curvature_empirical} (arising from Proposition~\ref{thm:two_sample_stability}) to deduce
$$
\begin{multlined}[0.9\textwidth]
F(\hat P_n) + L(\hat Q_m) \leq 
  \calW_2^2(\hat P_n, \hat Q_m) 
 - \calW_2^2(P,Q) \\
 \leq  F(\hat P_n) + L(\hat Q_m)
  +  2\lambda\left[\calW_2^2(\hat P_n, P) +  \calW_2^2(\hat Q_m, Q)\right].
  \end{multlined}$$
From Proposition~\ref{prop:one_sample_torus_functional}, it can be deduced that 
\begin{align}
\label{eq:pf_one_sample_wavelet_bias_functional}
\big| \bbE F(\hat P_n)\big| \vee \big|\bbE  L(\hat Q_m)  \big| &\lesssim \nmtrate \\
\label{eq:pf_one_sample_wavelet_variance_functional_P}
\Var\big[ F(\hat P_n)\big] &\leq \frac{\Var_P[\phi_0(X)]}{n} + C \sqtrate \\
\label{eq:pf_one_sample_wavelet_variance_functional_Q}
\Var\big[ L(\hat Q_m)\big] &\leq \frac{\Var_Q[\psi_0(Y)]}{m} + C \sqmtrate,
\end{align}
%
for a constant $C > 0$ depending only on $M,\gamma,\alpha$, whose value we allow to change from line to line in the 
remainder of the proof. Thus, recalling Proposition~\ref{prop:one_sample_torus_wasserstein_under}, 
\begin{align*}
\big| \bbE \calW_2^2(\hat P_n,& \hat Q_m) - \calW_2^2(P, Q)\big| \\
 &\lesssim \big|\bbE  F(\hat P_n) \big| + 
      \big| \bbE L(\hat Q_m)\big| + 
      \bbE \calW_2^2(\hat P_n, P) + \bbE \calW_2^2(\hat Q_m, Q)  \lesssim \nmtrate.
\end{align*} 
Furthermore,
\begin{align*}
\bbE\big| & \calW_2^2(\hat P_n, \hat Q_m) - \calW_2^2(P, Q)\big|^2 \\
 &\leq  \bbE\left[ \left(\big| F(\hat P_n)\big| + \big|L(\hat Q_m)\big| + 2\lambda\big(\calW_2^2(\hat P_n, P) + \calW_2^2(\hat Q_m,Q)\big)\right)^2\right]
  =: (I) + (II) + (III),
\end{align*}
where
\begin{align*}
(I)  &= \bbE \left[ \left( \big|F(\hat P_n)\big| + \big|L(\hat Q_m)\big|\right)^2\right] \\
(II) &= 4\lambda^2 \bbE \left[  \left( \calW_2^2(\hat P_n, P) + \calW_2^2(\hat Q_m, Q)\right)^2 \right] \\
(III)&= 4\lambda \bbE\left[
			\left( \calW_2^2(\hat P_n, P) + \calW_2^2(\hat Q_m, Q)\right)
			\left( \big| F(\hat P_n)\big| +  \big|L(\hat Q_m) \big|\right)
			\right].
\end{align*}
Regarding term $(I)$,  recall that we have assumed that $X_i$ is independent of  $Y_j$ for all $i,j=1, \dots, n$.
Therefore, using equations~(\ref{eq:pf_one_sample_wavelet_bias_functional}--\ref{eq:pf_one_sample_wavelet_variance_functional_Q}),
\begin{align*}
(I)
 &= \bbE\big[ F^2(\hat P_n)\big]  + \bbE \big[L^2(\hat Q_m)\big] + 2 \bbE\big|F(\hat P_n)L(\hat Q_m)\big|\\
 &= \bbE\big[ F^2(\hat P_n)\big]  + \bbE \big[L^2(\hat Q_m)\big] + 2 \bbE\big|F(\hat P_n)\big|\bbE\big|L(\hat Q_m)\big|\\
 &= \Var\big[F(\hat P_n)\big] + \Var\big[L(\hat Q_m)\big] + \big| \bbE F(\hat P_n)\big|^2 + \big| \bbE L(\hat Q_m)\big|^2
        + 2 \bbE\big|F(\hat P_n)\big|\bbE\big|L(\hat Q_m)\big| \\  
 &\leq \frac{\Var_P[\phi_0(X)]}{n} + \frac{\Var_Q[\psi_0(Y)]}{m} + C\sqnmtrate.
\end{align*}
Furthermore, by  Proposition~\ref{prop:one_sample_torus_wasserstein_under},  we   have
\begin{align*}
(II) \leq 8\lambda^2 \left(\bbE \calW_2^4(\hat P_n, P) + \bbE \calW_2^4(\hat Q_m, Q)\right)
 \leq C \sqnmtrate,
\end{align*}
and, using the Cauchy-Schwarz inequality and equations~(\ref{eq:pf_one_sample_wavelet_bias_functional}--\ref{eq:pf_one_sample_wavelet_variance_functional_Q}), we obtain
\begin{align*}
(III)
 &\leq C \sqrt{\left(\bbE \calW_2^4(\hat P_n, P) + \bbE \calW_2^4(\hat Q_m, Q)\right)
 					   \left(\bbE\big| F(\hat P_n)\big|^2 + \bbE\big|L(\hat Q_m)\big|^2\right)} \\
 &\leq C \sqrt{\sqnmtrate \left(C\sqnmtrate + \frac{\Var_P[\phi_0(X)]}{n} + \frac{\Var_Q[\psi_0(Y)]}{m} \right) } \\
 &\leq C \sqnmtrate + C \nmtrate \sqrt{\frac{\Var_P[\phi_0(X)]}{n} + \frac{\Var_Q[\psi_0(Y)]}{m} }.           
 \end{align*}
 Deduce that
$$(I)+(II)+(III) \leq \left(C \nmtrate + \sqrt{\frac{\Var_P[\phi_0(X)]}{n} + \frac{\Var_Q[\psi_0(Y)]}{m} }\right)^2.$$
Claim (ii) follows from here. 

To prove part (i), we shall
make use of the one-sample optimal transport problem from $P$ to $\hat Q_m$. 
Denote by $\widebar \varphi_m$ an optimal Brenier potential for this problem, 
so that $\widebar  T_m = \nabla \widebar\varphi_m$ is the optimal transport
map pushing $P$ forward onto $\hat Q_m$, with respect to 
the cost function $d_{\bbT^d}^2$.
Furthermore, denote~by 
$$\widebar\phi_m = \norm\cdot^2-2\widebar\varphi_m, \quad 
  \widebar\psi_m = \norm\cdot^2-2\widebar\varphi_m^*,$$ 
a corresponding pair of optimal Kantorovich potentials. 
We proceed with three steps. 

\noindent {\bf Step 1: Regularity of the Fitted Potentials.}
Recall that $\alpha > 1$, and fix $  \epsilon \in (0, 1 \wedge \frac{\alpha-1}{2})$. 
By Lemma~\ref{lem:wavelet_Linfty} and Lemma~\ref{lem:tilde_q_density}, under our choice of 
threshold $J_n$, and under the assumption $p,q \in \calC^{\alpha-1}(\bbT^d;M,\gamma)$, 
it can be deduced that the event
\begin{align*}
E_{nm} = 
\left\{\tilde p_n = \hat p_n \right\} &\cap 
\left\{\tilde q_m = \hat q_m \right\} \\ 
 &\cap \left\{\tilde p_n,\tilde q_m\geq 1/(2\gamma)~ \text{over } \bbT^d\right\} \\
 &\cap  
\left\{\|\tilde p_n\|_{\calB_{\infty,\infty}^{\epsilon}(\bbT^d)} \leq 2 \|p\|_{\calB_{\infty,\infty}^{\alpha-1}(\bbT^d)}\right\}  \cap 
\left\{\|\tilde q_m\|_{\calB_{\infty,\infty}^{\epsilon}(\bbT^d)} \leq 2 \|q\|_{\calB_{\infty,\infty}^{\alpha-1}(\bbT^d)}\right\}
\end{align*}
satisfies $\bbP(E_{nm}^\cp)\lesssim (n\wedge m)^{-2}$. 
Note that $\epsilon\not\in\bbN$, thus by Lemma~\ref{lem:besov_holder},
we have on the event~$E_{nm}$,  
$$\norm{\hat q_m}_{\calC^\epsilon(\bbT^d)} \lesssim  
  \norm{\hat q_m}_{\calB_{\infty,\infty}^\epsilon(\bbT^d)} \lesssim 
  \norm{q}_{\calB_{\infty,\infty}^{\alpha-1}(\bbT^d)} \lesssim 
  \norm{q}_{\calC^{\alpha-1}(\bbT^d)}\leq M,$$ 
and similarly for $\hat p_n$. Thus, there exists $M_1 > 0$
depending only on $M,\gamma$   such that
$$\norm{\hat p_n}_{\calC^{\epsilon}(\bbT^d)},\norm{\hat q_m}_{\calC^{\epsilon}(\bbT^d)}
\leq M_1,
\quad \text{on } E_{nm}.$$  
Under the preceding display, together with the smoothness assumptions
on the population densities $p,q$ themselves, and the fact that $\hat p_n,\hat q_m,p,q \geq 1/(2\gamma)$ over
$\bbT^d$ on the event
$E_{nm}$, we may apply the regularity Theorem~\ref{thm:torus_regularity}
to deduce that there exists a constant $M_2 > 0$ depending only on $M_0,M_1, \gamma$ 
such that for all $n,m \geq 1$, 
\begin{align} 
\label{eq:regularity_fitted_potentials}
\norm{\varphi_0}_{\calC^{2+\epsilon}([0,1]^d)}   \vee
\norm{\hat\varphi_{nm}}_{\calC^{2+\epsilon}([0,1]^d)}   \vee
\norm{\widebar\varphi_{m}}_{\calC^{2+\epsilon}([0,1]^d)}    \leq M_2,
\end{align}
on $E_{nm}$. Deduce from Proposition~\ref{prop:torus_ot--potential_periodicity} 
that the Hessians of the above potentials are uniformly bounded  
over $\bbR^d$. Further apply Proposition~\ref{prop:torus_ot--curvature}
to deduce that  $\hat \varphi_{nm}$ and $\widebar \varphi_m$ satisfy
the curvature condition~\ref{assm:curvature} almost surely, up to modifying the value of $\lambda > 0$
in terms of $M_2$ and $\gamma$, namely:
\begin{align}
\label{eq:fitted_curvature_2sample}
\lambda^{-1} I_d \preceq \nabla^2\varphi_0(x),\nabla^2\widebar\varphi_m(x),\nabla^2\hat\varphi_{nm}(x) \preceq \lambda I_d,\quad 
\text{for all }x \in \bbR^d;~ n,m\geq 1,
\end{align}
on the event $E_{nm}$.

\noindent {\bf Step 2: Reduction to Optimal Transport Problems with Same Source Distribution.}
 In order to appeal to the one-sample stability bounds of Theorem~\ref{thm:stability}, write 
\begin{align}
\label{eq:step_reduction_two_sample_same_source1}
\big\|\hat T_{nm} - T_0\big\|_{L^2(P)}^2
 &\lesssim \big\|\hat T_{nm} - \widebar T_m\big\|_{L^2(P)}^2 + 
           \big\|\widebar T_{m} - T_0\big\|_{L^2(P)}^2.
 \end{align}
The first term in the above display compares transport maps which are optimal for distinct
source distributions. We therefore proceed with the following reduction, over the event $E_{nm}$:
\begin{align}
\nonumber
\big\|\hat T_{nm} - \widebar T_m\big\|_{L^2(P)}^2
 &= \int_{\bbT^d} \big\|\hat T_{nm}(x) - \widebar T_m(x)\big\|^2 dP(x) \\
\nonumber
 &= \int_{\bbT^d}  \big\|\hat T_{nm}(\widebar T_m^{-1}(y)) - y \big\|^2 d\hat Q_m(y) \\ 
 &= \int_{\bbT^d}  \big\|\hat T_{nm}(\widebar T_m^{-1}(y)) - \hat T_{nm}(\hat T_{nm}^{-1}(y)) \big\|^2 d\hat Q_m(y),
\label{eq:step_reduction_two_sample_same_source2}
\end{align}
where the second line follows from the fact that $({{\widebar T}_m})_\# P = \hat Q_m$,
and the third follows by invertibility of $\hat T_{nm}$, which is ensured
by the strong convexity of $\hat\varphi_{nm}$ in equation~\eqref{eq:fitted_curvature_2sample}. 
This same equation implies that,  on the event $E_{nm}$, 
$\hat T_{nm}=\nabla\hat\varphi_{nm}$ is Lipschitz with a uniform constant. It follows that 
\begin{align}
\label{eq:step_reduction_two_sample_same_source3}
\big\|\hat T_{nm} - \widebar T_m\big\|_{L^2(P)}^2
 &\lesssim \int_{\bbT^d}  \big\| \hat T_{nm}^{-1}(y) - \widebar T_m^{-1}(y)\big\|^2 d\hat Q_m(y)
  = \big\| \hat T_{nm}^{-1}- \widebar T_m^{-1} \big\|_{L^2(\hat Q_m)}^2.
\end{align}
{\bf Step 3: Stability Bounds.} Due to the inequalities~\eqref{eq:fitted_curvature_2sample}, 
the stability bounds of Proposition~\ref{prop:torus_stability} (arising from Theorem~\ref{thm:stability}) imply
 \begin{align}
 \big\|\hat T_{nm}^{-1}-\widebar T_m^{-1}\big\|_{L^2(\hat Q_m)}^2
 &\leq  \lambda^2  \calW_2^2(\hat P_n, P),   \quad 
   \big\|\widebar T_{m} - T_0\big\|_{L^2(P)}^2
  \leq  \lambda^2  \calW_2^2(\hat Q_m, Q).
\end{align}
Thus, combined with equations~\eqref{eq:step_reduction_two_sample_same_source1} and \eqref{eq:step_reduction_two_sample_same_source3}, we have on the event $E_{nm}$,
\begin{align*}
\big\|\hat T_{nm} - T_0\big\|_{L^2(P)}^2
 &\lesssim \calW_2^2(\hat P_n, P) + \calW_2^2(\hat Q_m, Q).
 \end{align*}
We deduce, 
\begin{align*}
\bbE \big\|\hat T_{nm} - T_0\big\|_{L^2(P)}^2
 &= \bbE \left[  \big\|\hat T_{nm} - T_0\big\|_{L^2(P)}^2 I_{E_{nm}}\right] + 
    \bbE \left[  \big\|\hat T_{nm} - T_0\big\|_{L^2(P)}^2 I_{E_{nm}^\cp}\right] \\
 &\lesssim \bbE \left[  \calW_2^2(\hat P_n, P) I_{E_{nm}}\right] +
           \bbE \left[  \calW_2^2(\hat Q_m, Q) I_{E_{nm}}\right] +
           \bbP(E_{nm}^\cp) \\
 &\leq \bbE \left[  \calW_2^2(\hat P_n, P)  \right] +
           \bbE \left[  \calW_2^2(\hat Q_m, Q) \right] +
           \bbP(E_{nm}^\cp) \\           
 &\lesssim \trate + \mtrate + (n\wedge m)^{-2} \lesssim \nmtrate,
\end{align*}
where we made use of Proposition~\ref{prop:one_sample_torus_wasserstein_under}  on the final line. 
The claim follows.
\qed

\section{Proofs of Upper Bounds for Two-Sample Kernel Estimators}
\label{app:pf_kernel_based}

The goal of this Appendix is to prove Theorem~\ref{thm:two_sample_kernel}. 
For ease of notation, we omit the superscript ``ker'' in all kernel-based estimators, and write
$$p_{h_n}(x) = \bbE[\tilde p_n(x)] = (p \star K_{h_n})(x), \quad 
  q_{h_m}(y) = \bbE[\tilde q_m(y)] = (q \star K_{h_m})(y),\quad x,y \in \bbT^d.$$
We begin with the following technical result.
\begin{lemma}
\label{lem:ker_key_lemma} 
Let $t,s > 0$, and assume $p \in H^s(\bbT^d)$. Assume further that the kernel $K$ satisfies 
condition~\hyperref[assm:kernel]{\textbf{K1($s+t,\kappa$)}} for some $\kappa > 0$.
Then, for any $h_n > 0$, 
$$\norm{p_{h_n} - p}_{\dH{-t}(\bbT^d)}  \leq \kappa \norm p_{H^s(\bbT^d)} h_n^{s+t}$$
\end{lemma}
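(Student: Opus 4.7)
The plan is to prove the bound by direct Fourier analysis on the torus. Since $p_{h_n} = p \star K_{h_n}$, where the convolution is understood via the periodization $K_{h_n}^{\mathrm{(per)}}$, the key identity to exploit is that the Fourier coefficients of the periodization satisfy
\[
\calF[K_{h_n}^{\mathrm{(per)}}](\xi) = \calF[K_{h_n}](\xi) = \calF[K](h_n \xi), \quad \xi \in \bbZ^d,
\]
by a Poisson summation argument (or by a direct computation unfolding the definitions of the Fourier series and transform). Consequently, since convolution on the torus corresponds to multiplication in Fourier space,
\[
\calF[p_{h_n} - p](\xi) = \calF[p](\xi)\bigl(\calF[K](h_n \xi) - 1\bigr), \quad \xi \in \bbZ^d.
\]

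With this identity in hand, the proof reduces to a pointwise estimate on the multiplier $\calF[K](h_n \xi) - 1$. By the definition of the homogeneous Sobolev norm (with the convention $0/0 = 0$, noting that $\calF[K](0) = 1$ is forced by continuity of $\calF[K]$ and condition~\hyperref[assm:kernel]{\textbf{K1($s+t,\kappa$)}}),
\[
\norm{p_{h_n} - p}_{\dH{-t}(\bbT^d)}^2 = \sum_{\xi \in \bbZ^d \setminus \{0\}} \|\xi\|^{-2t} \bigl|\calF[p](\xi)\bigr|^2 \bigl|\calF[K](h_n \xi) - 1\bigr|^2.
\]
Applying condition~\hyperref[assm:kernel]{\textbf{K1($s+t,\kappa$)}} with the vector $h_n \xi$ gives $|\calF[K](h_n\xi) - 1| \leq \kappa h_n^{s+t} \|\xi\|^{s+t}$, so the $\|\xi\|^{-2t}$ factor cancels the $\|\xi\|^{2t}$ piece of this bound, leaving $\|\xi\|^{2s}$.

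Finally, using $\|\xi\|^{2s} \leq \langle\xi\rangle^{2s}$ (valid for $s > 0$) and summing yields
\[
\norm{p_{h_n} - p}_{\dH{-t}(\bbT^d)}^2 \leq \kappa^2 h_n^{2(s+t)} \sum_{\xi \in \bbZ^d} \langle\xi\rangle^{2s} \bigl|\calF[p](\xi)\bigr|^2 = \kappa^2 h_n^{2(s+t)} \norm{p}_{H^s(\bbT^d)}^2,
\]
and taking square roots delivers the claim. There is no real obstacle here beyond bookkeeping: the only subtlety is making sure the Fourier multiplier identity for the periodized kernel is correctly stated, and that the $\xi = 0$ term is handled (which it is, since $\calF[K](0) - 1 = 0$).
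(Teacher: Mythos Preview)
Your proof is correct and follows essentially the same route as the paper's: compute the Fourier coefficients of $p_{h_n}-p$ via the convolution/multiplier identity, apply condition~\hyperref[assm:kernel]{\textbf{K1($s+t,\kappa$)}} pointwise, and then bound $\|\xi\|^{2s} \leq \langle\xi\rangle^{2s}$ to recover the $H^s$ norm. You are slightly more explicit than the paper about the periodization step and the $\xi=0$ term, but the argument is otherwise identical.
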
 
{\bf Proof of Lemma~\ref{lem:ker_key_lemma}.} 
By definition of the $\dH{-t}(\bbT^d)$ norm, 
$$\norm{p_{h_n}-p}_{\dH{-t}(\bbT^d)}  = \norm{\|\cdot\|^{-t} \calF[p_{h_n}-p](\cdot)}_{\ell^2(\bbZ^d)}.$$
Furthermore, using standard properties of the Fourier transform,
$$\calF[p_{h_n}-p]  = \calF[p \star K_{h_n} - p]  = (\calF [K](h_n\cdot) - 1) \calF [p].$$
Thus, using assumption~\hyperref[assm:kernel]{\textbf{K1($s+t,\kappa$)}}, we have
\begin{align*}
\norm{p_{h_n}-p}_{\dH{-t}(\bbT^d)}^2
 &= \sum_{\xi\in \bbZ^d} \frac 1 {\norm \xi^{2t}}\big| \calF[K](h_n\xi) - 1\big|^2  \calF[p]^2(\xi) \\
 &\leq  \kappa^2  \sum_{\xi\in \bbZ^d} \frac{\norm{h_n \xi}^{2(s+t)}}{\norm \xi^{2t}} \calF[p]^2(\xi) \\
 &=  \kappa^2 h_n^{2(s+t)} \sum_{\xi\in \bbZ^d} \norm \xi^{2s} \calF[p]^2(\xi) \leq \kappa^2 h_n^{2(s+t)} \norm p_{H^s(\bbT^d)}^2,
\end{align*}
as claimed.\qed 

While Lemma~\ref{lem:ker_key_lemma} will be needed in the proof of Theorem~\ref{thm:two_sample_kernel} below, 
we begin by showing how it may also be used to derive a rate of convergence of $\hat Q_n$
under the Wasserstein distance. The following result was anticipated by~\cite{divol2021}, who derived a Fourier-analytic
proof of the convergence rate of the empirical measure under the Wasserstein distance on $\bbT^d$. 
Our proof follows along similar lines, and is simplified by the fact that we work only with the Wasserstein distance of second order, 
but is complicated by the fact that we require general exponents $\rho\geq 0$.  
\begin{lemma}
\label{lem:w2_kernel_rate} 
Let $s>0$. Assume $P \in \calPac(\bbT^d)$ admits a density $p$ such that
$$\norm{p}_{H^{s}(\bbT^d)}  \leq R < \infty, \qquad 0 < \gamma^{-1} \leq p \leq \gamma < \infty.$$
Assume further that the kernel $K$ satisfies 
condition~\hyperref[assm:kernel]{\textbf{K1($s+1,\kappa$)}} for some $\kappa > 0$.
Set $h_n \asymp n^{\frac 1 {2s+d}}$. Then, for any $\rho \geq 0$, 
$$\bbE \calW_2^\rho(\hat P_n, P) \lesssim_{R,\rho,\gamma,s} 
\begin{cases}
n^{-\frac{\rho(s+1)}{2s+d}}, & d \geq 3 \\
(\log n/n)^{\rho/2}, & d = 2 \\
(1/n)^{\rho/2}, & d = 1.
\end{cases}$$
\end{lemma}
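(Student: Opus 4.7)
\textbf{Proof plan for Lemma~\ref{lem:w2_kernel_rate}.} The overall strategy is to reduce the problem to an $L^2$-type estimate on the Fourier side. Note first that by Lemma~\ref{lem:kernel_infty} combined with the H\"older-type smoothness of $p$ (guaranteed by $p\in H^s(\bbT^d)\cap L^\infty$ and a routine application of Lemma~\ref{lem:kernel_smoothness} to ensure $\|\tilde p_n - p\|_{L^\infty} \leq \gamma^{-1}/2$ with probability $\geq 1 - c_1/n^2$), there is a high-probability event $E_n$ on which $\hat p_n = \tilde p_n$ and $(2\gamma)^{-1} \leq \hat p_n \leq 2\gamma$. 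On $E_n$, the Peyr\'e inequality~\eqref{eq:peyre} (in its torus form, see the discussion around~\eqref{eq:besov_ub_torus}) yields
\[
\calW_2(\hat P_n, P) \lesssim_{\gamma} \|\hat p_n - p\|_{\dH{-1}(\bbT^d)} \leq \|\tilde p_n - p_{h_n}\|_{\dH{-1}(\bbT^d)} + \|p_{h_n} - p\|_{\dH{-1}(\bbT^d)}.
\]
Off the event $E_n$, we use the trivial bound $\calW_2(\hat P_n, P) \leq \diam(\bbT^d)$, contributing only $O(n^{-2})$ after multiplying by $\bbP(E_n^\cp)$, which is negligible compared to the claimed rates.

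The bias term is handled directly by Lemma~\ref{lem:ker_key_lemma} with $t=1$, giving $\|p_{h_n}-p\|_{\dH{-1}} \lesssim h_n^{s+1}$. For the stochastic fluctuation term, a Fourier computation yields
\[
\bbE \|\tilde p_n - p_{h_n}\|_{\dH{-1}(\bbT^d)}^2 = \sum_{\xi\in\bbZ^d\setminus\{0\}}\frac{|\calF[K](h_n\xi)|^2}{\|\xi\|^2}\cdot\frac{\Var(e^{-2\pi i\xi^\top X})}{n} \leq \frac{\sigma_n^2}{n},
\]
where $\sigma_n^2 := \sum_{\xi\neq 0} \|\xi\|^{-2}|\calF[K](h_n\xi)|^2$. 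Using that $\calF[K]$ is Schwartz (since $K\in\calC_c^\infty$), a standard integral comparison shows $\sigma_n^2 \lesssim h_n^{-(d-2)}$ for $d\geq 3$, $\sigma_n^2 \lesssim \log(1/h_n)$ for $d=2$, and $\sigma_n^2 \lesssim 1$ for $d=1$. Plugging in $h_n \asymp n^{-1/(2s+d)}$, the bias and variance contributions to $\bbE \calW_2^2$ balance at exactly the rates stated in the lemma.

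To pass from $\rho=2$ to general $\rho\geq 1$, we use Jensen's inequality to handle $\rho\in[0,2]$, and for $\rho>2$ we invoke Pinelis-type moment inequalities for sums of i.i.d. bounded Hilbert-space-valued random variables. Writing $\tilde p_n - p_{h_n} = n^{-1}\sum_i W_i$ with $W_i = K_{h_n}(\cdot - X_i)- p_{h_n}$ viewed as elements of $\dH{-1}(\bbT^d)$, the deterministic bound $\|W_i\|_{\dH{-1}}^2 \leq \sigma_n^2$ together with $\bbE\|W_i\|_{\dH{-1}}^2 \leq \sigma_n^2$ yields, via Pinelis's inequality, $\bbE \|\tilde p_n - p_{h_n}\|_{\dH{-1}}^\rho \lesssim_\rho (\sigma_n^2/n)^{\rho/2}$ for all $\rho\geq 1$ (the residual Rosenthal-type term scales as $\sigma_n^\rho/n^{\rho-1}$ and is dominated). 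Combined with $\|p_{h_n}-p\|_{\dH{-1}}^\rho \lesssim h_n^{\rho(s+1)}$, this gives the claimed $\rho$-th moment bounds after the same bias/variance balancing.

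The main obstacle is controlling higher moments $\bbE\calW_2^\rho$ for $\rho > 2$: the diameter bound $\calW_2\leq\sqrt d/2$ only reduces such moments to the case $\rho=2$, which is insufficient. The resolution, as above, is the Hilbert-space concentration of $\tilde p_n - p_{h_n}$ around $0$ in the $\dH{-1}$ norm; a careful verification that $\bbE\|\cdot\|^\rho \lesssim_\rho (\bbE\|\cdot\|^2)^{\rho/2}$ is needed, and one should check that this can be done uniformly in $h_n$ (which requires using the deterministic upper bound on $\sigma_n$). The remaining steps are routine: decomposing, applying Lemma~\ref{lem:ker_key_lemma}, and summing the Fourier tail.
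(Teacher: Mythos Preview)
Your overall architecture matches the paper's: reduce to $\rho\geq 2$ by Jensen, pass to the event $\{\hat p_n=\tilde p_n\}$ (complement probability $O(n^{-2})$), apply Peyre's inequality~\eqref{eq:peyre} on the torus, split into bias $\|p_{h_n}-p\|_{\dH{-1}}$ (handled by Lemma~\ref{lem:ker_key_lemma}) and fluctuation $\|\tilde p_n-p_{h_n}\|_{\dH{-1}}$. For $\rho=2$ your Fourier computation of $\sigma_n^2=\sum_{\xi\neq 0}\|\xi\|^{-2}|\calF[K](h_n\xi)|^2$ and its dimension-dependent estimate coincide with the paper's.

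The genuine difference is in the treatment of moments $\rho>2$. The paper works \emph{coordinate-wise in frequency}: it writes $\bbE\|\tilde p_n-p_{h_n}\|_{\dH{-1}}^\rho$ as the $\rho/2$-th power of a sum over $\xi$, applies H\"older's inequality with an auxiliary exponent $\eta$ to separate the sum into two factors, uses \emph{scalar} Rosenthal on each $\bbE|n^{-1}\sum_j Z_j(\xi)|^\rho$, and then chooses $\eta$ dimension-by-dimension (with a further split into $\|h_n\xi\|\leq 1$ and $\|h_n\xi\|>1$) so that both factors are controlled. Your route instead treats $W_i=K_{h_n}(\cdot-X_i)-p_{h_n}$ as i.i.d.\ centered elements of the Hilbert space $\dH{-1}(\bbT^d)$ and applies a Hilbert-space Rosenthal/Pinelis inequality once, using the deterministic bound $\|W_i\|_{\dH{-1}}^2\leq 4\sigma_n^2$ (which follows since $|e^{-2\pi i\xi^\top X_i}-\calF[p](\xi)|\leq 2$) together with $\bbE\|W_i\|_{\dH{-1}}^2\leq\sigma_n^2$. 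This yields $\bbE\|n^{-1}\sum W_i\|_{\dH{-1}}^\rho\lesssim_\rho(\sigma_n^2/n)^{\rho/2}+\sigma_n^\rho n^{1-\rho}$, and since the ratio of the second term to the first is $n^{1-\rho/2}\leq 1$, the claim follows. Your argument is cleaner and avoids the parameter $\eta$ and the low/high frequency split entirely; the paper's argument is more elementary in that it only invokes the scalar Rosenthal inequality.

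One minor imprecision: your justification of the high-probability event via ``H\"older-type smoothness guaranteed by $p\in H^s\cap L^\infty$'' is not quite right, since $H^s\cap L^\infty$ does not imply H\"older continuity in general, and Lemma~\ref{lem:kernel_smoothness} concerns $\tilde q_n$, not $p$. The paper is equally terse here; in its applications $p\in\calC^{\alpha-1}$, which does give the needed $L^\infty$ control of the bias $p_{h_n}-p$. You should either add a continuity assumption on $p$ or note explicitly how $\|p_{h_n}-p\|_{L^\infty}\to 0$ follows under the stated hypotheses.
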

\begin{proof} 
By Jensen's inequality, it suffices to prove the claim for $\rho \geq 2$. 
It is a direct consequence of Lemma~\ref{lem:kernel_infty} 
and the assumption $ \gamma^{-1}\leq p \leq \gamma$ that the event $A_n=\{\hat p_n = \tilde p_n\}$ satisfies $\bbP(A_n) \lesssim 1/n^2$. 
Furthermore, recall from equation~\eqref{eq:peyre}, arising from the work of~\cite{peyre2018}, that 
$$\calW_2(\hat P_n, P) \lesssim \|\hat p_n - p\|_{\dH{-1}}.$$
We therefore have, 
\begin{align}
\nonumber
\bbE \calW_2^\rho(\hat P_n, P) 
 &= \bbE \Big[ \calW_2^\rho(\hat P_n, P) I_{A_n}\Big] + \bbE \Big[ \calW_2^\rho(\hat P_n, P) I_{A_n^\cp}\Big] \\
\nonumber
 &\lesssim \bbE \Big[ \|\hat p_n-p\|_{\dH{-1}}^\rho I_{A_n}\Big] +1/n^2 \\
\nonumber
 &= \bbE \Big[ \|\tilde p_n-p\|_{\dH{-1}}^\rho I_{A_n}\Big] +1/n^2 \\ 
\nonumber
 &\lesssim  \|p_{h_n} - p\|_{\dH{-1}}^\rho + \bbE \|\tilde p_n - p_{h_n}\|_{\dot H^{-1}}^\rho  + 1/n^2 \\
 &\lesssim  h_n^{\rho(s+1)} + \bbE \|\tilde p_n - p_{h_n}\|_{\dot H^{-1}}^\rho  + 1/n^2,
 \label{eq:pf_kernel_first_reduction}
\end{align} 
where we used Lemma~\ref{lem:ker_key_lemma} on the final line, together with the assumption~\hyperref[assm:kernel]{\textbf{K1($s+1,\kappa$)}}.
To bound the variance term,  
write  $\bbE  \| \tilde p_n - p_{h_n}\|_{\dH{-1}}^\rho  \lesssim S_{n,1} + S_{n,2}$, where
\begin{align*} 
S_{n,1} &:=  \bbE \left[\left(\sum_{\xi\in \bbZ^d,\|h_n\xi\|\leq 1} \norm \xi^{-2} \big|\calF[\tilde p_n - p_{h_n}](\xi)\big|^2\right)^{\frac \rho 2}\right],\\
S_{n,2} &:= \bbE \left[\left(\sum_{\xi\in \bbZ^d,\|h_n\xi\|>  1} \norm \xi^{-2} \big|\calF[\tilde p_n - p_{h_n}](\xi)\big|^2\right)^{\frac \rho 2}\right].
\end{align*}
We begin by bounding $S_{n,1}$. Recall that 
$$\calF[\tilde p_n - p_{h_n}](\xi) = \calF[K](h_n\xi) \frac 1 n\sum_{j=1}^n \left( e^{-2\pi i \langle X_j,\xi\rangle} - \calF[p](\xi)\right),\quad \xi\in \bbZ^d,$$
where $i^2 = -1$. 
In fact, since $\tilde p_n$ and $p_{h_n}$ integrate to the same constant, we have $\calF[\tilde p_n - p_{h_n}](0)=0$. 
Furthermore, let $\rho' \in \bbR$ satisfy $\frac 1 \rho + \frac 1 {\rho'} = \frac 1 2$.  Then, 
for any $\eta \in \bbR$, we have by H\"older's inequality, 
\begin{align*} 
S_{n,1} 
 &= \bbE \left[\left(\sum_{\xi\in \bbZ^d,\|h_n\xi\|\leq 1} \norm \xi^{-2\eta} 
 \norm \xi^{2(\eta-1)}\big|\calF[\tilde p_n - p_{h_n}](\xi)\big|^2\right)^{\frac \rho 2}\right]\\
 &\leq \left(\sum_{\substack{\xi\in \bbZ^d,\xi \neq 0 \\ \|h_n\xi\|\leq 1}} \norm \xi^{-\rho'\eta} \right)^{\frac \rho {\rho'}} \bbE \left[\sum_{\substack{\xi\in \bbZ^d,\xi \neq 0 \\ \|h_n\xi\|\leq 1}} \norm \xi^{\rho(\eta-1)} \big|\calF[\tilde p_n - p_{h_n}](\xi)\big|^\rho\right] \\
 &=\left(\sum_{\substack{\xi\in \bbZ^d,\xi \neq 0 \\ \|h_n\xi\|\leq 1}}\norm \xi^{-\rho'\eta} \right)^{\frac \rho {\rho'}}  
         \sum_{\substack{\xi\in \bbZ^d,\xi \neq 0 \\ \|h_n\xi\|\leq 1}}\norm \xi^{\rho(\eta-1)}\big|\calF[K](h_n\xi)\big|^\rho
      \bbE\left| \frac 1 n\sum_{j=1}^n Z_j(\xi)\right|^\rho,
\end{align*} 
where $Z_j(\xi) = e^{-2\pi i \langle X_j,\xi\rangle} - \calF[p](\xi)$, for all $j=1,\dots, n$ and $\xi\in \bbZ^d$. 
Since $\rho \geq 2$, it can be deduced from
Rosenthal's inequalities~\citep{rosenthal1970,rosenthal1972} that, 
$$\bbE\left| \frac 1 n \sum_{j=1}^n Z_j(\xi) \right|^\rho 
 \lesssim n^{-\frac \rho 2} \left(\bbE |Z_1(\xi)|^2\right)^\rho + n^{1-\rho}\bbE|Z_1(\xi)|^\rho.$$
 Notice that $|Z_1(\xi)| \leq 2$ for any $\xi \in \bbZ^d$, and $\rho/2 \leq \rho-1$, thus we deduce from the previous two displays that, 
\begin{align}
\label{eq:pf_kernel_Sn1_sums0} 
S_{n,1} 
 &\lesssim n^{-\frac \rho 2}\left(\sum_{\substack{\xi\in \bbZ^d,\xi \neq 0 \\ \|h_n\xi\|\leq 1}} \norm \xi^{-\rho'\eta} \right)^{\frac \rho {\rho'}} 
                               \sum_{\substack{\xi\in \bbZ^d,\xi \neq 0 \\ \|h_n\xi\|\leq 1}} \norm \xi^{\rho(\eta-1)}\big|\calF[K](h_n\xi)\big|^\rho \\
 &\lesssim n^{-\frac \rho 2}\left(\sum_{\substack{\xi\in \bbZ^d,\xi \neq 0 \\ \|h_n\xi\|\leq 1}} \norm \xi^{-\rho'\eta} \right)^{\frac \rho {\rho'}} 
                               \sum_{\substack{\xi\in \bbZ^d,\xi \neq 0 \\ \|h_n\xi\|\leq 1}} \norm \xi^{\rho(\eta-1)},
\label{eq:pf_kernel_Sn1_sums}                         
\end{align} 
where the final inequality follows from the fact that the Fourier transform of $K$ is bounded over the unit ball, since $K\in \calC_c^\infty(\bbR^d)$.
When $d \geq 3$, due to the condition $\frac 1 \rho + \frac 1 {\rho'} = \frac 1 2$, we may   choose
$\eta$ satisfying
\begin{align}
\label{eq:eta_choice_d_r}
d\left(\frac 1 d - \frac 1 \rho\right) < \eta < \frac d {\rho'}.
\end{align}
In particular, we then have $-d < \rho(\eta-1)$ and $-d < -\rho'\eta$, so that 
$$S_{n,1} \lesssim n^{-\frac \rho 2} \left(h_n^{\rho'\eta-d}\right)^{\frac \rho {\rho'}} h_n^{-\rho (\eta-1) - d}
 = n^{-\frac \rho 2} h_n^{\rho-d(\frac \rho {\rho'}+ 1)} 
 = n^{-\frac \rho 2} h_n^{\rho(1-\frac d 2)} .$$ 
If $d=2$, we choose $\eta$ such that the strict inequalities
in equation~\eqref{eq:eta_choice_d_r} both hold with equality. In this case, we have $\rho'\eta = d$ and $\rho (\eta-1)=-d$,  thus
$$S_{n,1} \lesssim n^{-\frac \rho 2} \left(\sum_{\xi\in \bbZ^d,\|h_n\xi\|\leq 1} \norm \xi^{-d} \right)^{\frac \rho {\rho'}+1} 
\lesssim n^{-\frac \rho 2}\log(h_n^{-1})^{\frac \rho {\rho'} + 1} 
                         = \left(\log(h_n^{-1})/n\right)^{\frac \rho 2}.$$
Finally, if $d = 1$, choose $\eta$ such that 
\begin{align} 
1 - \frac 1 \rho > \eta > \frac 1 {\rho'}.
\end{align}
In this case, both sequences in equation~\eqref{eq:pf_kernel_Sn1_sums} are summable, and
we obtain $S_{n,1} \lesssim n^{-\rho/2}$. In summary, we deduce
\begin{equation}
\label{eq:pf_kernel_Sn1}
S_{n,1} \lesssim \beta_n := n^{-\frac \rho 2} \begin{cases}
h_n^{\rho(1-\frac d 2)}, & d \geq 3 \\
\big(\log(h_n^{-1})\big)^{\rho/2}, & d = 2 \\
1, & d = 1.
\end{cases}
\end{equation}
We next bound $S_{n,2}$. Let $\eta < d/\rho'$. Apply a similar reduction as in equation~\eqref{eq:pf_kernel_Sn1_sums0}, 
to obtain
\begin{align*}
S_{n,2}  
 &\lesssim n^{-\frac \rho 2}\left(\sum_{\xi\in \bbZ^d,\|h_n\xi\|> 1} \norm \xi^{-\rho'\eta} \right)^{\frac \rho {\rho'}} 
                         \left(\sum_{\xi\in \bbZ^d,\|h_n\xi\|> 1} \norm \xi^{\rho(\eta-1)}\big|\calF[K](h_n\xi)\big|^\rho\right).
\end{align*}                         
Since $K \in \calC_c^\infty(\bbR^d)$, 
notice that $K$ and $\calF[K]$ are Schwartz
functions. In particular, $\calF[K](\xi) \lesssim \|\xi\|^{-\ell}$ for any $\ell > 0$.
Choose $\ell > 0$ such that $\rho(\eta - 1-\ell) < -d$.  
We then have, 
\begin{align*}
S_{n,2}   
 &\lesssim n^{-\frac \rho 2}\left(\sum_{\xi\in \bbZ^d,\|h_n\xi\|> 1} \norm \xi^{-\rho'\eta} \right)^{\frac \rho {\rho'}} 
                         \left(h_n^{-\rho\ell}\sum_{\xi\in \bbZ^d,\|h_n\xi\|> 1} \norm \xi^{\rho(\eta-1-\ell)}\right) \\                         
 & \lesssim n^{-\frac \rho 2} \left(h_n^{\rho'\eta -d} \right)^{\frac \rho {\rho'}} h_n^{-\rho\ell} h_n^{-\rho(\eta-1-\ell) - d} \lesssim 
 n^{-\frac  \rho 2} h_n^{\rho(1-\frac d 2)}\lesssim \beta_n.
\end{align*} 
Combine this bound with those of equations~\eqref{eq:pf_kernel_first_reduction} and~\eqref{eq:pf_kernel_Sn1} 
$$\bbE \calW_2^\rho(\hat P_n, P) \lesssim h_n^{\rho(s+1)} + \beta_n + 1/n^2 
\lesssim \begin{cases} 
n^{-\frac{\rho(s+1)}{2s+d}}, & d \geq 3 \\
(\log n/n)^{\rho/2}, & d = 2 \\
(1/n)^{\rho/2}, & d = 1.
\end{cases}$$
The claim follows.
\end{proof} 

\paragraph*{} We are now in a position to prove Theorem~\ref{thm:two_sample_kernel}.

\subsection{Proof of Theorem~\ref{thm:two_sample_kernel}}
In view of Lemmas~\ref{lem:kernel_infty}, \ref{lem:kernel_smoothness}, \ref{lem:ker_key_lemma}, and \ref{lem:w2_kernel_rate}, 
the proof of the claim is analogous to that of Theorem~\ref{thm:two_sample_density}, thus we only
provide brief justifications. 

Regarding part (i), apply Lemmas~\ref{lem:besov_holder} and \ref{lem:kernel_infty}--\ref{lem:kernel_smoothness}
to deduce that there exists $\epsilon \in (0, 1 \wedge \frac{\alpha-1}{2})$ and an event of probability
at least $1-1/n^2$ over which $\hat p_n, \hat q_m$ coincide with $\tilde p_n, \tilde q_m$ respectively, 
are bounded from below by $(2\gamma)^{-1}$, and are of class $\calC^{\epsilon}(\bbT^d)$,
with H\"older norm  uniformly bounded in $n$. By Theorem~\ref{thm:torus_regularity}, 
it follows that, over this same high-probability event, any mean-zero Brenier potential  in the optimal transport
problem from $P$ to $\hat Q_m$, or from $\hat P_n$ to $\hat Q_m$, is of class $\calC^{2+\epsilon}(\bbT^d)$, again with a uniformly bounded H\"older
norm. Arguing as in Step 1 of the proof of Theorem~\ref{thm:two_sample_density--map}, 
we deduce that these potentials achieve the conclusion of equation~\eqref{eq:fitted_curvature_2sample} therein. 
The same argument as in Steps~2--3 of that proof, coupled with Lemma~\ref{lem:w2_kernel_rate} stating the convergence rate of the kernel
density estimator in Wasserstein distance,  can then be used to deduce that
the optimal transport map $\hat T_{nm}$ from $\hat P_n$ to $\hat Q_m$ satisfies 
$$\bbE \big\|\hat T_{nm} - \hat T_0\big\|^2_{L^2(P)} \lesssim
\bbE W_2^2(\hat P_n, P) + \bbE W_2^2(\hat Q_m,Q) + \frac 1 {(n\wedge m)^2} 
\lesssim  \nmtratesker.$$
In applying Lemma~\ref{lem:w2_kernel_rate}, we note that
our stated assumption \hyperref[assm:kernel]{\textbf{K1($2\alpha,\kappa$)}}  
implies \hyperref[assm:kernel]{\textbf{K1($\alpha+1,\kappa'$)}}  
for a constant $\kappa' > 0$ depending only on $\alpha$ and $\kappa$. 
This proves part (i). To prove part (ii), we use the following observation.
\begin{lemma}
\label{lem:L_ker}
Under the assumptions of Theorem~\ref{thm:two_sample_kernel}, we have
\begin{alignat}{3}
\label{eq:pf_kernel_claim1}
\bbE\left[\int \phi_0 (\hat p_n - p)\right]   = O(h_n^{2\alpha}),
\quad
\Var\left[\int \phi_0 (\hat p_n - p)\right] = \frac{\Var_P[\phi_0(X)]}{n} + O\left(\frac{h_n^{2\alpha}}{n}\right),
\end{alignat}
where the implicit constants depend only on $M,\gamma,\alpha$.
\end{lemma}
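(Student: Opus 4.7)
The plan is to first replace $\hat p_n$ by the unnormalized kernel estimator $\tilde p_n = P_n \star K_{h_n}$, then analyze $\int \phi_0 \tilde p_n$ by Fourier analysis leveraging Lemma~\ref{lem:ker_key_lemma}. The reduction uses the $L^\infty$ concentration of $\tilde p_n$ around $p_{h_n} = p \star K_{h_n}$ from Lemma~\ref{lem:kernel_infty}: combined with $\gamma^{-1} \leq p$, the event $A_n = \{\hat p_n = \tilde p_n\}$ holds with probability at least $1 - c/n^2$, and since $\phi_0$ is uniformly bounded on $\bbT^d$ (with $\int_{\bbT^d}|\tilde p_n| \lesssim h_n^{-d}$ controlling the worst case), the complement contributes only $O(1/n^2)$ to both the bias and the variance, which is absorbed in $O(h_n^{2\alpha})$ for the scaling $h_n^{-1} \asymp n^{1/(d+2(\alpha-1))}$. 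The key regularity input, used throughout, is that $\phi_0 = \|\cdot\|^2 - 2\varphi_0$ lies in $\calC^{\alpha+1}(\bbT^d) \subseteq H^{\alpha+1}(\bbT^d)$ with norm bounded in terms of $M,\gamma,\alpha$; this follows from Theorem~\ref{thm:torus_regularity} together with the $\bbZ^d$-periodicity of $\|\cdot\|^2/2 - \varphi_0$ guaranteed by Proposition~\ref{prop:torus_ot--potential_periodicity}.

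\textbf{Bias.} Since $\bbE[\tilde p_n] = p_{h_n}$, the bias equals $\int \phi_0 (p_{h_n} - p)$. By Plancherel over $\bbT^d$ and Cauchy--Schwarz with weights $\|\xi\|^{\pm(\alpha+1)}$ (noting that the $\xi = 0$ term vanishes since $p_{h_n} - p$ has zero mean),
\[
\left|\int \phi_0 (p_{h_n} - p)\right| \leq \|\phi_0\|_{\dH{\alpha+1}(\bbT^d)} \cdot \|p_{h_n} - p\|_{\dH{-(\alpha+1)}(\bbT^d)}.
\]
The first factor is bounded by $\|\phi_0\|_{H^{\alpha+1}(\bbT^d)}$ via Lemma~\ref{lem:sobolev_facts}, hence controlled by the $\calC^{\alpha+1}$ bound above. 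For the second factor I would invoke Lemma~\ref{lem:ker_key_lemma} with the balanced choice $s = \alpha - 1$ and $t = \alpha + 1$, so that $s + t = 2\alpha$ matches the exponent in condition~\hyperref[assm:kernel]{\textbf{K1($2\alpha,\kappa$)}}, and $p \in \calC^{\alpha-1} \subseteq H^{\alpha-1}$ supplies the required $H^s$ factor. This yields $\|p_{h_n} - p\|_{\dH{-(\alpha+1)}} \lesssim h_n^{2\alpha}$ and completes the bias estimate.

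\textbf{Variance.} Using evenness of $K$, one has $\int \phi_0 \tilde p_n = n^{-1} \sum_{i=1}^n \Psi_n(X_i)$ with $\Psi_n = \phi_0 \star K_{h_n}$, hence $\Var[\int \phi_0 \tilde p_n] = n^{-1} \Var_P[\Psi_n(X)]$. Writing $g_n = \Psi_n - \phi_0$, the variance decomposes as
\[
\Var_P[\Psi_n(X)] = \Var_P[\phi_0(X)] + 2\,\mathrm{Cov}_P[\phi_0, g_n] + \Var_P[g_n],
\]
and the task reduces to bounding the last two terms by $O(h_n^{2\alpha})$. For $\Var_P[g_n]$, I would use $\Var_P[g_n] \leq \gamma \|g_n\|_{L^2(\bbT^d)}^2$ together with the Fourier identity $\|g_n\|_{L^2(\bbT^d)}^2 = \sum_\xi |\calF[\phi_0](\xi)|^2 |\calF[K](h_n\xi) - 1|^2 \lesssim h_n^{2\alpha}\|\phi_0\|_{\dH{\alpha}}^2$, where $|\calF[K](h_n\xi) - 1|^2 \lesssim h_n^{2\alpha}\|\xi\|^{2\alpha}$ follows by combining the boundedness of $\calF[K] - 1$ (from $K \in \calC_c^\infty(\bbR^d) \subset L^1$) with condition~\hyperref[assm:kernel]{\textbf{K1($2\alpha,\kappa$)}}. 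For the covariance, expand as $\int \phi_0 g_n p - (\int \phi_0 p)(\int g_n p)$; the second summand is $O(h_n^{2\alpha})$ by the bias estimate applied to $p$, while the first, after re-arranging via evenness of $K$, equals $\int \phi_0 \bigl( (\phi_0 p) \star K_{h_n} - \phi_0 p \bigr)$, which is structurally identical to the bias with $p$ replaced by $\phi_0 p$. Since $\phi_0 p \in \calC^{\alpha-1}(\bbT^d)$ with uniformly bounded norm by Lemma~\ref{lem:holder_products}, the same Sobolev--duality argument delivers the desired bound.

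\textbf{Main obstacle.} The crux is obtaining the quadratic rate $h_n^{2\alpha}$ in the bias; the covariance bound is then essentially the bias argument applied to the auxiliary density $\phi_0 p$. The naive estimate $|\int \phi_0 (p_{h_n} - p)| \leq \|\phi_0\|_\infty \|p_{h_n} - p\|_{L^1}$ only yields the rate $h_n^{\alpha-1}$ inherited from standard kernel bias bounds for an $(\alpha-1)$-H\"older density. Beating this requires \emph{simultaneously} exploiting the higher-order nature of the kernel (two extra orders beyond $\alpha-1$, encoded in \hyperref[assm:kernel]{\textbf{K1($2\alpha,\kappa$)}}) and the two additional degrees of H\"older smoothness of $\phi_0$ beyond those of $p$ (furnished by Theorem~\ref{thm:torus_regularity}). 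The Sobolev-duality pairing with the balanced exponents $t = \alpha+1$, $s = \alpha-1$ in Lemma~\ref{lem:ker_key_lemma} is the mechanism that aligns these two smoothness budgets into the full $h_n^{2\alpha}$ rate, and is the conceptual heart of the argument, mirroring the role played by the analogous wavelet computation in Lemma~\ref{lem:L}.
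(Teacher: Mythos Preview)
Your proposal is correct and essentially matches the paper's proof: both reduce to $\tilde p_n$ via Lemma~\ref{lem:kernel_infty}, bound the bias by the Sobolev-duality pairing $\|\phi_0\|_{\dH{\alpha+1}}\|p_{h_n}-p\|_{\dH{-(\alpha+1)}}$ together with Lemma~\ref{lem:ker_key_lemma} at $(s,t)=(\alpha-1,\alpha+1)$, and handle the variance through $\Var_P[(\phi_0\star K_{h_n})(X)]$. The only cosmetic difference is in the variance step, where the paper factors $\phi_{h_n}^2-\phi_0^2=(\phi_{h_n}-\phi_0)(\phi_{h_n}+\phi_0)$ and applies Lemma~\ref{lem:ker_key_lemma} to $\phi_0$ with the swapped exponents $(s,t)=(\alpha+1,\alpha-1)$, whereas you use self-adjointness of convolution with an even kernel to recast the covariance term as the bias computation with $p$ replaced by $\phi_0 p$; both routes rest on the same two ingredients (Lemma~\ref{lem:ker_key_lemma} with $s+t=2\alpha$ and Lemma~\ref{lem:holder_products}).
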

Using Lemmas~\ref{lem:w2_kernel_rate}--\ref{lem:L_ker}, 
the same argument as in the proof of Theorem~\ref{thm:two_sample_density--wasserstein}
leads to the claim of part (ii). \qed 

\paragraph*{}It thus remains to prove Lemma~\ref{lem:L_ker}.

\subsection{Proof of Lemma~\ref{lem:L_ker}}
Using Lemma~\ref{lem:kernel_infty}, {the densities $\tilde p_n$ and $\hat p_n$ coincide with high probability, }
thus arguing similarly as in the proof of Lemma~\ref{lem:L}, 
it will suffice to prove that 
\begin{align}
\int \phi_0 (p-p_{h_n})    = O(h_n^{2\alpha}),
\quad
\Var\left[\int \phi_0 (\tilde p_n - p_{h_n})\right] = \frac{\Var_P[\phi_0(X)]}{n} + O\left(\frac{h_n^{2\alpha}}{n}\right).
\end{align} 
Under the condition $\alpha \not\in \bbN$, $\alpha > 1$, 
we deduce from Theorem~\ref{thm:torus_regularity} that there exists  $\lambda > 0$ 
depending only on $M,\gamma,\alpha$ such that
\begin{equation}
\label{eq:kernel_pf_potential_smoothness}
\phi_0,\psi_0 \in \calC^{\alpha+1}(\bbT^d;\lambda).
 \end{equation}
Now, by Parseval's Theorem, 
\begin{align*}
\left|\int_{\bbT^d} \phi_0 (p-p_{h_n})\right|
 &= \left|\sum_{\xi \in \bbZ^d} \calF[\phi_0](\xi) \calF[p - p_{h_n}](\xi) \right|  \\
 &\leq 
\big\| \|\cdot\|^{\alpha+1} \calF[\phi_0](\cdot)\big\|_{\ell^2(\bbZ^d)} \big\| \|\cdot\|^{-(\alpha+1)} \calF[p-p_{h_n}](\cdot)\big\|_{\ell^2(\bbZ^d)} \\
 &= \|\phi_0\|_{\dH{\alpha+1}(\bbT^d)}\|p - p_{h_n}\|_{\dH{-(\alpha+1)}(\bbT^d)}
 \lesssim \|p - p_{h_n}\|_{\dH{-(\alpha+1)}(\bbT^d)},
\end{align*}
where we used 
equation~\eqref{eq:kernel_pf_potential_smoothness} and the fact that $\|\phi_0\|_{\dot H^{\alpha+1}(\bbT^d)}
\leq \|\phi_0\|_{H^{\alpha+1}(\bbT^d)} 
\lesssim 
\|\phi_0\|_{\calC^{\alpha+1}(\bbT^d)}$ (cf. Lemma~\ref{lem:sobolev_facts}).
Apply 
Lemma~\ref{lem:ker_key_lemma}, under the assumption~\hyperref[assm:kernel]{\textbf{K1($2\alpha,\kappa$)}}
and the smoothness assumption on $p$, 
to deduce
\begin{align*}
\left|\int_{\bbT^d} \phi_0 d(p-p_{h_n})\right| \lesssim h_n^{2\alpha} \lesssim \nmtratesker.
\end{align*}
To prove the variance bound, notice that
$$\Var\left[\int \phi_0(\tilde p_n-p_{h_n})\right]
 = \Var\left[\int (\phi_0 \star K_{h_n}) d(P_n - P)\right]
 = \frac 1 n \Var_P[\phi_{h_n}(X)],$$
 where $\phi_{h_n} = \phi_0\star K_{h_n}$. Thus, reasoning as in the proof of Lemma~\ref{lem:L}, we have
\begin{align*}
\bigg|\Var&\left[\int \phi_0(\tilde p_n-p_{h_n})\right]-
  \frac 1 n \Var_P[\phi_{0}(X)]\bigg| \\
   &\leq \frac 1 n \Big|  \Var_P[\phi_{h_n}(X)] - \Var_P[\phi_{0}(X)]\Big| \\
   &\leq \frac 1 n \Big|\bbE[\phi_{h_n}^2(X) - \phi_0^2(X)]\Big| + \frac 1 n \Big|\bbE[\phi_{h_n}(X) - \phi_0(X)]\Big| = \frac 1 n\big[(I) + (II)\big].
\end{align*}
We shall again bound term $(I)$, and a similar proof can be used for term $(II)$. Notice that
\begin{align*}
(I)
 = \left| \int (\phi_{h_n} - \phi_0)(\phi_{h_n} + \phi_0)p\right|
 \leq \|\phi_{h_n} - \phi_0\|_{\dot H^{-(\alpha-1)}(\bbT^d)}\|(\phi_{h_n} + \phi_0)p\|_{\dot H^{\alpha-1}(\bbT^d)}.
\end{align*} 
It is a straightforward observation that $\|\phi_{h_n}\|_{\calC^{\alpha+1}(\bbT^d)} \leq \|\phi_0\|_{\calC^{\alpha+1}(\bbT^d)}$
for all $n \geq 1$, thus the function $(\phi_{h_n} + \phi_0)p$ has uniformly bounded $\calC^{\alpha-1}(\bbT^d)$
norm, by Lemma~\ref{lem:holder_products}. Since $\phi_0 \in \calC^{\alpha+1}(\bbT^d;\lambda)$, we deduce   that 
\begin{align*}
(I) \lesssim \|\phi_{h_n} - \phi_0\|_{\dot H^{-(\alpha-1)}(\bbT^d)} \lesssim h_n^{2\alpha},
\end{align*} 
by Lemma~\ref{lem:ker_key_lemma}. 
The claim follows from here.
\qed 

{ 
\subsection{Further Results}
In this section, we state for completeness several additional results on estimating optimal transport
maps and Wasserstein distances over $\bbT^d$, which mirror our results
over domains of $\bbR^d$ across Sections~\ref{sec:one_sample}--\ref{sec:two_sample}. Throughout what follows, let $P,Q \in \calPac(\bbT^d)$ 
admit densities $p,q$, and let $X_1,\dots,X_n \sim P$ and $Y_1,\dots,Y_m \sim Q$ be i.i.d. samples which
are independent of each other. Let $P_n = (1/n)\sum_{i=1}^n \delta_{X_i}$ and $Q_m = (1/m) \sum_{j=1}^m \delta_{Y_j}$. 
As in the previous subsections, we omit the superscript ``ker'' on the estimators $\hat P_n\sker$ and $\hat Q_m\sker$. 
Let $T_m$ be the optimal transport
map from $P$ to $Q_m$, and let 
$$\Delta_{nm} = \sum_{i=1}^n \sum_{j=1}^m \hat\pi_{ij} \|T_0(X_i) - Y_j\|^2,$$
where 
$$\hat\pi \in \argmin_{\pi \in \calQ_{nm}} \sum_{i=1}^n \sum_{j=1}^m \pi_{ij} d_{\bbT^d}^2(X_i,Y_j).$$
Furthermore, let $\widebar T_m$ be the optimal transport map from $P$ to $\hat Q_m$. 
 We begin by stating a one-sample analogue 
of Theorem~\ref{thm:two_sample_kernel}.
 \begin{proposition}[One-Sample Kernel Estimators]
\label{thm:one_sample_kernel}
Let $P,Q\in \calPac(\bbT^d)$ and assume $p,q \in \calC^{\alpha-1}(\bbT^d; M,\gamma)$,
for some $\alpha > 1$, $\alpha\not\in\bbN$, and $M,\gamma > 0$.
Let  $h_m \asymp m^{-1 /({d+2(\alpha-1)})}$.  Then, there exists a constant $C > 0$ depending only on
$M,\gamma,\alpha$ such that the following assertions hold.
\begin{thmlist}
\item \label{thm:one_sample_kernel--map} (Optimal Transport Maps) We have,
\begin{equation*}
\bbE \big\|\widebar T_m - T_0\big\|_{L^2(P)}^2
\leq C \mtratesker.
\end{equation*}
\item \label{thm:one_sample_kernel--wasserstein} (Wasserstein Distances) We have,
\end{thmlist}
\vspace{-0.1in}
\begin{align*}
\big| \bbE W_2^2(P, \hat Q_m) - W_2^2(P,Q) \big| &\leq C \mtratesker,\\
\bbE \big| W_2^2(P, \hat Q_m) - W_2^2(P,Q) \big|^2
&\leq \left[C \mtratesker  +  \sqrt{\frac {\Var_Q[\psi_0(Y)]}{m}}\right]^2.
\end{align*}
\end{proposition} 
Next, we state convergence rates for empirical estimators.  In what follows, we use the abbreviation
$$\tilde \kappa_n = \begin{cases}
1/n, & d = 1, \\ 
\log n/n , & d=2, \\ 
n^{-2/d}, & d \geq 3.
\end{cases}$$
\begin{proposition}[One-Sample Empirical Estimators over $\bbT^d$]
\label{prop:one_sample_torus_empirical}
Let $P,Q \in \calPac(\bbT^d)$ admit densities $p,q$ satisfying
$$\gamma^{-1} \leq p,q \leq \gamma,\quad\text{over } \bbT^d,$$
for some $\gamma > 0$. Assume further that $\phi_0 \in \calC^2(\bbT^d)$. Then, 
$$\bbE \|\hat T_m - T_0\|_{L^2(P)}^2 \asymp \bbE \big[ W_2^2(P,Q_m) - W_2^2(P,Q)\big] \asymp \bbE W_2^2(Q_m,Q) 
\lesssim \tilde \kappa_m,$$
and, 
$$\bbE [\Delta_{nm}] \asymp \bbE \big[ W_2^2(P_n,Q_m) - W_2^2(P,Q)\big] \lesssim   \tilde \kappa_{n\wedge m}.$$
\end{proposition}
From Proposition~\ref{prop:one_sample_torus_empirical}, one may also deduce rates of convergence for the  nearest-neighbor
estimator discussion in Section~\ref{sec:two_sample_empirical},  over $\bbT^d$. We omit the details for the sake of brevity.
 }

{ 

\section{Plugin Estimation over Smooth Domains}  
\label{app:smooth_domains}
{The goal of this appendix is to prove 
Theorem~\ref{thm:smooth_domains}. Let us summarize our proof strategy.
\begin{enumerate}
\item[(i)] In Section~\ref{app:spectrally_defined_sobo}, we define a scale
of spectrally-defined Sobolev 
spaces $\calH^{s,r}(\Omega)$, which are well-suited to the analysis
of the density estimator $\hat q_n^{(\mathrm{lap})}$.  
We show that these spaces coincide with a scale of subspaces $H_N^{s,r}(\Omega)$ of the usual
Sobolev spaces $H^{s,r}(\Omega)$. 
\item[(ii)] In Section~\ref{app:density_estimation_under_spectral_norm}, 
we   bound the risk of the density estimator $\hat q_n^{(\mathrm{lap})}$
under the norm of the space $\calH^{s,r}(\Omega)$, for a wide range of parameters $s,r$.
\item[(iii)] In Section~\ref{app:regularity_of_spectral_estimator},
we use parts (i)--(ii)
to derive a convergence rate of $\hat q_n^{(\mathrm{lap})}$ under a suitable H\"older norm.
This
will allow us to deduce that with probability tending to one, 
$\hat q_n^{(\mathrm{lap})}$ 
satisfies the regularity conditions
needed for Caffarelli's regularity theory~(condition~\ref{assm:caffarelli}).
\item[(iv)] In Section~\ref{app:W2_conv_of_spectral_estimator}, we combine
parts (i)--(iii) to obtain a convergence rate of 
$\hat q_n^{(\mathrm{lap})}$ under the Wasserstein distance, using
its equivalence to a negative Sobolev norm (cf. equation~\eqref{eq:peyre}). 
\item[(v)] In Section~\ref{app:pf_thm_smooth_domains}, we combine these ingredients to deduce the claim,
by the same strategy as in the proof of Theorems~\ref{thm:two_sample_density}
and~\ref{thm:two_sample_kernel}.
\end{enumerate}

Throughout our development, an important role will be played
by the Neumann boundary condition,  together with 
assumption~\ref{assm:smooth_domain}. On the one hand, 
we will see that these conditions
are sufficient for the space $\calH^{s,r}(\Omega)$ to admit
a Littlewood-Paley characterization; cf.~Lemmas~\ref{lem:multiplier} and~\ref{lem:littlewood_paley}.
On the other hand, these conditions ensure that the eigenvalues 
and spectral function
of the Neumann Laplacian grow at a sufficiently slow rate to obtain
our stated convergence rates for density estimation;
cf. Lemmas~\ref{lem:spectral_function}--\ref{lem:weyl}.  
}
 
\subsection{Spectrally-Defined Sobolev Spaces}
\label{app:spectrally_defined_sobo}
To facilitate our analysis of the estimators $\tilde p_n, \tilde q_m$, 
we will begin by showing that the Bessel potential Sobolev spaces $H^{s,r}(\Omega)$,
defined in Appendix~\ref{app:sobolev}, can be characterized via the spectrum of the Neumann Laplacian.
More specifically, we will work with the following subspaces of $H^{s,r}(\Omega)$, 
which have suitably vanishing Neumann trace. 
{Throughout what follows, we always denote by $\nu$ 
an outward-pointing unit normal vector to $\partial\Omega$, and by 
$\partial u/\partial\nu$ the 
weak normal derivative operator, whose trace on $\partial\Omega$  is  well-defined  and takes values in
$L^r(\partial\Omega)$ 
whenever $u \in H^{t,r}(\Omega)$ with $t > 1 + 1/r$~\citep[Theorem 4.6]{taira2016}.}
We will assume
 $s \geq 0$ and $r \geq 2$ throughout this section. Furthermore, we adopt the nonstandard notation
$H_0^{s,r}(\Omega) = H^{s,r}(\Omega) \cap L_0^r(\Omega)$.
\begin{definition}[\cite{triebel1995}, Section 4.3.3]
Let $\Omega$ be a domain satisfying condition~\ref{assm:smooth_domain}.
For all $s \geq 0$ and $r \geq 2$, let $H_N^{s,r}(\Omega)$ be defined as follows.
\begin{enumerate}
\item[(i)] If $s - 1/r < 1$,  set $H_N^{s,r}(\Omega) = H_0^{s,r}(\Omega)$.  
\vspace{0.05in}
\item[(ii)] If $2k +1< s-1/r < 2(k+1)+1$ for some $k \geq 0$,  set
$$H_N^{s,r}(\Omega) = \left\{ u \in H_0^{s,r}(\Omega): \frac{\partial  \Delta^j u}{\partial \nu} = 0 \text{ on } \partial\Omega,
~ 0 \leq j \leq k\right\}.$$
\item[(iii)] If $2k +1= s-1/r$ for some $k \geq 0$, extend $\nu$ continuously to $\Omega$, and set
$$H_N^{s,r}(\Omega) = \left\{ u \in H_0^{s,r}(\Omega): \frac{\partial  \Delta^j u}{\partial \nu} = 0 \text{ on } \partial\Omega,
~ 0 \leq j < k,\, \frac{\partial \Delta^k u}{\partial \nu} \in H^{\frac 1 r,r}(\bbR^d)  \right\},$$
where $\partial \Delta^k u/\partial \nu$ is extended by zero outside of $\Omega$.
\end{enumerate}
\end{definition}
Note that, in part (iii), the normal vector $\nu$  may be extended smoothly away from the boundary
since we assumed $\partial\Omega$ is $\calC^\infty$, thus $\nu$ is itself smooth. The particular choice of extension does not
alter the definition of the space. 
As we shall see, the relevance of the space $H_N^{s,r}(\Omega)$ lies in the fact that the 
fractional Neumann Laplacian $(-\Delta)^{s/2}$, defined next,
is an isomorphism of $H_N^{s,r}(\Omega)$ onto $L_0^r(\Omega)$. 



 Recall that $0<\lambda_1 \leq \lambda_2 \leq \dots$ is the sequence of eigenvalues corresponding
to the eigenbasis $\{\eta_\ell\}_{\ell=1}^\infty$. 
Define the spectral fractional Laplacian  for all $u \in L_0^2(\Omega)$ by 
$$(-\Delta)^{s/2} u = \sum_{\ell=1}^\infty \lambda_\ell^{s/2} \calL_\ell[u] \eta_\ell,\quad \text{where } ~\calL_\ell[u] := \langle u,\eta_\ell\rangle_{L^2(\Omega)},~~ \ell=1,2,\dots$$
Furthermore, for $s \geq 0$ and $r \geq 2$, let $\calH^{s,r}(\Omega)$ denote the Banach space of functions $u \in L_0^r(\Omega)$ such that 
the norm 
$$\|u\|_{\calH^{s,r}(\Omega)} := \big\| (-\Delta)^{s/2} u \big\|_{L^r(\Omega)} $$
is finite. 
In the special case $r=2$, 
$\calH^{s,r}(\Omega)$ becomes a Hilbert space, as noted by~\cite{dunlop2020}. In this case we omit the superscript ``$r$'' and simply write $\calH^s(\Omega):= \calH^{s,2}(\Omega)$.
The corresponding inner product on this space is given by
$$\langle u,v\rangle_{\calH^{s}(\Omega)} = \sum_{\ell=1}^\infty \lambda_\ell^{s} \calL_\ell[u] \calL_\ell[v],\quad u,v\in \calH^s(\Omega).$$
It is easy to see by Parseval's identity that $\|\cdot \|_{\calH^{s}(\Omega)} = \sqrt{\langle\cdot,\cdot\rangle_{\calH^{s}(\Omega)}}$.

Our first aim is to show that the spaces $\calH^{s,r}(\Omega)$ and $H_N^{s,r}(\Omega)$ coincide, with equivalent norms.  
To simplify our proof, we will focus only on the ranges of $s$ and $r$ which we will need in our development. 
\begin{proposition}
\label{prop:equiv_sobolev}
Let $\Omega$ satisfy condition~\ref{assm:smooth_domain}. Assume that one of the following conditions holds:
\begin{align}
\begin{cases}
s \in [0,2] \\
r \geq 2 
\end{cases} 
\quad \text{or} \quad 
\begin{cases}
s \geq 0 \\
r =2. 
\end{cases} 
\end{align}
Then, with equivalent norms,
\begin{equation}
\label{eq:equality_sobolev_spaces}
H_N^s(\Omega) = \calH^{s}(\Omega).
\end{equation}
\end{proposition}
Proposition~\ref{prop:equiv_sobolev} was   stated without proof by~\citet[Section 5]{seeley1972}. 
For completeness, we provide a self-contained proof 
below. 
Let us also note that, in the Hilbertian case $r=2$, Proposition~\ref{prop:equiv_sobolev}
was  established for integer exponents $s$ by~\citet[Lemma 7.1]{dunlop2020}, and for $s \in [0,2]$ by~\cite{kim2020}.  
For the case $r >2$, a  
result similar to~Proposition~\ref{prop:equiv_sobolev} was 
proven by~\citet[Theorem 3.1]{cao2020}, however they considered the setting where $\Omega$ is the entire Euclidean space $\bbR^d$, 
thus they employed a different definition of the fractional Laplacian.

Before turning to the proof, let us begin by stating a generalization of Mikhlin's multiplier theorem for the  Neumann Laplacian,
which we will use repeatedly in the following subsections. 
This result follows from Theorem 1.3 of~\cite{xu2011}, or Theorem 7.9 of~\cite{kerky2015}.
\begin{lemma}[Mikhlin's Multiplier Theorem for the Neumann Laplacian]
\label{lem:multiplier}  
Let $m \in \calC^\infty(\bbR_+)$ satisfy Mikhlin's multiplier condition:
\begin{align}\label{eq:cond_mikhlin}
|D^\alpha m(x)| \leq c |x|^{-|\alpha|}, \quad\text{for all } \alpha=1, \dots, d +1,\,x \in \bbR_+.
\end{align} 
Then, for any $1 < r < \infty$, there exists $C > 0$ depending on $\Omega,c,r$ such that for any $f \in L_N^r(\Omega)$, 
\begin{equation}
\label{eq:multiplier}
\left\|\sum_{\ell=1}^\infty m(\sqrt{\lambda_\ell}) \calL_\ell[f] \eta_\ell\right\|_{L^r(\Omega)} \leq C 
\|f\|_{L^r(\Omega)}.
\end{equation} 
In this case, we say that $m$ is an $L^r_N(\Omega)$ multiplier.
\end{lemma}

We now turn to the proof of Proposition~\ref{prop:equiv_sobolev}.  
\subsubsection{Proof of Proposition~\ref{prop:equiv_sobolev}}
\label{app:pf_equiv_sobolev}
Let us begin with the case where $s \in [0,2]$ and $r \geq 2$. 
The result is trivial when $s=0$, in which case we have
$$\calH^{0,r}(\Omega) = H_N^{0,r}(\Omega) = L^r_0(\Omega).$$
Next, we prove the claim when $s=2$, in which case the space $H_N^{s,r}(\Omega)$ takes the form
$$H_N^{2,r}(\Omega) = \left\{ u \in H^{2,r}(\Omega): \frac{\partial u}{\partial \nu} = 0~\text{ on } \partial\Omega\right\}.$$
By~\citet[Theorem 4.2.4, p.~316]{triebel1995}, we have that $H^{2,s}(\Omega) = W^{2,s}(\Omega)$
with equivalent norms, where $W^{k,r}(\Omega)$ is the standard $L^r(\Omega)$ Sobolev norm with integer smoothness parameter $k \in \bbN$.
Thus, for all $u \in H_N^{2,r}(\Omega)$, we have
$$\|u\|_{\calH^{2,r}(\Omega)} = \|\Delta u\|_{L^r(\Omega)} \lesssim  \|u\|_{W^{2,r}(\Omega)}
\lesssim \|u\|_{H^{2,r}(\Omega)}.$$
It is a standard fact that $-\Delta$ is a bijection
of $H_N^{2,r}(\Omega)$ onto $L_0^r(\Omega)$~(e.g.~\cite{franke1995}). Furthermore, the above
display shows that this mapping is continuous, thus, by the Banach isomorphism theorem, 
 the operator $(-\Delta)^{-1}:L^r_0(\Omega)\to H_N^{2,r}(\Omega)$ is bounded. Equivalently, for all $w \in \calH^{2,r}(\Omega)$, 
we obtain $\|w\|_{H^{2,r}(\Omega)} \lesssim \|w\|_{\calH^{2,r}(\Omega)}$. We deduce that
$\calH^{2,r}(\Omega) = H_N^{2,r}(\Omega)$, with equivalent norms.

It thus remains to prove the claim for all $s \in (0,2)$, which we shall do using an interpolation argument. 
Given two complex Banach spaces
$A$ and $B$, let $(A,B)_{[\theta]}$ denote the complex interpolation space
between $A$ and $B$, for any $\theta \in [0,1]$~\citep{bergh1976}. 
It is well-known that the complex interpolation of any two Bessel potential spaces $H^{s_0,r}(\Omega)$
and $H^{s_1,r}(\Omega)$ is itself a Bessel potential space
(see for instance~\cite{triebel1995}, 
Theorem 4.3.1/1). 
The following is an analogue of this result for spaces with zero Neumann trace.
\begin{lemma}[\cite{seeley1972}]\label{lem:interpolation_bessel}
Let $1 < r < \infty$. Then, for all $s \geq 0$ and $\theta \in (0,1)$, 
$$H_N^{\theta s,r}(\Omega) = \big(L_0^{r}(\Omega), H_N^{s,r}(\Omega)\big)_{[\theta]}.$$
\end{lemma}
By combining Lemma~\ref{lem:interpolation_bessel} with
what we have shown above, it holds  for any $s \in [0,2]$,
$$H^{s,r}_N(\Omega) = \big(L_0^{r}(\Omega), H_N^{2,r}(\Omega)\big)_{[s/2]} = 
\big( L_0^{r}(\Omega), \calH^{2,r}(\Omega)\big)_{[s/2]},$$
To complete the proof of the claim, it thus suffices to prove that 
\begin{equation}\label{eq:interpolation_of_our_space}
\calH^{s,r}(\Omega) = \big(L_0^{r}(\Omega), \calH^{2,r}(\Omega)\big)_{[s/2]}
\end{equation}
for any $s \in [0,2]$. We will do so by following
 similar lines as the proof of Theorem 6.4.5 of~\cite{bergh1976}. Specifically, the following can be inferred
 from their Theorem~6.4.2.
\begin{lemma}
\label{lem:retract}
Suppose there exists a collection of complex Banach spaces $(B_s)_{s\in[0,2]}$ with
$B_s\subseteq B_{s'} \subseteq L_N^r(\Omega)$ for all $0 \leq s' \leq s \leq 2$,
fulfilling the following properties for all $s \in [0,2]$:
\begin{enumerate}
\item[(i)] $B_s = (B_0,B_2)_{[s/2]}$. 
\item[(ii)] There exists a continuous linear map $\mathscr{I}: \calH^{s,r}(\Omega) \to B_s$. 
\item[(iii)] There exists a continuous linear map $\mathscr{P} : B_s \to \calH^{s,r}(\Omega)$ such that
{ $\mathscr{P}\circ \mathscr{I} = 
\mathrm{Id}_{\calH^{s,r}(\Omega)}$}.
\end{enumerate}
Then, equation~\eqref{eq:interpolation_of_our_space} holds for all $s \in [0,2]$.
\end{lemma} 
The claim will therefore follow if we can exhibit a collection of Banach spaces $(B_s)_{s \in [0,2]}$
satisfying the properties of Lemma~\ref{lem:retract}. 
To do so, it will be convenient to show that  $\calH^{s,r}(\Omega)$ lies in the Triebel-Lizorkin family of spaces. 
Indeed, it is well-known that the standard Sobolev space $H^{s,r}(\Omega)$
is equal to the Triebel-Lizorkin space $F_{r,2}^s(\Omega)$~\citep{triebel1995}, and an analogue of this
fact for the space $\calH^{s,r}(\Omega)$ has been derived by~\cite{kerky2015}. 
We state a variant of their result   below, beginning with  some notation. 
Let $\xi_0,\xi \in \calC^\infty(\bbR_+)$
be an admissible pair of Littlewood-Paley functions, so that $\supp(\xi_0) \subseteq [0,2]$, 
$\supp(\xi) \subseteq [1/2,2]$, and 
$\sum_{j\geq 0} \xi_j(\lambda) = 1$ for all $\lambda \in \bbR$, where we write $\xi_j = \xi(2^{-j}(\cdot))$
  (cf. Lemma 6.1.7 of~\cite{bergh1976} for a construction of such 
functions).
We then have the following statement. 
\begin{lemma}[\cite{kerky2015}]
\label{lem:littlewood_paley}
Let $1 < r < \infty$. Then, for all $u \in L_0^r(\Omega)$,
$$\|u\|_{\calH^{s,r}(\Omega)} \asymp \|u\|_{\mathscr{F}_{r,2}^s(\Omega)}:= 
\left\| \left(\sum_{j=0}^\infty   \bigg|2^{js}\sum_{\ell=1}^\infty  \xi_j(\lambda_\ell^{1/2}) \calL_\ell[u] \eta_\ell(\cdot) \bigg|^2 \right)^{\frac 1 2}\right\|_{L^r(\Omega)}$$
\end{lemma}
\begin{proof}[Proof of Lemma~\ref{lem:littlewood_paley}] 
It follows from Theorem 7.8 of~\cite{kerky2015} that
$$ \|u\|_{\mathscr{F}_{r,2}^s(\Omega)}\asymp\left\|(\mathrm{Id}-\Delta)^{s/2}u\right\|_{L^r(\Omega)} 
= \left\|\sum_{\ell=1}^\infty (1+\lambda_\ell)^{s/2} \calL_\ell[u] \eta_\ell \right\|_{L^r(\Omega)}$$
where we used the fact the Neumann Laplacian over a convex domain $\Omega$ with smooth boundary
satisfies the conditions of the operator~$L$ in the introduction
of~\cite{kerky2015}. Indeed, the Gaussian
upper bounds on the heat kernel generated by the Neumann Laplacian are given for instance in Theorem~3.3.5 of~\cite{davies1989}, 
while the H\"older continuity of the heat kernel can be deduced, as in Proposition~3.1 of~\cite{sturm1996},
from the parabolic Harnack inequality (see for instance Theorem 5.3.5 of~\cite{davies1989}). 
See also 
\citet[Theorem 3.1]{saloff2010}, and remarks thereafter. 
To prove our claim, it thus suffices to show that
\begin{equation}
\label{eq:homog_inhomog}
\|(-\Delta)^{s/2} u\|_{L^r(\Omega)} \asymp \|(\mathrm{Id}-\Delta)^{s/2} u \|_{L^r(\Omega)}.
\end{equation}
Notice first that the maps $m(\lambda) = (1+\lambda^{s/2}) / (1+\lambda)^{s/2}$
and $1/m(\lambda)$, $\lambda \in \bbR_+$ satisfy the conditions of Lemma~\ref{lem:multiplier}, thus
$$\|(\mathrm{Id}-\Delta)^{s/2} u\|_{L^r(\Omega)} \asymp \left\| \sum_{\ell=1}^\infty (1+\lambda_\ell^{s/2}) \calL_\ell[u] \eta_\ell\right\|_{L^r(\Omega)}
\asymp \|u\|_{L^r(\Omega)} + \| (-\Delta)^{s/2} u\|_{L^r(\Omega)}.$$
 It thus suffices to show that $\|u\|_{L^r(\Omega)} \lesssim \| (-\Delta)^{s/2} u\|_{L^r(\Omega)}$.
This follows from the fact that 
 any map $m\in \calC^\infty(\bbR_+)$, of the form $m(\lambda) = \lambda^{-s/2}$
for $\lambda > \lambda_1/2$, is an $L_N^r(\Omega)$ multiplier. The claim follows.
\end{proof}
Let $s \in [0,2]$. Denote by $\ell_2^s$ the set
of real-valued sequences $a =(a_j)_{j\geq 0}$ such that
$$\|a\|_{\ell^s_2} := \left(\sum_{j=0}^\infty (2^{js} |a_j|)^2\right)^{\frac 1 2 } < \infty$$
Furthermore, let $L_0^r(\ell_2^s)$ denote the space of all sequences 
$F=(f_j)_{j\geq 0} \subseteq L_0^r(\Omega)$
such that 
$$\|F\|_{L^r(\ell_2^s)}^r := \int_\Omega \|F(x)\|_{\ell_2^s}^r dx < \infty.$$
By Theorems 5.1.2 and 5.6.3 of~\cite{bergh1976}, the Banach spaces $B_s := L_0^r(\ell_2^s)$, for $0\leq s \leq 2$, 
satisfy condition~(i) of Lemma~\ref{lem:retract}. Furthermore, the map 
$$\mathscr{I}:\calH^{s,r}(\Omega) \to B_s, \quad \mathscr{I}:u \mapsto \left(\sum_{\ell=1}^\infty \xi_j(\sqrt{\lambda_\ell}) \calL_\ell[u] \eta_\ell(\cdot)\right)_{j\geq 0}$$
satisfies, by Lemma~\ref{lem:littlewood_paley}, $\|u\|_{\calH^{s,r}(\Omega)} \asymp \|u\|_{\mathscr{F}_{r,2}^s(\Omega)} = \|\mathscr{I} u\|_{B_s}$, and thus satisfies condition~(ii) of Lemma~\ref{lem:retract}. 
Finally, define the map
$$\mathscr{P}:B_s \to \calH^{s,r}(\Omega),\quad \mathscr{P}:(f_j)_{j\geq 0}\mapsto 
\sum_{\ell=1}^\infty \sum_{j=0}^\infty \tilde\xi_j(\sqrt{\lambda_\ell}) \calL_\ell[f_j]\eta_\ell,$$
where
\begin{align*}
\tilde \xi_j &= \xi_{j-1} + \xi_j + \xi_{j+1}, \quad j=1,2,\dots
\end{align*}
with the convention that $\xi_{-m} = 0$ for any {$m > 0$}. Notice that for all $u \in \calH^{s,r}(\Omega)$, we have
\begin{align*}
\mathscr{P} \mathscr{I} u 
 &= \sum_{\ell=1}^\infty \sum_{j=0}^\infty  \tilde \xi_j(\sqrt{\lambda_\ell})\xi_j(\sqrt{\lambda_\ell})\calL_\ell[u] \eta_\ell.
\end{align*}
Since $\xi_j$ has disjoint support from $\xi_k$ for any $j,k \in \bbN$, $|j-k| \geq 2$, we have
$$\tilde \xi_j(\sqrt{\lambda_\ell}) \xi_j(\sqrt{\lambda_\ell}) =
 \sum_{k=0}^\infty \xi_k(\sqrt{\lambda_\ell}) \xi_j(\sqrt{\lambda_\ell}) = \xi_j(\sqrt{\lambda_\ell}),$$
 where we used the fact that $\{\xi_j\}_{j\geq 0}$ forms a partition of unity. By reapplying this property, we obtain 
\begin{align*}
\mathscr{P} \mathscr{I} u 
 &= \sum_{\ell=1}^\infty \sum_{j=0}^\infty  \xi_j(\sqrt{\lambda_\ell})\calL_\ell[u] \eta_\ell = \sum_{\ell=1}^\infty \calL_\ell[u] \eta_\ell = u.
\end{align*}
It thus remains to show that $\mathscr{P}$ is a bounded linear operator. 
To do so, we will make use of the Hardy-Littlewood maximal function
$$M f(x) = \sup_{B\in \calB_x} \frac 1 {\calL(B)}\int_B f(y)dy, \quad x \in \Omega,$$
for any $f \in L^1(\Omega)$, where $\calB_x$ is the set of balls of the form $\{y\in \Omega: \|x-y\|< \delta\}$, $\delta > 0$. 
For our purposes, the utility of the maximal function lies in the fact that it   induces a bounded operator from $B_s$ into 
itself (cf. Theorem~5.6.6 of~\cite{grafakos2009}):
there exists a constant $C > 0$ depending on $\Omega,r$ such that
for any $s \in [0,2]$ and $(f_j)_{j\geq 0} \in B_s$, it holds that
\begin{equation}\label{eq:maximal_fn}
\left\| \left( \sum_{j=0}^\infty (2^{js} |M(f_j)|)^2\right)^{\frac 1 2}\right\|_{L^r(\Omega)}
\leq C \left\| \left(\sum_{j=0}^\infty (2^{js}|f_j|)^2\right)^{\frac 1 2}\right\|_{L^r(\Omega)}.
\end{equation}
Let us now turn to bounding the operator norm of $\mathscr{P}$. Using Lemma~\ref{lem:littlewood_paley}, we have
\begin{align}
\label{eq:bounding_mathscrP}
\nonumber
\|\mathscr{P} (f_j)_{j}\|_{\calH^{s,r}(\Omega)}
 &\asymp \left\| \left( \sum_{j=0}^\infty \left( 2^{js} \sum_{\ell=1}^\infty \sum_{k=0}^\infty \xi_j(\sqrt{\lambda_\ell})  \tilde \xi_k(\sqrt{\lambda_\ell}) \calL[f_k] \eta_\ell\right)^2\right)^{\frac 1 2}\right\|_{L^r(\Omega)} \\ 
 &= \left\| \left( \sum_{j=0}^\infty \left( 2^{js} \sum_{\ell=1}^\infty \sum_{k=j-2}^{j+2} \xi_j(\sqrt{\lambda_\ell})  \tilde \xi_k(\sqrt{\lambda_\ell}) \calL[f_k] \eta_\ell\right)^2\right)^{\frac 1 2}\right\|_{L^r(\Omega)},
\end{align} 
where we used the fact that $\xi_j$ has disjoint support from $\tilde \xi_k$ whenever $|j-k| \geq 3$. 
For any $j,k \geq 0$, let $\Lambda_{jk}$ be the operator defined by
$$\Lambda_{jk} f = \sum_{\ell=1}^\infty \xi_j(\sqrt{\lambda_\ell})\tilde \xi_k(\sqrt{\lambda_\ell}) \calL[f]\eta_\ell,$$
for any $f \in L_N^r(\Omega)$. 
In order to bound the right-hand side of equation~\eqref{eq:bounding_mathscrP}, we will relate
the operator $\Lambda_{jk}$ to the maximal function $M$, in the following Lemma. 
This result is largely inspired by~\citet[Eq.\,(5.9)]{georgiadis2023}. 
\begin{lemma}\label{lem:Lambda_M}
There exists a constant $C > 0$ such that for all $s \in [0,2]$, $j \geq 0$, 
$f \in L^r_N(\Omega)$, and $x \in \Omega$,  
$$|\Lambda_{jk} f(x)| \leq C M f(x).$$
\end{lemma}
Before proving the Lemma, let us show how it implies the claim. Write $f_{-1}=f_{-2}=0$. Continuing from equation~\eqref{eq:bounding_mathscrP}, we obtain
from Lemma~\ref{lem:Lambda_M} that
\begin{align*}
\|\mathscr{P}(f_j)_j\|_{\calH^{s,r}(\Omega)} 
 &\lesssim   \left\| \left( \sum_{j=0}^\infty \left( 2^{js}\sum_{k=j-2}^{j+2} M f_k \right)^2\right)^{\frac 1 2}\right\|_{L^r(\Omega)} \\
 &\lesssim   \left\| \left( \sum_{j=0}^\infty \left( 2^{js}  M f_j \right)^2\right)^{\frac 1 2}\right\|_{L^r(\Omega)} \\
 &\lesssim \|(f_j)_j\|_{B_s},
\end{align*}
where the final inequality follows from equation~\eqref{eq:maximal_fn}.
This proves the boundedness of the operator $\mathscr{P}$, and it remains to prove Lemma~\ref{lem:Lambda_M}, which we shall do next.

By Theorem 3.1 of~\cite{kerky2015} and the definition of the functions~$\xi_j$, the operator $\Lambda_{jk}$
is an integral operator, 
$$\Lambda_{jk} f(x) = \int_\Omega \Gamma_{jk}(x,y) f(y)dy,\quad f \in L^r(\Omega), x \in \Omega,$$
where the kernel $\Gamma_{jk}$ is real-valued and enjoys the bound
$$|\Gamma_{jk}(x,y)| \lesssim_a \calL\big(B(x,2^{-j})\big)^{-1}(1+2^j \|x-y\|)^{-a},$$
for any $a$ sufficiently large,  where $B(x,\delta) = \{y \in \Omega: \|x-y\| < \delta\}$.
Note that the implicit constant above is independent of $j,k$. 
Under condition~\ref{assm:smooth_domain}, 
we have $\calL\big(B(x,2^{-j})\big)^{-1} \lesssim 2^{jd}$.
Letting $D = \diam(\Omega)$, we thus have uniformly in $x \in \Omega$,
\begin{align*}
|\Lambda_{jk}f(x)|
 &= \left| \int_\Omega \Gamma_{jk} (x,y) f(y)dy \right| \\ 
 &\leq \sum_{k=0}^{j-1} \int_{\{y\in \Omega : 2^{k-j} \leq \frac{\|x-y\|}{D} \leq 2^{k-j+1}\}}2^{jd}(1+2^j \|x-y\|)^{-a}| f(y)|dy \\ 
 &\leq \sum_{k=0}^{j-1} \int_{\{y\in \Omega :  \frac{\|x-y\|}{D} \leq 2^{k-j+1}\}}2^{jd-ka}| f(y)|dy \\ 
 &\lesssim \sum_{k=0}^{j-1}   2^{jd-ka +  d(k-j+1)} Mf(x) \\ 
 &\lesssim Mf(x) \sum_{k=0}^{\infty}   2^{-k(a-d)}.
\end{align*}
Choosing $a > d$, we obtain the conclusion of Lemma~\ref{lem:Lambda_M}, and hence of Proposition~\ref{prop:equiv_sobolev}
in the regime $s \in [0,2]$, $r \geq 2$. 

It remains to prove Proposition~\ref{prop:equiv_sobolev} in the regime $s \geq 0$ when $r=2$. 
By~\citet[Theorem 7.1]{dunlop2020},
we already know that the claim holds for integer values of $s$. We 
will again use an interpolation argument to deduce the claim for non-integer values of $s$. 
Indeed, as noted by~\cite{dunlop2020}, for $s_1 \in \bbN$, and any $0 \leq s \leq s_1$, 
the space $\calH^s(\Omega)$ can be written as the following real interpolation space~\citep{bergh1976}:
$$\calH^{s}(\Omega) = \big( L_0^2(\Omega), \calH^{s_1}(\Omega)\big)_{s/s_1,2}.$$
Theorem 7.1 of~\cite{dunlop2020} thus implies
$$\calH^{s}(\Omega) = \big( L_0^2(\Omega), H_N^{s_1}(\Omega)\big)_{s/s_1,2}.$$
The right-hand side of the above display is  equal to $H_N^s(\Omega)$ by~\cite{lofstrom1992}, and the claim follows
by taking $s_1$ arbitrarily large.\qed

\subsection{Density Estimation under the $\calH^{s,r}(\Omega)$ Norms}
\label{app:density_estimation_under_spectral_norm}
Our aim in this subsection is to prove the following result, which is our main technical
tool for deriving Theorem~\ref{thm:smooth_domains}. Let $p_{L_n}(x):=\bbE[\tilde p_n(x)]$ for all $x \in \Omega$. 
\begin{proposition}
\label{prop:density_risk_smooth_domain}
Let $r \geq 2$, $M,s,c> 0$, and assume that $p \in \calC_N^s(\Omega;M)$. Assume that either $s \leq 2$ or $r=2$. 
Assume further that $L_n = c n^a$ for some $0<a<1$. Then, for any given $-\infty <  t < s$, 
there exists a constant $C > 0$ depending on $\Omega, M,d,t,s,r,c,a$ such that
\begin{align*}
\| p_{L_n} - p\|_{\calH^{t,r}(\Omega)}^r &\leq C  L_n^{-\frac{r(s-t)}{d}},
\end{align*}
and, if we further assume
 $t \geq 0$,  
\begin{align*}
\bbE \|\tilde p_n - p_{L_n} \|_{\calH^{t,r}(\Omega)}^r &\leq C   n^{-\frac r 2}  L_n^{r\left(\frac{t}{d} + \frac 1 2\right)}.
\end{align*} 
In particular, if $L_n^{1/d} \asymp n^{\frac 1 {d+2s}}$, then for  $t \geq 0$, 
$$\bbE  \|\tilde p_n - p \|_{\calH^{t,r}(\Omega)}^r \lesssim n^{-\frac{r(s-t)}{2s+d}}.$$
\end{proposition}

\subsubsection{Proof of Proposition~\ref{prop:density_risk_smooth_domain}} 
We begin by bounding the variance term, assuming $t \geq 0$. 
Recall that $L_n \asymp n^a$ with $0 < a < 1$, and define $r_0= 2/(1-a)$. 
We begin by proving the claim when $r \geq r_0$. 

We wish to bound the quantity
$$V_n = \bbE \big\|(-\Delta)^{t/2}[\tilde p_n-p_{L_n}]\big\|_{L^r(\Omega)}^r
 = \left\| \sum_{\ell=1}^{L_n} \calL_\ell[\tilde p_n-p_{L_n}] \omega_\ell \lambda_\ell^{t/2} \eta_\ell\right\|_{L^r(\Omega)}^r.$$ 
Notice that
$$\calL_\ell[\tilde p_n-p_{L_n}]  = \halpha_\ell - \alpha_\ell = \frac 1 n \sum_{i=1}^n (\eta_\ell(X_i)-\bbE[\eta_\ell(X_i)]),$$
so that
$$V_n = \int_\Omega\left| \frac 1 n \sum_{i=1}^n U_{n,i}(x)\right|^r dx,
\quad \text{with } U_{n,i}(x) = \sum_{\ell=1}^{L_n} \omega_\ell \lambda_\ell^{t/2} \eta_\ell(x)\big(\eta_\ell(X_i) - \bbE[\eta_\ell(X_i)]\big).$$
By Rosenthal's inequalities~\citep{rosenthal1970,rosenthal1972}, we deduce that
\begin{equation}\label{eq:rosenthal}
V_n \lesssim n^{-r/2} \int_\Omega \bbE[|U_{n,1}(x)|^2]^{r/2}dx + n^{1-r} \int_\Omega \bbE|U_{n,1}(x)|^rdx.
\end{equation} 
 We will provide a somewhat crude bound
   on $\bbE|U_{n,1}(x)|^r$, 
 followed by a sharp bound on $\bbE|U_{n,1}(x)|^2$.
 The density $p$ is H\"older-smooth over $\Omega$, and is in particular bounded. We thus have
 for any $x \in \Omega$, 
\begin{align*}
\bbE|U_{n,1}(x)|^r 
 &\lesssim \left\| \sum_{\ell=1}^{L_n} \omega_\ell \lambda_\ell^{t/2} \eta_\ell(x) \eta_\ell\right\|_{L^r(\Omega)}^r \\
 &\leq \lambda_{L_n}^{rt/2} \sup_{y \in \Omega} \left(\sum_{\ell=1}^{L_n} |\eta_\ell(x)||\eta_\ell(y)|\right)^r
 \leq \lambda_{L_n}^{rt/2} \left(e_{L_n}(x,x)e_{L_n}(y,y)\right)^{\frac r 2},
 \end{align*} 
 where we define the {\it spectral function} of the Neumann Laplacian by 
$$e_L(x,y) = \sum_{\ell=1}^L \eta_\ell(x) \eta_\ell(y),\quad x,y \in \Omega, ~ L \geq 1.$$
We will make use of the following bound on the spectral function. 
\begin{lemma}[\cite{hormander2007}, Theorem 17.5.3]
\label{lem:spectral_function}
There exists a constant $C > 0$ such that for all $L \geq 1$, 
$$\| e_L\|_{L^\infty(\Omega\times \Omega)} \leq C \lambda_L^{\frac{d}{2}}.$$
\end{lemma}
In order to bound the eigenvalue appearing on the right-hand side of the above Lemma, we make use of Weyl's Law for the Neumann
Laplacian (see for instance~\cite{dunlop2020}, Lemma 7.10, and references therein). 
\begin{lemma}[Weyl's Law]
\label{lem:weyl}
There exists a constant $c > 0$ depending only on $\Omega$ such that
$$\ell^{2/d}/c \leq \lambda_\ell \leq c \ell^{2/d}, \quad \ell=1,2,\dots$$
\end{lemma}
By Lemmas~\ref{lem:spectral_function}--\ref{lem:weyl}, we 
obtain 
\begin{equation}
\label{eq:U1r_bound}
\bbE|U_{n,1}(x)|^r \lesssim \lambda_{L_n}^{\frac{rt}{2} + \frac {dr} 2} \asymp L_n^{r\left(\frac{t}{d} + 1\right)}.
\end{equation} 
Using Plancherel's identity, a sharper bound in available in the quadratic case:
\begin{align}
\nonumber 
\bbE|U_{n,1}(x)|^2
 &\lesssim \left\| \sum_{\ell=1}^{L_n} \omega_\ell \lambda_\ell^{t/2} \eta_\ell(x) \eta_\ell\right\|_{L^2(\Omega)}^2 \\
\nonumber 
 &=   \sum_{\ell=1}^{L_n} \omega_\ell^2 \lambda_\ell^{t} \eta_\ell^2(x)   \\
\nonumber 
 &\lesssim \lambda_{L_n}^t  e_{L_n}(x,x) \\
 \label{eq:plancherel_bound_U1}
 &\lesssim \lambda_{L_n}^{t + \frac d 2} \asymp L_n^{\frac{2t}{d} + 1}.
 \end{align}
Combining equation~\eqref{eq:rosenthal} with the bounds~\eqref{eq:U1r_bound}
and~\eqref{eq:plancherel_bound_U1}, we have thus shown:
$$V_n \lesssim n^{-\frac r 2}  L_n^{r\left(\frac{t}{d} + \frac 1 2\right)}  + n^{1-r} L_n^{r\left(\frac{t}{d} + 1\right)}.$$
Since $r \geq r_0$, the second term is of lower order than the first, and we obtain the claimed bound
$$V_n \lesssim n^{-\frac r 2}  L_n^{r\left(\frac{t}{d}+\frac 1 2\right)}.$$
If we instead have $r < r_0$, then Jensen's inequality and the above bound imply
\begin{align*}
\bbE\big\|(-\Delta)^{t/2}[\tilde p_n-p_{L_n}]\big\|_{L^r(\Omega)}^r
 &\leq \bbE\big\|(-\Delta)^{t/2}[\tilde p_n-p_{L_n}]\big\|_{L^{r_0}(\Omega)}^r \\
 &\leq \Big(\bbE\big\|(-\Delta)^{t/2}[\tilde p_n-p_{L_n}]\big\|_{L^{r_0}(\Omega)}^{r_0}\Big)^{\frac{r}{r_0}}\\
 &\lesssim \Big( n^{-\frac {r_0} 2}   L_n^{r_0\left(\frac{t}{d}+\frac 1 2\right)}\Big)^{\frac{r}{r_0}}\\ 
 &=  n^{-\frac r 2}  L_n^{r\left(\frac{t}{d}+\frac 1 2\right)}. 
\end{align*}
This completes our bound of the fluctuations when $t \geq 0$.
 
We now turn to bounding the bias term, where we now allow $t$ to be any real number. 
Our main technical tool will be the multiplier result in Lemma~\ref{lem:multiplier}. 
Define the map 
\begin{align}
\label{eq:multiplier_smooth_domain}
m(x) = |x|^{t-s}(1-\tau(|x|^2)),\quad x \in \bbR,
\end{align}
where we recall that $\tau$ is the function used to define
the weights $\omega_j$. Notice that $m(x) = 0$ for all $|x|\leq 1/2$. 
Furthermore, it is clear that $m \in \calC^\infty(\bbR_+)$, 
and that $m$ satisfies Mikhlin's condition~\eqref{eq:cond_mikhlin}. 
It is then also clear that the map $m(\cdot/\sqrt{\lambda_{L_n}})$ satisfies this condition.

Now, with the convention $\omega_\ell = 0$ for all $\ell \geq L_n+1$, 
$$(-\Delta)^{t/2} [p - p_{L_n} ]= \sum_{\ell=1}^\infty (1-\omega_\ell) \lambda_\ell^{t/2} \alpha_\ell \eta_\ell 
 = \lambda_{L_n}^{-\frac{s-t}{2}} \sum_{\ell=1}^\infty m(\sqrt{\lambda_\ell/\lambda_{L_n}}) \lambda_\ell^{s/2}  \alpha_\ell\eta_\ell, $$
thus, applying Lemma~\ref{lem:multiplier} to the multiplier $m(\cdot/\sqrt{\lambda_{L_n}})$, we obtain 
\begin{align*}
\big\|(-\Delta)^{t/2} [p_{L_n} - p]\big\|_{L^r(\Omega)}
 &= \left\| \lambda_{L_n}^{-\frac{s-t}{2}} \sum_{\ell=1}^\infty 
                        m(\sqrt{\lambda_\ell/\lambda_{L_n}}) \lambda_\ell^{s/2}  \alpha_\ell\eta_\ell\right\|_{L^r(\Omega)} \\
 &\lesssim \lambda_{L_n}^{-\frac{s-t}{2}} \left\|  \sum_{\ell=1}^\infty  \lambda_\ell^{s/2}  \alpha_\ell\eta_\ell \right\|_{L^r(\Omega)} \\
 &\lesssim  {L_n}^{-\frac{s-t}{d}} \left\| p\right\|_{\calH^{s,r}(\Omega)},
\end{align*} 
where we again used Weyl's law. It thus suffices to show that $\left\| p\right\|_{\calH^{s,r}(\Omega)}$ is finite.
To this end, let $r_1$ be defined as $r$ if $s-1/r$ is not an odd integer, and otherwise define $r_1$ as $r+\delta$ for any small enough $\delta < 1$. 
Then, by  Proposition~\ref{prop:equiv_sobolev} (with $s \leq 2$ or $r=r_1=2$) \
and the definitions of the spaces $\calC_N^s(\Omega)$ and $H_N^{s,r}(\Omega)$, we have
\begin{align*}
\left\| p\right\|_{\calH^{s,r}(\Omega)}
\leq 
\left\| p\right\|_{\calH^{s,r_1}(\Omega)}
 &\lesssim  \left\| p\right\|_{H^{s,r_1}(\Omega)}
 \leq   \left\| p\right\|_{\calC^s(\Omega)} \leq M.
\end{align*} 
The claim thus follows
\qed 

\begin{remark}\label{rem:fefferman}
Suppose that instead of the estimator 
$$\tilde p_n = \sum_{\ell=1}^{L_n} \omega_\ell\hat\alpha_\ell\eta_\ell$$
we had used the traditional truncated series estimator
$$\bar p_n = \sum_{\ell=1}^{L_n} \hat\alpha_\ell\eta_\ell,$$
which corresponds to choosing the nonsmooth function $\tau(x) = I(|x| < 1)$
in the definition of the weights $\omega_\ell$. 
This choice 
would  prevent the function $m$ in equation~\eqref{eq:multiplier_smooth_domain} from satisfying
the conditions of Lemma~\ref{lem:multiplier}. In fact, if one
were to replace $\Omega$ by $\bbT^d$, then the eigenvalues $\lambda_\ell$ would be of the form $\|2\pi \xi_\ell\|^2$
for some enumeration $\xi_1,\xi_2,\dots$ of $\bbZ_*^d$. In this case, we have for all $\ell \geq 1$, 
$$1-\tau(\lambda_\ell/\lambda_{L_n}) = I(\|\xi_\ell\| \leq \|\xi_{L_n}\|).$$
Viewed as a function of $\xi_\ell$, the right-hand side is
the indicator function of a ball, which  is well-known not to be an $L^r(\bbT^d)$ Fourier multiplier
for $r > 2$ and $d > 1$ 
\citep{fefferman1971}, thus the expression $m$ in~\eqref{eq:multiplier_smooth_domain} is also not an $L^r(\bbT^d)$-multiplier
in this case. This suggests that our current proof technique cannot be used for the traditional series
estimator.
\end{remark} 

We now supplement Proposition~\ref{prop:density_risk_smooth_domain} with a simple variance
bound for negative values of $t$, but now focusing on the case $r=2$. 

\begin{proposition}\label{prop:smooth_domain_neg_exp}
Let $M,s> 0$, and assume that $p \in \calC_N^s(\Omega;M)$. 
Then, for any given $t < 0$, 
there exists a constant $C > 0$ depending on $\Omega, M,d,t,s$ such that 
\begin{align*}
 \bbE \|\tilde p_n - p_{L_n}\|_{\calH^{t}(\Omega)}^2  
\lesssim \frac 1 n\begin{cases}
L_n^{\frac{2t}{d}+1}, & 2|t| < d, \\ 
\log(L_n), & 2|t| = d, \\
1, & 2|t| > d.
\end{cases}
\end{align*} 
In particular, if $L_n^{1/d} \asymp n^{\frac 1 {d+2s}}$, then for  $t \geq 0$, 
$$\bbE  \|\tilde p_n - p \|_{\calH^{t}(\Omega)}^2 \lesssim 
\begin{cases}
n^{-\frac{2(s-t)}{2s+d}}, & 2|t| < d \\ 
\log n / n,       & 2|t| = d \\ 
1/n, &                      2|t| > d
\end{cases}
.$$
\end{proposition}

\begin{proof}[Proof of Proposition~\ref{prop:smooth_domain_neg_exp}]
Notice that
\begin{align*}
\bbE \|\tilde  p_n - p_{L_n}\|_{\calH^{t}(\Omega)}^2
 &= \bbE\left[\sum_{\ell=1}^{L_n} \lambda_\ell^{t} \omega_\ell^2 (\halpha_\ell-\alpha_\ell)^2\right] 
 \leq \sum_{\ell=1}^{L_n}  \lambda_\ell^{t}  \Var[\halpha_\ell] = \sum_{\ell=1}^{L_n} \lambda_\ell^{t}\frac{ \Var[\eta_\ell(X)]}{n}.
\end{align*}
Since $p$ is bounded from above by $M$ over $\Omega$, we have 
$$\Var[\eta_\ell(X)] \leq \bbE[\eta_\ell^2(X)] \leq M \|\eta_\ell\|_{L^2(\Omega)}^2 = M,$$
thus, together with Weyl's Law, we have
\begin{align*}
\bbE \|\tilde  p_n - p_{L_n}\|_{\calH^{-t}(\Omega)}^2
 \leq \frac M n\sum_{\ell=1}^{L_n}\ell^{-2|t|/d} .
\end{align*}
The claim  follows from here. 
\end{proof} 

\subsection{Regularity of the Density Estimator}
\label{app:regularity_of_spectral_estimator} 
With Proposition~\ref{prop:density_risk_smooth_domain} in hand, we can prove the following result, 
which will allow us to invoke condition~\ref{assm:caffarelli} directly on the density
estimators.
\begin{lemma}\label{lem:tilde_q_smooth}
Let  $M,\gamma,s,c> 0$, and assume that $p \in \calC_N^s(\Omega;M,\gamma)$. 
Assume $L_n^{1/d} = c n^{\frac{1}{d+2s}}$.
Then, there exist  constants 
$C,\epsilon > 0$ depending on $\Omega, M,\gamma,d,s,c$ such that the following assertions
hold on an event of probability at least $1 - C/n^2$:
\begin{enumerate}
\item[(i)] $\tilde p_n \geq 1/C$ over $\Omega$. In particular, $\tilde p_n = \hat p_n$. 
\item[(ii)] $\|\tilde p_n\|_{\calC^\epsilon(\Omega)} \leq C$. 
\end{enumerate}
\end{lemma}
\begin{proof}[Proof of Lemma~\ref{lem:tilde_q_smooth}] 
Let $t = (s/2) \wedge 1$ and $\epsilon =t/2$. By a Sobolev embedding (cf.~\cite{triebel1995},
Theorem~4.6.1), 
we have  for all $r \geq r_0:= 2d/\epsilon$,
$$ \|\tilde p_n-p\|_{\calC^\epsilon(\Omega)}\lesssim \|\tilde p_n - p\|_{H^{t,r}(\Omega)}
\asymp \|\tilde p_n-p\|_{\calH^{t,r}(\Omega)},$$
where the final order assessment follows from Proposition~\ref{prop:equiv_sobolev}.
By Proposition~\ref{prop:density_risk_smooth_domain}, we thus obtain
$$\bbE \|\tilde p_n-p\|_{\calC^\epsilon(\Omega)}^r \lesssim 
 n^{-\frac{r(s-t)}{2s+d}}.$$ 
Let $u = \big(\|p^{-1}\|_{L^\infty(\Omega)}^{-1} \wedge \|p\|_{\calC^\epsilon(\Omega)}\big)/2$.
Then, by Markov's inequality, 
we have 
\begin{align*}
\bbP\Big( \|\tilde p_n - p\|_{\calC^\epsilon(\Omega)} \geq u  \Big)
 \leq \frac{\bbE\|\tilde p_n - p\|_{\calC^\epsilon(\Omega)}^r}{u^r} 
 \lesssim \frac{n^{-\frac{r(s-t)}{2s+d}}}{u^r}.
\end{align*}
Since $t < s$, we may choose $r$ large enough such that $\frac{r(s-t)}{2s+d} \geq 2$. Thus we 
readily deduce that for a large enough constant $C > 0$, 
we have with probability at least $1-C/n^2$ that 
$$\|\tilde p_n - p\|_{\calC^\epsilon(\Omega)} \leq u.$$
Over the above high-probability event, we have on the one hand
$$\inf_{x \in \Omega} \tilde p_n(x) 
\geq \inf_{x \in \Omega}  p(x)  - \|\tilde p_n - p\|_{L^\infty(\Omega)} 
\geq \inf_{x \in \Omega}  p(x)  - \|\tilde p_n - p\|_{\calC^\epsilon(\Omega)} 
\geq \inf_{x \in \Omega}  p(x) / 2,$$
from which part (i) of Lemma~\ref{lem:tilde_q_smooth} follows. On the other hand, 
$$\|\tilde p_n\|_{\calC^\epsilon(\Omega)} \leq 
   \|p\|_{\calC^\epsilon(\Omega)} + \|\tilde p_n - p\|_{\calC^\epsilon(\Omega)}
\leq 2\|p\|_{\calC^\epsilon(\Omega)} ,$$
from which part (ii) of Lemma~\ref{lem:tilde_q_smooth} follows. 
\end{proof}

\subsection{Convergence Rate under the Wasserstein Distance}
\label{app:W2_conv_of_spectral_estimator}
With the help of Proposition~\ref{prop:density_risk_smooth_domain} 
we can   obtain a bound on the risk
of $\hat P_n$ in Wasserstein distance. 
\begin{lemma}\label{lem:w_convergence_domain}
Let  $M,\gamma,s> 0$,  and assume that $p \in \calC_N^s(\Omega;M,\gamma)$. 
Assume $L_n^{1/d} = c n^{\frac{1}{d+2s}}$. Then, there exists
a  constant 
$C > 0$ depending on $\Omega, M,\gamma,d,s,r,c$ such that
$$\bbE W_2^2(\hat P_n,P)
\lesssim 
\begin{cases}
n^{-\frac{2(s+1)}{2s+d}}, & d > 2 \\ 
\log n / n,       & d=2 \\ 
1/n,              & d=1.
\end{cases}
$$
\end{lemma}
\begin{proof}[Proof of Lemma~\ref{lem:w_convergence_domain}]
Let $A_n$ be the event over which the two assertions of Lemma~\ref{lem:tilde_q_smooth} hold. 
By the bound~\eqref{eq:peyre} due to~\cite{peyre2018} (in its form
stated in Theorem 5.34 of~\cite{santambrogio2015}), it holds that
\begin{align*}
\bbE W_2^r(\hat P_n, P) 
 \lesssim 
\bbE \big[W_2^r(\hat P_n, P)I(A_n)\big] + n^{-2}
 \lesssim   \bbE\big[ \|\tilde  p_n - p\|_{\calH^{-1}(\Omega)}^r \big] + n^{-2}.
\end{align*} 
The claim thus follows from Proposition~\ref{prop:smooth_domain_neg_exp}.
\end{proof}

\subsection{Proof of Theorem~\ref{thm:smooth_domains}}
\label{app:pf_thm_smooth_domains}
Using Lemmas~\ref{lem:tilde_q_smooth} and \ref{lem:w_convergence_domain}, 
Theorem~\ref{thm:smooth_domains--one_sample} follows by the   same argument 
as Theorem~\ref{thm:one_sample_wavelet--map}, and Theorem~\ref{thm:smooth_domains--two_sample} follows by the same
argument as  Theorems~\ref{thm:two_sample_kernel} and \ref{thm:two_sample_density}.
In the latter case, one replaces the application of Caffarelli's regularity theory (Theorem~\ref{thm:torus_regularity})
by an application of condition~\ref{assm:caffarelli}.
We omit further details for brevity.\qed  

}

\section{Proofs of Central Limit Theorems}
\label{app:pf_clts}

The aim of this appendix is to prove Theorem~\ref{thm:clt}. We also  state and prove
Proposition~\ref{prop:variance_estimation}, regarding the question of 
variance estimation. We begin by deriving limit laws for the functional $\int\phi_0(\hat p_n-p) $,
which form an important component of our central limit theorems.
Here, $\hat p_n$ is one of
the estimators $\hat p_n\sbc, \hat p_n\sper$, and 
$\hat p_n\sker$,  
which respectively arise from the classical boundary-corrected, periodic, and  kernel density estimators
$\tilde p_n\sbc, \tilde p_n\sper$,  and
$\tilde p_n\sker$.  
We also write  
$$p_{J_n}\sbc = \bbE[\tilde p_n\sbc], \quad 
  p_{J_n}\sper = \bbE[\tilde p_n\sper], 
\quad 
p_{h_n}\sker = \bbE[\tilde p_n\sker].$$
We have the following. 
\begin{lemma}
\label{lem:linear_functional_clt}
Let $\epsilon,s > 0$, and let $h_n^{-1} \asymp 2^{J_n} \uparrow \infty$. 
\begin{thmlist}
\item\label{lem:linear_functional_clt--bc}  (Unit Hypercube) Let $p \in \calC^\epsilon([0,1]^d)$ be positive over $[0,1]^d$. 
Assume that $\phi_0 \in \calC^{s}([0,1]^d)$ satisfies $\Var_P[\phi_0(X)]  > 0$. Then, as  $n \to \infty$, 
\begin{align*}
\sqrt n \int \phi_0 (\hat p_n\sbc - p_{J_n}\sbc) &\rightsquigarrow N(0, \Var_P[\phi_0(X)]).
\end{align*}
\item\label{lem:linear_functional_clt--per} (Flat Torus) Let $p\in \calC^\epsilon(\bbT^d)$ 
be positive over $\bbT^d$. Assume that  $\phi_0 \in \calC^s(\bbT^d)$
satisfies  $\Var_P[\phi_0(X)] > 0$. Then,  as $n \to \infty$,
\begin{align*}
\sqrt n\int \phi_0 (\hat p_n\sper - p_{J_n}\sper) &\rightsquigarrow N(0, \Var_P[\phi_0(X)]), \\
\sqrt n\int \phi_0 (\hat p_n\sker - p_{h_n}\sker) &\rightsquigarrow N(0, \Var_P[\phi_0(X)]).
\end{align*}
\end{thmlist} 
\end{lemma}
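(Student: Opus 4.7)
The plan is to reduce the statement to a central limit theorem for a sum of i.i.d.\ random variables, via the orthogonality identity already employed in equation~\eqref{eq:pf_variance_kantorovich_bound_step}. Let $\phi_{J_n}$ denote the projection of $\phi_0$ onto $\mathrm{Span}\bigl(\Phi \cup \bigcup_{j=j_0}^{J_n} \Psi_j\bigr)$, where $\Psi$ is the relevant wavelet system. By orthonormality,
$$\int \phi_0\, \tilde p_n = \int \phi_{J_n}\, dP_n,\qquad \int \phi_0\, p_{J_n} = \int \phi_{J_n}\, dP,$$
so that
$$\int \phi_0 (\tilde p_n - p_{J_n}) = \int \phi_{J_n}\, d(P_n - P).$$
The analogous identity for the kernel estimator, obtained via Fubini and the evenness of $K$, reads $\int \phi_0 (\tilde p_n\sker - p_{h_n}) = \int \phi_{h_n}\, d(P_n - P)$, where $\phi_{h_n} := \phi_0 \star K_{h_n}$.

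I would then apply the Lindeberg--Feller CLT to the i.i.d.\ triangular array $\{\phi_{J_n}(X_i)\}_{i=1}^n$, and similarly in the kernel case. Two ingredients suffice: (a) a uniform $L^\infty$ bound on $\phi_{J_n}$ (respectively $\phi_{h_n}$), which instantly implies Lindeberg's condition; and (b) convergence $\phi_{J_n} \to \phi_0$ in $L^2(P)$, whence $\mathrm{Var}_P[\phi_{J_n}(X)] \to \mathrm{Var}_P[\phi_0(X)] > 0$. For (a) in the wavelet case, combining the H\"older regularity of $\phi_0$, the Besov embedding of Lemma~\ref{lem:besov_holder} applied with a non-integer exponent $0 < s' < s$, the decay $|\gamma_\xi| \lesssim 2^{-j(s'+d/2)}$ on the wavelet coefficients of $\phi_0$, and the locality and $L^\infty$ normalization of the basis from Lemma~\ref{lem:wavelet}, yields
$$\|\phi_{J_n}\|_{L^\infty} \lesssim \|\phi_0\|_{\calC^{s'}}\left(1 + \sum_{j=j_0}^{J_n} 2^{-j s'}\right),$$
which is uniformly bounded in $n$; in the kernel case, $\|\phi_{h_n}\|_{L^\infty} \leq \|\phi_0\|_{L^\infty}\|K\|_{L^1(\bbR^d)}$ is immediate. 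Ingredient (b) then follows from pointwise convergence $\phi_{J_n} \to \phi_0$ together with the uniform $L^\infty$ bound by dominated convergence, and analogously for $\phi_{h_n}$, using $\calF[K](0) = 1$, which follows from letting $x \to 0$ in condition \hyperref[assm:kernel]{\textbf{K1($2\alpha,\kappa$)}}. The Lindeberg--Feller theorem then yields
$$\sqrt n \int \phi_0 (\tilde p_n - p_{J_n}) \rightsquigarrow N\bigl(0, \Var_P[\phi_0(X)]\bigr),$$
and likewise for the kernel estimator.

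The final step is to replace $\tilde p_n$ by $\hat p_n$. Since $p \in \calC^\epsilon(\Omega)$ is continuous and strictly positive on the compact domain $\Omega$, it is bounded below by a positive constant; Lemma~\ref{lem:tilde_q_density} in the wavelet setting, and a direct consequence of Lemma~\ref{lem:kernel_infty} in the kernel setting, both give $\bbP(\hat p_n \neq \tilde p_n) \lesssim n^{-2}$. Combined with the crude deterministic bound $|\int \phi_0(\hat p_n - \tilde p_n)| \leq 2\|\phi_0\|_{L^\infty}$, this implies $\sqrt n \int \phi_0(\hat p_n - \tilde p_n) \to 0$ in probability, and Slutsky's theorem concludes. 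The main technical care lies in the uniform $L^\infty$ bound on $\phi_{J_n}$ for the boundary-corrected system, where the Besov embedding of Lemma~\ref{lem:besov_holder} may lose a derivative at integer values of~$s$; this is harmlessly circumvented by passing to a non-integer auxiliary exponent $s' < s$, since only continuity and boundedness of $\phi_0$ are ever invoked in the CLT itself.
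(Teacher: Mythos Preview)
Your proposal is correct and follows essentially the same route as the paper's proof: both reduce $\hat p_n$ to $\tilde p_n$ via the high-probability event from Lemma~\ref{lem:tilde_q_density}, use the orthogonality identity to rewrite $\int \phi_0(\tilde p_n - p_{J_n})$ as $\int \phi_{J_n}\,d(P_n-P)$, establish a uniform $L^\infty$ bound on $\phi_{J_n}$ from the wavelet-coefficient decay, and apply a triangular-array CLT (the paper uses Lyapunov's form, you use Lindeberg--Feller). The only cosmetic difference is that the paper appeals to Lemma~\ref{lem:L} for the variance convergence whereas you argue it directly via dominated convergence; your explicit treatment of the kernel case via $\phi_{h_n}=\phi_0\star K_{h_n}$ is also what the paper has in mind when it says the remaining claims ``can be proven similarly.''
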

\subsection{Proof of Lemma~\ref{lem:linear_functional_clt}}
The proof is standard, thus we only prove  claim (i).
 The remaining
claims can be proven similarly. 
For simplicity, we write $\Psi_{j_0-1}\pbc = \Phi\pbc$ throughout the proof.
Reasoning as in the proof of Lemma~\ref{lem:L}, 
and in particular using Lemma~\ref{lem:tilde_q_density}, it holds that
\begin{align*}
\sqrt n \int \phi_0 (\hat p_n\sbc - p_{J_n}\sbc)
 &= \sqrt n \int \phi_0 (\tilde p_n\sbc - p_{J_n}\sbc) + \sqrt n \int \phi_0 (\hat p_n\sbc - \tilde p_n\sbc) \\
 &=  \sqrt n \int \phi_0 (\tilde p_n\sbc - p_{J_n}\sbc) + o_p(1) \\ 
 &= \frac 1 {\sqrt n} \sum_{i=1}^n (Z_{n,i} - \bbE[ Z_{n,i}]),
\end{align*}
where we write 
$$Z_{n,i} =   \sum_{j=j_0-1}^{J_n}\sum_{\xi\in\Psi_j\pbc} \xi(X_i)\gamma_\xi,
\quad i=1,\dots,n,$$
and where  $\gamma_\xi = \int \phi_0 \xi$ for all $\xi \in \Psi\pbc$.  
By Lyapunov's central limit theorem~(\cite{billingsley1968}, Theorem 7.3), it holds that
\begin{equation}
\label{eq:lyapunov}
\frac{1}{\sqrt{\sum_{i=1}^n \Var[Z_{n,i}]}}\sum_{i=1}^n (Z_{n,i} - \bbE[Z_{n,i}])  
 \rightsquigarrow N(0,1),
\end{equation}
provided that for some $p > 2$,
\begin{equation}
\label{eq:lyapunov_condition}
\frac{\sum_{i=1}^n \bbE\left[|Z_{n,i}-\bbE Z_{n,i}|^p\right]}
       {\left(\sum_{i=1}^n \Var[Z_{n,i}]\right)^{p/2} }
      \to 0.
\end{equation}      
Now, using Lemma~\ref{lem:wavelet}, it holds that  
\begin{align}
\label{eq:pf_kantorovich_variance_bound_unif_Z}
\nonumber
\sup_{n \geq 1} \sup_{1 \leq i \leq n} |Z_{n,i} |
 &\leq \sup_{n \geq 1}\left\|\sum_{j=j_0-1}^{J_n}\sum_{\xi\in\Psi_j\pbc}
  \xi \gamma_\xi\right\|_{\infty} \\
\nonumber 
 &\leq  \sum_{j=j_0-1}^\infty \|(\gamma_\xi)_{\xi\in\Psi_j\pbc}\|_{\ell_\infty}
 				\left(\sup_{\xi\in\Psi_j\pbc} \|\xi\|_{\infty} \right) \left\|\sum_{\xi \in \Psi_j\pbc} I(|\xi| > 0)\right\|_{\infty} \\
 &\lesssim   \sum_{j=j_0-1}^\infty 2^{-j\left(\frac d 2 +s\right)} 2^{\frac{dj}{2}} \lesssim 
 \sum_{j=j_0-1}^\infty  2^{-js} <\infty.
\end{align}
On the other hand, under the stated conditions, it follows from Lemma~\ref{lem:L} that 
\begin{equation} 
\label{eq:variance_n_o_1}
\sum_{i=1}^n \Var[Z_{n,i}]  
 = n(\Var_P[\phi_0(X)]+ o(1)).
 \end{equation} 
Since $\Var_P[\phi_0(X)] > 0$, the denominator in equation~\eqref{eq:lyapunov_condition}
is of the order $n^{p/2}$, while the numerator is of   order $n$ by equation~\eqref{eq:pf_kantorovich_variance_bound_unif_Z}.
It follows that Lyapunov's condition~\eqref{eq:lyapunov_condition} holds for all $p > 2$.
The claim thus follows from equations~\eqref{eq:lyapunov} and~\eqref{eq:variance_n_o_1}. 
\qed

\subsection{Proof of Theorem~\ref{thm:clt}} 
Assume first that $\sigma_0,\sigma_1 > 0$. We begin with part (i). 
Under the stated conditions on the densities, it follows from Theorem~\ref{thm:torus_regularity}
that $\varphi_0$ satisfies condition~\ref{assm:curvature} for some $\lambda > 0$.  
{Apply the stability bound of Theorem~\ref{thm:stability} to obtain,
$$0 \leq W_2^2 (\hat P_n^{(\mathrm{ker})}, Q) - W_2^2(P,Q) 
 - \int \phi_0 d(\hat P_n^{(\mathrm{ker})} - P)\leq W_2^2(\hat P_n^{(\mathrm{ker})}, P).$$
Using the convergence
rate of $\hat P_n^{(\mathrm{ker})}$ under $W_2^2$ in Proposition~\ref{lem:w2_kernel_rate}, and 
Lemma~\ref{lem:L_ker}
regarding the bias of $\int \phi_0 d\hat P_n^{(\mathrm{ker})}$, we obtain
$$W_2^2 (\hat P_n^{(\mathrm{ker})}, Q) - W_2^2(P,Q)  = \int \phi_0 (\hat p_n^{(\mathrm{ker})}-p_{h_n}^{(\mathrm{ker})}) +  O_p\left(n^{-\frac{2\alpha}{2(\alpha-1)+d}} \vee \frac{(\log n)^2}{n}\right).$$
Using the assumption $2(\alpha+1) > d$, deduce that 
$$\sqrt n \left(W_2^2 (\hat P_n^{(\mathrm{ker})}, Q) - W_2^2(P,Q) \right) =\sqrt n \int \phi_0  (\hat p_n^{(\mathrm{ker})}-p_{h_n}^{(\mathrm{ker})}) +  o_p(1).$$
Apply  Lemma~\ref{lem:linear_functional_clt} to deduce that
$$\sqrt n \left(W_2^2 (\hat P_n^{(\mathrm{ker})}, Q) - W_2^2(P,Q) \right) \rightsquigarrow N(0,\sigma_0^2),
\quad \text{as } n\to \infty.$$
By the same reasoning, but now using the two-sample stability bound of Proposition~\ref{thm:two_sample_stability}, we also have
\begin{align*}
\sqrt{\frac{nm}{n+m}} &\left(W_2^2 (\hat P_n^{(\mathrm{ker})}, Q_m^{(\text{ker})})- W_2^2(P,Q) \right) 
\\ &= 
\sqrt {(1-\rho)   n} \int \phi_0  (\hat p_n^{(\mathrm{ker})}-p_{h_n}^{(\mathrm{ker})}) + 
\sqrt{\rho m} \int \psi_0 (\hat q_m^{(\mathrm{ker})}-q_{h_m}^{(\mathrm{ker})}) + o_p(1),
\end{align*}
as $n,m \to \infty$ such that $n/(n+m) \to \rho \in [0,1]$.
By Lemma~\ref{lem:linear_functional_clt} and the independence
of $X_1, \dots, X_n, Y_1, \dots, Y_m$, we deduce that 
\begin{align*}
\sqrt{\frac{nm}{n+m}} \left(W_2^2 (\hat P_n^{(\mathrm{ker})}, Q_m^{(\text{ker})})- W_2^2(P,Q) \right) 
\rightsquigarrow N(0, \sigma_\rho^2),
\end{align*}
as $n,m\to\infty$ such that $n/(n+m) \to \rho$. This proves claim (i) for kernel estimators. 
Claim~(ii)  regarding the boundary-corrected wavelet estimators $(\hat P_n,\hat Q_m)$
now follows analogously by using 
Lemma~\ref{lem:L} 
to bound the bias of $\int \phi_0 d\hat P_n$, 
 Lemma~\ref{lem:wavelet_wasserstein} 
to bound the convergence rate
of $\hat P_n$ in Wasserstein distance,
Proposition~\ref{thm:two_sample_density--wasserstein_cube},
to bound the bias of the plugin estimator of the Wasserstein distance, and 
Lemma~\ref{lem:linear_functional_clt} to obtain the limiting distribution
of $\int \phi_0 (\tilde p_n - \bbE[\tilde p_n])$. 
}

Finally, to prove part (v), apply Corollary~\ref{cor:one_sample_emp_hypercube} and the result of~\cite{divol2021}
to deduce that, for $\Omega \in \{[0,1]^d, \bbT^d\}$, since the densities $p$ and $q$ are bounded and bounded away from zero, we have
$$\sqrt n  W_2^2(P_n, P) = o_p(1), \quad \sqrt m W_2^2(Q_m, Q) = o_p(1), $$
as $n,m \to \infty$, whenever $d \leq 3$. Therefore, using  Theorem~\ref{thm:stability}, 
Proposition~\ref{thm:two_sample_stability}, 
and Proposition~\ref{prop:torus_stability}, we obtain
\begin{align*}
\sqrt n\Big( W_2^2(P_n, Q) - W_2^2(P,Q)\Big)  &= \sqrt n \int \phi_0 d(P_n - P) + o_p(1), \\
\sqrt{\frac{nm}{n+m}} \Big( W_2^2(P_n, Q_m) - W_2^2(P,Q)\Big)  &= \sqrt{(1-\rho) n} \int \phi_0 d(P_n - P) \\ &+ 
\sqrt{\rho m} \int\psi_0 d(Q_m-Q) + o_p(1).
\end{align*}
Claim (v) then follows by the classical central limit theorem. 

It thus remains to consider the situation
where $\sigma_1 = 0$ or $\sigma_0 = 0$. Notice that the Kantorovich potentials $\phi_0$ and $\psi_0$
are almost everywhere constant if and only if $P = Q$. As a result, the statements  ``$\sigma_0 = 0$'', ``$\sigma_1 = 0$'', 
and ``$P = Q$'' are equivalent, thus it remains to prove the claim when $P = Q$. In this case, 
it suffices to show that $\sqrt n W_2^2(\hat P_n, P) = o_p(1)$ and 
$\sqrt{\frac{nm}{n+m}} W_2^2(\hat P_n, \hat Q_m) = o_p(1)$ for the various estimators
$\hat P_n$ and $\hat Q_m$ under consideration. But these assertions are a direct consequence
of the aforementioned convergence rates of these estimators in Wasserstein distance,
under the assumptions of each of parts (i)--(v).
The claim thus follows.
\qed  

\begin{remark}[Periodic Wavelet Estimators]
Using Proposition~\ref{prop:one_sample_torus} and Lemma~\ref{lem:linear_functional_clt}, 
it is easy to see that Theorem~\ref{thm:clt--torus} continues to hold
when the kernel density estimators $(\hat P_n\sker,\hat Q_m\sker)$ are replaced
by the periodic wavelet estimators $(\hat P_n\sper,\hat Q_m\sper)$, defined in Appendix~\ref{app:pf_density_based}. 
\end{remark} 
 
\subsection{Variance Estimation}\label{app:variance_estimation}
We now state   a simple result regarding the estimation of variances appearing in Theorem~\ref{thm:clt}. 
In what follows, let $\Omega$ be equal to $\bbT^d$, or to a compact and connected subset of $\bbR^d$, and let
$P,Q \in \calPac(\Omega)$. Let $(\phi_0,\psi_0)$ be a pair of Kantorovich potentials in the optimal transport problem
from $P$ to $Q$. Furthermore, let
$$X_1,\dots, X_n \sim P, \quad Y_1,\dots, Y_m \sim Q$$
be i.i.d. samples which are independent of each other, and let $P_n$ and $Q_m$ denote their respective empirical measures. 
\begin{proposition}\label{prop:variance_estimation}
Let the distributions $P,Q \in \calPac(\Omega)$ 
have positive densities over $\Omega$. Let $\hat P_n$ and $\hat Q_m$ be 
estimators such that
$$W_2(\hat P_n,P) = o_p(1), \quad \text{and}\quad W_2(\hat Q_m,Q) = o_p(1),$$
as $n,m\to\infty$. Let $(\hat \phi_{nm},\hat\psi_{nm})$
be a uniformly bounded  pair of Kantorovich potentials in the 
optimal transport problem from $\hat P_n$ to $\hat Q_m$. Then, as $n,m\to \infty$,
\begin{alignat*}{2} 
\hat\sigma_{0,nm}^2 &:= \Var_{U\sim P_n}[\hphi_{nm}(U)]
 &&\overset{p}{\longrightarrow} \Var[\phi_0(X)] , \text{ and}, \\ 
\hat\sigma_{1,nm}^2 &:= \Var_{V\sim Q_m}[ \hat\psi_{nm}  (V)] &&\overset{p}{\longrightarrow} \Var[\psi_0(Y)] .
\end{alignat*} 
\end{proposition} 
Note that the assumption of uniform boundedness of the fitted potentials can always be satisfied, due
to the compactness of $\Omega$~\cite[Remark 1.13]{villani2003}. 
In particular, the conditions of Proposition~\ref{prop:variance_estimation} are met for any of 
the estimators $(\hat P_n,\hat Q_m)$ appearing in the statement of Theorem~\ref{thm:clt}.

\begin{proof}[Proof of Proposition~\ref{prop:variance_estimation}]
To prove the claim, we shall make use of the following stability result for Kantorovich potentials 
over compact metric spaces, due to~\citeauthor{santambrogio2015} (\citeyear{santambrogio2015}, Theorem 1.52), which we only state in the generality required for our proofs. 
\begin{lemma}[\cite{santambrogio2015}]
\label{lem:stability_potentials}
Let $P,Q \in \calPac(\Omega)$,
and assume that at least one of $P$ and $Q$ has support equal to $\Omega$. Let
$(\widebar P_k)_{k\geq 1} , (\widebar Q_k)_{k\geq 1} \subseteq \calP(\Omega)$ be sequences 
which respectively converge to $P,Q$ weakly. Let $(\phi_k,\psi_k)$
denote a pair of   Kantorovich potentials in the optimal transport
problem from $\widebar P_k$ to $\widebar Q_k$, for all $k \geq 1$. Then, 
it holds that $\phi_k \to \phi_0$ and $\psi_k \to \psi_0$ as $k \to \infty$, the convergence
being uniform over $\Omega$, for some pair of Kantorovich potentials $(\phi_0,\psi_0)$  in the optimal transport
problem from $P$ to $Q$, which is uniquely defined up to translation by constants. 
\end{lemma}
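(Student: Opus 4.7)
The plan is to follow the classical stability argument for Kantorovich potentials under a quadratic-type cost on a compact domain: first normalize and reduce to c-concave pairs to obtain uniform Lipschitz continuity and uniform boundedness, then apply Arzel\`a--Ascoli to extract a uniform limit, then pass to the limit in the dual problem to verify optimality of the limit pair, and finally invoke uniqueness up to additive constants to guarantee that the limit is subsequence-independent modulo translations.

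First I would reduce to a normalized c-concave pair. Since Kantorovich potentials are only defined up to the substitution $(\phi_k + C_k, \psi_k - C_k)$, I may assume $\int_\Omega \phi_k \, d\calL = 0$ for every $k$. I would further replace $(\phi_k, \psi_k)$ by the c-conjugate pair $(\psi_k^c, \phi_k^c)$, where $\phi^c(y) = \inf_{x \in \Omega}\{c(x,y) - \phi(x)\}$ and $c(x,y) = \|x-y\|^2$ in the Euclidean case (or $c(x,y) = d_{\bbT^d}^2(x,y)$ in the toroidal case). This c-conjugation preserves optimality, since it can only increase the dual objective while retaining admissibility, and it makes both potentials equi-Lipschitz with constant of order $\diam(\Omega) \leq \sqrt d$: the infimum-of-translates structure of $\phi^c$ inherits the Lipschitz constant of $c(\cdot,y)$ uniformly in $y$. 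Combined with the normalization, this gives uniform boundedness of $(\phi_k)$ and, by c-conjugacy, of $(\psi_k)$.

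Next I would apply Arzel\`a--Ascoli to extract a subsequence (not relabeled) along which $\phi_k \to f_0$ and $\psi_k \to g_0$ uniformly on $\Omega$. Letting $k\to \infty$ in the pointwise constraint $\phi_k(x) + \psi_k(y) \leq c(x,y)$ yields $(f_0, g_0) \in \calK$ (or $\calK_T$ in the toroidal case). Moreover, since $\phi_k \to f_0$ and $\psi_k \to g_0$ uniformly on the compact set $\Omega$ while $P_k \to P$ and $Q_k \to Q$ weakly, one has
\begin{equation*}
\int f_0 \, dP + \int g_0 \, dQ = \lim_{k\to\infty}\left(\int \phi_k \, dP_k + \int \psi_k \, dQ_k\right) = \lim_{k\to\infty} W_2^2(P_k, Q_k) = W_2^2(P,Q),
\end{equation*}
where the last equality uses continuity of $W_2^2$ under weak convergence on a compact metric space. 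Hence $(f_0, g_0)$ is a pair of Kantorovich potentials between $P$ and $Q$.

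Finally, since at least one of $P,Q$ has support equal to the connected domain $\Omega$, the uniqueness of Kantorovich potentials up to additive constants (Proposition~7.18 of~\cite{santambrogio2015}) implies that $(f_0, g_0)$ is determined modulo the transformation $(f_0 + C, g_0 - C)$, independently of the extracted subsequence. The main obstacle will be establishing the equi-Lipschitz bound cleanly in both the Euclidean and toroidal cost settings; this is handled uniformly because $c(\cdot,y)$ has Lipschitz constant bounded by $2\diam(\Omega)$ in either case. A secondary subtlety is that for each finite $k$ the pair $(\phi_k, \psi_k)$ need not be unique when $P_k$ and $Q_k$ both fail to have full support in $\Omega$, but this is inconsequential: the argument merely chooses an arbitrary admissible optimal pair along the sequence, and uniqueness at the limit controls its subsequential cluster points.
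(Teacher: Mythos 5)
Your overall route is the classical compactness argument — c-concavification to get equi-Lipschitz and (after normalization) equi-bounded potentials, Arzel\`a--Ascoli, passage to the limit in the dual constraint and in the dual objective via weak convergence and continuity of $W_2^2$ on a compact space, then uniqueness of potentials up to constants. This is exactly the proof of the result the paper cites (Theorem 1.52 of Santambrogio); the paper itself gives no proof, so there is nothing different to compare against in substance.

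However, one step fails as written. The pair $(\psi_k^c,\phi_k^c)$ obtained by conjugating \emph{both} coordinates simultaneously need not be admissible, even when $(\phi_k,\psi_k)$ is optimal: conjugation raises each coordinate ($\psi_k^c\geq\phi_k$ and $\phi_k^c\geq\psi_k$), and the two gains can add up to violate $\phi(x)+\psi(y)\leq c(x,y)$ at points outside the supports of $P_k,Q_k$, where an optimal pair is unconstrained by optimality. For instance, with $P_k=Q_k=\delta_0$ on $[0,1]$ one can take an optimal pair that dips to a very negative value on $(1/2,1]$; then $\psi_k^c(1)+\phi_k^c(1)=1/2>0=c(1,1)$. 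The correct replacement is the conjugate/double-conjugate pair $(\phi_k^{cc},\phi_k^c)$ (equivalently $(\psi_k^c,\psi_k^{cc})$), which is admissible by definition of $c$-conjugation and has objective at least as large, hence is still optimal and is equi-Lipschitz. Relatedly, you should normalize \emph{after} this replacement (fixing $\phi_k(x_0)=0$ or a zero mean), since conjugation destroys a normalization imposed beforehand; and since you are proving convergence for the modified potentials rather than the originals, you should note explicitly that the statement is to be read for $c$-concave, suitably normalized potentials (as in Santambrogio's formulation, and as the lemma is used in the paper, where the potentials are chosen uniformly bounded) — for arbitrary optimal pairs the conclusion is false already because of diverging additive constants. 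With these repairs the remainder of your argument (uniform convergence along a subsequence, admissibility and optimality of the limit pair, uniqueness up to translation by constants under the full-support and connectedness hypotheses) is correct.
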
 
We will prove the claim for $\hat\sigma_{0,nm}^2$, and a symmetric argument
may be used for $\hat\sigma_{1,nm}^2$. By Lemma~\ref{lem:stability_potentials}, there exists a random variable $a_{nm}$ such that
$\hat\phi_{nm}^o := \hat\phi_{nm} + a_{nm}$ and $\hat\psi_{nm}^o:=\hat\psi_{nm}-a_{nm}$ 
converge uniformly to $\phi_0$ and $\psi_0$
respectively.
We have, 
\begin{align} 
\nonumber |\hsigma_{0,{nm}}^2 - \sigma_0^2|
 &= \left| \Var_{P_n}[\hphi_{nm}(U)] - \Var_P[\phi_0(X)]\right| \\
\nonumber 
 &= \left| \Var_{P_n}[\hphi_{nm}^o(U)] - \Var_P[\phi_0(X)]\right| \\ 
\nonumber 
 &\lesssim \left| \int (\hphi_{nm}^o)^2 dP_n - \int_{\bbT^d} \phi_0^2 dP\right| + 
          \left| \int  \hphi_{nm}^o   dP_n - \int_{\bbT^d} \phi_0 dP\right| \\
\nonumber 
 &\lesssim \left| \int(\hphi_{nm}^o)^2 d(P_n - P)\right| + 
           \left| \int \hphi_{nm}^o d(P_n - P)\right| +  
              \big\|\hphi_{nm}^o  - \phi_0\big\|_{L^2(P)}. 
\end{align} 
Since $\hat\phi_{nm}^o$ is convex up to translation
by a quadratic function,  and uniformly bounded, 
it must be 
Lipschitz with respect to $\|\cdot\|$ over the compact set $\Omega$, 
with a uniform constant depending only on the diameter of this 
set~(\cite{hiriart-urruty2004}, Lemma 3.1.1, p. 102). 
Thus, $(\hat\phi_{nm}^o)^2$ is also Lipschitz over $\Omega$ with uniform constant. 
The set of Lipschitz functions 
with a uniformly bounded  Lipschitz constant, over any given compact domain, forms a Glivenko-Cantelli class~(\cite{vandervaart1996}, Theorem 2.7.1),  
thus the first two terms on the right-hand side of the above
display vanish in probability. The final term vanishes due to the uniform convergence of $\hat\phi_{nm}^o$ to $\phi_0$.
\end{proof} 
%
%
%
%
 
\section{Proofs of Efficiency Lower Bounds}
\label{app:pf_efficiency}
Throughout this appendix, given $Q \in \calPac(\Omega)$, we abbreviate the functional $\Phi_Q$ by $\Phi$, 
and the influence functions $\tilde\Phi_{(P,Q)}$ and $\tilde\Psi_{(P,Q)}$ by $\tilde\Phi$ and $\tilde\Psi$,
respectively. 
 
We begin by defining the differentiable paths $(P_{t,h_1})_{t \geq 0}$
and $(Q_{t,h_2})_{t \geq 0}$, for all $(h_1,h_2) \in \bbR^2$, as announced
in Section~\ref{sec:efficiency}. 
We follow a construction from  
Example 1.12 of~\cite{vandervaart2002}.  
Recall that $P,Q \in \calPac(\Omega)$ admit respective densities $p,q$. 
Let $\zeta \in \calC^\infty(\bbR)\cap\calC^2(\bbR)$ be a bounded nonnegative  map, which is bounded
away from zero over $\bbR$ by a positive constant, and which satisfies
 $\zeta(0) = \zeta'(0) =\zeta''(0) = 1$. For any functions $f \in L^2_0(P)$
 and $g \in L^2_0(Q)$, define $P_t^f, Q_t^g \in \calPac(\Omega)$ to be
 the distributions with densities 
\begin{align}
\label{eq:main_diff_path_construction}
p_{t}^f(x) \propto \zeta(tf(x)) p(x), \quad
q_{t}^g(y) \propto \zeta(tg(y)) q(y),
\end{align}
for all $x,y \in \Omega$ and $t \geq 0$. Since $\zeta$ is bounded away from zero over $\bbR$, 
notice that the implicit normalizing constants in the above display are bounded from above by a constant
which does not depend on $t,f,g,p,q$. 
We now turn to the proofs of Lemma~\ref{lem:influence} and Theorem~\ref{thm:asymptotic_minimax}.
   
\subsection{Proof of Lemma~\ref{lem:influence}}
Let $f \in \dot\calP_P$ be an arbitrary score function, 
and abbreviate the differentiable path $P_t := P_t^f$, 
and its density $p_t := p_t^f$, for all $t \geq 0$. Here, we use the definition 
in equation~\eqref{eq:main_diff_path_construction}.  
Let $(\phi_t,\psi_t)$ denote a pair of Kantorovich potentials in 
the optimal transport problem from $P_t$ to $Q$, which we may and do choose to be
uniformly bounded by $\diam(\Omega)^2$, and hence uniformly bounded in $t$.
By the Kantorovich duality, one has
\begin{equation}
\label{eq:pf_influence_fn_kantorovich}
\begin{aligned}
\Phi(P_t) - \Phi(P) 
 &= \sup_{(\phi,\psi) \in \calK} \left[\int \phi dP_t + \int \psi dQ\right] - \int \phi_0 dP - \int\psi_0 dQ \\
 &\geq  \int \phi_0 d P_t + \int \psi_0 dQ - \int \phi_0 dP - \int \psi_0 dQ
 =   \int \phi_0 d(P_t - P), \\
\Phi(P_t) - \Phi(P) &= 
   \int \phi_t dP_t + \int \psi_t dQ - \sup_{(\phi,\psi) \in \calK}\left[\int \phi dP - \int\psi dQ\right]
 \leq \int \phi_t d(P_t - P).
\end{aligned}
\end{equation} 
By construction, the map $t\in [0,\infty) \mapsto p_t(x)$ is differentiable 
for every $x \in \Omega$, and letting $\Delta_t(x) = (p_t(x) - p(x))/t$, we have
\begin{equation}
\label{eq:pf_influence_fn_limit_qt}
\begin{aligned}
\lim_{t\to0} \Delta_t(x) = \frac{\partial}{\partial t} p_t(x) \bigg|_{t=0}
 = f(x)p(x).
 \end{aligned}
\end{equation}
Now, notice that for all $t \geq 0$, 
$$|\Delta_t(x)| \lesssim \left|\frac{\zeta(tf(x)) - 1}{t} \right|p(x) = \left|\frac{\zeta(tf(x)) - \zeta(0)}{t} \right| p(x)
\leq \|\zeta\|_{\calC^1(\bbR^d)} f(x)p(x).$$
Since $f \in L^2_0(P) \subseteq L^1_0(P)$, we deduce that $\Delta_t(x)$ is dominated by an integrable function, 
uniformly in $t$. Since $\phi_0$ is uniformly bounded, we also deduce
that the map $|\phi_0||\Delta_t - fp|$ is dominated by an integrable function.  
We then have, by equation~\eqref{eq:pf_influence_fn_limit_qt} and the Dominated Convergence Theorem,
\begin{align}
\nonumber 
\liminf_{t\to 0} \frac{\Phi(P_t) - \Phi(P)}{t} 
 &\geq  \liminf_{t \to 0}  \int_\Omega \phi_0 \Delta_t d\calL \\
\nonumber 
 &= \int_\Omega \phi_0 f dP +  \liminf_{t \to 0}  \int_\Omega \phi_0 \left[\Delta_t - fp\right] d\calL \\
\nonumber 
 &\geq \int_\Omega \phi_0 f dP -  \limsup_{t \to 0}  \int_\Omega |\phi_0| \left|\Delta_t - fp\right|d\calL \\
 &\geq \int_\Omega \phi_0 f dP -  \int_\Omega |\phi_0|  \limsup_{t \to 0}  \left|\Delta_t - fp\right|d\calL = \int \phi_0 f dP,
 \label{eq:pf_influence_liminf}
\end{align}
and similarly, 
\begin{align*}
\limsup_{t \to 0} \frac{\Phi(P_t) - \Phi(P)}{t}
 &\leq \limsup_{ t \to 0} \int \phi_t \Delta_t d\calL \\
 &\leq \limsup_{t\to 0}\int \phi_t fdP  + \left(\sup_{t \geq 0} \norm{\phi_t}_{L^\infty(\Omega)}\right) \limsup_{t \to 0}\int \left|\Delta_t - fp\right| d\calL\\
 &= \limsup_{t\to 0}  \int \phi_tfdP. 
 \end{align*}
Let $t_k\downarrow 0$ be a sequence achieving the limit superior, in the sense that $\lim_{k\to \infty} \int \phi_{t_k} fdP
= \limsup_{t\to 0} \int \phi_t dP$. Up to taking a subsequence  of $(t_k)$, Lemma~\ref{lem:stability_potentials} implies that 
$\phi_{t_k}$ converges uniformly to a Kantorovich potential $f_0$ from $P$ to $Q$, 
which is unique up to translation by a constant, and which therefore takes the form
$f_0 = \phi_0 + a$ for some $a \in \bbR$.  The limit superior clearly continues to be achieved along this subsequence,
thus we replace it by $(\phi_{t_k})$ without loss of generality. We thus have
$$\limsup_{t\to 0} \int \phi_t fdP = 
 \lim_{k\to\infty} \int \phi_{t_k} f dP = \int \left(\lim_{k\to\infty} \phi_{t_k}\right) f dP = \int (\phi_0 +a)f dP
  = \int \phi_0 f dP,$$
where the interchange of limit and integration holds again by the Dominated Convergence Theorem,
since $\phi_t$ are uniformly bounded, and $f \in L_0^2(P)$. Combine
this fact with equation~\eqref{eq:pf_influence_liminf} to deduce that
$$\lim_{t \to 0}\frac{\Phi(P_t) - \Phi(P)}{t} = \int \phi_0 f dP.$$
It follows that $\tilde \Phi = \phi_0 - \int \phi_0 dP$ is an influence function
of $\Phi$ with respect to $\dot\calP_P$. Since we assumed that
$\tilde \Phi \in \dot\calP_P$, it must in fact be the case
that $\tilde\Phi$ is the unique efficient influence function of $\Phi$
with respect to $\dot\calP_P$~\citep{vandervaart2002}, and the claim follows. 
 \qed

\subsection{Proof of Theorem~\ref{thm:asymptotic_minimax}} 
We shall use the following abbrevations of the differentiable
paths defined in equation~\eqref{eq:main_diff_path_construction}. For any $h \in \bbR$ and $t \geq 0$, if
$f = h\tilde \Phi$ and $g = h\tilde \Psi$, we write 
\begin{alignat}{3}
\label{eq:diff_opath_main_P}
P_{t,h} &:= P_t^{f}, \quad p_{t,h}(x) &&:= p_t^f(x) &&= c_h(t) \zeta(th\tilde\Phi(x))p(x), \\
\label{eq:diff_opath_main_Q}
Q_{t,h} &:= Q_t^{g}, \quad q_{t,h}(y) &&:= q_t^g(y) &&= k_h(t) \zeta(th\tilde\Psi(y))q(y), 
\end{alignat}
for all $x,y \in \bbT^d$, where the normalizing constants are explicitly denoted 
$$c_{h}(t) = \left(\int_{\bbT^d}\zeta(th\tilde\Phi(x))dP(x)\right)^{-1},\quad 
  k_{h}(t) = \left(\int_{\bbT^d} \zeta(th\tilde\Psi(y)) dQ(y)\right)^{-1}.$$
In this case,  the collections $\{(P_{t,h})_{t \geq 0}: h \in \bbR\}$ and
$\{(Q_{t,h})_{t \geq 0}: h \in \bbR\}$
respectively have score functions given by the tangent spaces
$$\dot\calP_P = \{h \tilde\Phi: h  \in \bbR\},\quad 
  \dot\calP_Q = \{h \tilde\Psi: h  \in \bbR\}.$$

We begin by showing that there exist $\widebar M, \widebar\gamma,\widebar u  > 0$ such that 
$p_{t,h} \in \calC^{\alpha-1}(\bbT^d;\widebar M, \widebar \gamma)$ 
uniformly in $t|h|\leq \widebar u$. An identical argument may then be used to show
that $q_{t,h} \in \calC^{\alpha-1}(\bbT^d;\widebar M, \widebar \gamma)$ for all appropriate $t,h$.
Our proof then proceeds by proving parts (i) and (ii). 

Since $p \geq \gamma^{-1}$, and since $\zeta$ is  
bounded from below by a positive constant, 
it is clear that there must exist $\widebar \gamma > 0$ depending on $\gamma$ 
and $\zeta$ such that 
\begin{equation}
\label{eq:pth_lower_bound}
\bar \gamma^{-1} \leq p_{t,h}~~~\text{over } \bbT^d,~ \text{for all } t \geq 0,h\in \bbR .
\end{equation}
We next prove the uniform H\"older continuity of $p_{t,h}$. We begin by studying the H\"older continuity of the map $\zeta(th\widetilde \Phi(\cdot))$.
Since 
$p,q \in \calC^{\alpha-1}(\bbT^d;M,\gamma)$, and since we assumed $\alpha \not\in\bbN$, 
we have by Theorem~\ref{thm:torus_regularity} that, for some constant $\lambda > 0$ depending
only on $M,\gamma, \alpha$, 
\begin{equation}
\label{eq:pf_one_sample_eff_caff}
\|\tilde\Phi \|_{\calC^{\alpha+1}(\bbT^d)}  \leq \lambda.
\end{equation}
Furthermore, recall that $\zeta \in \calC^\infty(\bbR)$. 
Thus, by the multivariate Fa\`a di Bruno formula~(see, for instance, \cite{encinas2003,constantine1996}), it holds that for all multi-indices $1 \leq |\beta| \leq \lfloor \alpha+1\rfloor$,
$$D^\beta \zeta(th \tilde\Phi(\cdot)) = \beta! \sum_{\ell=0}^{|\beta|} (th)^{\ell}\zeta^{(\ell)}(th \tilde\Phi(\cdot)) 
\sum_{(e_j),(\tau_j)} \prod_{j=1}^d \frac 1 {e_j!} \left(\frac 1 {\tau_j!} 
D^{\tau_j} \tilde\Phi(\cdot)\right)^{e_j},$$ 
where the second summation is taken over
all indices $(e_j)_{1 \leq j \leq d} \subseteq  \bbN$ and
multi-indices $(\tau_j)_{1 \leq j \leq d} \subseteq  \bbN^d$
such that for some $1 \leq s \leq d-1$, $\tau_j = 0$ and $e_j = 0$ for all $1 \leq j \leq s$, 
$e_j \neq 0$ and $\tau_j \neq 0$ for $s+1 \leq j \leq d$, and 
for which it holds that $\sum_{j=1}^d e_\tau = \ell$  and
$\sum_{j=1}^d e_j \tau_j = \beta.$
Furthermore,  $\beta! = \beta_1 ! \dots \beta_d!$, and   $\tau_j!$ is defined similarly for all $j$.  
Since  $\zeta \in \calC^\infty(\bbR)$, its derivatives of all orders less than $\alpha+1$ are uniformly
bounded over any fixed compact set. Since $\widetilde \Phi$ is   bounded, we deduce that for any $\widebar u > 0$, 
$$\sup_{0 \leq \ell \leq  \lfloor \alpha+1\rfloor} \sup_{\substack{t \geq 0 , h \in \bbR \\ t|h| \leq \widebar u}} \| (th)^\ell \zeta^{(\ell)}(th \widetilde \Phi(\cdot))\|_\infty \lesssim_{\widebar u,\alpha} 1.$$
Furthermore, we have $\|D^\tau \widetilde \Phi\|_\infty \leq \lambda$ for any $0 \leq |\tau| \leq \lfloor \alpha+1\rfloor$. 
This fact together with the preceding two displays implies
\begin{equation}
\label{eq:pf_semipar_holder_step_zeta}
\sup_{0 \leq |\beta| \leq \lfloor \alpha+1\rfloor} \sup_{\substack{t \geq 0 , h \in \bbR \\ t|h| \leq \widebar u}} \| D^\beta \zeta(th \tilde\Phi(\cdot))\|_\infty \lesssim_{\lambda,\widebar u, \alpha} 1.
\end{equation}
Now, recall that $p_{t,h}(\cdot) = c_h(t) \zeta(th\tilde\Phi(\cdot)) p(\cdot)$, and
 that $c_h(t)$ is uniformly bounded in $h$ and $t$ because $\zeta$ is bounded away from zero by a positive constant.
Thus, using the above display, the fact that $p \in \calC^{\alpha-1}(\bbT^d;M)$, and Lemma~\ref{lem:holder_products},
we deduce there exists a constant $\widebar M > 0$, depending only on $M,\widebar u,\alpha$ and the choice of $\zeta$, 
such that
$$\sup_{\substack{t \geq 0 , h \in \bbR \\ t|h| \leq \widebar u}} \|p_{t,h}\|_{\calC^{\alpha-1}(\bbT^d)} \leq \widebar M.$$
Combine this fact with equation~\eqref{eq:pth_lower_bound} to deduce that
$$p_{t,h} \in \calC^{\alpha-1}(\bbT^d; \widebar M, \widebar \gamma), \quad \text{for all } t\geq 0, h\in \bbR,t|h| \leq \widebar u.$$

We now prove part (i).  
Since $\tilde\Phi \in \dot\calP_P$, it follows from Lemma~\ref{lem:influence}  that 
$\tilde \Phi$
is the efficient influence function of $\Phi$
relative to $\dot\calP_P$. Since $\dot\calP_P$ is a vector space, it  
follows from Theorem~25.21 of~\cite{vandervaart1998} that for any estimator sequence $U_n$, 
$$\sup_{\substack{\calI \subseteq \bbR \\ |\calI| < \infty}} \liminf_{n\to\infty} \sup_{h \in \calI} n\bbE_{n,h}
\big|U_n - \Phi_Q(P_{n^{-1/2}, h})\big|^2  \geq \Var_P[\phi_0(X)],$$
where the infimum is over all estimator sequences.  
 
We next prove part (ii).  Inspired by the proof of Theorem~11 of \cite{berrett2019},
our goal will be to invoke a more general version of  Theorem~25.21 of~\cite{vandervaart1998}, 
given in Theorem~3.11.5 of~\cite{vandervaart1996}, whose statement we briefly summarize here. 
Let $H$ be a Hilbert space with inner product $\langle\cdot,\cdot\rangle_H$, 
and norm $\norm\cdot_H$. 
Let $(\calX_n, \calA_n, \mu_{n,h}:h\in H)$ be a sequence of asymptotically normal
experiments (as defined in Section 3.11 of  \cite{vandervaart1996}). 
A parameter sequence $(\kappa_n(h):h\in H) \subseteq \bbR$ is said to be 
regular if there exists a nonnegative sequence $(r_n)$ such that
$$r_n\big( \kappa_n(h) - \kappa_n(0)\big) \to \dot\kappa(h), \quad h \in H,$$
for a continuous linear map $\dot\kappa: H \to \bbR$.  
Denote by $\dot\kappa^*:\bbR \to H$ the adjoint of $\dot\kappa$, 
namely the map satisfying $\langle \dot\kappa^*(b^*), h\rangle_H = b^* \dot\kappa(h)$
for all $h \in H$.  
\begin{lemma}[\cite{vandervaart1996}, Theorem 3.11.5]
\label{lem:abstract_minimax}
Let the sequence of experiments $(\calX_n, \calA_n, \mu_{n,h}:h\in H)$ be asymptotically normal, 
and let the parameter sequence $(\kappa_n(h):h\in H)$ be regular. Suppose there exists a Gaussian random variable $G$
such that for all $b^* \in \bbR$, $b^* G \sim N(0, \|\dot\kappa^*(b^*)\|_H^2)$. 
Then, for any estimator sequence $(U_n)_{n\geq 1}$,
$$\sup_{\substack{\calI\subseteq H \\ |\calI| < \infty}} \liminf_{n\to\infty} \sup_{h\in \calI} r_n^2\bbE_{\mu_{n,h}}  (U_n - \kappa_n(h))^2 \geq \Var[G].$$ 
\end{lemma}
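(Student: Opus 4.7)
The plan is to invoke Le Cam's theory of convergence of experiments and reduce the lower bound to the corresponding statement for the Gaussian shift limit. First, I would fix an arbitrary finite subset $\calI \subseteq H$ and restrict attention to the sub-experiments $(\calX_n, \calA_n, \mu_{n,h}: h \in \calI)$. Asymptotic normality of the sequence of experiments means, by definition, that the log-likelihood ratios admit a LAN-type expansion and that these finite-dimensional marginals converge in the Le Cam sense to (the restriction to $\calI$ of) a Gaussian shift experiment indexed by $H$. The regularity hypothesis then ensures that $r_n(\kappa_n(h)-\kappa_n(0))\to \dot\kappa(h)$, a continuous linear functional of $h$ which, by definition of the adjoint, satisfies $\dot\kappa(h)=\langle \dot\kappa^*(1),h\rangle_H$.

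Next, I would appeal to the asymptotic representation theorem, Theorem~3.11.2 of~\cite{vandervaart1996}, applied to the rescaled estimator sequence $\tilde U_n := r_n(U_n - \kappa_n(0))$. Along any subsequence on which the relevant liminf of normalized risks is finite, this theorem produces, after possibly randomizing, a measurable limit functional $T$ of the Gaussian shift observation such that $\tilde U_n - \dot\kappa(h) \rightsquigarrow T - \dot\kappa(h)$ under $\mu_{n,h}$. Combined with a truncation argument giving lower semicontinuity of quadratic risk under weak convergence, this yields
\[
\liminf_{n\to\infty} \sup_{h\in\calI} r_n^2 \bbE_{\mu_{n,h}}(U_n-\kappa_n(h))^2 \;\geq\; \sup_{h\in\calI} \bbE_h\bigl(T-\dot\kappa(h)\bigr)^2,
\]
where $\bbE_h$ denotes expectation in the Gaussian shift experiment under the shift $h$.

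The core step is the lower bound in the Gaussian shift experiment: I would show that for any measurable $T$,
\[
\sup_{\calI\subseteq H,\;|\calI|<\infty}\;\sup_{h\in\calI} \bbE_h\bigl(T-\dot\kappa(h)\bigr)^2 \;\geq\; \|\dot\kappa^*(1)\|_H^2 \;=\; \Var[G].
\]
The strategy is to choose $\calI$ to consist of a finite grid inside the one-dimensional subspace spanned by $\dot\kappa^*(1)$, which reduces the problem to a one-parameter normal location experiment with unit information, where $\dot\kappa(h) = \|\dot\kappa^*(1)\|_H^2 \cdot \alpha$ when $h = \alpha \dot\kappa^*(1)/\|\dot\kappa^*(1)\|_H$. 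Placing a mean-zero Gaussian prior on $\alpha$ and letting its variance tend to infinity, the Bayes risk of any estimator is bounded below, by conjugacy, by a quantity converging to $\|\dot\kappa^*(1)\|_H^2$, which is exactly the Cram\'er--Rao bound in this submodel. A discretization argument then transfers this bound from the continuous prior to a sufficiently fine finite $\calI$.

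The main obstacle is the interchange of limits between $n\to\infty$ and the Gaussian shift limit, in particular verifying the lower semicontinuity of squared-error risk so that the representation theorem yields a liminf, rather than merely a limit-in-distribution, bound; this is handled by a standard truncation of the loss combined with Fatou's lemma. A secondary subtlety is ensuring that the finite set $\calI$ used for discretization in the last step can be made uniform across the subsequence extraction in the representation theorem, which follows by the usual diagonal argument.
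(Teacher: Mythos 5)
The paper does not prove this statement: it is imported verbatim as Theorem~3.11.5 of \cite{vandervaart1996} and used as a black box in the proof of Theorem~\ref{thm:asymptotic_minimax}, so there is no internal argument to compare against. Your sketch is a reconstruction of the standard Le Cam--theory proof of the cited theorem, and its outline is sound: restriction to finite $\calI$, convergence of the restricted experiments to the Gaussian shift limit, an asymptotic representation argument along subsequences with finite normalized risk (finite risk gives tightness of $r_n(U_n-\kappa_n(h))$, hence of $r_n(U_n-\kappa_n(0))$, and joint convergence with the likelihood ratios along a further subsequence), lower semicontinuity of the truncated quadratic risk, and finally a minimax lower bound in the limit experiment obtained from the one-dimensional submodel spanned by $\dot\kappa^*(1)$. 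Three remarks. First, with your normalization $h=\alpha\,\dot\kappa^*(1)/\|\dot\kappa^*(1)\|_H$ the target is $\dot\kappa(h)=\alpha\,\|\dot\kappa^*(1)\|_H$, not $\alpha\,\|\dot\kappa^*(1)\|_H^2$; this is harmless, since estimating $c\alpha$ from a unit-information normal location observation has minimax risk $c^2=\|\dot\kappa^*(1)\|_H^2=\Var[G]$. Second, the step you dispatch as ``a discretization argument'' is the genuine technical content of the theorem: one must show that the supremum over \emph{finite} subsets of the Gaussian-experiment minimax risk already attains $\Var[G]$, i.e.\ for every $\epsilon>0$ there is a finite grid on which \emph{every} randomized estimator $T$ has maximal risk at least $\Var[G]-\epsilon$; passing from the conjugate Gaussian-prior Bayes bound to finitely supported priors is not automatic for the unbounded quadratic loss and is where the textbook proof expends its effort (Bayes risks for priors on fine grids in large boxes, together with explicit Gaussian computations or Anderson's lemma). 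As written this is asserted rather than proved, so your proposal is an outline of the correct route rather than a complete argument. Third, the diagonalization you worry about at the end is unnecessary: the subsequence extraction is performed separately for each fixed finite $\calI$, and the supremum over $\calI$ is taken only after the limit-experiment bound for that $\calI$ has been established, so no uniformity across subsequences is required.
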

Returning to the proof, define the Hilbert space
$H = \bbR^2$  with inner product 
$$\langle(h_1, h_2), (h_1', h_2')\rangle_H = h_1h_1'\Var_P[\phi_0(X)] + h_2h_2'\Var_Q[\psi_0(Y)], \quad (h_1,h_2),(h_1',h_2') \in H,$$
and the sequence of experiments
$$\mu_{n,h} = P_{n^{- 1 /2},h_1}^{\otimes n} \otimes Q_{m^{-1 /2},h_2 }^{\otimes m},
\quad h=(h_1,h_2) \in \bbR^2,$$
endowed with the standard Borel $\sigma$-algebra. 
Here, $m$ is viewed as a function of $n$ which satisfies $n/(n+m) \to \rho \in [0,1]$ as $n\to \infty$. 
The following result can be deduced from Section~7.5
of~\cite{berrett2019} with minor modifications, using our assumptions placed on~$\zeta$. 
\begin{lemma}[\cite{berrett2019}]  
\label{lem:asymp_normal_experiments}
The sequence of experiments $(\mu_{n,h}: h\in H)$ is asymptotically
normal.
\end{lemma}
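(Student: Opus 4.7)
\textbf{Proof Plan for Lemma \ref{lem:asymp_normal_experiments}.}

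The plan is to verify local asymptotic normality (LAN) for the product experiments $\mu_{n,h}$ by establishing differentiability in quadratic mean (DQM) of the one-dimensional submodels $t\mapsto P_{t,h_1}$ and $t\mapsto Q_{t,h_2}$ at $t=0$, and then combining the two samples via independence. Concretely, I aim to show that under $\mu_{n,0} = P^{\otimes n}\otimes Q^{\otimes m}$,
\begin{equation*}
\log \frac{d\mu_{n,h}}{d\mu_{n,0}} = \Delta_{n,h} - \tfrac{1}{2}\|h\|_H^2 + o_{\mu_{n,0}}(1),
\end{equation*}
where $\Delta_{n,h} = \tfrac{h_1}{\sqrt n}\sum_{i=1}^n \widetilde\Phi(X_i) + \tfrac{h_2}{\sqrt m}\sum_{j=1}^m \widetilde\Psi(Y_j)$, and $\Delta_{n,h}\rightsquigarrow N(0,\|h\|_H^2)$.

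First I would verify DQM for the source path. By construction, $p_{t,h_1}(x) = c_{h_1}(t)\zeta(th_1\widetilde\Phi(x))p(x)$, with $\zeta(0)=\zeta'(0)=\zeta''(0)=1$ and $\zeta\in\calC^\infty$ bounded away from $0$. A second-order Taylor expansion of $\zeta$ around $0$, combined with the uniform boundedness of $\widetilde\Phi$ (which follows from $\widetilde\Phi = \phi_0 - \int\phi_0\,dP$ and compactness of $\bbT^d$) and the positivity of $p$, yields
\begin{equation*}
\sqrt{p_{t,h_1}(x)} = \sqrt{p(x)}\Bigl(1 + \tfrac{t h_1}{2}\widetilde\Phi(x) + O(t^2)\Bigr)
\end{equation*}
uniformly in $x$, with the normalizing constant satisfying $c_{h_1}(t) = 1 + O(t^2)$ since $\int\widetilde\Phi\,dP = 0$. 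Dominated convergence then gives
$\int\bigl(\tfrac{\sqrt{p_{t,h_1}}-\sqrt{p}}{t} - \tfrac{h_1}{2}\widetilde\Phi\sqrt{p}\bigr)^2 d\calL\to 0$,
establishing DQM with score $h_1\widetilde\Phi\in L^2_0(P)$. An identical argument yields DQM for $t\mapsto Q_{t,h_2}$ with score $h_2\widetilde\Psi\in L^2_0(Q)$.

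Next, I would apply the standard LAN theorem (e.g., Theorem~7.2 of \cite{vandervaart1998}) to each submodel at $t_n=n^{-1/2}$ and $t_m=m^{-1/2}$ respectively, obtaining
\begin{align*}
\sum_{i=1}^n \log\frac{p_{n^{-1/2},h_1}(X_i)}{p(X_i)} &= \tfrac{h_1}{\sqrt n}\sum_{i=1}^n\widetilde\Phi(X_i) - \tfrac{h_1^2}{2}\Var_P[\widetilde\Phi(X)] + o_{P^{\otimes n}}(1),\\
\sum_{j=1}^m \log\frac{q_{m^{-1/2},h_2}(Y_j)}{q(Y_j)} &= \tfrac{h_2}{\sqrt m}\sum_{j=1}^m\widetilde\Psi(Y_j) - \tfrac{h_2^2}{2}\Var_Q[\widetilde\Psi(Y)] + o_{Q^{\otimes m}}(1).
\end{align*}
Summing the two and using independence of the two samples gives the asserted quadratic expansion with $\|h\|_H^2 = h_1^2\Var_P[\widetilde\Phi] + h_2^2\Var_Q[\widetilde\Psi]$, matching the definition of the inner product on $H$. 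The classical CLT applied to the two independent sums then yields $\Delta_{n,h}\rightsquigarrow N(0,\|h\|_H^2)$, and linearity of $h\mapsto\Delta_{n,h}$ (and of the limiting Gaussian process) is immediate from the product structure.

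The main obstacle I expect is the DQM verification, since it requires controlling the quadratic remainder in the Taylor expansion of $\zeta(th\widetilde\Phi)$ uniformly well enough to pass to the $L^2(\calL)$ limit; this is where the specific normalization $\zeta(0)=\zeta'(0)=\zeta''(0)=1$, the smoothness and positivity of $\zeta$, the uniform boundedness of the Kantorovich potentials on $\bbT^d$, and the boundedness of $p$ all combine crucially. Once DQM is in hand, the remainder of the argument is standard Le Cam theory and follows essentially the same template as Section~7.5 of \cite{berrett2019}.
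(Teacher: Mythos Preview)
Your proposal is correct and follows precisely the approach the paper indicates: the paper does not give a self-contained proof but simply defers to Section~7.5 of \cite{berrett2019}, whose argument is exactly the DQM-then-LAN template you have outlined, with the modifications you identify (the specific properties of $\zeta$, boundedness of the potentials on $\bbT^d$, and the product structure of the two independent samples). There is nothing to add.
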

For all  $h = (h_1,h_2) \in H$, let $\kappa_n(h) = \Psi(P_{n^{-1/2},h_1}, Q_{m^{-1/2},h_2})$,
where again $m$ is treated as a function of $n$. Notice that
$\kappa_n(0) = \Psi(P,Q)$ for any $n \geq 1$. By following the same
argument as in the proof of Lemma~\ref{lem:influence}, using the Kantorovich
duality and the stability result for Kantorovich potentials in  Lemma~\ref{lem:stability_potentials}, one has
$$\kappa_n(h) - \kappa_n(0) 
 = \int \phi_0 d(P_{ n^{-1/2},h_1}-P) + \int \psi_0 d(Q_{m^{-1/2},h_2} - Q) + o(1).$$
 Now,  since $\zeta(0)=\zeta'(0)=1$ and $\int \tilde\Phi dP = 0$, we have  for all $t \geq 0$, 
\begin{align*}
\Bigg|\frac 1 {c_{h_1}(t)} - 1 \Bigg|
= \left| \int \left[ \zeta(t h_1 \tilde\Phi(x)) - 1 - th_1\tilde\Phi(x) \right]dP(x) \right| 
\lesssim \|\zeta\|_{\calC^2(\bbR)} t^2 h_1^2\|\tilde\Phi\|_{L^2(P)}. 
\end{align*}
Recall that $p$ and $\zeta$ are bounded, and that
$c_{h_1}(n^{-1/2})$ is uniformly bounded in $h_1$ and $n$, thus for all $x \in \bbT^d$, 
\begin{align*} 
p_{n^{-1/2},h_1}(x) - p(x) 
 &= p(x)\left[c_{h_1}(n^{-1/2})\zeta\big(h_1n^{-1/2}\tilde\Phi(x)\big)-1\right] \\
 &= p(x)\left[\zeta\big(h_1n^{-1/2}\tilde\Phi(x)\big)-1 \right] +  O\left(\frac{\|p\|_{\infty} \|\zeta\|_\infty h_1^2}{n}\right) \\
 &= p(x) h_1n^{-1/2}\tilde\Phi(x) +  O\left(h_1^2/n\right),
\end{align*}
where we again used the fact that $\zeta(0) = \zeta'(0)=1$. 
Similarly, for all $y \in \bbT^d$, 
$$q_{m^{-1/2},h_2}(y) - q(y) = q(y)h_2 m^{-1/2}\tilde\Psi(y) + O\left(h_2^2/m\right),$$
implying that, 
\begin{align*}
\kappa_n(h) - \kappa_n(0) 
&= h_1 n^{-1/2} \int \phi_0 \tilde\Phi dP + h_2m^{-1/2} \int \psi_0 \tilde\Psi dQ  
+ O(h_1^2/n+h_2^2/m)\\
&= h_1 n^{-1/2} \Var_P[\phi_0(X)] + h_2m^{-1/2} \Var_Q[\psi_0(Y)]+ O(h_1^2/n+h_2^2/m).
\end{align*}
We deduce that  
$$\sqrt{\frac{nm}{m+m}}  (\kappa_n(h) - \kappa_n(0) )
 \longrightarrow 
   \dot\kappa(h):= \langle (\sqrt{1- \rho}, \sqrt{\rho}), (h_1,h_2)\rangle_H,$$
   as $n,m\to\infty$ such that $n/(n+m) \to \rho$. 
It follows that the sequence of parameters $(\kappa_n(h):h\in H)$ is regular. 
Furthermore, the adjoint of $\dot\kappa$ is easily seen to be
$\dot\kappa^*(b^*) = b^*(\sqrt{1-\rho},\sqrt{\rho})$, for all $b^* \in \bbR$,
and one has
$$\norm{\dot\kappa^*(b^*)}_H^2 = b^* \Big((1-\rho) \Var_P[\phi_0(X)] + \rho \Var_Q[\psi_0(Y)]\Big).$$
The claim now follows from Lemma~\ref{lem:abstract_minimax}.\qed

\section{Alternate Proofs of Central Limit Theorems}
\label{app:pf_clts_alternate}
In this Section, we provide an alternate proof of Theorem~\ref{thm:clt}
which does not rely on our stability bounds in Theorem~\ref{thm:stability}
and~Proposition~\ref{thm:two_sample_stability}. We instead   follow the strategy 
developed by~\cite{delbarrio2019a} for obtaining limit
laws of the   process $\sqrt n(W_2^2(P_n, Q)  -W_2^2(P,Q))$.
For the sake of brevity, we only prove the one-sample case of Theorem~\ref{thm:clt--hypercube}, and the remaining assertions
of Theorem~\ref{thm:clt} can be handled similarly.  
Throughout this section, we abbreviate $\Psi = \Psi\pbc$ and $\hat P_n = \hat P_n\sbc$.

We shall make use of the classical Efron-Stein inequality (see for instance~\cite{boucheron2013}, Theorem~3.1) for bounding
the variance of functions of independent
random variables, stated as follows.
\begin{lemma} [Efron-Stein Inequality]
\label{lem:efron-stein}
Let $X_1, X_1', X_2, X_2', \dots, X_n, X_n'$ be independent random variables, and let 
$R_n = f(X_1, \dots, X_n)$ be a square-integrable function of $X_1, \dots, X_n$.
Let 
$$R_{ni}' = f(X_1, \dots, X_{i-1}, X_i', X_{i+1}, \dots, X_n),\quad i=1, \dots, n.$$ Then, 
$$\Var[R_n] \leq \sum_{i=1}^n  \bbE(R_n-R_{ni}')_+^2.$$
\end{lemma}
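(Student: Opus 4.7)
My plan is to prove the bound in two stages: first establish the sharp Efron-Stein tensorization $\Var[R_n] \leq \tfrac{1}{2}\sum_{i=1}^n \bbE[(R_n-R_{ni}')^2]$, and then convert the squared differences into positive parts by an exchangeability argument.

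For the first stage, I would use a Doob martingale decomposition. Let $\calF_i = \sigma(X_1,\dots,X_i)$ with $\calF_0$ trivial, and set $M_i = \bbE[R_n \mid \calF_i]$. Then $R_n - \bbE R_n = \sum_{i=1}^n D_i$ with $D_i = M_i - M_{i-1}$, and since the $D_i$ are martingale differences they are pairwise orthogonal in $L^2$, giving $\Var[R_n] = \sum_i \bbE D_i^2$. The key identity to establish next is $D_i = \bbE\bigl[R_n - \bbE[R_n\mid X_{-i}] \,\big|\, \calF_i\bigr]$, where $X_{-i} = (X_1,\dots,X_{i-1},X_{i+1},\dots,X_n)$; this holds because $\bbE[R_n \mid X_{-i}]$ is $X_i$-free, and averaging out $X_{i+1},\dots,X_n$ against it recovers $M_{i-1}$ by Fubini together with the product structure of the law of the $X_j$. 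Conditional Jensen then yields $\bbE D_i^2 \leq \bbE[\Var(R_n \mid X_{-i})]$, so $\Var R_n \leq \sum_i \bbE[\Var(R_n \mid X_{-i})]$. Finally, I would apply the elementary identity $\Var Z = \tfrac{1}{2}\bbE[(Z-Z')^2]$ (valid for any square-integrable $Z$ with independent copy $Z'$) conditionally on $X_{-i}$, using $X_i'$ as the independent copy of $X_i$; this gives $\Var(R_n\mid X_{-i}) = \tfrac{1}{2}\bbE[(R_n - R_{ni}')^2 \mid X_{-i}]$, and taking expectations produces the sharp tensorization.

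For the second stage I would invoke exchangeability. Since $X_i$ and $X_i'$ are i.i.d.\ and independent of every other coordinate, the pair $(R_n, R_{ni}')$ has the same joint law as $(R_{ni}', R_n)$, so $\bbE(R_n - R_{ni}')_+^2 = \bbE(R_{ni}' - R_n)_+^2 = \bbE(R_n - R_{ni}')_-^2$. Splitting $(R_n-R_{ni}')^2 = (R_n-R_{ni}')_+^2 + (R_n-R_{ni}')_-^2$ then gives $\bbE[(R_n-R_{ni}')^2] = 2\,\bbE(R_n-R_{ni}')_+^2$, and substitution into the sharp tensorization yields the claimed bound.

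The only moderately delicate step is the identity $\bbE[\bbE(R_n\mid X_{-i}) \mid \calF_i] = M_{i-1}$, which requires a careful double application of the tower property together with the independence of $X_{i+1},\dots,X_n$ from $\calF_i$; everything else in the argument is routine.
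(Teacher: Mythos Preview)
Your proof is correct and follows the standard martingale-plus-exchangeability route found, for instance, in Boucheron, Lugosi and Massart. The paper itself does not prove this lemma: it simply quotes it as the classical Efron--Stein inequality and cites \cite{boucheron2013}, Theorem~3.1, so there is no in-paper argument to compare against. One minor point worth making explicit in your write-up is that the exchangeability step (and already the conditional identity $\Var(R_n\mid X_{-i}) = \tfrac{1}{2}\bbE[(R_n-R_{ni}')^2\mid X_{-i}]$) requires $X_i$ and $X_i'$ to have the \emph{same} distribution, not merely to be independent; the lemma statement only says ``independent,'' but the intended and standard hypothesis is that $X_i'$ is an independent copy of $X_i$, which is how the paper uses it.
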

 
With these results in place, we turn to proving the one-sample case of Theorem~\ref{thm:clt--hypercube}.
In view of Lemma~\ref{lem:wavelet_wasserstein}, it suffices to assume $P \neq Q$, in which case $\Var[\phi_0(X)] > 0$. 
We abbreviate $\hat P_n = \hat P_n\sbc$, and we begin with the following result. 
\begin{proposition}
\label{prop:wasserstein_linearization}
Assume the same conditions as~Theorem~\ref{thm:clt--hypercube}. Define 
\begin{align*}
R_n    &= W_2^2(\hat P_n, Q) - \int \phi_0 d\hat P_n. 
\end{align*}
Then, as $n\to \infty$, $n\Var(R_n) \to 0$.
\end{proposition}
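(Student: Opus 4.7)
The plan is to apply the Efron-Stein inequality (Lemma~\ref{lem:efron-stein}) to $R_n$ viewed as a symmetric function of $X_1,\dots,X_n$. Letting $X_1'$ be an independent copy of $X_1$ and $R_{n1}'$ the corresponding perturbed value, by exchangeability it suffices to show $\bbE(R_n - R_{n1}')^2 = o(1/n^2)$. To sandwich the Wasserstein increment without invoking Theorem~\ref{thm:stability}, I would use only Kantorovich duality: if $\phi_{0,n}$ and $\phi_{0,n}^{(1)}$ denote Kantorovich potentials for the transport problems from $\hat P_n$ and $\hat P_n^{(1)}$ to $Q$, normalized to have zero Lebesgue mean over $[0,1]^d$, then
\[
\int \phi_{0,n}^{(1)}\, d(\hat P_n - \hat P_n^{(1)}) \;\leq\; W_2^2(\hat P_n, Q) - W_2^2(\hat P_n^{(1)}, Q) \;\leq\; \int \phi_{0,n}\, d(\hat P_n - \hat P_n^{(1)}),
\]
and subtracting $\int \phi_0\, d(\hat P_n - \hat P_n^{(1)})$ yields
\[
\int(\phi_{0,n}^{(1)} - \phi_0)\, d(\hat P_n - \hat P_n^{(1)}) \;\leq\; R_n - R_{n1}' \;\leq\; \int(\phi_{0,n} - \phi_0)\, d(\hat P_n - \hat P_n^{(1)}).
\]

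On the event $A_n := \{\hat p_n = \tilde p_n\}\cap \{\hat p_n^{(1)} = \tilde p_n^{(1)}\}$, which has probability at least $1 - O(n^{-2})$ by Lemma~\ref{lem:tilde_q_density}, the unnormalized wavelet density estimators are linear in the sample, and orthonormality of $\Psi\pbc$ produces the key identity
\[
\int f\, d(\hat P_n - \hat P_n^{(1)}) \;=\; \tfrac 1 n \bigl(f_{J_n}(X_1) - f_{J_n}(X_1')\bigr), \qquad f \in L^2([0,1]^d),
\]
where $f_{J_n}$ is the $L^2$-projection of $f$ onto $\mathrm{Span}(\Phi\pbc \cup \bigcup_{j=j_0}^{J_n} \Psi_j\pbc)$. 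Applied with $f = \phi_{0,n} - \phi_0$ (or with $\phi_{0,n}^{(1)} - \phi_0$; note the additive normalization cancels), a weighted Cauchy-Schwarz in the wavelet coefficients with weights $a_\xi = 2^{2j}$ for $\xi \in \Psi_j\pbc$ gives
\[
|f_{J_n}(X_1) - f_{J_n}(X_1')|^2 \;\lesssim\; \|f\|_{H^1([0,1]^d)}^2 \cdot S_{J_n},
\qquad S_{J_n} := 1 + \sum_{j=j_0}^{J_n} 2^{-2j}\sum_{\xi \in \Psi_j\pbc}(\xi(X_1)-\xi(X_1'))^2,
\]
where the $H^1$ characterization in terms of wavelet coefficients (cf.\ Appendix~\ref{app:besov_spaces}) is being invoked. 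By the locality and $L^\infty$ bounds of Lemma~\ref{lem:wavelet}, $S_{J_n}$ is bounded pointwise in $(X_1,X_1')$ by a constant multiple of $1$, $J_n$, or $2^{(d-2)J_n}$ when $d=1$, $d=2$, or $d\geq 3$.

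The remaining factor $\|f\|_{H^1}^2$ is controlled by the $\dH{1}$ norm via Poincar\'e (our normalization forces $f$ to have zero mean, and $p \geq \gamma^{-1}$), while the identity $\nabla\phi_{0,n} - \nabla\phi_0 = 2(T_0 - \hat T_n)$ gives $\|f\|_{\dH{1}}^2 \lesssim_\gamma \|\hat T_n - T_0\|_{L^2(P)}^2$, whose expectation is of order $\trate$ by Theorem~\ref{thm:one_sample_wavelet--map}. Assembly then yields
\[
n^2\,\bbE\bigl[(R_n - R_{n1}')^2 I_{A_n}\bigr] \;\lesssim\; S_{J_n}\cdot\trate,
\]
which under $2^{J_n} \asymp n^{1/(d+2(\alpha-1))}$ equals $n^{(d-2-2\alpha)/(d+2(\alpha-1))}$ for $d\geq 3$, $O((\log n)^3/n)$ for $d=2$, and $O(1/n)$ for $d=1$. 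Each of these vanishes precisely because of the high-smoothness condition $2(\alpha+1) > d$ underlying Theorem~\ref{thm:clt--hypercube}. The contribution from $A_n^c$ is negligible via the crude uniform bound $|R_n - R_{n1}'| = O(1)$ together with $\bbP(A_n^c) = O(n^{-2})$. I expect the main obstacle to be the careful coupling between the two perturbed worlds on $A_n$: specifically, ensuring that $\tilde p_n$ and $\tilde p_n^{(1)}$ simultaneously define genuine probability densities (via Lemma~\ref{lem:wavelet_density_integral}) so that the Brenier-potential identities for $\phi_{0,n}$, $\phi_{0,n}^{(1)}$ are valid, together with verifying the wavelet characterization of $H^1$ with the boundary-corrected basis so that the Cauchy-Schwarz step holds uniformly in the perturbation.
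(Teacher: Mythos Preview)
Your proposal is correct and follows essentially the same route as the paper: Efron--Stein, the Kantorovich duality sandwich, wavelet expansion of $\hat p_n - \hat p_n^{(1)}$, Cauchy--Schwarz, Poincar\'e, and finally the transport-map rate from Theorem~\ref{thm:one_sample_wavelet--map}. The only implementation difference is that the paper applies an unweighted Cauchy--Schwarz across resolution levels (picking up an extra harmless $J_n$) followed by a \emph{local} Poincar\'e inequality on each wavelet support $I_\xi$, whereas you use a single weighted Cauchy--Schwarz that directly produces the global $H^1$ norm; your variant is slightly cleaner but relies on the identification $H^1([0,1]^d)=\calB_{2,2}^1([0,1]^d)$ for the boundary-corrected basis, which the paper does not state explicitly (only the torus analogue in Lemma~\ref{lem:sobolev_facts}), while the paper's local argument sidesteps this.
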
 

\subsection{Proof of Proposition~\ref{prop:wasserstein_linearization}}
\label{app:pf_wasserstein_linearization}
Let $X_1' \sim P$ denote a random variable independent of $X_1, \dots, X_n$, and let
$$P_n' = \frac 1 n \delta_{X_1'} + \frac 1 n \sum_{i=2}^n \delta_{X_i}$$
denote the corresponding 
empirical measure. Let $\hat P_n'$ be the distribution with density
$$\hat p_n' = \sum_{\zeta\in\Phi}\hbeta_\zeta'\zeta + 
\sum_{j=j_0}^{J_n}\sum_{\xi\in\Psi_j} \hbeta_\xi'\xi 
 = \sum_{j=j_0-1}^{J_n}\sum_{\xi\in\Psi_j} \hbeta_\xi'\xi, \quad 
\text{where } \hbeta_\xi' = \int \xi d\hat P_n', \ \xi \in\Psi,$$
where we write $\Psi_{j_0-1} = \Phi$ for ease of notation. Set
$$R_n' = W_2^2(\hat P_n', Q) - \int \phi_0d\hat P_n'.$$
By  Lemma~\ref{lem:efron-stein}, 
it will suffice to prove that $n^2 \bbE(R_n-R_n')_+^2 =o(1)$.  
Let $(\hat\phi_n,\hat\psi_n)$ be a pair of Kantorovich potentials between $\hat P_n$ and $Q$. Without
loss of generality, we may assume that $\int \hat\phi_n d\calL = \int \phi_0 d\calL$ for all
$n \geq 1$. By the Kantorovich duality, we have
\begin{align*}
W_2^2(\hat P_n,  Q) &= \int \hat \phi_n d\hat P_n+ \int \hat \psi_n d Q, \\
W_2^2(\hat P_n', Q) &= 
 \sup_{(\phi,\psi) \in\calK} \int \phi d \hat P_n' + \int \psi d Q \\ &\geq 
 \int \hat \phi_n d \hat P_n' + \int \hat \psi_nd Q = 
 W_2^2(\hat P_n, Q) + \int \hphi_n d(\hat P_n' - \hat P_n).
\end{align*}
It follows that,  on the event $E_n$, 
\begin{align*}
R_n-R_n' \leq \int (\hat \phi_n - \phi_0)d(\hat P_n - \hat P_n').
\end{align*}
In view of Lemma~\ref{lem:efron-stein}, the claim will
follow if we are able to show that $n^2 \bbE(R_n-R_n')_+ = o(1)$. 
Arguing similarly as in the proof of, for instance, Lemma~\ref{lem:L},  
it holds that $\bbP(\hat p_n= \tilde p_n) \lesssim n^{-3}$.
Using this fact and the above inequality,  it will suffice to prove that the quantity
$$\Delta_n := n^2 \bbE\left(\int (\hat \phi_n - \phi_0)(\tilde p_n - \tilde p_n')d\calL\right)_+^2$$
vanishes as $n\to\infty$. 
To this end, notice that
\begin{align*}
\int (\hat \phi_n - \phi_0)(\tilde p_n - \tilde p_n')d\calL
 &=  \int (\hat \phi_n - \phi_0)\left( 
		\sum_{j=j_0-1}^{J_n}\sum_{\xi\in\Psi_j} (\hbeta_\xi- \hbeta_\xi')\xi\right) \\
 &= \frac 1 n \sum_{j=j_0-1}^{J_n}\sum_{\xi\in\Psi_j}(\xi(X_1)-\xi(X_1'))  \int (\hat \phi_n - \phi_0)
		 \xi.
\end{align*} 
Using the locality of the wavelet basis (Lemma~\ref{lem:wavelet--locality}) and the Cauchy-Schwarz inequality, we obtain 
\begin{align*}
\Delta_n
 &\lesssim   J_n \sum_{j=j_0-1}^{J_n}\sum_{\xi\in\Psi_j} \bbE[\xi^2(X)] \int \big\|\hat \phi_n - \phi_0\big\|^2|\xi|^2  
 \lesssim   {J_n}  \sum_{j=j_0-1}^{J_n} \sum_{\xi\in\Psi_j}
 			\int \big\|\hat \phi_n - \phi_0 \big\|^2|\xi|^2.
\end{align*} 			
In the final step, we again used Lemma~\ref{lem:wavelet--locality}  together with the fact that  $p$ is bounded over $[0,1]^d$ 
(since $p \in \calC^{\alpha-1}([0,1]^d)$), implying that
$$\bbE[\xi^2(X)] = \int \xi^2(x)p(x)dx \lesssim \int\xi^2(x)dx =1.$$
By Lemma~\ref{lem:wavelet},  for all $\xi \in \Psi_j$ and $j \geq j_0$,
we have $\supp(\xi) \subseteq I_\xi$ for a rectangle $I_\xi \subseteq [0,1]^d$
satisfying $\diam(I_\xi) \lesssim 2^{-j}$, and 
$\|\xi\|_{L^\infty(I_\xi)} \lesssim 2^{dj/2}$. Thus, 
\begin{align*}
n^2&\bbE(R_n-R_n')_+^2 
 \lesssim   {J_n}  \sum_{j=j_0-1}^{J_n}  2^{dj} \int_{I_\xi} \big\|\hat \phi_n - \phi_0 \big\|^2.
\end{align*} 	
Apply the Poincar\'e inequality in Lemma~\ref{lem:poincare} together with the bound $\diam(I_\xi) \lesssim 2^{-j}$ to deduce
\begin{align*}
n^2 \bbE(R_n-R_n')_+^2 
 {\lesssim}   {J_n}  \sum_{j=j_0-1}^{J_n}  2^{(d-2)j} \int_{I_\xi} \big\|\nabla(\hat \phi_n - \phi_0 )\big\|^2 
 {\lesssim}   {J_n}  \sum_{j=j_0-1}^{J_n}  2^{(d-2)j} \big\|\hat T_n - T_0\big\|_{L^2(P)}^2,
\end{align*} 	
where the final inequality holds due to the assumption that $p$ has a positive density over
$[0,1]^d$, which, due to the continuity of $p $, implies that there is a constant $\gamma^{-1} > 0$
such that $p \geq \gamma^{-1}$ over $[0,1]^d$. 
Apply Theorem~\ref{thm:one_sample_wavelet} to deduce that 
\begin{align*} 
n^2 \bbE(R_n-R_n')_+^2 
 \lesssim   {J_n}  \sum_{j=j_0-1}^{J_n}  2^{(d-2)j} \big\|\hat T_n - T_0\big\|_{L^2(P)}^2 
 \lesssim J_n \left(2^{J_n(d-2-2\alpha)} \vee \frac{(\log n)^2}{n}\right).  
\end{align*}
Since $d < 2(\alpha+1)$ and $J_n \asymp \log n$, the above display is of order $o(1)$,
thus the claim follows from Lemma~\ref{lem:efron-stein}.\qed

To prove the claim from here, write 
\begin{align*}
\sqrt n
 &\Big( W_2^2(\hat P_n, Q) - \bbE W_2^2(\hat P_n ,Q)\Big)   
 = \sqrt n \int \phi_0   (\hat p_n   - p_{J_n})
 + \sqrt n\Big(R_n - \bbE[R_n]\Big),
 \end{align*}
 where recall that 
 $p_{J_n} = \bbE[\hat p_n]$. 
It follows from Proposition~\ref{prop:wasserstein_linearization} that the final term of the above display
 converges to zero in probability. Furthermore, $\sqrt n \int \phi_0 (\hat p_n - p_{J_n}) \rightsquigarrow N(0,\Var[\phi_0(X)])$
 by Lemma~\ref{lem:linear_functional_clt}. Combining these facts with the bias bound of Theorem~\ref{thm:one_sample_wavelet}, 
 the claim follows. \qed

\end{appendix}

\begin{acks}[Acknowledgments] 
The authors would like to thank Alden Green for bringing their attention
to the paper~\cite{hendriks1990}, 
and Ziv Goldfeld and Kengo Kato for a discussion related
to the results of Section~\ref{sec:efficiency}. The authors are grateful for the constructive 
comments of the Editor, Associate Editor, and four anonymous reviewers, which significantly improved the quality of this manuscript.
TM also wishes to thank Aram-Alexandre Pooladian for conversations related
to this work, and for his comments on an earlier version of this manuscript.
TM was supported in part by the Natural Sciences and Engineering Research Council of Canada,
through a PGS D scholarship.
TM, SB and LW were supported
in part by National Science Foundation grants
DMS-1713003 and DMS-2113684. SB was additionally supported by a Google Research Scholar Award and an Amazon Research Award.
JNW gratefully acknowledges the support of National Science Foundation grant DMS-2015291.
\end{acks}


\bibliographystyle{imsart-nameyear} 
\bibliography{manuscript_full}       
 
\end{document}